\numberwithin{equation}{section}
\newtheorem{Thm}{Theorem}[section]
\newtheorem{Prop}[Thm]{Proposition}
\newtheorem{Lem}[Thm]{Lemma}
\newtheorem{Conj}[Thm]{Conjecture}
\newtheorem{Cor}[Thm]{Corollary}
\newtheorem{Fact}[Thm]{Fact}
\theoremstyle{remark}
\newtheorem{Exa}[Thm]{Example}
\newtheorem{Rem}[Thm]{Remark}
\theoremstyle{definition}
\newtheorem{Def}[Thm]{Definition}
\newtheorem{Not}[Thm]{Notation}
\newtheorem*{Def*}{Definition}
\numberwithin{equation}{section}
\newcommand{\g}[1]{{\mathfrak #1}}
\newcommand{\m}[1]{\mathbb{ #1}}
\newcommand{\mc}[1]{\mathcal{ #1}}
\def\al{\alpha}       \def\be{\beta}        \def\ga{\gamma}
\def\de{\delta}       \def\eps{\varepsilon}  
\def\th{\theta}       %\def\im{\imath}
       \def\la{\lambda}      
\def\si{\sigma}                
\def\ph{\varphi}               
\def\om{\omega}       \def\Ga{\Gamma}       \def\De{\Delta}
\def\La{\Lambda}      \def\Si{\Sigma}       \def\Ph{\Phi}
                  \def\Om{\Omega}
\theoremstyle{definition}
\theoremstyle{remark}
\newtheorem{Rmq}[Thm]{Remark}
\numberwithin{equation}{section}
\newfont{\goth}{eufm10 at 12pt}
\newfont{\gots}{eufm8 at 9pt}
\def\bt{\begin{Thm}}
\def\et{\end{Thm}}
\def\br{\begin{Rmq}}
\def\er{\end{Rmq}}
\def\bc{\begin{Cor}}
\def\ec{\end{Cor}}
\def\bp{\begin{Prop}}
\def\ep{\end{Prop}}
\def\bl{\begin{Lem}}
\def\el{\end{Lem}}
\def\bd{\begin{Def}}
\def\ed{\end{Def}}
\def\bq{\begin{quotation}}
\def\eq{\end{quotation}}
\def\bfa{\begin{Fact}}
\def\efa{\end{Fact}}
\def\ra{\rightarrow}
\def\vs{\vspace{1em}}
\begin{document}
\title{
Arithmeticity of discrete subgroups\\ containing horospherical lattices
}
\author{Yves Benoist and S\'ebastien Miquel
}
\date{}
\maketitle
\vspace{-2em}
%\centerline{\footnotesize 
%\mbox{Preliminary version } }

\begin{abstract}{
\noindent Let $G$ be a semisimple real algebraic Lie group of real rank at least two   and 
$U$ be the unipotent radical of a non-trivial parabolic subgroup. 
We prove that a discrete Zariski dense subgroup of $G$ 
that contains an irreducible lattice of $U$
is an arithmetic lattice of $G$. 
This solves a conjecture of Margulis and extends previous work of Hee Oh.}\end{abstract}
{\footnotesize \tableofcontents}

%1
\section{Introduction}
\label{secintrod}

In the mid $90's$, G. Margulis raised the following
conjecture.

\begin{Conj}
\label{conmar}
Let $G$ be a semisimple real algebraic Lie group 
of real rank at least $2$ and $U$ 
be a non-trivial horospherical subgroup of $G$.
Let $\Gamma$ be a discrete Zariski dense subgroup of $G$
that contains an irreducible lattice $\Delta$  of $U$. Then $\Gamma$
is a non-cocompact irreducible arithmetic lattice of $G$.
\end{Conj}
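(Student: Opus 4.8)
The plan is to manufacture a $\mathbb{Q}$-form of the Lie algebra $\mathfrak{g}$ of $G$ for which $\Gamma$ consists of rational points, and then to invoke the Borel--Harish-Chandra and Margulis arithmeticity machinery. The starting point is the interaction between $\Delta$ and the parabolic $P=N_G(U)$, with Levi decomposition $P=LU$. First I would observe that $\Lambda:=\Gamma\cap U$ is a discrete subgroup of $U$ containing the cocompact lattice $\Delta$, hence is itself a lattice commensurable with $\Delta$. For any $g\in\Gamma\cap P$ one has $g\Lambda g^{-1}\subseteq\Gamma\cap U=\Lambda$, and applying this to $g^{-1}$ forces $g\Lambda g^{-1}=\Lambda$; thus $\Gamma\cap P$ normalizes $\Lambda$. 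Consequently $\log\Lambda$ spans a $\mathbb{Q}$-form $\mathfrak{u}_{\mathbb{Q}}$ of $\mathfrak{u}$ preserved by the adjoint action of $\Gamma\cap P$, so the projection $\pi_L(\Gamma\cap P)$ into $\operatorname{Aut}(\mathfrak{u})$ lands in the integral automorphisms $\operatorname{Aut}(\log\Lambda)$ and is in particular discrete. This is the seed of arithmeticity: the grading $\mathfrak{u}=\bigoplus_i M^i(\mathfrak{u})$ determined by the parabolic is defined over $\mathbb{Q}$, and the graded pieces inherit compatible lattices.

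Second, I would upgrade "discrete" to "lattice". The goal is to show that $\Gamma\cap P$ is a lattice in $P$, equivalently that $\pi_L(\Gamma\cap P)$ has finite covolume in the relevant subgroup of $L$. Here the hypothesis $\operatorname{rank}_{\mathbb{R}}G\ge 2$ first enters: a discrete subgroup of $L$ that normalizes a lattice in $U$ and is compatible with the discreteness of $\Gamma$ in $G$ should be forced to have finite covolume, via a recurrence/cocompactness argument for the $L$-action on $U/\Lambda$ combined with the discreteness of $\Gamma$. In low rank this step fails, which is exactly why the statement is false for $\operatorname{rank}_{\mathbb{R}}G=1$. Granting it, $\Gamma\cap P$ is a non-uniform lattice in $P$ and $\pi_L(\Gamma\cap P)$ is an arithmetic lattice of $L$ cut out by the $\mathbb{Q}$-structure $\mathfrak{u}_{\mathbb{Q}}$.

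Third comes the decisive step: producing an opposite horospherical lattice and gluing the two rational structures. Since $\Gamma$ is Zariski dense and $\operatorname{rank}_{\mathbb{R}}G\ge 2$, I would find $\gamma\in\Gamma$ carrying $P$ to an opposite parabolic $\bar P$, so that $\gamma\Delta\gamma^{-1}\subset\Gamma$ is a lattice in the opposite unipotent $\bar U$, with its own $\mathbb{Q}$-form $\bar{\mathfrak{u}}_{\mathbb{Q}}$. Because $\mathfrak{u}$ and $\bar{\mathfrak{u}}$ generate $\mathfrak{g}$, the subspaces $\mathfrak{u}_{\mathbb{Q}}$ and $\bar{\mathfrak{u}}_{\mathbb{Q}}$ together with all iterated brackets span a $\mathbb{Q}$-subalgebra $\mathfrak{g}_{\mathbb{Q}}$; the real content is to show that $\mathfrak{g}_{\mathbb{Q}}$ is a genuine $\mathbb{Q}$-form, i.e. that $\mathfrak{u}_{\mathbb{Q}}$ and $\bar{\mathfrak{u}}_{\mathbb{Q}}$ are compatible rather than generic real rotations of one another. \textbf{This compatibility is the main obstacle.} The elements of $\Gamma$ realizing the transition between $P$ and $\bar P$ are a priori arbitrary real matrices, and nothing a priori forces them to respect $\mathfrak{u}_{\mathbb{Q}}$; the resolution must exploit the arithmeticity of $\pi_L(\Gamma\cap P)$ obtained above together with a commensurator or superrigidity argument, the rank $\ge 2$ hypothesis being precisely what excludes the Schottky-type non-arithmetic examples available in rank one. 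Concretely, I would aim either to show that the commensurator of $\Gamma\cap P$ in $G$ is dense, or directly to match the rational structures on $\mathfrak{u}$ and $\bar{\mathfrak{u}}$ by Galois descent, yielding $\mathfrak{g}_{\mathbb{Q}}$ defined over $\mathbb{Q}$ with $\Gamma\subset G(\mathbb{Q})$.

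Finally, once there is a $\mathbb{Q}$-form $\mathfrak{g}_{\mathbb{Q}}$ with $\Gamma\subset G(\mathbb{Q})$ stabilizing an integral lattice in $\mathfrak{g}$, I would conclude that $\Gamma$ is commensurable with the associated arithmetic group: it is discrete, Zariski dense, and contained in the lattice attached to $\mathfrak{g}_{\mathbb{Q}}$, hence of finite index in it and therefore an arithmetic lattice of $G$. Irreducibility follows from Zariski density together with the (almost-)simple packaging of $G$, while non-cocompactness is automatic because $\Gamma$ contains the nontrivial unipotent subgroup $\Delta$, so by the Godement criterion $G/\Gamma$ is not compact.
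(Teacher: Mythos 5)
Your outline reproduces the well-known architecture of the problem (equip $\mathfrak{u}$ and an opposite $\bar{\mathfrak{u}}$ with $\mathbb{Q}$-forms coming from the lattices, glue them into a $\mathbb{Q}$-form of $\mathfrak{g}$, conclude), but the two places you flag as "the step where rank $\ge 2$ enters" and "the main obstacle" are precisely the mathematical content of the theorem, and neither is supplied. Your second step asserts that $\Gamma\cap P$ is a lattice in $P$ "via a recurrence/cocompactness argument"; no such argument is available at that stage, and the paper proves something much weaker — only that the stabilizer $L_{\Lambda,\Lambda^-}$ in the Levi $L$ of the \emph{pair} of horospherical lattices is infinite — and even that occupies Chapters 3 and 4: it requires first reducing (via Oh's reduction steps, which you never perform) to $U$ reflexive commutative or Heisenberg, then constructing the polynomial $\Phi(g)=\det_{\mathfrak{z}}(M(g))$, proving $\Phi(\Gamma)$ is discrete by a double application of Mahler's compactness criterion, deducing that the single and double $L$-orbits of $(\Lambda,\Lambda^-)$ are closed in the space of lattices (which in turn needs the purely algebraic classification results of Chapter 6 identifying $L$ inside $\mathrm{Aut}_{gr}(\mathfrak{u})$ as the stabilizer of $F$ up to scalar), and finally invoking Dani--Margulis recurrence (or, for minimal $P$, Mahler again). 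Your third step, the compatibility of $\mathfrak{u}_{\mathbb{Q}}$ and $\bar{\mathfrak{u}}_{\mathbb{Q}}$, is resolved in the paper by Margulis' extension lemma, whose hypotheses require exactly the infinitude of $\Gamma\cap L$ established above; "a commensurator or superrigidity argument" and "Galois descent" are placeholders, not proofs.

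There is also an outright false inference in your last paragraph: a discrete Zariski dense subgroup of $G$ contained in $G_{\mathbb{Z}}$ need \emph{not} have finite index in $G_{\mathbb{Z}}$ — thin groups are exactly such subgroups, and ruling them out is the whole point of the conjecture. Once $\Gamma\subset G_{\mathbb{Q}}$ is secured, the paper concludes not by a finite-index triviality but by the Raghunathan--Venkataramana theorem, which says that a subgroup of $G_{\mathbb{Z}}$ containing lattices in two opposite horospherical $\mathbb{Q}$-subgroups has finite index; this uses essentially that you hold lattices in \emph{both} $U$ and $\bar U$, together with the higher rank hypothesis. As written, your proposal is a plausible table of contents for the proof rather than a proof: every load-bearing step is either asserted, deferred to an unspecified mechanism, or (in the final step) logically invalid.
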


This conjecture is first quoted by Hee Oh in \cite{Oh96} and in \cite{Oh00}. Indeed her work  in \cite{Oh98a} is a major contribution towards it  (see Section \ref{secmotpre}). 
This conjecture is also reported by Lizhen Ji in  his summary \cite[Section 7]{Ji08} 
of the work of Margulis. The main feature here is that 
``$\Ga$ is a lattice'' is a conclusion, not an assumption.
The aim of this paper is to prove (Theorem \ref{thmmar}) that
Conjecture \ref{conmar} is true.

%11
\subsection{Arithmeticity of discrete groups}
\label{secmaires}

\bq
We quickly recall the relevant definitions.  
See Chapter~\ref{secnotati} for more details
and Section \ref{secexahor} for examples.
\eq

A {\it semisimple real algebraic Lie group} is  a  subgroup 
$G\subset {\rm GL}(N,\m R)$ which is defined by polynomial equations with real coefficients
and whose Lie algebra $\g g$ is semisimple. The real rank of $G$ is the dimension of the 
maximal split tori of $G$. 
The group $G$ is endowed with two topologies:
the Lie group topology and the Zariski topology.

A  {\it horospherical subgroup} $U$ of $G$ 
is the {\it stable group} of an element $g$ in $G$ 
i.e. 
$\displaystyle U:=\{u\in G\mid \lim_{n\ra\infty} g^n ug^{-n}=e\}$. The normalizer $P$ of such a group $U$ is called  a {\it parabolic subgroup}, and the group $U$ is the unipotent radical of $P$.

A {\it lattice} $\Delta$ of $U$ is a discrete cocompact subgroup of $U$.
It is said to be 
{\it irreducible}
if it intersects trivially 
every proper normal subgroup of $G$
(see also Remark \ref{remirrhor} for equivalent definitions).
If such a lattice $\Delta$ exists, the group $U$ 
meets non-trivially all the simple normal subgroups of $G$. 
In particular, the group $G$ has no compact factors.

A discrete subgroup $\Gamma$ of $G$ is said to be 
{\it irreducible} if, for
all proper normal subgroup $G'$ of $G$
the intersection $\Ga\cap G'$ is finite.
A {\it lattice}  $\Gamma$ of $G$ is a discrete subgroup  of $G$ 
such that the  quotient $G/\Gamma$ has finite volume.
For instance, discrete cocompacts subgroups of $G$ are lattices of $G$.
In this preprint we will only deal with irreducible non-cocompact lattices of $G$.

If one can choose an embedding 
$G\hookrightarrow {\rm GL}(N,\m R)$
and   polynomial equations defining $G$ with rational coefficients, the group 
$G_{\m Q}:= G\cap {\rm GL}(N,\m Q)$
is dense in $G$. This group $G_{\m Q}$ is then called a $\m Q$-form of $G$ and the subgroup of integral points $G_{\m Z}:= G\cap {\rm GL}(N,\m Z)$ is a lattice of $G$. 
When $G$ is adjoint, an irreducible non-cocompact lattice $\Gamma$ of $G$ is said to be {\it arithmetic} if there exists a $\m Q$-form of $G$ 
such that $\Gamma$ and $G_{\m Z}$
are commensurable.

\begin{Rem}
- When $G$ is simple, every lattice of $U$ is irreducible.\\ 
- The irreducibility assumption on the lattice $\Delta$ of $U$ in Conjecture \ref{conmar}
can be weakened to an {\it indecomposability} assumption
(see Definition \ref{defirrlat}).\\
- A non-cocompact irreducible arithmetic lattice $\Gamma$ of $G$ is always 
Zariski dense and always intersects cocompactly
at least one non-trivial horospherical subgroup $U$ of $G$.\\ 
- The higher rank assumption in Conjecture \ref{conmar} is necessary:  
In $G={\rm SL}(2,\m R)$, the subgroup of ${\rm SL}(2,\m Z)$
generated by the matrices 
\mbox{\scriptsize$
\left(\!\!
\begin{array}{cc}
1&3\\
0&1
\end{array}\!\!
\right) 
$} 
and 
\mbox{\scriptsize$
\left(\!\!
\begin{array}{cc}
1&0\\
3&1
\end{array}\!\!
\right) 
$} 
has infinite index in ${\rm SL}(2,\m Z)$ and therefore
is not a lattice in $G$.
More generally when $G$ is a simple real algebraic Lie group of real rank  one, 
when $U$, $U^-$ are two distinct non-trivial horospherical subgroups of $G$, 
and when $\Delta$, $\Delta^-$ are lattices in $U$ and $U^-$,
there always  exist finite index subgroups $\Delta'$ and  ${\Delta'}^-$ 
in $\Delta$ and $\Delta^-$ that play ping-pong on the flag variety $G/P$.
The group generated by  $\Delta'$ and  ${\Delta'}^-$ is then a discrete
subgroup of $G$, isomorphic to the free product  $\Delta' \star {\Delta'}^-$, and
of infinite covolume in $G$.\\
- When $G$ isn't adjoint, an irreducible non-cocompact lattice of $G$ is arithmetic if
its image in the adjoint group of $G$ is arithmetic.
\end{Rem}

%12
\subsection{Motivations and previous results}
\label{secmotpre}
\bq
We present some history of Conjecture \ref{conmar} and related questions, aswell as previous partial results towards it.
\eq

The fundamental finiteness theorem of Borel and Harish-Chandra in \cite{BorelHarish62}
in the $60$'s tells us that  in a real semisimple algebraic Lie group $G$ defined over $\m Q$ 
the group  $G_{\m Z}$ of integral points is a lattice. 

The celebrated arithmeticity theorem of Margulis
in \cite{Margulis84} and \cite{Margulis91}
in the $70$'s
tells us that {\it an irreducible lattice $\Gamma_0$
in a real semisimple algebraic group $G$
with no compact factors and of real rank at least two is an arithmetic group.} 
This means that there exists a real semisimple algebraic group $H$ defined over $\m Q$ 
and a group morphism 
$\pi: H\ra G$ with compact kernel and finite index image such that 
$\Ga_0$ is commensurable to $\pi(H_\m Z)$. 

The proof was a fantastic 
application of ergodic theory.
Before proving the general case,  Margulis 
first proved in \cite{Margulis74}
his arithmeticity theorem for 
non-cocompact lattices and showed that, in this case, one can choose 
$\pi$ to have finite kernel.
The strategy for this first case 
relied on a previous result of Kazhdan and Margulis
in \cite{KazhdanMargulis68},
which implies that there exists a horospherical subgroup 
$U$ of $G$ such that the intersection $U\cap \Gamma_0$ is an irreducible lattice in $U$
see \cite[Theorem 7.1.1]{Margulis74} and also \cite{Raghunathan75}. 
It also used a previous result
of Borel in \cite{Borel60} which says that 
the lattice $\Gamma_0$ is Zariski dense in $G$. 

The Margulis arithmeticity theorem was the resolution of a conjecture of Selberg.  In fact
A. Selberg, using tricky computations, 
was able to prove the arithmeticity 
of non-cocompact irreducible lattices $\Gamma_0$ 
when $G$ 
is a product of copies of ${\rm SL}(2,\m R)$ and  
${\rm SL}(2,\m C)$. 
In the early $90$'s 
A. Selberg announced that his unpublished proof applied
not only to irreducible lattices but also 
to any discrete and Zariski dense subgroup $\Gamma$ of $G$
that intersects the maximal unipotent subgroup
$U$ of $G$ in an irreducible lattice.

Moreover, by a previous result of Raghunathan and Venkataramana 
in \cite{Raghunathan76} and \cite{Venkataramana94},
it was known that Conjecture \ref{conmar} was true 
for a subgroup of an arithmetic group.  Indeed, their results say that
if $G$ is an absolutely $\m Q$-simple algebraic group
of real rank at least two,
and $U$, $U^-$ is a pair of non-trivial opposite maximal horospherical subgroups defined over $\m Q$
then any subgroup $\Gamma$ of $G_{\m Z}$ that contains lattices in
both $U$ and $U^-$ has finite index
in $G_{\m Z}$.

All these partial results lead G. Margulis to 
formulate Conjecture \ref{conmar}. 
In her PhD thesis \cite{Oh98a} in 1997, Hee Oh  proved 
Conjecture \ref{conmar} in many cases
where $G$ is absolutely simple.
For instance she proved it for all simple $\m R$-split 
groups $G$, except for $G={\rm SL}(3,\m R)$ 
and $P$ a minimal parabolic subgroup.
Her proof relied on a clever combination of  Ratner's  
classification theorem (\cite{Ratner91a}, \cite{Ratner91b}),
Weil's local rigidity theorem for lattices  and
a construction of $\m Q$-forms of Margulis
(\cite{Margulis74}). 
The remaining unproven cases with $G$ absolutely simple
consisted of four infinite families of pairs $(G,P)$ and two exceptional cases. 
Among them was the pair $G={\rm SL}(n,\m H)$, $n\geq 3$ and $P$
the stabilizer of a quaternionic line in $\m H^n$, and also the pair
$G={\rm SO}(2,n+2)$, $n\geq 2$ and $P$ the  
stabilizer of an isotropic $2$-plane.
Neither can this method  deal with   products
$G$ of real rank one groups.

Selberg's preprint \cite{Selberg08}
became available only in 2008.
After simplifying Selberg's ideas in \cite{BenoistOh10a}, 
Benoist and Oh solved in \cite{BenoistOh10b} 
the last remaining $\m R$-split simple group, i.e. 
when $G={\rm SL}(3,\m R)$ 
and $P$ is a minimal parabolic subgroup.

Motivated by these new results, the second author
focused on Conjecture \ref{conmar} in his PhD thesis.
First, building on Selberg's approach, 
S. Miquel solved Conjecture \ref{conmar} when $G$
is a product of real rank one groups
in \cite{Miquel17}. 
Second, in the unpublished part of his thesis, 
using a case by case analysis
relying on Hee Oh's approach, 
he was able to deal with 
all simple algebraic group $G$  except
for one family: when  
$G={\rm SO}(2,4n+2)$, $n\geq 1$ and $P$ is the  
stabilizer of an isotropic $2$-plane.
Neither can these methods  deal with  groups $G$ 
with both
real rank one  and higher rank factors.

Pursuing our efforts on that family after the PhD defense,
we found a new strategy which 
does not rely on tricky computations, 
%supersedes the previous approaches, 
avoids the use of Ratner's classification theorem and Weil's rigidity theorem
and gives a full and unified answer to Margulis Conjecture \ref{conmar}.

%13 
\subsection{Strategy of proof}
\label{secstrpro}

\bq 
We now explain the strategy of the proof of Conjecture \ref{conmar} 
and the organization of this paper.
\eq 

\subsubsection{Reflexive commutative and Heisenberg horospherical $U$}

A horospherical subgroup $U$ of $G$ is said to be 
{\it reflexive} if $U$ is conjugate to an
opposite horospherical group $U^-$ (see \S\ref{secdefpar}). 
We will say that $U$ is {\it Heisenberg} if it is two-step nilpotent and if the 
group $P$ acts on the center $\g z$ of the Lie algebra $\g u$ of $U$
by similarities for a suitable Euclidean norm.
This implies that $U$ is reflexive.
More details will be given in Chapter
\ref{secnotati}.

According to reduction steps due to Hee Oh in \cite{Oh98a} and \cite{Oh99}, which we will recall in Chapter \ref{secmaithe}, 
it is enough to prove 
Conjecture \ref{conmar} in two cases:  when $U$ is reflexive commutative and when $U$ is Heisenberg.
One can also assume that $\Gamma$ is a discrete subgroup of $G$
containing both $\exp(\La)$ and $\exp(\La^-)$ 
where $\La$ and $\La^-$ are Lie lattices of
$\g u$ and $\g u^-$. 

\subsubsection{Orbits of horospherical lattices}
The reductive group $L:=P\cap P^{-}$,
intersection of the normalizers of $\g u$ and 
$\g u^-$, acts by conjugation both on the space of lattices 
$\mc X(\g u)$ of $\g u$ and $\mc X(\g u^-)$ of $\g u^-$.
In both cases, 
we will check in Chapter \ref{secorbhor} (Proposition \ref{prosindou}) that 
\begin{equation}
\label{quosindou}
\mbox{\begin{minipage}{24em}
the {\it single $L$-orbit} $L\La$ is closed in 
the space $\mc X(\g u)$
and
the {\it double $L$-orbit} $L\, (\La,\La^-)$ is closed in  
$\mc X(\g u)\times\mc X(\g u^-)$.
\end{minipage}}
\end{equation}
We will be able to prove \eqref{quosindou} without using
Ratner's classification theorem
or Weil's rigidity theorem.
This explains why we can work directly with the group $L$
instead of working as in \cite{Oh98a} with the subgroup $S$ of $L$
generated by its unipotent elements.
Indeed the remaining open cases in \cite{Oh98a}
were exactly those for which 
$S$ is much smaller than $L$. For instance 
in the case where 
$G={\rm SO}(2,n+2)$, $n\geq 4$ and $P$ is the  
stabilizer of an isotropic $2$-plane,
the group $S\simeq {\rm SL}(2,\m R)$ is much smaller
than the group $L\simeq {\rm GL}(2,\m R)\times SO(n,\m R)$. 

The strategy to prove \eqref{quosindou} 
is to introduce a polynomial function $\Ph$ on $G$
which is semi-invariant by multiplication by $L$
and 
such that the sets $\Ph(\Gamma)$ and $\Ph(\Gamma w_0)$
are closed discrete subsets of $\m R$,
where $w_0$ is the longest element of the Weyl group
(Corollary \ref{corphg}).  
We then focus on two polynomial functions given by
$F(X):=\Ph(e^Xw_0)$ for $X$ in $\g u$ and 
$G(X,Y):=\Phi(e^Xe^Y)$ for $X$ in $\g u$ and $Y$ in $\g u^-$.
The former will be used to prove that the single $L$-orbit is closed and the latter
to prove that the double $L$-orbit
is closed. 
The two main properties of these polynomials are their $L$-invariance and the discreteness of
the sets $F(\La)$ and $G(\La,\La^-)$.

\subsubsection{Construction of the function $\Phi$}
Assume first that $\g u$ is reflexive commutative.
The key idea in order to construct $\Ph$ 
is to think of $G$ as a group of $3\times 3$ block-matrices
acting on the natural $3$-terms graduation
$\g g=\g g_{-1}\oplus \g g_0\oplus \g g_1$,
where $\g u^-=\g g_{-1}$, $\g l=\g g_0$ and  $\g u=\g g_{1}$
are the Lie algebras of $U^-$, $L$ and $U$.
We then extract the right-lower block $M(g)\in {\rm End}(\g u)$.
The function $\Ph(g)$ is nothing but the determinant 
$\Phi(g)=\det(M(g))$. 
The fact that $\Phi(\Gamma)$ is a discrete set
follows from an application of the Bruhat decomposition of $G$ and a repeated application of the Mahler compactness criterion.

When $\g u$ is Heisenberg, one uses the same construction
relying on the natural $5$-terms graduation $\g g=\g g_{-2}\oplus \cdots\oplus \g g_2$, 
thinking of $G$ as a group of $5\times 5$ block-matrices
and extracting the right-lower block $M(g)\in {\rm End}(\g z)$.

A key algebraic property of the function $F$
will be checked in Chapter \ref{secapphor}.
It says that the group $L$, seen as a
subgroup of the group ${\rm Aut}_{gr}(\g u)$
of graded automorphisms of $\g u$, 
is, up to finite index, 
equal to the subgroup $H$
that preserve $F$ up to a scalar multiple 
(Proposition \ref{profphf}).
This property is not surprising since, when $\g g$ is absolutely simple, the group 
$L$ is either equal to ${\rm Aut}_{gr}(\g u)$ or is a maximal subgroup of 
${\rm Aut}_{gr}(\g u)$  up to finite index.

\subsubsection{Stabilizer of horospherical lattices and construction of $G_\m Q$}
In Chapter \ref{secarigam}, 
we deduce  from \eqref{quosindou} 
that  
\begin{equation}
\label{eqnlllinf}
\mbox{\rm the stabilizer $L_{\La,\La^-}$ of $(\La,\La^-)$ is infinite,}
\end{equation}
except in one case: when the restricted root system of $G$ is of type $A_2$. This exceptional case is dealt with in Lemma \ref{lemheidic}.$ii$. 
When the parabolic subgroup $P$ is not minimal, the proof of \eqref{eqnlllinf} 
relies on Dani-Margulis' recurrence theorem for unipotent flow (see Proposition \ref{prodanmar}).
When $P$ is minimal, the proof of \eqref{eqnlllinf}  
relies once more on the Mahler compactness criterion
(see Lemmas \ref{lemsod} and \ref{lemheidic}).  
Note that when $G$ is of rank one this stabilizer $L_{\La,\La^-}$ is always finite.
The higher rank assumption is used here through the non-compactness
of the subgroup 
$L_0:=\{\ell\in L\mid \det_\g u({\rm Ad}\ell)=1\}$.

Once \eqref{eqnlllinf} is proven, 
without loss of generality, we can assume that $\Gamma$
contains this infinite group $L_{\La,\La^-}$. 
We denote by $Q$ the Zariski closure of $\Gamma\cap P$.
Its Lie algebra $\g q$ is endowed with a natural $\m Q$-form.
We check that 
the various $\m Q$-forms on the spaces ${\rm Ad}g(\g q)$, for $g$ in $\Ga$,
are induced by a $\Gamma$-invariant $\m Q$-form 
on $\g g$. 
This tells us that $\Gamma$ is included in a $\m Q$-form $G_\m Q$ of $G$ (Proposition \ref{proextfor}). 
Using a result (Proposition \ref{proragven}) of Raghunathan and Venkataramana, we then conclude that $\Ga$ is commensurable to $G_\m Z$ (Corollary \ref{corextfor}).
\vs

The above strategy 
can also be extended to products of semisimple groups over various local fields (see Section \ref{secsarset}).

%2
\section{Notation}
\label{secnotati}

We collect in this chapter 
a few definitions and notations that we will use freely 
all over this paper.

%21 
\subsection{Algebraic groups}
\label{secalggro}

Let ${\bf G}$ be a complex linear algebraic group.
This means that  
${\bf G}$ is a subgroup of 
${\rm GL}(V)$, for some finite dimensional complex vector space $V$ and 
${\bf G}$  is  the set of zeroes 
of a family  $\mc P$ of polynomials on $V$:
$$
{\bf G}=\{g\in {\rm GL}(V)\mid P(g)=0\;\;
\mbox{for all $P$ in $\mc P$}\}
$$
Given a basis $\mc B$ of $V$, we will say that $\bf G$ is defined
over $\m R$ (resp. $\m Q$) if the family $\mc P$ can be chosen
with polynomials with real (resp. rational) coefficients in $\mc B$.
In that case, we will denote by ${\bf G}_{\m R}$ (resp. ${\bf G}_{\m Q}$) the
set of elements of $\bf G$ with real (resp. rational) coefficients.
We will say that the groups ${\bf G}_{\m R}$ and ${\bf G}_{\m Q}$ are
a real form and a $\m Q$-form of $\bf G$, respectively.
We also call the group $G={\bf G}_{\m R}$ a real algebraic Lie group and
denote its Lie algebra by the corresponding gothic letter $\g g$.

This naive approach of algebraic groups, 
avoiding functors and schemes, will
be enough for our purpose.

Let $G$ be a {\it semisimple} real algebraic Lie group. 
This means that the Lie algebra $\g g$ is semisimple i.e.
$\g g$  is a direct sum of 
simple ideals $\g g_a$ where $a$ runs over a finite set 
$\mc A$. 
Since the morphism $G\ra {\rm Aut}(\g g)$ 
has finite kernel and finite index image, 
we will often assume, without loss of generality,
that $G$ is {\it adjoint}. This means that $G$ is 
isomorphic to the Zariski connected component of ${\rm Aut}(\g g)$.
 
The group $G$ or the Lie algebra $\g g$ is said to be {\it absolutely simple} if the complexified Lie algebra
$\g g_\m C$ is simple.
When $\g g$ is simple but not absolutely simple 
it has a complex  structure.

\begin{Exa}
- When ${\bf G}$ is the orthogonal group 
${\bf G}= O(d,\m C)$, 
the groups $G={\rm O}(p,q)$, with $p+q=d$ are 
real forms of ${\bf G}$.\\
- For any quadratic form $Q$ 
with rational coefficients and signature $(p,q)$, the
orthogonal group $O(Q,\m Q)$ is a $\m Q$-form of $G$.
Two quadratic forms $Q_1$ and $Q_2$ give $\m Q$-isomorphic 
$\m Q$-forms if and only if they are proportional.\\
- When $d=2m$, 
the quaternionic orthogonal groups $G=O(m,\m H)$ 
is also a real form of ${\bf G}$. Using quaternion division
algebras over $\m Q$, 
one also constructs many $\m Q$-forms of this  group $G$.\\
- More generally, by a theorem of Borel, 
every non-compact simple real algebraic Lie group $G$ admits at least one 
$\m Q$-form $G_\m Q$ containing non-trivial unipotent elements.\\
- The semisimple real algebraic Lie group $G={\rm SL}(2,\m C)$ is simple but not absolutely simple.
Its complexified Lie algebra is  
$\g g_\m C\simeq \g s\g l(2,\m C)\oplus \g s\g l(2,\m C)$. 
\end{Exa}

%22
\subsection{Parabolic and horospherical subgroups}
\label{secparhor}
\bq
We recall a few properties of parabolic subgroups. 
\eq

\subsubsection{Definition}
\label{secdefpar}
Let $G$ be a Zariski connected semisimple real algebraic Lie group.
A {\it unipotent} subgroup $U$ of $G$ is a subgroup whose elements are unipotent.
Note that the {\it maximal unipotent subgroups} are all conjugate.  
By definition, a {\it minimal parabolic subgroup} $P_0$ of $G$ is 
the normalizer of a maximal unipotent subgroup $U_0$ of $G$.
A {\it parabolic subgroup} $P$ of $G$ is a group that contains a minimal parabolic subgroup $P_0$.
A {\it horospherical subgroup} $U$ of $G$ 
is the unipotent radical of a non-trivial parabolic subgroup $P$
i.e. the largest normal unipotent subgroup of $P$.
The parabolic subgroup $P$ is both the normalizer of $U$ and of its Lie algebra $\g p$.
This Lie algebra $\g p$ of $P$  is called a  {\it parabolic subalgebra} and 
the Lie algebra $\g u$ of $U$ is called a   {\it horospherical subalgebra}.

A parabolic subgroup $P^{-}$ is said to be {\it opposite} to the parabolic subgroup $P$
if  the intersection $L:=P^{-}\cap P$ is  a Levi subgroup 
for both $P$ and $P^-$.
One then has the equality
$P=LU$ and $P^{-}=LU^{-}$.
We will also say that $U^{-}$ is opposite to $U$.
All parabolic subgroups $P^{-}$  opposite to $P$ are conjugate by an element of $U$.
The set 
$
\Omega:=U^{-}LU=\{ g\in G\mid P^-
\;\mbox{\rm is opposite to}\; gPg^{-1}\}
$ 
is then a Zariski open set in $G$. See  \cite{BorelTits65}
for more details.

The group  $U$ or the group $P$  is said 
to be {\it reflexive} if there exists 
an element $g$ in $G$
such that $gUg^{-1}$ is opposite to $U$.
The set of such elements $g$ is also Zariski open in $G$.

\subsubsection{Root systems}
\label{secroosys}
We recall the construction of the {\it standard} minimal parabolic subalgebra $\g p_0$.
Let $A$ be a maximal split torus of $G$ and $\g a$
its Lie algebra. 
The real rank $r$ of $G$ is the dimension of $\g a$.
Let $\Sigma$ be the set of 
{\it restricted roots} i.e. of roots of $\g a$ in $\g g$,
let $\Sigma^+\subset \Sigma$ be a choice of positive roots
and $\Pi\subset\Sigma^+$ be the corresponding set of simple roots. 
This set $\Pi=\{\al_1,\ldots ,\al_r\}$ is a basis of $\g a$. Let $W$ be the Weyl group of $\Sigma$ 
and $w_0$ be the element of $W$ such that $w_0(\Pi)=-\Pi$. For every $\al\in \Sigma\cup\{0\}$, let 
$\g g_\al$ be the corresponding weight space so that one has
$$
\g g= 
\textstyle
\bigoplus\limits_{\al\in \Sigma\cup\{0\}}
\g g_\alpha,
\;\;\mbox{\rm
and}\;\;
\g u_{_\Pi}:= \textstyle\bigoplus\limits_{\al\in \Sigma^+}\g g_\alpha
\;\;{\rm and}\;\;
\g p_{_\Pi}:= \textstyle\bigoplus\limits_{\al\in \Sigma^+\cup\{0\}}\g g_\alpha
$$
are respectively a maximal unipotent subalgebra and a minimal parabolic subalgebra of $\g g$.

We now explain the construction of the finitely many 
parabolic subalgebras $\g p$ containing $\g p_{_\Pi}$. 
They are parametrized by the subsets $\theta$ of $\Pi$.
For such a subset $\th\subset\Pi$, we introduce the function 
$n_\th:\Sigma\cup\{0\}\ra \m Z$ defined by 
\begin{equation}
\label{eqnnth}
\textstyle
n_\th(\,\sum\limits_{  i\leq r}k_i\al_i\,):=\sum\limits_{\al_i\in \th}k_i
\; ,\;\;\;\mbox{\rm for all $k_i$ in $\m Z$}.
\end{equation}
The Lie subalgebras 
\begin{equation}
\label{eqnuthpth}
\g u_\th:= \textstyle\bigoplus\limits_{n_\th(\al)>0}\g g_\alpha
\;\;{\rm and}\;\;
\g p_\th:= \textstyle\bigoplus\limits_{n_\th(\al)\geq 0}\g g_\alpha
\end{equation}
are the standard horospherical Lie subalgebra 
and the standard parabolic Lie subalgebra associated to 
$\th$.
The Lie subalgebras
\begin{equation}
\g u^{-}_\th:= \textstyle\bigoplus\limits_{n_\th(\al)<0}\g g_\alpha
\;\;{\rm and}\;\;
\g p^{-}_\th:= \textstyle\bigoplus\limits_{n_\th(\al)\leq 0}\g g_\alpha
\end{equation}
are the standard opposite horospherical Lie subalgebra 
and the standard opposite parabolic Lie subalgebra.
The intersection $\g l_\th:=\g p_\th\cap \g p^{-}_\th$ is a Levi 
subalgebra of both $\g p_\th$ and $\g p^{-}_\th$.
The horospherical Lie subalgebra $\g u_\th$ 
is reflexive if
and only if one has $w_0(\th)=-\th$.

Note that any pair of opposite horospherical 
subalgebras $(\g u, \g u^{-})$ of $\g g$ is equal
to a pair $(\g u_\th, \g u^{-}_\th)$ given by a 
suitable choice of $A$, $\Sigma^+$ and $\th$.
The classification of reflexive commutative 
horospherical subalgebras will be recalled in 
Section \ref{secrefcom}.

\begin{Def}
\label{defroospa} 
One says that {\it the center $\g z$ of $\g u$ is a root space}
if the action of $A$ on $\g z$ is scalar.
\end{Def}

This means that $\g u$ is included in a simple ideal of $\g g$ 
and $\g z$ is the root space
$\g g_{\widetilde{\al}}$ associated to the corresponding 
largest root 
$\widetilde{\al}$ of 
$\Sigma$.

\begin{Def}
\label{defheihor} 
We will say that the horospherical group $U$ is Heisenberg
if $\g g$ is simple,
$\g u$ is two-step nilpotent and, 
the center $\g z$ of $\g u$ is a root space.
\end{Def}

The Heisenberg horospherical Lie subalgebras are always reflexive.
Their classification will be recalled in 
Section \ref{secheihor}. 

\subsubsection{Graded Lie algebras}
\label{secgralie}
The structure of horospherical Lie subalgebras can be understood using the
graduation of $\g g$ induced by the function $n_\theta$.
The following lemma on root systems is classical. 
\begin{Lem}
\label{lemgralie}
$a)$ If $\al$, $\be$ and $\al+\be$ are roots, then 
$[\g g_\al,\g g_\be]=\g g_{\al+\be}$.\\
$b)$ For any positive roots $\be_1,\ldots ,\be_k$ whose 
sum $\be_1+\cdots +\be_k$ is a  root, 
there exists a permutation $\si$ of $[1,k]$
such that  all  
$\be_{\si(1)}+\cdots +\be_{\si(j)}$ are roots.\\
$c)$ If $\g g$ is simple and $\be_1$ is a positive root,
there exists a sequence $\be_2,\ldots ,\be_k$ of simple roots
such that all  
$\be_{1}+\cdots +\be_{j}$ are roots and 
$\be_{1}+\cdots +\be_{k}$ is the largest root $\widetilde\al$.
\end{Lem}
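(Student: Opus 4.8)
The plan is to treat the three assertions separately by the standard structure theory of the restricted root system $\Sigma$ of $\g g$. Throughout I write $B$ for the Killing form, $\theta$ for a Cartan involution with $\theta\g g_\al=\g g_{-\al}$, and $B_\theta(X,Y):=-B(X,\theta Y)$ for the associated positive definite form, and I use freely that $B$ pairs $\g g_\al$ with $\g g_{-\al}$ non-degenerately and that each $\al$ carries an $\mathfrak{sl}_2$-triple with raising vector in $\g g_\al$ and lowering vector in $\g g_{-\al}$. For $a)$, the inclusion $[\g g_\al,\g g_\be]\subseteq\g g_{\al+\be}$ is immediate from the grading, so only surjectivity is at stake. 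I would argue by contradiction: were the bracket a proper subspace, non-degeneracy of $B_\theta$ on $\g g_{\al+\be}$ would yield a nonzero $Z\in\g g_{\al+\be}$ with $B_\theta(Z,[X,Y])=0$ for all $X\in\g g_\al$, $Y\in\g g_\be$. Rewriting this via $\theta\g g_\al=\g g_{-\al}$ and the invariance identity $B(Z,[\theta X,\theta Y])=B([Z,\theta X],\theta Y)$ gives $B([Z,\theta X],\theta Y)=0$ and, symmetrically, $B([Z,\theta Y],\theta X)=0$; since $[Z,\theta X]\in\g g_\be$ and $[Z,\theta Y]\in\g g_\al$, a further use of non-degeneracy forces $[\g g_{-\al},Z]=0$ and $[\g g_{-\be},Z]=0$. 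Thus $Z$ is a lowest weight vector for both the $\mathfrak{sl}_2$ attached to $\al$ and the one attached to $\be$, whence $\langle\al+\be,\al^\vee\rangle\le 0$ and $\langle\al+\be,\be^\vee\rangle\le 0$, i.e. $\langle\be,\al^\vee\rangle\le-2$ and $\langle\al,\be^\vee\rangle\le-2$. For non-proportional $\al,\be$ the product of these integers equals $4\cos^2\angle(\al,\be)<4$, a contradiction; the finitely many proportional configurations (possible since $\Sigma$ may be non-reduced, of type $BC$) I would check by hand.

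For $b)$ I would induct on $k$, the case $k=1$ being vacuous. Writing $\ga:=\be_1+\cdots+\be_k$, the identity $\langle\ga,\ga\rangle=\sum_i\langle\be_i,\ga\rangle$ together with $\langle\ga,\ga\rangle>0$ supplies an index $j$ with $\langle\be_j,\ga\rangle>0$. The standard fact that two roots with positive inner product have a root as their difference then shows $\ga-\be_j\in\Sigma$ (the few proportional cases, where $\ga=2\be_j$, give $\ga-\be_j=\be_j$ directly). Now $\ga-\be_j$ is a root equal to the sum of the remaining $k-1$ positive roots, so the induction hypothesis orders those with all partial sums in $\Sigma$; appending $\be_j$ at the end, whose partial sum is $\ga$, completes the ordering.

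For $c)$ I would induct on the height deficit $\mathrm{ht}(\wt\al)-\mathrm{ht}(\be_1)$, reducing everything to the sub-claim that every positive root $\be\neq\wt\al$ admits a simple root $\al_i$ with $\be+\al_i\in\Sigma$. If $\langle\be,\al_i\rangle<0$ for some $i$, the $\al_i$-string through $\be$ extends upward and $\be+\al_i$ is a root. Otherwise $\be$ is dominant, and here I invoke the classical fact that the only dominant root of an irreducible system is $\wt\al$: the highest root has full support by connectedness of the Dynkin diagram, and two distinct dominant roots have positive inner product, so their difference is a root or $0$ of height $0$, hence vanishes. This contradicts $\be\neq\wt\al$. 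Iterating the sub-claim produces simple roots $\be_2,\dots,\be_k$ with all partial sums $\be_1+\cdots+\be_j$ in $\Sigma$ and terminal sum $\wt\al$.

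The one genuinely substantive point is the surjectivity in $a)$: because $\Sigma$ is a restricted root system the spaces $\g g_\al$ may be higher dimensional and $\Sigma$ may be non-reduced, so the Chevalley structure-constant argument of the split complex case is unavailable and must be replaced by the invariant-form and $\mathfrak{sl}_2$ computation above. Parts $b)$ and $c)$ are purely combinatorial facts about $\Sigma$ and present no real difficulty beyond bookkeeping of the non-reduced cases.
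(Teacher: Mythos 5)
Your arguments for parts $a)$ and $b)$ are correct. For $b)$ you reproduce the standard induction (this is exactly the content of the Bourbaki proposition the paper cites). For $a)$ you take a genuinely different route: the paper reduces to the complex case and quotes Bourbaki, whereas you argue directly on the restricted root spaces with the Killing form, the Cartan involution and $\g s\g l_2$-theory; this is a reasonable choice here, since $\Sigma$ is a restricted root system with possibly higher-dimensional root spaces, and your deferred ``proportional'' cases do resolve (in each of them one of the two inequalities $\langle\be,\al^\vee\rangle\le -2$, $\langle\al,\be^\vee\rangle\le -2$ already fails outright).

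Part $c)$, however, contains a genuine gap. Your sub-claim (every positive root $\be\neq\widetilde\al$ admits a simple root $\al_i$ with $\be+\al_i\in\Sigma$) is true, but your proof of it is not: the ``classical fact'' that $\widetilde\al$ is the \emph{only} dominant root of an irreducible system is false except in the simply-laced reduced case. In any non-simply-laced system there is one dominant root per root length: in $B_2$ the short root $e_1$ is dominant and distinct from $\widetilde\al=e_1+e_2$, and in $BC_n$ there are three dominant roots. These are precisely the systems that occur as restricted root systems in this paper (types $B$, $BC$, $C$, $F_4$, $G_2$). Your justification breaks at ``their difference is a root or $0$ of height $0$, hence vanishes'': the difference of two distinct dominant roots is indeed a root, but there is no reason for it to have height $0$ (for $e_1+e_2$ and $e_1$ in $B_2$ the difference is the simple root $e_2$). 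Concretely, for $\be=e_1$ in $B_2$ your dichotomy fails: no simple root pairs negatively with $\be$, yet $\be\neq\widetilde\al$, so your argument yields a false contradiction; meanwhile $\be+e_2$ \emph{is} a root even though $(\be,e_2)=0$, because the $e_2$-string through $e_1$ is symmetric and extends upward for a reason your root-string criterion does not detect. To repair the sub-claim one needs a different mechanism for dominant $\be\neq\widetilde\al$ — for instance the paper's one-line argument that the adjoint representation has a unique highest weight $\widetilde\al$ (so a nonzero $X\in\g g_\be$ killed by all $\g g_{\al_i}$, hence by all of $\g u_{_\Pi}$ using $a)$ and $b)$, must lie in $\g g_{\widetilde\al}$), or a direct combinatorial comparison of $\be$ with $\widetilde\al$.
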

\begin{proof} 
$a)$ One can assume $\g g$ complex and apply \cite[Prop.4 \S VIII.2.2]{Bourbaki78}. 

$b)$ This is \cite[Prop. 19 \S VI.1.6]{Bourbaki456}.

$c)$ The adjoint representation 
has a unique highest weight $\widetilde\al$.
\end{proof} 
Let $s:=\max\{n_\th(\al)\mid \al\in \Sigma\}$.
For $j\in \m Z$, we denote by 
$
\g g_j:=\textstyle\bigoplus\limits_{n_\th(\al)=j}\g g_\alpha 
$ 
so that one has 
\begin{equation}
\label{eqngsgogs}
\g g=\g g_{-s}\oplus\ldots \oplus \g g_0
\oplus\ldots \oplus \g g_s\, ,
\end{equation}
\begin{equation*}
\g l_\th =\g g_0\; ,\; \g u_\th=\g g_1
\oplus\ldots \oplus \g g_s\; ,\;\;
\g p_\th= \g l_\th\oplus \g u_\th\, ,
\end{equation*}
\begin{equation*}
\g u^{-}_\th=\g g_{-s}
\oplus\ldots \oplus \g g_{-1}\;\; ,\;\;
\g p^{-}_\th= \g l_\th\oplus \g u^{-}_\th\, .
\end{equation*}

For latter use, the following lemma will be useful,

\begin{Lem}
\label{lemnonroo}
$a)$ One has the equality $[\g g_1, \g g_j]=\g g_{j+1}$
for all $j\geq 1$.\\
$b)$ The Lie algebra $\g u_\th$ is $s$-step nilpotent.\\
$c)$ The  center of $\g u_\th$ is 
$\g z=\g g_s$.\\ 
$d)$ When $\g g$ is simple, the Lie subalgebra 
$\g g'=\g g_{-s}\oplus [\g g_{-s},\g g_s]\oplus \g g_s$
is also simple.
\end{Lem}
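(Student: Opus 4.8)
The plan is to prove $a)$ first and to derive $b)$, $c)$ and $d)$ from it together with Lemma \ref{lemgralie}; throughout I use that $n_\th$ is additive, so that $\g g=\bigoplus_j\g g_j$ is a Lie algebra grading, i.e. $[\g g_i,\g g_j]\subseteq\g g_{i+j}$. For $a)$, the inclusion $[\g g_1,\g g_j]\subseteq\g g_{j+1}$ is immediate from the grading, and for the reverse it is enough to show $\g g_\ga\subseteq[\g g_1,\g g_j]$ for every root $\ga$ with $n_\th(\ga)=j+1\geq 2$. I would argue by induction on the height of $\ga$. Using Lemma \ref{lemgralie}$b)$ I write $\ga=\ga'+\de$ with $\de$ simple and $\ga'$ a root. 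If $\de\in\th$ then $n_\th(\ga')=j$ and Lemma \ref{lemgralie}$a)$ gives $\g g_\ga=[\g g_\de,\g g_{\ga'}]\subseteq[\g g_1,\g g_j]$. If $\de\notin\th$ then $\de$ has degree $0$, the root $\ga'$ still has degree $j+1$ but smaller height, so $\g g_{\ga'}\subseteq[\g g_1,\g g_j]$ by induction, and the Jacobi identity together with $[\g g_\de,\g g_i]\subseteq\g g_i$ yields $\g g_\ga=[\g g_\de,\g g_{\ga'}]\subseteq[[\g g_\de,\g g_1],\g g_j]+[\g g_1,[\g g_\de,\g g_j]]\subseteq[\g g_1,\g g_j]$. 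The recursion lowers the height while keeping the degree equal to $j+1\geq 2$, so it must reach the case $\de\in\th$; the one delicate point is exactly this transport of a degree-$0$ simple root across the bracket, which the Jacobi identity handles.

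Parts $b)$ and $c)$ then follow quickly. From $a)$ an induction shows the lower central series of $\g u_\th$ is $C^k=\bigoplus_{j\geq k}\g g_j$, since $[\g u_\th,\bigoplus_{j\geq k}\g g_j]\subseteq\bigoplus_{j\geq k+1}\g g_j$ contains each $[\g g_1,\g g_j]=\g g_{j+1}$ with $j\geq k$; hence $C^s=\g g_s\neq 0$ and $C^{s+1}=0$, which is $b)$. For $c)$, one has $\g g_s\subseteq\g z$ because $[\g u_\th,\g g_s]\subseteq\bigoplus_{j>s}\g g_j=0$; since $\g z$ is stable under $\g a$ it is graded, so I must only rule out $\g z\cap\g g_j\neq 0$ for $1\leq j\leq s-1$, i.e. show that no nonzero $\g g_\ga$ with $n_\th(\ga)=j<s$ is annihilated by $\g g_1$. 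Here I run the dual argument: by Lemma \ref{lemgralie}$c)$ (this is where $\g g$ simple is used) I build $\ga$ up to the highest root $\widetilde\al$, which has degree $s>j$, keeping the first simple root added that lies in $\th$ and transporting the intervening degree-$0$ simple roots by Jacobi exactly as in $a)$; this forces $[\g g_1,\g g_\ga]\neq 0$. Thus $\g z=\g g_s$.

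For $d)$, I first check that $\g g'=\g g_{-s}\oplus\g g_0'\oplus\g g_s$, with $\g g_0':=[\g g_{-s},\g g_s]$, is a graded subalgebra: the only nonobvious relation $[\g g_0',\g g_0']\subseteq\g g_0'$ follows from Jacobi. Choosing a Cartan involution with $\ta(\g g_j)=\g g_{-j}$ makes $\g g'$ stable under $\ta$, hence reductive; its center is trivial, because a central element lies in $\g g_0'$ (no nonzero vector of $\g g_{\pm s}$ is killed by $\g g_{\mp s}$, the Killing form pairing $\g g_s$ with $\g g_{-s}$ perfectly and detecting the grading) and then the positive-definite form $-\ka(\,\cdot\,,\ta\,\cdot\,)$ forces it to vanish; so $\g g'$ is semisimple. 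To obtain simplicity I would use that, $\g g$ being simple, the top piece $\g g_s$ is an irreducible $\g g_0'$-module generated from the highest-root vector $\g g_{\widetilde\al}$: any nonzero graded ideal of $\g g'$ then meets $\g g_s$ or $\g g_{-s}$ in a nonzero submodule, hence contains all of it, and then all of $\g g'$. I expect the main obstacle to be $d)$, and within it this irreducibility of $\g g_s$ over $\g g_0'$: showing that every degree-$s$ root is joined to $\widetilde\al$ by subtracting simple roots outside $\th$ while staying among roots is the real use of the simplicity of $\g g$, and it is precisely what prevents $\g g'$ from splitting into two ideals.
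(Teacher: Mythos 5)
Parts $a)$--$c)$ of your proposal are correct and are essentially the expansion the paper intends: it derives $a)$ and $b)$ from Lemma~\ref{lemgralie}.$a$,$b$ and $c)$ from Lemma~\ref{lemgralie}.$a$,$c$, exactly as you do (your induction on height for $a)$, the identification $C^k\g u_\th=\bigoplus_{j\geq k}\g g_j$ for $b)$, and the chain up to $\widetilde\al$ with the ``first simple root in $\th$'' for $c)$ are the right arguments). One small point: do first reduce to $\g g$ complex and simple, as the paper does; otherwise restricted root spaces need not be one-dimensional, so in $c)$ ``no nonzero element of $\g g_\ga$ is central'' is not the same as ``$[\g g_1,\g g_\ga]\neq 0$'', and Lemma~\ref{lemgralie}.$a$ is itself justified in the paper only after complexifying.

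The gap is in $d)$. Your replacement of the paper's citation of Bourbaki (Chap.~VIII, \S 3.1, Prop.~2) by the Cartan-involution argument is fine: $\ta$-stability gives reductivity, and for a central $Z=\sum_i[X_i,Y_i]\in\g g_0':=[\g g_{-s},\g g_s]$ one has $B(Z,\ta Z)=\sum_iB([Z,\ta X_i],\ta Y_i)=0$, so the centre is trivial. The problem is the assertion that $\g g_s$ is an irreducible $\g g_0'$-module, which is the pivot of your simplicity argument. The justification you offer --- every degree-$s$ root is joined to $\widetilde\al$ through simple roots outside $\th$ --- only involves root vectors $e_{\pm\be}$ with $n_\th(\be)=0$; these lie in $\g g_0=\g l_\th$ but in general \emph{not} in $\g g_0'$ (in the three-block example of Section~\ref{secexahor} with $d=d_1\!+\!d_2\!+\!d_3$ and $s=2$, the whole middle $\g g\g l(d_2)$ factor of $\g g_0$ is absent from $\g g_0'$). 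So your chain argument shows at best that $\g g_s$ is generated by $\g g_{\widetilde\al}$ over $\g g_0$; to get irreducibility over $\g g_0$ one must add that any highest-weight vector of $\g g_s$ for a Borel of $\g g_0$ is a root vector $e_\ga$ with $\ga+\be\notin\Si$ for every positive $\be$, forcing $\ga=\widetilde\al$, and then invoke complete reducibility; and to pass from $\g g_0$ to $\g g_0'$ one needs the further observation that the $B$-orthogonal complement of $\g g_0'$ in $\g g_0$ is exactly the centralizer of $\g g_s\oplus\g g_{-s}$ in $\g g_0$ (since $B(X,[\g g_{-s},\g g_s])=B([X,\g g_{-s}],\g g_s)$ and $B$ pairs $\g g_{-s}$ with $\g g_s$ perfectly); once the centre of $\g g'$ is known to be trivial this yields $\g g_0=\g g_0'\oplus\g c$ with $\g c$ acting trivially on $\g g_{\pm s}$, so $\g g_0'$-submodules of $\g g_s$ are $\g g_0$-submodules and your argument closes. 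Alternatively you can bypass $\g g_0'$-irreducibility entirely: $\g g_0$ normalizes $\g g'$, hence acts by derivations and preserves each simple ideal of the semisimple algebra $\g g'$; a simple ideal with zero degree-$\pm s$ parts would centralize $\g g_{\pm s}$, hence centralize $\g g_0'$ and be abelian, which is impossible, so each simple ideal meets $\g g_s$ in a nonzero $\g g_0$-submodule, and $\g g_0$-irreducibility of $\g g_s$ already forces a single ideal.
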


\begin{proof} 
We can assume $\g g$ complex and simple.

$a)$ and $b)$ follow from Lemma \ref{lemgralie}.a and  \ref{lemgralie}.b

$c)$ and $d)$ follow from Lemma \ref{lemgralie}.a and \ref{lemgralie}.c and from \cite[Prop.2 \S VIII.3.1]{Bourbaki78} for the semisimplicity of $\g g'$.
\end{proof}

\begin{Rem}
For $\theta\subset\Pi$ we introduce the 
element $h_\th\in \g a$ by 
\begin{equation}
\label{eqnhth}
\al_i(h_\theta)=1\;\;{\rm if}\; \al_i\in \th
\;\;\;\;\; {\rm and}\;\;\;\;\;
\al_i(h_\theta)=0\;\;{\rm if}\; \al_i\not\in \th,
\end{equation}
so that one has the equality $n_\th(\al)=\al(h_\th)$
for all $\al\in \Sigma\cup\{0\}$.
In the decomposition \eqref{eqngsgogs} 
the spaces $\g g_j$ are
the eigenspaces of 
${\rm ad}h_\th$ for the eigenvalue $j$.
\end{Rem}

%23
\subsection{Examples of horospherical subgroups}
\label{secexahor}
\bq
To be concrete we now discuss explicit examples. 
\eq

Let $G={\rm SL}(d,\m R)$, $d\geq 3$
and
$\g a=\{ x={\rm diag}(x_1,\ldots, x_d)\;/\;
{\rm tr}(x)=0\}$. The set of roots is
$\Si=\{e^*_i-e^*_j ,\; 1\leq i\neq j\leq d\}$.
and the root spaces $\g g_{e^*_i-e^*_j}$
are  one dimensional with basis $E_{i,j}=e_j^\star\otimes e_i$.
We take
$\Si^+=\{e^*_i-e^*_j,\; 1\leq i<j\leq d\}$ so that
$\Pi=\{\al_i:=e^*_{i+1}-e^*_i ,\; 1\leq i< d\}$. 
The corresponding maximal horospherical and minimal parabolic subalgebras are,
say when $d=4$,
$$
\g u_{_\Pi}=
\mbox{\scriptsize$
\left\{ \left(
\begin{array}{cccc}
0&*&*&*\\
0&0&*&*\\
0&0&0&*\\
0&0&0&0
\end{array}
\right) \right\}
$}
, \hspace{1em}
\g p_{_\Pi}=
\mbox{\scriptsize $
\left\{ \left(
\begin{array}{cccc}
*&*&*&*\\
0&*&*&*\\
0&0&*&*\\
0&0&0&*
\end{array}
\right) \right\}
$}
.
$$
We now describe the horospherical Lie algebras $\g u_\th$ in terms of block matrices.

When the set $\th$ contains only one simple root,
$\th=\{\al_{d_1}\}$, one has  a block decomposition given by 
the equality $d=d_1\!+\! d_2$:
$$
\g u_\th=
\mbox{\footnotesize $
\left\{ \left(
\begin{array}{c|c} 0 &B   \\
\hline 
  0 &0  
\end{array}
\right) \right\}
$}
, \hspace{1em}
\g l_\th=
\mbox{\footnotesize $
\left\{ \left(
\begin{array}{c|c} A & 0 \\ 
\hline
0 & D  
\end{array}
\right) \right\}
$}
, \hspace{1em}
\g u^-_\th=
\mbox{\footnotesize $
\left\{ \left(
\begin{array}{c|c} 0 & 0  \\ 
\hline
{}C & 0  
\end{array}
\right) \right\}
$}
.
$$
This horospherical Lie algebra $\g u_\th$ is reflexive when $d_1=d_2$.
Conjecture \ref{conmar} for this case is already proven in   \cite{Oh98a}, but our strategy for this reflexive commutative case is new even when $d_1=d_2=2$.
The reader is advised to keep this particular example in mind.
\vs

When the set $\th$ contains two simple roots,
$\th=\{\al_{d_1},\al_{d_1+d_2}\}$,  
one has  a block decomposition given by 
the equality $d=d_1\!+\! d_2\!+\! d_3$:
$$
\g u_\th=
\mbox{\footnotesize $
\left\{ \left(
\begin{array}{c|c|c} 0 & * & * \\
\hline 
0 & 0 & * \\ 
\hline
0 & 0 & 0 \end{array}
\right) \right\}
$}
, \hspace{1em}
\g l_\th=
\mbox{\footnotesize $
\left\{ \left(
\begin{array}{c|c|c} * &0 & 0 \\ 
\hline
0 & * & 0 \\ 
\hline
0 & 0 & * \end{array}
\right) \right\}
$}
, \hspace{1em}
\g u^-_\th=
\mbox{\footnotesize $
\left\{ \left(
\begin{array}{c|c|c} 0 & 0 & 0 \\ 
\hline
{}* & 0 & 0 \\ 
\hline
{}* & * & 0 \end{array}
\right) \right\}
$}
.
$$

This horospherical algebra $\g u_\th$ is reflexive when $d_1=d_3$,
and is Heisenberg when $d_1=d_3=1$.
Conjecture \ref{conmar} for this case is already proven in \cite{BenoistOh10b} when $d_2=1$
and in \cite{Oh98a} when $d_2\geq 2$, but
our strategy for this Heisenberg case 
is new even for $d_2=1$ or  $2$.
\vs

Given a horospherical subgroup $U$ of $G$, 
there may exist  many arithmetic lattices $\Ga$ that intersect $U$ cocompactly. Here are a few examples:

\begin{Exa}
When $G={\rm SL}(4,\m R)$ and 
$U=  \{ \mbox{\footnotesize $
\left(
\begin{array}{c c} I &X   \\
  0 &I  
\end{array}
\right)$}\mid X\in \mc M(2,\m R)  \}
$, the following arithmetic lattices $\Ga$   intersect $U$ cocompactly:\\
$(i)$
$\Ga= {\rm SL}(4,\m Z)$,\\ 
$(ii)$ $\Ga={\rm SU}(h,\m Z[\sqrt{2}])$
where $h$ is a non-degenerate hermitian form on $\m Q[\sqrt{2}]^4$.\\
$(iii)$ $\Ga ={\rm SL}(2,\m H_\m Z)$ where $\m H_\m Q$ 
is a quaternion division algebra over $\m Q$.
\end{Exa}

\begin{Exa}
When $G={\rm SO}(1,4)\times {\rm SO}(5,\m C)$ and 
$P$ is the product of  the stabilizers of isotropic lines,
the group $U$ is commutative reflexive and isomorphic to $\m R^3\times \m C^3$.
The following group can be embedded in $G$ as an arithmetic lattice:
$\Ga= {\rm SO}(q,\m Z[\sqrt[3]{2}])$
where $q=x_1^2+ x_2^2
+x_3^2+x_4^2- x_5^2$.
\end{Exa}

%24
\subsection{Discrete subgroups}
\label{secdissub}

\bq
We now collect results on discrete subgroups of Lie groups.
\eq

\subsubsection{Arithmetic lattices}
Let $G$ be a real algebraic Lie group.
Two discrete subgroups $\Gamma_1$ and $\Gamma_2$
are said to be {\it commensurable} if the group
$\Gamma_1\cap \Gamma_2$ has finite index in 
both $\Gamma_1$ and $\Gamma_2$.
A discrete subgroup $\Ga$ of $G$ is a {\it lattice}
if the $G$-invariant 
measures on the quotient $G/\Ga$ have finite volume.
When $G$ is semisimple, a 
discrete subgroup $\Ga\subset G$ is said to be {\it irreducible} 
if, for all proper normal subgroups $G'$ of $G$,
the intersection $\Ga\cap G'$ is finite. 

Assume now that $G$ is semisimple adjoint. 
A non-cocompact irreducible discrete subgroup 
$\Gamma$ of 
$G$ is called an {\it arithmetic subroup} of $G$
if there exists a $\m Q$-form $G_\m Q$ of $G$ such that $\Gamma$ is commensurable
with $G_{\m Z}$.
Such a subgroup 
is always a lattice.
See \cite{Borel69} for more detail.

\begin{Rem}
The definition of an arithmetic subgroup $\Gamma$ 
of a semisimple adjoint real algebraic group $G$ used in Margulis arithmeticity theorem 
is more general. 
It uses an auxiliary compact semisimple real algebraic Lie group $K$ 
and a $\m Q$-form $H_\m Q$ of the product $H:=G\times K$.
A group $\Gamma$ is defined to be
arithmetic if it is commensurable with
the projection in $G$ of $H_\m Z$.
When $\Gamma$ is irreducible and the group $K$ is non-trivial all the groups constructed in this way are 
cocompact. This explains why no auxiliary group $K$
is involved in our definition. 
\end{Rem}

\begin{Rem}
When the semisimple group $G$ is not adjoint, 
we define an arithmetic subgroup of $G$
as a subgroup whose image in the adjoint group is 
arithmetic. Note that the $\m Q$-structure on the adjoint group might not lift to $G$. 
\end{Rem}

\subsubsection{Nilpotent lattices}

\begin{Lem}
\label{lemnillat}
Let $U$ be a unipotent real algebraic Lie group, 
$\g u$ be its Lie algebra, 
and $\Delta$ a lattice of $U$.\\
$a)$ There exists a unique $\m Q$-form $U_\m Q$ of $U$
such that $\Delta\subset U_\m Q$.\\
$b)$ The set $\g u_\m Q:=\log U_\m Q$ is then a $\m Q$-form of the Lie algebra $\g u$.\\
$c)$ Two lattices $\Delta$ and $\Delta'$ give the same $\m Q$-form
iff they are commensurable.
\end{Lem}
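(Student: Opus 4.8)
I would realize the rational structure directly as the $\m Q$-span of $\log\Delta$ and read everything off from the fact that $\exp$ intertwines the Baker--Campbell--Hausdorff product with the Lie bracket. Since $U$ is unipotent it is simply connected nilpotent, $\exp\colon\g u\ra U$ is a polynomial diffeomorphism with polynomial inverse $\log$, and by nilpotency the product $X*Y:=\log(\exp X\exp Y)$ is a \emph{finite} sum of iterated brackets of $X,Y$ with rational coefficients. Being a discrete cocompact subgroup of a nilpotent group, $\Delta$ is finitely generated and torsion free. Set $\g u_\m Q:=\operatorname{span}_\m Q(\log\Delta)$ and $U_\m Q:=\exp(\g u_\m Q)$. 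For $a)$ and $b)$ it suffices to show that $\g u_\m Q$ is a $\m Q$-Lie subalgebra with $\g u_\m Q\otimes_\m Q\m R=\g u$: granting this, the rationality of $*$ shows that $U_\m Q=\exp\g u_\m Q$ is stable under multiplication and inversion and is the group of $\m Q$-points of a $\m Q$-form of the algebraic group $U$, while $\log U_\m Q=\g u_\m Q$ gives $b)$ at once.

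I would prove that $\g u_\m Q$ is a $\m Q$-form by induction on the nilpotency class, peeling off the center $\g z=Z(\g u)$, $Z=\exp\g z$. The standard facts I use (Malcev's theory; cf. Raghunathan, Ch.~II) are that $\Delta\cap Z$ is a lattice of the vector group $Z$, that $\bar\Delta:=\Delta Z/Z$ is a lattice of $\bar U:=U/Z$, and that the intersection with the center is saturated: $\g u_\m Q\cap\g z=\operatorname{span}_\m Q\log(\Delta\cap Z)=:\g z_\m Q$. For the central projection $\pi\colon\g u\ra\bar{\g u}$, the identity $\exp\circ\, d\phi=\phi\circ\exp$ for homomorphisms of simply connected nilpotent groups gives $\pi(\log\delta)=\log(\bar\delta)$, whence $\pi(\g u_\m Q)=\operatorname{span}_\m Q\log\bar\Delta=:\bar{\g u}_\m Q$. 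By the abelian base case $\g z_\m Q$ is a $\m Q$-form of $\g z$, and by induction $\bar{\g u}_\m Q$ is a $\m Q$-form of $\bar{\g u}$; the resulting sequence $0\ra\g z_\m Q\ra\g u_\m Q\ra\bar{\g u}_\m Q\ra0$ is exact and stays exact after $\otimes_\m Q\m R$, so $\g u_\m Q\otimes_\m Q\m R=\g u$. Closure under the bracket with rational structure constants follows by a descending induction along the lower central series from the stability of $\log\Delta$ under $*$ and under group commutators: the leading term of $\log$ of a group commutator $[\exp X,\exp Y]$ is $[X,Y]$, and the lower-order iterated brackets lie in deeper rational pieces $\g u^{(k)}\cap\g u_\m Q$ already controlled at earlier stages of the induction.

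Uniqueness in $a)$ is then formal. Any $\m Q$-form $U'_\m Q$ containing $\Delta$ has $\log U'_\m Q$ a $\m Q$-subspace of $\g u$ with $\log U'_\m Q\otimes_\m Q\m R=\g u$; it contains $\log\Delta$, hence contains its $\m Q$-span $\g u_\m Q$, and equality of $\m R$-dimensions forces $\log U'_\m Q=\g u_\m Q$ and $U'_\m Q=U_\m Q$. This also proves $b)$.

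For $c)$, the forward implication is soft: if $\Delta\cap\Delta'$ has finite index $m$ in $\Delta$, then every $\delta\in\Delta$ satisfies $\delta^m\in\Delta\cap\Delta'$, and since $\log(\delta^m)=m\log\delta$ we get $\log\delta\in\operatorname{span}_\m Q\log(\Delta\cap\Delta')$; thus the three $\m Q$-spans coincide and the $\m Q$-forms agree. The converse carries the real content: assuming $\operatorname{span}_\m Q\log\Delta=\operatorname{span}_\m Q\log\Delta'=\g u_\m Q$, both $\Delta$ and $\Delta'$ lie in $U_\m Q$, and I would fix a strong Malcev basis $(X_1,\dots,X_n)$ of $\g u$ through the lower central series with all $X_i\in\g u_\m Q$, so that $\Delta$ is exactly the integer points in the resulting polynomial coordinates. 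Finite generation of $\Delta'$ bounds the denominators of its coordinates, so $\Delta'$ is sandwiched between two rescalings of the coordinate lattice by $N$ and by $1/N$; all such rescalings are commensurable to $\Delta$ by the Malcev bookkeeping (the rescaled generating sets generate lattices of controlled index, using that the structure constants are rational). Hence $\Delta'$ is commensurable to $\Delta$. This Malcev coordinate computation — that two lattices with the same rational structure are commensurable — is the main obstacle; everything else is formal once $\g u_\m Q$ is known to be a $\m Q$-form, and one may alternatively quote Malcev's theorem directly.
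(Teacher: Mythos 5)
Your argument is correct and is precisely the standard Malcev theory that the paper does not spell out but delegates to \cite[Chapter 2]{Raghunathan72} (Theorem 2.12 there contains parts $a)$--$c)$), so you are following the same route, just with the citation unpacked. The only slip is harmless: a subgroup of index $m$ in $\Delta$ need not contain every $m$-th power, only $\delta^{m!}$ (or some positive power of each $\delta$), which is all your forward implication in $c)$ actually requires.
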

A lattice $\Lambda$ in a Lie algebra $\g u$ is 
a {\it Lie lattice} if $[\Lambda,\Lambda]\subset \Lambda$.

We denote by $C^k\Delta$ and $C^kU$ the descending central sequences of $\Delta$ and $U$.
\begin{Cor}
\label{cornillat} 
$a)$ There exists a Lie lattice $\Lambda$ of $\g u$
such that ${\rm exp}(\Lambda)\subset \Delta$.\\
$b)$ For all $k\geq 1$, the group $C^k\Delta$ is a lattice of $C^kU$.\\
$c)$ Let $U'\subset U$ be a subgroup such that $\Delta\cap U'$ 
is a lattice in $U'$ and $U''$ be the centralizer of $U'$ in $U$,
then $\Delta\cap U''$ is a lattice in $U''$.  
\end{Cor}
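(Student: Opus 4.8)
The plan is to establish part $a)$ first, by reducing to the rational structure furnished by Lemma~\ref{lemnillat}, and then to deduce $b)$ and $c)$ from $a)$ through a single auxiliary principle. For $a)$, Lemma~\ref{lemnillat} gives the $\m Q$-form $\g u_\m Q=\log U_\m Q$ with $\log\Delta\subset\g u_\m Q$. Since a lattice of a simply connected nilpotent group is Zariski dense, $\log\Delta$ spans $\g u$ over $\m R$, so $\g u_\m Q$ carries Lie lattices: starting from any full-rank $\m Z$-submodule $\Lambda_0$ of $\g u_\m Q$ and rescaling by a large integer $N$, one has $[N\Lambda_0,N\Lambda_0]\subset N\Lambda_0$, and a further such rescaling makes $\exp$ of the lattice a genuine subgroup, using that the Baker--Campbell--Hausdorff series is a finite sum with bounded denominators because $U$ is nilpotent. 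The delicate point is not the existence of Lie lattices but the containment $\exp(\Lambda)\subset\Delta$, rather than mere commensurability with $\Delta$. I would obtain it by induction on the nilpotency step $s$ of $U$. When $s=1$ the group $U$ is abelian, $\exp$ is a linear isomorphism, and $\Lambda:=\log\Delta$ works. For the inductive step, set $Z:=C^sU$, which is central and defined over $\m Q$; using the foundational facts that $\Delta\cap Z$ is a lattice of $Z$ and that the image $\bar\Delta$ of $\Delta$ in $\bar U:=U/Z$ is a lattice of $\bar U$, the inductive hypothesis yields a Lie lattice $\bar\Lambda$ of $\bar{\g u}$ with $\exp\bar\Lambda\subset\bar\Delta$. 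Lifting a basis of $\bar\Lambda$ to elements $Y_i$ with $\exp Y_i\in\Delta$, adjoining the central Lie lattice $\Lambda_Z:=\log(\Delta\cap Z)$, and rescaling to restore bracket- and BCH-closure while keeping the central components inside $\Lambda_Z$, one produces a Lie lattice $\Lambda$ with $\exp\Lambda\subset\Delta$.

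The hard part is precisely this last bookkeeping. The Baker--Campbell--Hausdorff formula together with Lemma~\ref{lemnillat}$c)$ make \emph{any} Lie lattice of $\g u_\m Q$ commensurable to $\Delta$ for free, but upgrading commensurability to genuine inclusion forces one to control the central $Z$-component of every element $\exp X$, $X\in\Lambda$. This is exactly the content of Malcev's adapted-basis construction, and the induction on $s$ through the central subgroup $Z=C^sU$ is the device that organizes it.

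Once $a)$ holds I would record the following principle and deduce everything else from it: if $V\subset U$ is a connected subgroup whose Lie algebra $\g v$ is defined over $\m Q$, that is $\g v=(\g v\cap\g u_\m Q)\otimes\m R$, then $\Delta\cap V$ is a lattice of $V$. Indeed, with $\Lambda$ as in $a)$, the intersection $\Lambda\cap\g v$ is a full-rank lattice of $\g v$ (because $\g v$ is rational), and it is a Lie lattice since $[\Lambda\cap\g v,\Lambda\cap\g v]\subset[\Lambda,\Lambda]\cap\g v\subset\Lambda\cap\g v$. Moreover $\exp(\Lambda\cap\g v)\subset\exp\Lambda\cap V\subset\Delta\cap V$; as $\Delta\cap V$ is discrete and contains the cocompact subgroup generated by $\exp(\Lambda\cap\g v)$, it is a lattice of $V$.

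Finally, $b)$ and $c)$ follow from this principle. For $b)$, the term $C^kU$ of the descending central series is a connected subgroup whose Lie algebra $C^k\g u$ is defined over $\m Q$, being spanned by the $k$-fold brackets of $\g u_\m Q$; hence $\Delta\cap C^kU$ is a lattice of $C^kU$, and since $C^k\Delta$ has finite index in its isolator $\Delta\cap C^kU$ (a standard fact for finitely generated nilpotent groups), $C^k\Delta$ is itself a lattice of $C^kU$. For $c)$, the lattice $\Delta\cap U'$ is Zariski dense in $U'$, so the centralizer $U''$ of $U'$ in $U$ coincides with the centralizer of the set of $\m Q$-points $\Delta\cap U'\subset U_\m Q$; this centralizer is therefore defined over $\m Q$ and connected, and the principle yields that $\Delta\cap U''$ is a lattice of $U''$.
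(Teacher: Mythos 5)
Your argument is correct. The paper gives no proof of its own here: it simply refers to \cite[Chapter 2]{Raghunathan72}, and what you have written is essentially a faithful reconstruction of the Malcev theory developed there --- the adapted-basis induction on the nilpotency step for $a)$, and the principle that a connected subgroup with $\m Q$-rational Lie algebra meets $\Delta$ in a lattice for $b)$ and $c)$. The only places where you lean on further standard facts rather than full arguments (that $C^k\Delta$ is Zariski dense in $C^kU$ and hence of finite index in $\Delta\cap C^kU$, and that the centralizer of a connected subgroup of a simply connected nilpotent group is connected, being $\exp$ of the centralizing subalgebra) are both true and are themselves part of the same body of results in \cite{Raghunathan72}.
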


\begin{proof}
See \cite[Chapter 2]{Raghunathan72}.
\end{proof}

We will also need the following lemma.

\begin{Lem}
\label{lemactlat}
Let $\g u$ be a  nilpotent real Lie algebra,
$H$ be the group of automorphisms of $\g u$
and $\Lambda$ be a Lie lattice of $\g u$. Then the orbit 
$H\Lambda$ is closed in the space $\mc X(\g u)$ of lattices of $\g u$.
\end{Lem}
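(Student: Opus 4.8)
The plan is to realize $H$ as the stabilizer of a single \emph{integral vector} in a linear representation, and then combine the discreteness of an integral orbit with a short limit argument. The nilpotency of $\g u$ will play no role; the only feature I use is that $\Lambda$ is a Lie lattice, which forces integrality of the structure constants.

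First I would fix a $\m Z$-basis $e_1,\dots,e_n$ of $\Lambda$ and use it to identify $\g u$ with $\m R^n$, $\Lambda$ with $\m Z^n$, and $\mc X(\g u)$ with ${\rm GL}(n,\m R)/{\rm GL}(n,\m Z)$. The Lie bracket is then an element $\mu$ of the representation $W:={\rm Hom}(\wedge^2\g u,\g u)$ of ${\rm GL}(\g u)$, where $g$ acts by $(g\cdot\mu)(X,Y)=g\,\mu(g^{-1}X,g^{-1}Y)$. By the very definition of an automorphism one has $H=\{g\in{\rm GL}(\g u)\mid g\cdot\mu=\mu\}$, that is, $H$ is the full stabilizer of the \emph{vector} $\mu$ (not merely of the line $\m R\mu$). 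Since $\Lambda$ is a Lie lattice, the structure constants $[e_i,e_j]=\sum_l c_{ij}^l e_l$ satisfy $c_{ij}^l\in\m Z$, so $\mu$ lies in the integral lattice $W_\m Z\subset W$ attached to the basis; as ${\rm GL}(n,\m Z)$ preserves $W_\m Z$, the orbit ${\rm GL}(n,\m Z)\cdot\mu$ is contained in $W_\m Z$ and is therefore discrete in $W$.

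To prove closedness, I would take a lattice $L\in\overline{H\Lambda}$ and $h_k\in H$ with $h_k\Lambda\to L$. Lifting this convergence through the open, locally trivial quotient map ${\rm GL}(n,\m R)\to\mc X(\g u)$ gives $g_0\in{\rm GL}(n,\m R)$ with $g_0\Lambda=L$ and, for large $k$, lifts $g_k$ of $h_k\Lambda$ with $g_k\to g_0$; setting $\gamma_k:=h_k^{-1}g_k\in{\rm GL}(n,\m Z)$ one has $g_k=h_k\gamma_k$. The key computation uses that each $h_k$ fixes $\mu$:
\[
\gamma_k^{-1}\cdot\mu=(g_k^{-1}h_k)\cdot\mu=g_k^{-1}\cdot(h_k\cdot\mu)=g_k^{-1}\cdot\mu\longrightarrow g_0^{-1}\cdot\mu \qquad (k\to\infty).
\]
Hence the sequence $\gamma_k^{-1}\cdot\mu$ is bounded in $W$; being contained in the discrete set $W_\m Z$, it is eventually equal to the constant $\nu=g_0^{-1}\cdot\mu$. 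For such $k$ one gets $(\gamma_k g_0^{-1})\cdot\mu=\gamma_k\cdot(\gamma_k^{-1}\cdot\mu)=\mu$, so $g_0\gamma_k^{-1}\in{\rm Stab}(\mu)=H$. Since $\gamma_k^{-1}\in{\rm GL}(n,\m Z)$ fixes $\Lambda$, this yields $L=g_0\Lambda=(g_0\gamma_k^{-1})\Lambda\in H\Lambda$, and therefore $H\Lambda=\overline{H\Lambda}$.

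The one substantive point is the discreteness of ${\rm GL}(n,\m Z)\cdot\mu$, which is exactly where the hypothesis that $\Lambda$ is a Lie lattice enters, through the integrality of the structure constants. Once $H$ is exhibited as the stabilizer of the integral vector $\mu$, the limit argument is elementary: the trick is to look at $\gamma_k^{-1}\cdot\mu$ rather than $\gamma_k\cdot\mu$, so that the unbounded factors $h_k$ disappear and one is left with a bounded sequence in a discrete set. In particular this proof needs neither the Mahler compactness criterion nor any external input from the theory of arithmetic groups.
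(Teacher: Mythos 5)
Your proof is correct, and it takes a genuinely different route from the paper's, even though both rest on the same arithmetic input: the integrality of the structure constants of a Lie lattice. The paper argues directly with bases. It notes that the limit lattice $\Lambda_\infty$ is again a Lie lattice, picks a $\m Z$-basis $(X_j)$ of $\Lambda_\infty$ with integer structure constants $c^k_{i,j}$, approximates it by bases $(X_{j,n})$ of $\varphi_n(\Lambda)$, and uses the uniform separation of the lattices $\varphi_n(\Lambda)$ from $0$ to force the same integer bracket relations $[X_{i,n},X_{j,n}]=\sum_k c^k_{i,j}X_{k,n}$ for $n$ large; the linear map $X_{j,n}\mapsto X_j$ is then the required automorphism. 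You instead linearize: you exhibit $H={\rm Aut}(\g u)$ as the full stabilizer in ${\rm GL}(n,\m R)$ of the bracket tensor $\mu\in{\rm Hom}(\wedge^2\g u,\g u)$, observe that $\mu$ is integral in the basis coming from $\Lambda$ so that ${\rm GL}(n,\m Z)\cdot\mu$ is discrete, and then run the standard lift-and-compare argument in ${\rm GL}(n,\m R)/{\rm GL}(n,\m Z)$, with the correct twist of looking at $\gamma_k^{-1}\cdot\mu=g_k^{-1}\cdot\mu$ so that the possibly unbounded $h_k$ drop out. Your version applies integrality to $\Lambda$ itself rather than to the limit lattice, produces the automorphism as $g_0\gamma_k^{-1}$ rather than by an explicit change of basis, and makes visible that the lemma is an instance of the general principle that the stabilizer of an integral vector acts with closed orbits on the space of lattices; the paper's version is more self-contained and avoids introducing the auxiliary representation. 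Both arguments are elementary, neither uses nilpotency, and neither needs Mahler's criterion.
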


\begin{proof} 
Let $\ph_n$ be a sequence of automorphisms of $\g u$ such that 
the sequence of Lie lattices $\La_n:=\ph_n(\La)$ converges to a lattice $\La_\infty$. Since the  lattices
$\La_n$ are Lie lattices, the lattice $\La_\infty$ is also a Lie lattice. We want to find an automorphism $\psi $ of $\g u$
such that $\La_\infty=\psi(\La)$.

Let $(X_1,\ldots,X_d)$ be a basis of $\Lambda_\infty$.
For all $j\leq d$, the exists  $X_{j,n}\in \La_n$
such that the sequence $X_{j,n}$ converges to $X_j$.
For $n$ large the family $(X_{1,n},\ldots,X_{d,n})$ is a basis of $\Lambda_n$.
For all $i, j\leq d$, one has a relation 
$$
\textstyle
[X_i,X_j]=\sum_{k\leq d} c^k_{i,j}X_k
\;\;{\rm with}\;\; c^k_{i,j}\in \m Z.
$$ 
Note that there exists a neighborhood $\Om_0$ of $0$ in $\g u$ 
such that, for all $n\geq 1$, one has $\La_n\cap \Om_0=\{0\}$.
Therefore, for $n$ large, the same relations 
$$
\textstyle
[X_{i,n},X_{j,n}]=\sum_{k\leq d} c^k_{i,j}X_{k,n}
$$
must be satisfied. The linear map 
$\psi_n: \g u\ra \g u$ given by $\psi_n(X_{j,n})=X_j$ for all
$j\leq d$
is then an automorphism of $\g u$ such that $\psi_n(\La_n)=\La_\infty$.
\end{proof}

\subsubsection{Horospherical lattices}
Let  $U$ be a horospherical group in a Zariski connected
semisimple real algebraic Lie group $G$.

\begin{Def}
\label{defirrlat}
- A lattice $\Delta$ of $U$ is {\it irreducible} if for any proper normal subgroup $G'$
of $G$, 
one has $\Delta\cap G'=\{e\}$.\\
- A lattice $\Delta$ of $U$ is {\it indecomposable} 
if one cannot write $G$ as a product $G=G'G''$ 
of two proper normal subgroups 
with finite intersection such that  the group $(\Delta\cap G')(\Delta\cap G'')$  has finite index in $\Delta$.\\
These notions depend on the embedding $U\hookrightarrow G$.
\end{Def}

For lattices $\Delta\subset U$ included in discrete Zariski dense subset $\Gamma$ of $G$,
we will see in Lemma \ref{lemirrhor} that these two notions are equivalent.

The following lemma tells us that when working with a Zariski dense subgroup $\Gamma$
containing a lattice in a reflexive horospherical group,
one can assume that $\Gamma$ contains another lattice in an opposite
horospherical subgroup.

\begin{Lem}
\label{lemgamuum}
Let $G$ be a Zariski connected semisimple real algebraic Lie group,
$U$ a non-trivial reflexive horospherical subgroup, 
and $\Gamma$ a  discrete subgroup 
of $G$ that contains an irreducible lattice $\Delta$ of $U$.
One has the equivalence \\
$i)$ $\Gamma$ is Zariski dense,\\ 
$ii)$ $G$ has no compact factors and there exists a horospherical 
subgroup $U^-$ opposite to $U$ such that $\Gamma$ also 
contains an irreducible lattice $\Delta^-$ of $U^-$.
\end{Lem}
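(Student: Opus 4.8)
The plan is to prove the two implications separately, using at the outset a remark that will serve both: the existence of the irreducible lattice $\Delta$ forces $\g u$ to meet every simple ideal of $\g g$. Indeed, writing $\g g=\bigoplus_{a\in\mc A}\g g_a$ and accordingly $\g u=\bigoplus_a\g u_a$ with $\g u_a=\g u\cap\g g_a$, suppose some $\g u_a=\{0\}$. Then $U$, and hence $\Delta$, lies in the proper normal subgroup $G'=\prod_{b\neq a}G_b$, so that $\Delta\cap G'=\Delta\neq\{e\}$, contradicting irreducibility. Since a unipotent subgroup avoids every compact factor, the same argument shows that $G$ has no compact factors, which already settles that half of statement $(ii)$.

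For $(i)\Rightarrow(ii)$ I would exploit reflexivity directly. By definition the set $\Omega_U:=\{g\in G\mid gUg^{-1}\text{ is opposite to }U\}$ is a nonempty Zariski open subset of $G$. As $\Gamma$ is Zariski dense it cannot be contained in the proper Zariski closed set $G\setminus\Omega_U$, so there exists $\gamma\in\Gamma\cap\Omega_U$. Then $U^-:=\gamma U\gamma^{-1}$ is a horospherical subgroup opposite to $U$, and $\Delta^-:=\gamma\Delta\gamma^{-1}\subseteq\Gamma$ is a lattice of $U^-$. It is again irreducible, since for any proper normal subgroup $G'$ one has $\Delta^-\cap G'=\gamma(\Delta\cap G')\gamma^{-1}=\{e\}$ by normality of $G'$. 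This produces the desired opposite lattice.

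For $(ii)\Rightarrow(i)$ I would first recall that a lattice in a unipotent group is Zariski dense in it (classical, see \cite[Chapter 2]{Raghunathan72}), so that the Zariski closure $H:=\overline{\Gamma}^Z$ contains $\overline{\Delta}^Z=U$ and $\overline{\Delta^-}^Z=U^-$, hence the abstract subgroup $\langle U,U^-\rangle$. The key step is that $\langle U,U^-\rangle$ is Zariski dense in $G$. I would argue it is normal: $L=P\cap P^-$ normalizes both $U$ and $U^-$, while $U$ and $U^-$ normalize any group containing them, so $\langle U,U^-\rangle$ is normalized by $\langle L,U,U^-\rangle\supseteq\langle P,P^-\rangle$; since $\g p+\g p^-=\g g$, the Zariski connected group $\langle P,P^-\rangle$ equals $G$, whence $\langle U,U^-\rangle\trianglelefteq G$. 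Its Zariski closure is then a normal algebraic subgroup whose Lie algebra is an ideal of $\g g$ containing $\g u$; as $\g u$ meets every simple ideal, this ideal is all of $\g g$. Therefore $H=G$, i.e.\ $\Gamma$ is Zariski dense.

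The main obstacle — really the only point that is not purely formal — is the structural claim in the last paragraph that two opposite horospherical subgroups generate a normal, hence (given that $\g u$ meets every simple factor) Zariski dense, subgroup of $G$. Everything else reduces to two elementary facts: a Zariski dense subset meets every nonempty Zariski open set, and lattices in unipotent groups are Zariski dense.
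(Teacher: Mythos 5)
Your proof is correct and follows essentially the same route as the paper's: for $(i)\Rightarrow(ii)$ you use reflexivity plus Zariski density to find $\gamma\in\Gamma$ conjugating $U$ (and $\Delta$) into an opposite horospherical group, and for $(ii)\Rightarrow(i)$ you use that lattices in unipotent groups are Zariski dense, so the Zariski closure of $\Gamma$ contains $U$ and $U^-$, which generate $G$ because $\g u$ meets every simple ideal. The only difference is that you spell out the generation step (via normality of $\langle U,U^-\rangle$ under $\langle P,P^-\rangle=G$), which the paper simply asserts.
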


\begin{proof}
$i)\Longrightarrow ii)$ Since $\Delta$ is irreducible, the group $G$ has no compact factors.

Since $U$ is reflexive, the set $\{g\in G\mid gUg^{-1}\;
\mbox{\rm is opposite to $U$}\}$ is non-empty and Zariski open
in $G$. Therefore it contains an element $g_0$ of $\Gamma$
and the group $\Gamma$ contains the lattice $g_0\Delta g_0^{-1}$
of the opposite group $g_0Ug_0^{-1}$

$ii)\Longrightarrow i)$ The lattices $\Delta$ and $\Delta^-$ 
are Zariski dense respectively in $U$ and $U^-$.
Since $\Delta$ is irreducible, the groups $U$ and $U^-$
intersect all the simple factors of $G$, hence they generate
the group $G$.
\end{proof}

We can now state the main theorem of this paper:

\begin{Thm}
\label{thmmar}
Let $G$ be a semisimple real algebraic Lie group 
of real rank at least $2$ and $U$ 
be a non-trivial horospherical subgroup of $G$.
Let $\Gamma$ be a discrete Zariski dense subgroup of $G$
that contains an indecomposable lattice $\Delta$  of $U$. Then $\Gamma$
is a non-cocompact irreducible arithmetic lattice of $G$.
\end{Thm}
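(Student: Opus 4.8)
The plan is to follow the roadmap of Section~\ref{secstrpro}: reduce the statement to two model situations, build a single explicit $L$-semi-invariant polynomial whose restriction to the lattice is discrete, and extract from it both the closedness of certain orbits and, ultimately, a $\m Q$-form of $G$ containing $\Ga$. First I would invoke the reduction steps of Oh, recalled in Chapter~\ref{secmaithe}, together with Lemma~\ref{lemgamuum}, to reduce to the case where $U$ is either reflexive commutative or Heisenberg, where $\Ga$ is Zariski dense and contains Lie lattices $\exp(\La)$ and $\exp(\La^-)$ in a pair of opposite horospherical subgroups $U,U^-$; here $L:=P\cap P^-$ acts by conjugation on the lattice spaces $\mc X(\g u)$ and $\mc X(\g u^-)$. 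The passage from the indecomposability of $\Delta$ to its irreducibility is supplied by Lemma~\ref{lemirrhor}.

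Next comes the analytic core. Using the $n_\th$-graduation of $\g g$ --- three terms $\g g_{-1}\oplus\g g_0\oplus\g g_1$ in the commutative case and five terms $\g g_{-2}\oplus\cdots\oplus\g g_2$ in the Heisenberg case --- I would realise each $g\in G$ as a block matrix through ${\rm Ad}$ and set $\Ph(g):=\det M(g)$, where $M(g)$ is the right-lower block, valued in ${\rm End}(\g u)$ (resp.\ ${\rm End}(\g z)$). This $\Ph$ is polynomial and semi-invariant under multiplication by $L$. The key input, Corollary~\ref{corphg}, is that $\Ph(\Ga)$ and $\Ph(\Ga w_0)$ are closed discrete subsets of $\m R$, which I would derive from the Bruhat decomposition of $G$ and a repeated application of the Mahler compactness criterion to $\Delta$ and $\Delta^-$. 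Passing to $F(X):=\Ph(e^Xw_0)$ and $G(X,Y):=\Ph(e^Xe^Y)$, the $L$-invariance of $F,G$, the discreteness of $F(\La)$ and $G(\La,\La^-)$, and the identification of $L$, up to finite index, with the subgroup of ${\rm Aut}_{gr}(\g u)$ preserving $F$ up to a scalar (Proposition~\ref{profphf}, together with Lemma~\ref{lemactlat}) would yield the closedness of the single orbit $L\La$ and the double orbit $L\,(\La,\La^-)$, namely \eqref{quosindou} (Proposition~\ref{prosindou}).

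From closedness I would deduce that the stabilizer $L_{\La,\La^-}$ is infinite, \eqref{eqnlllinf}: when $P$ is non-minimal via the Dani--Margulis recurrence theorem for unipotent flows (Proposition~\ref{prodanmar}), and when $P$ is minimal directly through the Mahler criterion (Lemmas~\ref{lemsod} and~\ref{lemheidic}), the restricted root system of type $A_2$ being the single exceptional case. It is precisely here that the real-rank-at-least-two hypothesis is indispensable, entering through the non-compactness of $L_0=\{\ell\in L\mid \det_{\g u}({\rm Ad}\,\ell)=1\}$. Assuming then, without loss of generality, that $\Ga$ contains this infinite group $L_{\La,\La^-}$, I would let $Q$ be the Zariski closure of $\Ga\cap P$; its Lie algebra $\g q$ inherits a $\m Q$-form from the lattice $\Delta$ via Lemma~\ref{lemnillat}. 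Transporting this structure by ${\rm Ad}\,\Ga$ and verifying that the induced $\m Q$-forms on the spaces ${\rm Ad}\,g(\g q)$ are compatible, I would assemble a $\Ga$-invariant $\m Q$-structure on $\g g$, so that $\Ga$ lies in a $\m Q$-form $G_\m Q$ of $G$ (Proposition~\ref{proextfor}). The theorem of Raghunathan and Venkataramana (Proposition~\ref{proragven}) would then force $\Ga$ to be commensurable with $G_\m Z$, hence a non-cocompact irreducible arithmetic lattice (Corollary~\ref{corextfor}).

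The main obstacle I anticipate lies in the analytic core: establishing that $\Ph(\Ga)$ is discrete and then upgrading this to the closedness of the $L$-orbits \emph{without} recourse to Ratner's measure classification or Weil's local rigidity. This is the genuinely new ingredient, and the delicate interplay to control is that among the Bruhat decomposition, the Mahler compactness criterion, and the algebraic maximality of $L$ inside ${\rm Aut}_{gr}(\g u)$ encoded by Proposition~\ref{profphf}. A secondary subtlety is the deduction of \eqref{eqnlllinf} from the closed orbits in the higher-rank setting, where the minimal-parabolic and $A_2$ cases must be handled separately.
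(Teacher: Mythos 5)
Your proposal is correct and follows essentially the same route as the paper: the same reduction to the reflexive commutative and Heisenberg cases, the same polynomial $\Phi$, $F$, $G$ and Mahler/Bruhat argument for closedness of the single and double $L$-orbits, the same case split (Dani--Margulis for $P$ non-minimal, Mahler for $P$ minimal, with the $A_2$ dichotomy), and the same conclusion via Margulis' extension of $\m Q$-forms and Raghunathan--Venkataramana. The only point not made explicit is that the reduction step for reflexive non-commutative $U$ whose center is not a root space (Proposition \ref{prononroo}) applies the theorem inductively to a smaller group $G'$, but this is subsumed in your appeal to Chapter \ref{secmaithe}.
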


%3
\section{Orbits of horospherical lattices}
\label{secorbhor}

In this chapter we focus on lattices in reflexive horospherical subgroups which are 
either commutative or Heisenberg. Our aim is to prove that 
the single $L$-orbit  and   the double $L$-orbit 
in the space of horospherical lattices are closed
(Proposition \ref{prosindou}). More precisely we will explain in this chapter the dynamical part
of the proof, and postpone to Chapter \ref{secapphor} the algebraic part (Proposition \ref{profphf}.a and Lemma \ref{lemfphf}).

%31 
%\subsection{Horospherical lattices}
%\label{secacthor}
We will keep the following notation throughout this chapter.

\begin{Not}
\label{notacthor}
Let $G$ be a semisimple adjoint real algebraic Lie group. 
Let 
$P$ and $P^{-}$ be two opposite parabolic subgroups,
$U$ and $U^{-}$ be their unipotent radicals, and $L:=P\cap P^{-}$ so that 
$P=LU$ and  $P^{-}=LU^{-}$.
Let $\g g$, $\g p$, $\g p^-$, $\g u$, $\g u^-$ and $\g l$
be the corresponding Lie algebras.
Let $\Delta$ be a lattice of $U$ and $\Delta^{-}$ be a lattice of $U^{-}$
such that the subgroup $\Gamma$ of $G$ generated by 
$\Delta$ and $\Delta^-$
is discrete.
Let $\Lambda$ and $\Lambda^{-}$ be  Lie lattices of $\g u$  and    $\g u^{-}$ such that
$\exp(\Lambda)\subset\Delta$ and 
$\exp(\Lambda^-)\subset\Delta^-$.
\end{Not}

In the next two sections, we begin with generic constructions which do not use the assumption that $\g u$ is reflexive commutative or Heisenberg.

%31
\subsection{The matrices $M(g)$}
\label{secmatmg}

\bq
The key idea is to think of the elements $g$ of $G$ as block matrices and to extract
a suitable block $M(g)$.
\eq

Let $\g z$ be the center of $\g u$ and 
$\pi: \g g\ra \g z$  be the $L$-equivariant projection:
in the decomposition \eqref{eqngsgogs}, $\pi$ is the projection on the last factor.

\begin{Def}
\label{defmg}
For $g$ in $G$, we set 
$M(g):=\pi\,{\rm Ad}g\,\pi \in {\rm End}(\g z)$.
\end{Def}
This matrix $M(g)$ is the lower-right block of ${\rm Ad} g$ in the decomposition \eqref{eqngsgogs}. 
Notice that the map $M:G\ra {\rm End}(\g z)$ is a polynomial map.

Here are two important features of the projection $\pi$ and the map $M$.

\begin{Lem}
\label{lemmgxadl}
$a)$ For $v$ in $U^-$ and $u$ in $U$, 
one has 
\begin{equation}
\label{eqnadvupi}
\pi\,{\rm Ad}v={ \rm Ad}u\, \pi=\pi\, .
\end{equation} 

$b)$ When $g$ is in the Zariski open set $\Om:=U^-LU$, i.e. when $g=v\ell u$ with $v\in U^-$, 
$\ell\in L$, $u\in U$, one has
\begin{eqnarray}
\label{eqnmgxadl}
M(g)X&=&{\rm Ad}\ell\, X ,
\;\;\mbox{\rm  for all $X$ in $\g z$\,}.
\end{eqnarray}
\end{Lem}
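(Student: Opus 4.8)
The plan is to exploit the graded structure \eqref{eqngsgogs} together with the observation that, with respect to it, the operators ${\rm Ad}u$ for $u\in U$ and ${\rm Ad}v$ for $v\in U^-$ are unitriangular in opposite directions. Recall that $\pi$ is the projection onto the top piece $\g g_s=\g z$, that ${\rm ad}$ strictly raises the grading on $\g u=\bigoplus_{j\geq 1}\g g_j$, and that it strictly lowers the grading on $\g u^-=\bigoplus_{j\leq -1}\g g_j$. Everything will follow from keeping track of these two facts.

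For part $a)$ I would first treat $u\in U$. Writing $u=\exp(Y)$ with $Y\in\g u$, the operator ${\rm ad}Y$ sends $\g g_s$ into $\bigoplus_{j>s}\g g_j=0$, so ${\rm Ad}u=\exp({\rm ad}Y)$ fixes $\g g_s$ pointwise. Since $\pi$ takes values in $\g g_s$, this gives ${\rm Ad}u\,\pi=\pi$. For $v\in U^-$, writing $v=\exp(Z)$ with $Z\in\g u^-$, the operator ${\rm ad}Z$ strictly lowers the grading; hence for any $W\in\g g$ every term of $\exp({\rm ad}Z)(W)$ other than $W$ itself lies in $\bigoplus_{j<s}\g g_j$, so the $\g g_s$-component of ${\rm Ad}v\,W$ equals that of $W$. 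This is exactly the identity $\pi\,{\rm Ad}v=\pi$.

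For part $b)$ I would decompose ${\rm Ad}g={\rm Ad}v\,{\rm Ad}\ell\,{\rm Ad}u$ and compute $M(g)=\pi\,{\rm Ad}g\,\pi=\pi\,{\rm Ad}v\,{\rm Ad}\ell\,{\rm Ad}u\,\pi$. Applying part $a)$ to cancel the two outer factors yields $M(g)=\pi\,{\rm Ad}\ell\,\pi$. Because ${\rm Ad}\ell$ preserves each graded piece $\g g_j$, which is precisely the $L$-equivariance of $\pi$, it commutes with $\pi$; using $\pi^2=\pi$ and $\pi|_{\g z}={\rm Id}$, I then get for $X\in\g z$ the equality $M(g)X={\rm Ad}\ell\,\pi(X)={\rm Ad}\ell\,X$.

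I do not expect a genuine obstacle here, since the whole argument reduces to bookkeeping of the grading. The only point requiring a little care is the unitriangularity claim, namely that ${\rm Ad}u$ and ${\rm Ad}v$ restrict to the identity on the relevant diagonal blocks; this is the standard consequence of the nilpotence of ${\rm ad}Y$ and ${\rm ad}Z$ relative to the finite grading \eqref{eqngsgogs}, and it is what makes both cancellations in part $a)$ go through.
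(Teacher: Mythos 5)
Your proof is correct and follows exactly the route the paper intends: the paper dismisses part $a)$ as "clear" and derives $b)$ from $a)$, and your grading/unitriangularity argument is precisely the standard justification being left implicit. The bookkeeping with ${\rm ad}Y$ raising and ${\rm ad}Z$ lowering the degree, and the final cancellation $M(g)=\pi\,{\rm Ad}\ell\,\pi$, are all as expected.
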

\begin{proof}
$a)$ is clear and $b)$ follows from $a)$. 
\end{proof}

\begin{Prop}
\label{promgx}
With the notation  \ref{notacthor}. The set 
$$
\{M(g)X\mid g\in \Gamma\cap\Omega\, ,\; X\in \Lambda\cap\g z\}
$$
is a closed discrete subset of $\g z$.
\end{Prop}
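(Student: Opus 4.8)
The plan is to prove the \emph{locally finite} reformulation: $S:=\{M(g)X\mid g\in\Ga\cap\Om,\ X\in\La\cap\g z\}$ meets every compact subset of $\g z$ in a finite set, which is exactly what ``closed and discrete'' means. So I fix a compact $K\subset\g z$, assume for contradiction that $S\cap K$ is infinite, and extract a sequence of pairwise distinct points $Z_n=M(g_n)X_n\in S\cap K$, converging after passing to a subsequence to some $Z_\infty\in K$.

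The first step is to realize each $Z_n$ as the top-degree part of the logarithm of an element of $\Ga$. Since $X_n\in\La\cap\g z\subset\La$, we have $\exp(X_n)\in\exp(\La)\subset\De\subset\Ga$, hence $\ga_n:=g_n\exp(X_n)g_n^{-1}\in\Ga$. I would write the decomposition $g_n=v_n\ell_n u_n$ with $v_n\in U^{-}$, $\ell_n\in L$, $u_n\in U$, which is possible as $g_n\in\Om=U^{-}LU$. Using that ${\rm Ad}u_n$ is the identity on $\g z$ (this is the equality ${\rm Ad}u\,\pi=\pi$ of Lemma \ref{lemmgxadl}.$a$), one gets $u_n\exp(X_n)u_n^{-1}=\exp(X_n)$, while $\ell_n\exp(X_n)\ell_n^{-1}=\exp({\rm Ad}\ell_n\,X_n)=\exp(Z_n)$ by Lemma \ref{lemmgxadl}.$b$. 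Therefore $\ga_n=v_n\exp(Z_n)v_n^{-1}$, so $\log\ga_n={\rm Ad}v_n\,Z_n$, and by the identity $\pi\,{\rm Ad}v_n=\pi$ of Lemma \ref{lemmgxadl}.$a$ its top-degree part is $\pi(\log\ga_n)=Z_n$. Thus $Z_n$ is determined by $\ga_n$ alone.

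The key step is to tame the a priori unbounded factor $v_n$ using the opposite lattice. Because $\De^{-}$ is cocompact in $U^{-}$, there is a fixed compact set $F\subset U^{-}$ with $U^{-}=\De^{-}F$; I write $v_n=\de_n^{-}r_n$ with $\de_n^{-}\in\De^{-}$ and $r_n\in F$. Replacing $g_n$ by $g_n':=(\de_n^{-})^{-1}g_n$ keeps $g_n'$ in $\Ga\cap\Om$, since $\de_n^{-}\in\De^{-}\subset\Ga$ and $\Om$ is left $U^{-}$-invariant, and its decomposition is $g_n'=r_n\ell_n u_n$; in particular the $L$-part is still $\ell_n$, so $M(g_n')X_n={\rm Ad}\ell_n\,X_n=Z_n$ is unchanged, but now the $U^{-}$-part $r_n$ lies in $F$. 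After this replacement $\ga_n=r_n\exp(Z_n)r_n^{-1}$ lies in the compact set $F\exp(K)F^{-1}$. Since $\Ga$ is discrete, $\{\ga_n\}$ is a finite set, whence by the previous step $\{Z_n\}=\{\pi(\log\ga_n)\}$ is finite as well, contradicting that the $Z_n$ were chosen pairwise distinct. This shows $S\cap K$ is finite and completes the proof.

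I expect the only genuine subtlety to be this taming step: the factor $v_n$ can run off to infinity, and the point is that left multiplication by a suitable element of $\De^{-}\subset\Ga$ absorbs that divergence into the cocompact lattice $\De^{-}$ while leaving the invariant $M(g_n)X_n={\rm Ad}\ell_n\,X_n$ untouched. It is worth emphasizing that this argument uses only the discreteness of $\Ga$, the cocompactness of $\De^{-}$ in $U^{-}$, and Lemma \ref{lemmgxadl}; in particular it needs neither the reflexive-commutative nor the Heisenberg hypothesis, nor the higher-rank assumption, and so applies in the full generality of Notation \ref{notacthor}.
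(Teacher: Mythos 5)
Your proof is correct and follows essentially the same route as the paper's: decompose $g_n=v_n\ell_n u_n$, absorb the divergent $U^-$-part into the cocompact lattice $\Delta^-$, observe that the resulting conjugates $\gamma_n$ of $\exp(X_n)$ stay in a compact subset of the discrete group $\Gamma$, and recover $Z_n$ via $\pi\,{\rm Ad}v=\pi$. The only cosmetic difference is that you read off $Z_n=\pi(\log\gamma_n)$ directly from finiteness of $\{\gamma_n\}$, where the paper passes to a convergent subsequence $v'_n\to v'_\infty$ and invokes the injectivity of $\exp$ on nilpotent elements to get ${\rm Ad}(v'_n)X'_n={\rm Ad}(v'_\infty)X'_\infty$; both steps are equivalent.
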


\begin{proof}
Assume by contradiction that there exists a  sequence
of distinct elements 
$$
X'_n=M(g_n)X_n
\;\;{\rm with}\;\;
g_n\in \Gamma\cap \Omega
\;\;{\rm and}\;\;
X_n\in \Lambda\cap \g z\, 
$$ 
converging to an element $X'_\infty\in \g z$. Since $g_n$ is in $\Omega$, one can write 
$$
g_n=v_n\ell_nu_n
\;\;{\rm with}\;\;
v_n\in U^-\, ,\; \ell_n\in L\, ,\; u_n\in U\,.
$$
Since $\Delta^-$ is cocompact in $U^-$, after extraction, 
we can write 
$$
v_n=\delta_n ^{-1}v'_n
\;\;{\rm with}\;\;
\delta_n\in \Delta
\;{\rm and} \;\;
v'_n\in U^- 
\;\mbox{\rm converging to}\; v'_\infty
\, .
$$
Remembering that $\g z$ is the center of $\g u$ and using \eqref{eqnmgxadl}, one computes 
\begin{equation}
\gamma_n:=\delta_n g_n e^{X_n}g_n^{-1}\delta_n^{-1}\\
=v'_n\ell_n e^{X_n}\ell_n^{-1}{v'_n}^{-1}
=
v'_n e^{X'_n}{v'_n}^{-1}
\end{equation}
Therefore this sequence $\gamma_n$ of elements of $\Gamma$ converges to
$
\gamma_\infty:=v'_\infty e^{X'_{\infty}}{v'_\infty}^{-1}
$
Since $\Gamma$ is discrete, one must have
$\ga_n=\ga_\infty$ for $n$ large. Since the exponential map restricted to the nilpotent elements of $\g g$ is injective, one deduces
\begin{equation}
\label{eqnadvnvi}
{\rm Ad}(v'_n)\, X'_n = {\rm Ad}(v'_\infty)\, X'_\infty
\;\; \;
\mbox{\rm for $n$ large.}
\end{equation}
Applying the projection $\pi$ to Equality \eqref{eqnadvnvi}
and using  \eqref{eqnadvupi},  one concludes $X'_n=X'_\infty$ for $n$ large. Contradiction.
\end{proof}

The following corollary is worth mentioning 
even though we will not use it in the rest of the text.

\begin{Cor}
\label{cormgg}
With the notation  \ref{notacthor}. The set 
$
M(\Gamma\cap \Omega)
$ 
is a closed discrete subset of ${\rm End}(\g z)$.
\end{Cor}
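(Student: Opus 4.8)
The plan is to deduce the discreteness of the matrix set $M(\Gamma\cap\Omega)$ from the vector statement already proved in Proposition \ref{promgx}, by evaluating each matrix $M(g)$ on a fixed lattice basis of the center $\g z$.

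First I would note that $\Lambda\cap\g z$ is a full lattice in $\g z$. Since $\Lambda$ is a Lie lattice of $\g u$, its rational span $\g u_\m Q:=\Lambda\otimes_{\m Z}\m Q$ is a $\m Q$-Lie subalgebra of $\g u$ (Lemma \ref{lemnillat}), and centrality is a $\m Q$-rational linear condition on $\g u_\m Q$; hence the center $\g z$ of $\g u$ is defined over $\m Q$ and $\Lambda\cap\g z$ spans $\g z$ over $\m R$. Fix a $\m Z$-basis $X_1,\dots,X_m$ of $\Lambda\cap\g z$, which is then an $\m R$-basis of $\g z$. Consequently any $M\in{\rm End}(\g z)$ is determined by the tuple $(MX_1,\dots,MX_m)$.

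Next I would argue that every sequence in $M(\Gamma\cap\Omega)$ that converges in ${\rm End}(\g z)$ is eventually constant, which in a metric space immediately gives that $M(\Gamma\cap\Omega)$ is closed and discrete. So suppose $M(g_n)\to M_\infty$ with $g_n\in\Gamma\cap\Omega$. For each fixed $i\le m$ one has $M(g_n)X_i\to M_\infty X_i$; but $X_i\in\Lambda\cap\g z$, so each $M(g_n)X_i$ lies in the closed discrete subset of $\g z$ furnished by Proposition \ref{promgx}, and a convergent sequence in such a set is eventually constant. Thus $M(g_n)X_i=M_\infty X_i$ for $n$ large, for every $i$ among the finitely many basis vectors, whence $M(g_n)=M_\infty$ for $n$ large.

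The computation itself is short once Proposition \ref{promgx} is available; the only step demanding care is the rationality remark that $\Lambda\cap\g z$ is a full lattice in $\g z$. This is what guarantees that evaluating on the center controls the whole matrix $M(g)$ rather than a proper subspace of its image, and without it the discreteness conclusion could genuinely fail.
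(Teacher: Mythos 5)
Your proof is correct and follows exactly the paper's route: the paper's entire proof consists of the remark that $\Lambda\cap\g z$ is a lattice in $\g z$, leaving implicit the evaluation of $M(g)$ on a lattice basis and the appeal to Proposition \ref{promgx} that you spell out. Your rationality justification for why $\Lambda\cap\g z$ is a full lattice is sound (it is also covered by Corollary \ref{cornillat}), so nothing is missing.
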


\begin{proof}
The intersection $\Lambda \cap \g z$ is a lattice in $\g z$.
\end{proof}

%32
\subsection{The function $\Phi(g)$}
\label{sefunphg}
\bq
We now introduce the polynomial  function $\Ph$ on $G$.
\eq

\begin{Def} Let $\Phi:G\ra \m R$ be the function on $G$ given by
$$\textstyle
\Phi(g):=\det_{\g z}(M(g))
$$  
\end{Def}

\begin{Prop}
\label{prophg}
With the notation  \ref{notacthor}. The  
set 
$\Phi(\Gamma\cap\Omega) $
is a closed discrete subset of $\m R$.
\end{Prop}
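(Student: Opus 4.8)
The plan is to read $\Phi(g)$ as (a multiple of) the covolume of a lattice sitting inside the fixed discrete set produced by Proposition \ref{promgx}. Write $C_0:=\mathrm{covol}(\Lambda\cap\g z)$; as noted in the proof of Corollary \ref{cormgg}, $\Lambda\cap\g z$ is a lattice of $\g z$. For $g\in\Gamma\cap\Omega$ write $g=v\ell u$ as in Lemma \ref{lemmgxadl}; since $\mathrm{Ad}\,\ell$ preserves the grading \eqref{eqngsgogs} and $\g z=\g g_s$ (Lemma \ref{lemnonroo}.c), the map $M(g)=\mathrm{Ad}\,\ell|_{\g z}$ is invertible, so $L_g:=M(g)(\Lambda\cap\g z)$ is a full lattice of $\g z$ with $\mathrm{covol}(L_g)=|\Phi(g)|\,C_0$. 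By Proposition \ref{promgx}, for each fixed $g$ every vector $M(g)X$ with $X\in\Lambda\cap\g z$ lies in the single closed discrete set
$$D:=\{M(h)X\mid h\in\Gamma\cap\Omega,\ X\in\Lambda\cap\g z\},$$
so in fact $L_g\subset D$ for all $g$. Thus $\Phi(\Gamma\cap\Omega)$ is controlled by the covolumes of lattices inscribed in the fixed discrete set $D$.

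Next I would extract the two bounds needed. Since $D$ is discrete and $0\in D$, there is $\delta>0$ with $D\cap B(0,\delta)=\{0\}$; as $L_g\subset D$, the first minimum satisfies $\lambda_1(L_g)\ge\delta$, and hence all successive minima satisfy $\lambda_i(L_g)\ge\delta$. Fix $R>0$ and suppose $|\Phi(g)|\le R$, i.e. $\mathrm{covol}(L_g)\le RC_0$. Minkowski's second theorem gives a dimensional constant $C_k$ (with $k=\dim\g z$) such that $\lambda_1(L_g)\cdots\lambda_k(L_g)\le C_k\,\mathrm{covol}(L_g)\le C_kRC_0$; combined with $\lambda_i\ge\delta$ this forces every $\lambda_i(L_g)\le C_kRC_0/\delta^{k-1}=:\Lambda_1$. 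So all successive minima of $L_g$ lie in the fixed range $[\delta,\Lambda_1]$.

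Finally I would conclude by a finiteness count. By reduction theory each $L_g$ admits a basis $(w_1,\dots,w_k)$ with $|w_i|\le c_k\lambda_i(L_g)\le c_k\Lambda_1=:\Lambda_2$. These $w_i$ lie in $L_g\subset D$, hence in the finite set $D\cap\overline B(0,\Lambda_2)$; therefore only finitely many lattices $L_g$ can occur for $|\Phi(g)|\le R$, so only finitely many covolumes arise, i.e. finitely many values of $|\Phi(g)|$ in $[0,R]$, and thus finitely many values of $\Phi$ in $[-R,R]$. Letting $R\to\infty$ shows $\Phi(\Gamma\cap\Omega)$ is closed and discrete. The one real obstacle is the non-compactness of the level set $\{\ell\in L\mid \det_{\g z}\mathrm{Ad}\,\ell=1\}$, which makes the individual vectors $M(g)X$ unbounded even when $\Phi(g)$ stays bounded; the device that overcomes it is precisely the containment $L_g\subset D$ coming from Proposition \ref{promgx}, which converts this unboundedness into a Minkowski-type finiteness statement (the same mechanism could equally be phrased through the Mahler compactness criterion, exactly as in Lemma \ref{lemactlat}).
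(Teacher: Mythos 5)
Your proof is correct, and it rests on exactly the same key input as the paper's --- Proposition \ref{promgx} applied to the family of lattices $M(g)(\Lambda\cap\g z)$, together with the identity $\mathrm{covol}(M(g)(\Lambda\cap\g z))=|\Phi(g)|\,\mathrm{covol}(\Lambda\cap\g z)$ --- but the execution is genuinely different. The paper argues by contradiction: it takes a sequence $g_n$ with $\Phi(g_n)$ distinct and convergent, extracts the same two bounds you do (a uniform lower bound on nonzero vectors from the discreteness of the union, a uniform upper bound on covolumes from the boundedness of the determinants), feeds them into the Mahler compactness criterion to get a limit lattice $\Lambda_\infty$ after extraction, and then invokes the discreteness of Proposition \ref{promgx} a second time to force the approximating lattices to be eventually \emph{equal} to $\Lambda_\infty$, whence $|\Phi(g_n)|$ is eventually constant. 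You instead exploit the stronger observation that every lattice $L_g$ is a \emph{subset} of the single closed discrete set $D$, and convert the two bounds into a direct finiteness count via Minkowski's second theorem and a reduced basis: for $|\Phi(g)|\le R$ all successive minima of $L_g$ lie in a fixed window $[\delta,\Lambda_1]$, so a reduced basis of $L_g$ lives in the finite set $D\cap\overline B(0,\Lambda_2)$, and only finitely many lattices (hence covolumes, hence values of $\Phi$ in $[-R,R]$) can occur. Your route avoids the sequential extraction and the second appeal to discreteness, and yields the slightly stronger quantitative statement that the set of lattices $L_g$ of bounded covolume is finite; the paper's route avoids any explicit appeal to Minkowski's second theorem or reduction theory, packaging the geometry of numbers entirely inside the Mahler criterion. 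Both are complete proofs.
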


\begin{proof} 
Assume by contradiction there exists a  sequence
of distinct real numbers $\Phi(g_n)$ with 
$g_n\in \Gamma\cap\Omega$
converging to $\Phi_\infty\in \m R$.
Since $g_n$ is in $\Gamma\cap \Omega$, by \eqref{eqnmgxadl}, the matrices 
$M(g_n)$ are invertible and,
by Proposition \ref{promgx}, the following union of lattices of $\g z$
$$
\textstyle\bigcup_{n\geq 1}M(g_n)(\Lambda\cap \g z)
$$
is a closed discrete subset of $\g z$.
In particular, one has
\begin{equation}
\label{eqninfmin}
\inf_{n\geq 1}\min_{X\in \Lambda\cap \g z\smallsetminus 0}
\| M(g_n)X\| >0\, .
\end{equation}
Since the determinants $\Phi(g_n)$ are bounded, 
one has a uniform upper bound on the covolume of these lattices:
\begin{equation}
\label{eqnsupcov}
\sup_{n\geq 1} {\rm covol}
(M(g_n)(\Lambda\cap \g z))\; <\;\infty\, .
\end{equation}
Conditions \eqref{eqninfmin} and \eqref{eqnsupcov}
tell us that this family of lattices of $\g z$ satisfy the
Mahler compactness criterion.
Therefore, after extraction, the sequence 
of lattices $M(g_n)(\Lambda\cap \g z)$ converges 
to a lattice $\Lambda_\infty$ of $\g z$.
Let $X_1,\ldots ,X_d$ be a basis of $\Lambda_\infty$.
For all $j\leq d$, there exists $X_{n,j}\in \Lambda\cap\g z$ such that 
$$
M(g_n)(X_{n,j})
\;\; \mbox{\rm converges to } \;
X_j\, .
$$
Therefore, by Proposition \ref{promgx}, one has 
$M(g_n)X_{n,j}=X_j$ for $n$ large. 
Hence, one has the equality of lattices 
$M(g_n)(\Lambda\cap \g z)=\Lambda_\infty\,.$
Computing the covolume of these lattices gives for $n$ large
$$
|\Phi(g_n)|\,{\rm covol}(\Lambda\cap \g z)=
{\rm covol}(\Lambda_\infty).
$$
Hence the sequence $|\Phi(g_n)|$ is constant for $n$ large. 
Therefore, $\Phi(g_n)$ can take only finitely many values.
Contradiction.
\end{proof}

%33
\subsection{The function $\Phi(gw_0)$}
\label{sefunphgw}
\bq
We assume up to the end of this chapter
that $\g u$ is reflexive commutative or Heisenberg.
This allows us to say more on the function $\Phi$.
\eq

We will use the notation of \S\ref{secroosys}
and \ref{secgralie}. We write $\g u=\g u_\th$
for a subset $\th$ of the set of simple roots
and let $w_0$ be the longest element of the Weyl group so that one has
\begin{equation}
\label{eqnwouwou}
{\rm Ad}w_0(\g u)=\g u^-
\;\;{\rm and}\;\;
{\rm Ad}w_0(\g u^-)=\g u\, .
\end{equation}

In this case, the parabolic subgroup $P$ is maximal among the reflexive parabolic subgroups of $G$. 
Indeed, the set $\th$ contains either one 
simple root invariant by $-w_0$
or a pair of roots exchanged by $-w_0$.
\vs

Here are a few useful properties of the polynomial function $\Ph$.
Let $\chi$ be the character of $L$ given by, for $\ell$ in $L$,
\begin{equation}
\label{eqnchidet}
\chi(\ell)={\rm det}_{\rm \g z}({\rm Ad}\ell).
\end{equation}

\begin{Lem}
\label{lemphg}
With the notation \ref{notacthor}.
Assume that $\g u$ is reflexive
commutative or Heisenberg.\\
$a)$ For all $g$ in $G$, $\ell$, $\ell'$ in $L$, $u$ in $U$, $v$ in $U^-$, one has
\begin{equation}
\label{eqnphlvglu}
\Phi( v\ell g\ell' u)=\chi(\ell)\chi(\ell')\, \Phi(g)\, .
\end{equation}
$b)$ 
For $g\in G$, one has the equivalence:\;  $g\in \Omega\Longleftrightarrow\Phi(g)\neq 0 \, .$
\end{Lem}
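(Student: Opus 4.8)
The plan is to prove both parts by reducing to the Bruhat-type decomposition $\Omega = U^-LU$ and exploiting the block-triangular structure of $\mathrm{Ad}\,g$ with respect to the graduation \eqref{eqngsgogs}. For part $a)$, the key observation is Lemma \ref{lemmgxadl}.a: left multiplication by $v\in U^-$ and right multiplication by $u\in U$ do not change the relevant block, since $\pi\,\mathrm{Ad}\,v = \pi$ and $\mathrm{Ad}\,u\,\pi = \pi$. First I would compute $M(v\ell g\ell' u)$ directly: using $\pi\,\mathrm{Ad}(v\ell g\ell' u)\,\pi = \pi\,\mathrm{Ad}\,v\,\mathrm{Ad}\,\ell\,\mathrm{Ad}\,g\,\mathrm{Ad}\,\ell'\,\mathrm{Ad}\,u\,\pi$, and applying \eqref{eqnadvupi} at both ends gives $\pi\,\mathrm{Ad}\,\ell\,\mathrm{Ad}\,g\,\mathrm{Ad}\,\ell'\,\pi$. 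Since $\ell,\ell'\in L=\g g_0$ preserve each graded piece $\g g_j$, and in particular preserve $\g z=\g g_s$, the projection $\pi$ commutes with $\mathrm{Ad}\,\ell$ and $\mathrm{Ad}\,\ell'$ in the sense that $\pi\,\mathrm{Ad}\,\ell = (\mathrm{Ad}\,\ell|_\g z)\,\pi$ and $\mathrm{Ad}\,\ell'\,\pi = \pi\,(\mathrm{Ad}\,\ell'|_\g z)$. Hence $M(v\ell g\ell' u) = (\mathrm{Ad}\,\ell|_\g z)\,M(g)\,(\mathrm{Ad}\,\ell'|_\g z)$, and taking determinants over $\g z$ yields $\Phi(v\ell g\ell' u) = \chi(\ell)\chi(\ell')\Phi(g)$ by the definition \eqref{eqnchidet} of $\chi$.

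For part $b)$, I would prove both implications. The direction $g\in\Omega \Rightarrow \Phi(g)\neq 0$ is immediate from \eqref{eqnmgxadl}: writing $g=v\ell u$ one has $M(g)X = \mathrm{Ad}\,\ell\,X$ on $\g z$, so $M(g)$ restricted to $\g z$ equals the invertible operator $\mathrm{Ad}\,\ell|_\g z$, whence $\Phi(g)=\chi(\ell)\neq 0$. For the converse $\Phi(g)\neq 0 \Rightarrow g\in\Omega$, I would use that $\Omega$ is the big Bruhat cell and that its complement is a union of lower-dimensional Bruhat cells. The cleanest argument is to show that for $g\notin\Omega$ the block $M(g)$ is degenerate. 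Here the reflexivity assumption (together with the commutative or Heisenberg hypothesis) is essential, because it forces $P$ to be maximal among reflexive parabolics, so that $\g z = \g g_s$ sits at the extreme end of the graduation and the only way $\mathrm{Ad}\,g$ can send $\g z$ isomorphically onto a complement of $\bigoplus_{j<s}\g g_j$ is when $g$ lies in the open cell.

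The main obstacle will be the converse implication in $b)$, namely ruling out $\Phi(g)\neq 0$ for $g\notin\Omega$. I would approach this via the Bruhat decomposition $G = \bigsqcup_{w} U^- w L U$ (or the analogous $P^-\backslash G/P^-$ decomposition), and check that on each non-open cell the operator $M(g)$ is singular. Concretely, if $g = v w \ell u$ with $w\neq e$ in the relevant Weyl-coset representative set, then $\mathrm{Ad}\,g$ maps $\g z=\g g_s$ into $\mathrm{Ad}\,w(\g z)$, and one must verify that $\pi\,\mathrm{Ad}\,w|_\g z = 0$ whenever $w\notin L$; that is, $\mathrm{Ad}\,w(\g z)$ meets $\g g_s$ trivially, or at least $\pi$ kills it, forcing $\det_\g z M(g)=0$. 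The point where the $s$-step structure matters is that $\g z=\g g_s$ is the top graded piece, so $w(\g z)\subset\g g_s$ forces $w$ to preserve the highest grading, which by the root-system analysis (using $w_0(\th)=-\th$ for reflexive $\g u$) pins $w$ down to the Levi. Verifying this cell-by-cell vanishing, in both the commutative and the Heisenberg case, is the genuinely substantive part; the rest is bookkeeping with the graduation and the equivariance of $\pi$.
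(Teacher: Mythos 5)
Your part $a)$ and the forward direction of part $b)$ are exactly the paper's argument: $M(v\ell g\ell' u)={\rm Ad}\ell\, M(g)\,{\rm Ad}\ell'$ on $\g z$ via \eqref{eqnadvupi} and the fact that $L$ preserves the grading, then take determinants; and $g=v\ell u\in\Omega$ gives $\Phi(g)=\chi(\ell)\neq 0$. The overall strategy for the converse of $b)$ --- Bruhat decomposition $g=vw\ell u$, reduction to $\Phi(w)$ via part $a)$, and singularity of $M(w)$ because $W$ permutes root spaces --- is also the paper's.

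However, the converse of $b)$ has a genuine gap at precisely the point you flag as ``the genuinely substantive part'': you never actually prove that $w(\g z)=\g z$ forces $w$ to normalize $\g u$ (equivalently, that $g\in\Omega$). Your sketch gestures at a cell-by-cell or case-by-case root-system verification, separately for the commutative and Heisenberg cases, but no argument is given, and the phrasing ``$w(\g z)\subset\g g_s$ forces $w$ to preserve the highest grading'' is circular as stated --- preserving $\g g_s$ is the hypothesis, and what must be deduced is preservation of all of $\g u=\g g_1\oplus\cdots\oplus\g g_s$. The paper closes this in one uniform stroke, with no case analysis: the normalizer $Q$ of $\g z$ is a parabolic subgroup containing $P$, and it is reflexive (since ${\rm Ad}w_0$ exchanges $\g z$ and $\g z^-$); but for $\g u$ reflexive commutative or Heisenberg, $P$ is \emph{maximal among reflexive parabolic subgroups} (this was recorded at the start of \S\ref{sefunphgw}: $\th$ is either a single $-w_0$-fixed simple root or a pair exchanged by $-w_0$), whence $Q=P$. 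Consequently ${\rm Ad}w\,\g u\neq\g u$ implies ${\rm Ad}w\,\g z\neq\g z$, and since $W$ permutes the weight spaces, $M(w)=\pi\,{\rm Ad}w\,\pi$ has nontrivial kernel (not the stronger claim $\pi\,{\rm Ad}w|_{\g z}=0$ that you first write down --- only non-injectivity is needed or true in general), so $\Phi(w)=0$ and $\Phi(g)=0$ by $a)$. If you replace your deferred verification by this maximality argument, your proof becomes complete and coincides with the paper's.
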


\begin{proof} $a)$ By \eqref{eqnadvupi}  one has the equality:
$M( v\ell g\ell' u)={\rm Ad}\ell\, M(g)\,{\rm Ad}\ell'$.

$b)$ 
If $g$ is in $\Omega$, one can write $g=v\ell u$
with $v\in U^-$, $\ell\in L$, $u\in U$, and one has
$\Phi(g)=\chi(\ell)\neq 0\, .$

By the Bruhat decomposition, 
one can write 
$g=v w\ell u$ with $v\in U^-$, $w\in W$, $\ell\in L$, $u\in U$. If $g$ is not in $\Omega$, one has 
${\rm Ad}w\,\g u\neq \g u$.
The normalizer $Q$ of $\g z$ in $G$ 
is a subgroup of $G$ containing $P$.
This group $Q$ is a reflexive parabolic subgroup of $G$.
Since the normalizer $P$ of $\g u$ is maximal among the reflexive parabolic subgroups of $G$, one has
$P=Q$. Therefore, one also has 
${\rm Ad}w\,\g z\neq \g z$.
Since $W$ permutes the weight spaces $\g g_\al$,
this implies that the matrix $M(w)=\pi\, {\rm Ad}w\, \pi\in {\rm End}(\g z)$ is not invertible. 
Hence one has $\Phi(w)=0$ and, by $a)$, one also has $\Phi(g)= 0$.
\end{proof}

\begin{Cor}
\label{corphg}
With the notation \ref{notacthor}. 
Assume that $\g u$ is reflexive commutative or Heisenberg.\\
$a)$ The set 
$\Phi(\Gamma) $
is a closed discrete subset of $\m R$.\\
$b)$  The set $\Phi(\Gamma w_0) $
is a closed discrete subset of $\m R$.
\end{Cor}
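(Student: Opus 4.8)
The plan is to prove both parts simultaneously by reducing the statement about $\Phi(\Ga)$ (and $\Phi(\Ga w_0)$) on all of $\Ga$ to the statement about $\Phi(\Gamma\cap\Omega)$, which is already established in Proposition \ref{prophg}. The essential point provided by Lemma \ref{lemphg}.$b$ is that $\Phi$ vanishes precisely off the Zariski open set $\Omega$. Hence for any $g\in\Ga$, either $g\in\Omega$ and $\Phi(g)\in\Phi(\Gamma\cap\Omega)$, or $g\notin\Omega$ and $\Phi(g)=0$. In other words one has the set equality
\begin{equation*}
\Phi(\Ga)=\Phi(\Ga\cap\Omega)\cup\{0\}.
\end{equation*}
Since by Proposition \ref{prophg} the set $\Phi(\Ga\cap\Omega)$ is closed and discrete in $\m R$, adjoining the single point $0$ keeps it closed, and the union remains discrete because $0$ is an isolated point of $\{0\}$ and the only possible accumulation of $\Phi(\Ga\cap\Omega)$ near $0$ is ruled out by discreteness of that set. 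This gives part $a)$.

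For part $b)$, I would apply exactly the same dichotomy to the translated set $\Ga w_0$. Here the key is that right-translation by $w_0$ is a bijection of $\Ga$ onto $\Ga w_0$, and that $\Phi(gw_0)=0$ if and only if $gw_0\notin\Omega$, again by Lemma \ref{lemphg}.$b$. Writing $\Ga' :=\Ga w_0$, which is a fixed translate of the discrete group $\Ga$, one gets
\begin{equation*}
\Phi(\Ga w_0)=\Phi\big((\Ga w_0)\cap\Omega\big)\cup\{0\},
\end{equation*}
and it suffices to know that $\Phi\big((\Ga w_0)\cap\Omega\big)$ is closed and discrete. The mild subtlety is that $\Ga w_0$ is not itself a subgroup, so Proposition \ref{prophg} does not apply verbatim. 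I would resolve this by re-running the proof of Proposition \ref{promgx} and Proposition \ref{prophg} with $\Ga$ replaced by the coset $\Ga w_0$: the only group-theoretic facts used in those proofs are that $\Delta^-$ is cocompact in $U^-$ (unchanged), that one can absorb elements $\delta_n\in\Delta\subset\Ga$ on the left, and that the resulting conjugates $\ga_n=\delta_n g_n e^{X_n}g_n^{-1}\delta_n^{-1}$ lie in the discrete set $\Ga$. Since $g_n\in\Ga w_0$ and $\delta_n\in\Ga$, the conjugate $\delta_n g_n e^{X_n} g_n^{-1}\delta_n^{-1}$ still lies in $\Ga$ (the $w_0$ factors cancel in the conjugation $g_n e^{X_n} g_n^{-1}$, as $X_n\in\g z$ and $g_n e^{X_n}g_n^{-1}$ is a product of an element of $\Ga$, a generator of $\Ga$, and the inverse), so the discreteness argument goes through unchanged.

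The main obstacle I anticipate is precisely this point: verifying that the Mahler-compactness machinery of Proposition \ref{prophg} remains valid for the coset $\Ga w_0$ rather than the group $\Ga$. Once one checks that all the conjugates $g_n e^{X_n} g_n^{-1}$ with $g_n=\gamma_n w_0$ and $\gamma_n\in\Ga$ still produce elements that land in the discrete set $\Ga$ after left-multiplication by $\Delta$, every step — the cocompactness extraction in $U^-$, the identity $\Phi(v\ell g\ell' u)=\chi(\ell)\chi(\ell')\Phi(g)$ from Lemma \ref{lemphg}.$a$, the lower bound \eqref{eqninfmin} on shortest vectors, the covolume bound \eqref{eqnsupcov}, and the final constancy of $|\Phi(g_n)|$ — transfers without modification. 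The genuinely new content is therefore organizational rather than technical: one formulates Propositions \ref{promgx} and \ref{prophg} for an arbitrary left-$\Ga$-coset intersected with $\Omega$, applies this to the trivial coset for part $a)$ and to $\Ga w_0$ for part $b)$, and in both cases appends the point $0$ coming from $G\smallsetminus\Omega$ via Lemma \ref{lemphg}.$b$.
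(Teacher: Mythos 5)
Your part $a)$ is correct and is exactly the paper's argument: by Lemma \ref{lemphg}.$b$ one has $\Phi(\Gamma)=\Phi(\Gamma\cap\Omega)\cup\{0\}$, and Proposition \ref{prophg} finishes it. The problem is part $b)$. The gap is precisely at the point you flag and then claim to resolve: the statement that ``the $w_0$ factors cancel'' and that $g_ne^{X_n}g_n^{-1}$ is a product of elements of $\Gamma$ is false. Writing $g_n=\gamma_nw_0$ with $\gamma_n\in\Gamma$, one gets
$$
\delta_n g_n e^{X_n}g_n^{-1}\delta_n^{-1}
=\delta_n\,\gamma_n\, e^{{\rm Ad}w_0(X_n)}\,\gamma_n^{-1}\,\delta_n^{-1},
$$
and ${\rm Ad}w_0(X_n)$ lies in $\g z^-\subset\g u^-$, not in $\g u$. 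The element $e^{{\rm Ad}w_0(X_n)}$ belongs to $U^-$ but there is no reason for it to belong to $\Gamma$: the lattice ${\rm Ad}w_0(\Lambda\cap\g z)$ of $\g z^-$ need not be contained in (or even commensurable with) $\Lambda^-\cap\g z^-$, since $w_0$ is merely a Weyl group representative with no relation to the arithmetic of $\Delta^-$. Consequently the sequence $\delta_ng_ne^{X_n}g_n^{-1}\delta_n^{-1}$ does not lie in the discrete group $\Gamma$, nor in any fixed discrete subset of $G$, and the crucial step ``converges, hence eventually constant'' in the proof of Proposition \ref{promgx} collapses. Your re-run of the Mahler machinery for the coset $\Gamma w_0$ therefore does not go through.

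The paper circumvents this entirely differently: it never works with the coset $\Gamma w_0$ dynamically, but instead uses the Zariski density of $\Gamma$ to produce an element $g_0\in\Gamma$ with ${\rm Ad}g_0(\g u)=\g u^-$. Then ${\rm Ad}(g_0^{-1}w_0)$ preserves $\g z$, the block $M_0:=\pi\,{\rm Ad}(g_0^{-1}w_0)\,\pi$ is an invertible element of ${\rm GL}(\g z)$, and one checks the purely algebraic identity $M(gw_0)=M(gg_0)M_0$ for all $g\in G$. Taking determinants gives $\Phi(gw_0)=m_0\,\Phi(gg_0)$ with $m_0:=\det_{\g z}(M_0)\neq 0$, so that $\Phi(\Gamma w_0)=m_0\,\Phi(\Gamma g_0)=m_0\,\Phi(\Gamma)$, and part $b)$ reduces to part $a)$. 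If you want to salvage your outline, you should replace the coset $\Gamma w_0$ by $\Gamma g_0$ for such a $g_0\in\Gamma$ (for which $\Gamma g_0=\Gamma$ and no new dynamics is needed) and then transport the result back to $\Gamma w_0$ by the fixed scalar $m_0$.
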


\begin{proof}
$a)$ This follows from Proposition \ref{prophg} and Lemma \ref{lemphg}

$b)$ Even though the element $w_0$ might not be 
in $\Ga$, we will deduce point $b)$ from point $a)$. Indeed, since $\Ga$
is Zariski dense in $G$, there exists an element $g_0$ in $\Ga$ 
such that ${\rm Ad}g_0(\g u)=\g u^-$.
Since one has the equality 
${\rm Ad}(g_0^{-1}w_0)(\g z)=\g z$
the following endomorphism $M_0$ is invertible
$$
M_0:=\pi\,{\rm Ad}(g_0^{-1}w_0)\, \pi
\in {\rm GL}(\g z)
$$ 
and we compute, for all $g\in G$,
$$
M(gw_0)= 
M(gg_0)M_0,
$$
and, setting $m_0:={\rm det}_\g z(M_0)$,
we deduce 
$$
\Ph(gw_0)=m_0\,\Ph(gg_0)\, .
$$
Therefore the set $\Ph(\Ga w_0)=m_0\,\Ph(\Ga)$ is  a closed discrete
subset of $\m R$.
\end{proof}

%34
\subsection{The polynomials $F(X)$ and $G(X,Y)$}
\label{secpolfx}
\bq
We introduce  in this section the polynomials 
$F(X)$ on $\g u$ and $G(X,Y)$ on $\g u\times \g u^-$
and study the pairs of automorphisms of $\g u$
and $\g u^-$
that preserve the polynomials 
$F(X)$ and $G(X,Y)$.
Note that this purely algebraic section 
does not involve the group $\Gamma$.
\eq

\begin{Def}
\label{defpolfg}
Let $F:\g u\ra \m R$ and $G:\g u\times \g u^-\ra\m R$
be the polynomial functions given by, for $X$ in $\g u$ and $Y$ in $\g u^-$,  
\begin{equation}
\label{eqnpolfg}
F(X):=\Phi(e^Xw_0)
\;\;\;{\rm and}\;\;\;
G(X,Y):=\Phi(e^Xe^Y).
\end{equation}
\end{Def}

\begin{Cor}
\label{corpolfg}
With the notation \ref{notacthor}. 
Assume that $\g u$ is reflexive commutative or Heisenberg.\\
$a)$ The set $F(\La)$ is a closed discrete subset of $\m R$.\\
$b)$ The set $G(\La\times\La^-)$ is also a closed discrete subset of $\m R$.
\end{Cor}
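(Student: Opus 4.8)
The plan is to deduce both statements directly from Corollary \ref{corphg} by a simple inclusion argument, since all of the analytic work (the Mahler compactness argument underlying Proposition \ref{prophg}) has already been carried out to show that $\Phi(\Gamma)$ and $\Phi(\Gamma w_0)$ are closed and discrete. The point is that $F(\La)$ and $G(\La\times\La^-)$ are subsets of these two sets.

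First I would record the relevant inclusions, using that, by Notation \ref{notacthor}, one has $\exp(\La)\subset\Delta\subset\Gamma$ and $\exp(\La^-)\subset\Delta^-\subset\Gamma$. For $X$ in $\La$ this gives $e^X\in\Gamma$, hence $e^Xw_0\in\Gamma w_0$, so that $F(X)=\Phi(e^Xw_0)\in\Phi(\Gamma w_0)$; thus $F(\La)\subset\Phi(\Gamma w_0)$. Similarly, for $X$ in $\La$ and $Y$ in $\La^-$ one has $e^X,e^Y\in\Gamma$, whence $e^Xe^Y\in\Gamma$ and $G(X,Y)=\Phi(e^Xe^Y)\in\Phi(\Gamma)$; thus $G(\La\times\La^-)\subset\Phi(\Gamma)$.

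Second, I would invoke the elementary topological fact that a subset of a closed discrete subset of $\m R$ is again closed and discrete: a closed discrete set has no accumulation point in $\m R$, and no subset of it can acquire one. Combining this observation with the inclusions above and with Corollary \ref{corphg}.b for part $a)$ and Corollary \ref{corphg}.a for part $b)$ yields both claims simultaneously.

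I expect no genuine obstacle here, as all the difficulty resides in the earlier Mahler-compactness argument behind Proposition \ref{prophg} and its consequence Corollary \ref{corphg}. The only step deserving explicit mention is the monotonicity of the closed-discrete property under passage to subsets, which is precisely what licenses the reduction from the group $\Gamma$ to the Lie lattices $\La$ and $\La^-$.
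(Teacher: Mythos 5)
Your proposal is correct and is exactly the argument the paper intends: its proof of this corollary is the one-line "This follows from Corollary \ref{corphg}", and your inclusions $F(\La)\subset\Phi(\Ga w_0)$ and $G(\La\times\La^-)\subset\Phi(\Ga)$, together with the stability of the closed-discrete property under passage to subsets, are precisely the omitted details.
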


\begin{proof}
This follows from Corollary \ref{corphg}.
\end{proof}

The polynomials $F$ and $G$ satisfy the following equivariant properties
with $\chi$ as in \eqref{eqnchidet}:

\begin{Lem}
\label{lemfgadl}
With the notation \ref{notacthor}. 
For $\ell$ in $L$, $X$ in $\g u$, $Y$ in $\g u^-$, one has
\begin{equation}
\label{eqnfadl}
F({\rm Ad}\ell\, X)=\chi(\ell)^2\, F(X)\, ,
\end{equation}
\begin{equation}
\label{eqngadl}
G({\rm Ad}\ell\, X, {\rm Ad}\ell\, Y)=G(X,Y)\, .
\end{equation}
\end{Lem}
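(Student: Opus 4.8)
The plan is to reduce both identities to the semi-invariance of $\Phi$ proved in Lemma \ref{lemphg}.a, using only the elementary identity $e^{{\rm Ad}\ell\, X}=\ell\, e^{X}\ell^{-1}$ valid for $\ell\in L$ and any $X\in\g g$. Throughout I keep $\chi(\ell)={\rm det}_{\g z}({\rm Ad}\ell)$ as in \eqref{eqnchidet}, and recall that $\chi$ is a character of $L$, so $\chi(\ell^{-1})=\chi(\ell)^{-1}$. The invariance \eqref{eqngadl} of $G$ is the quick one: applying the exponential identity to both factors gives $G({\rm Ad}\ell\,X,{\rm Ad}\ell\,Y)=\Phi(\ell\,e^{X}\ell^{-1}\ell\,e^{Y}\ell^{-1})=\Phi(\ell\,e^{X}e^{Y}\ell^{-1})$. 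Since $e^{X}\in U$ and $e^{Y}\in U^-$, I read this as $\Phi$ evaluated on $\ell\,g\,\ell^{-1}$ with $g=e^{X}e^{Y}$, and Lemma \ref{lemphg}.a (with trivial $U^-$ and $U$ factors) gives $\chi(\ell)\chi(\ell^{-1})\Phi(g)=\Phi(e^{X}e^{Y})=G(X,Y)$, because $\chi(\ell)\chi(\ell^{-1})=1$.

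For \eqref{eqnfadl} the same first step gives $F({\rm Ad}\ell\,X)=\Phi(\ell\,e^{X}\ell^{-1}w_0)$, where now the factor $\ell^{-1}$ sits between $e^{X}\in U$ and $w_0$. The idea is to push it through $w_0$. Since $w_0$ conjugates $P$ to $P^-$ and $P^-$ to $P$ by reflexivity (via \eqref{eqnwouwou}), it normalizes their common Levi, so $w_0Lw_0^{-1}=L$; hence $\ell'':=w_0^{-1}\ell^{-1}w_0\in L$ and $\ell^{-1}w_0=w_0\ell''$. This gives $F({\rm Ad}\ell\,X)=\Phi(\ell\,e^{X}w_0\,\ell'')$, and Lemma \ref{lemphg}.a applied to $g=e^{X}w_0$ yields $F({\rm Ad}\ell\,X)=\chi(\ell)\,\chi(\ell'')\,F(X)$.

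It then remains to show $\chi(\ell'')=\chi(\ell)$, which is the only non-formal point and where reflexivity is used once more. Here $\g z=\g g_s$ is the center of $\g u$ (Lemma \ref{lemnonroo}.c) and ${\rm Ad}w_0$ carries it isomorphically onto the center $\g z^-=\g g_{-s}$ of $\g u^-$. Writing ${\rm Ad}\ell''={\rm Ad}w_0^{-1}\,{\rm Ad}\ell^{-1}\,{\rm Ad}w_0$ and restricting to $\g z$, the two outer maps ${\rm Ad}w_0\colon\g z\to\g z^-$ and ${\rm Ad}w_0^{-1}\colon\g z^-\to\g z$ contribute determinants whose product is $1$ (their composite is the identity of $\g z$), so $\chi(\ell'')={\rm det}_{\g z^-}({\rm Ad}\ell^{-1})={\rm det}_{\g z^-}({\rm Ad}\ell)^{-1}$. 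Finally, the Killing form pairs $\g g_s$ and $\g g_{-s}$ non-degenerately and is ${\rm Ad}\ell$-invariant, so the action of ${\rm Ad}\ell$ on $\g z^-$ is contragredient to its action on $\g z$; hence ${\rm det}_{\g z^-}({\rm Ad}\ell)=\chi(\ell)^{-1}$ and therefore $\chi(\ell'')=\chi(\ell)$. Substituting back gives $F({\rm Ad}\ell\,X)=\chi(\ell)^2F(X)$, as claimed. I expect this last determinant bookkeeping—the appearance of the \emph{square} $\chi(\ell)^2$ through the passage of $\ell^{-1}$ across $w_0$—to be the only real obstacle; the remaining manipulations are purely formal applications of Lemma \ref{lemphg}.a.
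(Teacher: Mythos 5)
Your proof is correct and follows essentially the same route as the paper: both reduce \eqref{eqnfadl} and \eqref{eqngadl} to the semi-invariance \eqref{eqnphlvglu} of $\Phi$ by writing $e^{{\rm Ad}\ell\,X}w_0=\ell\,e^Xw_0\,(w_0^{-1}\ell^{-1}w_0)$ and $\ell\,e^Xe^Y\ell^{-1}$. The only difference is that you actually justify the identity $\chi(w_0^{-1}\ell^{-1}w_0)=\chi(\ell)$ via the Killing-form duality between $\g z$ and $\g z^-$, whereas the paper simply asserts the equivalent statement \eqref{eqnchiwo} without proof.
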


\begin{proof} Note that, for all $\ell$ in $L$, 
one has 
\begin{equation}
\label{eqnchiwo}
\chi(w_0\,\ell\, w_0^{-1})=\chi(\ell)^{-1}\, .
\end{equation}
One computes using \eqref{eqnphlvglu} 
\begin{eqnarray*}
F({\rm Ad}\ell\, X)&=&
\Phi(\ell\, e^X\, w_0\, w_0^{-1}\,\ell^{-1}\, w_0)\\
&=&
\chi(\ell)\Phi(  e^X\, w_0)\chi( w_0^{-1}\,\ell^{-1}\, w_0)
\; =\;\chi(\ell)^2\, F(X)\, .
\end{eqnarray*}
Similarly, one computes
\begin{eqnarray*}
G({\rm Ad}\ell\, X, {\rm Ad}\ell\, Y)
&=&
\Phi(\ell \, e^X\, e^Y\, \ell^{-1})\\
&=&\chi(\ell)\, G(X,Y)\, \chi(\ell)^{-1}
\;=\; G(X,Y)\, .
\end{eqnarray*} 
\end{proof}

In the next proposition, we identify implicitly, using the adjoint action, the group $L$ 
as a subgroup of the group ${\rm Aut}(\g u)$ of automorphisms of $\g u$.
Note that when $G$ is adjoint, the adjoint representation of $L$ on $\g u$ is faithful.

The following proposition and corollary are a converse to Lemma \ref{lemfgadl}.

\begin{Prop}
\label{profphf}
Let $G$ be an adjoint semisimple real algebraic Lie group, let
$U$, $U^{-}$ be opposite horospherical 
subgroups,
and $L:=P\cap P^-$ be the intersection of their normalizers.  
We assume that $U$ is reflexive commutative 
or Heisenberg.\\
$a)$ Let $H:=\{\ph\in {\rm Aut}(\g u)\mid F\circ\ph 
\;\mbox{is proportional to}\; F\}.$ Then the group 
$L$ has finite index in $H$.\\
$b)$ Let $\ell$, $\ell'$ be elements of $L$ such that,
for all $X$ in $\g u$, $Y$ in $\g u^{-}$, one has 
\begin{equation}
\label{eqnglxly}
G({\rm Ad}\ell\, X, {\rm Ad}\ell'\, Y)
= G(X,Y)
\end{equation}
Then one has the equality $\ell=\ell'$.
\end{Prop}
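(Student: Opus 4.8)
The plan is to prove the two parts by quite different means: part $(b)$ is self-contained and follows by isolating the lowest-order term of the polynomial $G$, whereas part $(a)$ is the genuinely algebraic statement and, as the authors signal, is the real obstacle.

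For part $(b)$, I would first use the diagonal $L$-invariance \eqref{eqngadl} to turn the hypothesis into a one-sided statement. Combining \eqref{eqnglxly} with \eqref{eqngadl} gives $G({\rm Ad}\ell\, X,{\rm Ad}\ell'\, Y)=G({\rm Ad}\ell\, X,{\rm Ad}\ell\, Y)$ for all $X,Y$; since $X\mapsto {\rm Ad}\ell\, X$ is onto $\g u$ and $Y\mapsto{\rm Ad}\ell\, Y$ is onto $\g u^-$, writing $m:=\ell'\ell^{-1}\in L$ this becomes
$$
G(X,{\rm Ad}m\, Y)=G(X,Y)\qquad\text{for all }X\in\g u,\ Y\in\g u^-.
$$
Because $G$ is adjoint, the adjoint action of $L$ on $\g u^-$ is faithful, so it suffices to show that this identity forces ${\rm Ad}m|_{\g u^-}={\rm id}$, whence $m=e$ and $\ell=\ell'$.

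The key is to read off the bihomogeneous component of $G$ of degree $(1,1)$ in $(X,Y)$. Writing $M(e^Xe^Y)=\pi\, e^{{\rm ad}X}e^{{\rm ad}Y}\,\pi$ and using Lemma \ref{lemmgxadl}.$a$ (which gives $M(e^X)=M(e^Y)={\rm id}_{\g z}$ and kills the pure $X$- and pure $Y$-terms), one finds $M(e^Xe^Y)={\rm id}_{\g z}+N(X,Y)$ where $N$ has no part of bidegree $(1,0)$ or $(0,1)$ and whose $(1,1)$-part is $\pi\,({\rm ad}X)({\rm ad}Y)\,\pi$. Expanding $G=\det_{\g z}({\rm id}+N)=1+{\rm tr}_{\g z}(N)+\cdots$, the lowest nonconstant bihomogeneous component is the bilinear form
$$
B(X,Y):={\rm tr}_{\g z}\big(({\rm ad}X)({\rm ad}Y)\big),\qquad X\in\g u,\ Y\in\g u^-.
$$
Since ${\rm Ad}m$ is linear and grading-preserving, comparing $(1,1)$-parts in $G(X,{\rm Ad}m\, Y)=G(X,Y)$ yields $B(X,{\rm Ad}m\, Y)=B(X,Y)$ for all $X,Y$. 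Hence, once $B$ is shown to be a nondegenerate pairing $\g u\times\g u^-\to\m R$, the vector ${\rm Ad}m\, Y-Y\in\g u^-$ is $B$-orthogonal to all of $\g u$, so ${\rm Ad}m\, Y=Y$ and part $(b)$ is done.

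It remains to prove that $B$ is nondegenerate. Here I would fix a Cartan involution $\vartheta$ adapted to the grading, so that $\vartheta(\g g_j)=\g g_{-j}$ and $\langle X,Y\rangle_\vartheta:=-\kappa(X,\vartheta Y)$ is positive definite, $\kappa$ denoting the Killing form. As $({\rm ad}X_i)({\rm ad}Y_{-j})$ preserves $\g z=\g g_s$ only when $i=j$, the form $B$ is block-diagonal for $\g u=\bigoplus_i\g g_i$, $\g u^-=\bigoplus_i\g g_{-i}$, and a short computation using ${\rm ad}$-invariance of $\kappa$ and the identity $\vartheta[X_i,\vartheta W]=[\vartheta X_i,W]$ gives, for $X_i\in\g g_i$ and an orthonormal basis $W$ of $\g z$,
$$
{\rm tr}_{\g z}\big(({\rm ad}X_i)({\rm ad}\vartheta X_i)\big)=-\sum_{W}\|[X_i,\vartheta W]\|_\vartheta^2\le 0,
$$
with equality iff $[X_i,\g g_{-s}]=0$. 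Thus $X\mapsto B(X,\vartheta X)$ is negative definite, and $B$ nondegenerate, as soon as $[X_i,\g g_{-s}]\ne0$ for every $X_i\ne 0$. For $i=s$ this follows from $\kappa([X,Y],h_\th)=s\,\kappa(X,Y)$ together with nondegeneracy of $\kappa$ on $\g g_s\times\g g_{-s}$. The only remaining case is $i<s$, which occurs solely in the Heisenberg case ($s=2$, $\g z=\g g_2$ one-dimensional); there one uses that $\g u$ is a Heisenberg algebra, so the bracket $\g g_1\times\g g_1\to\g z$ is a nondegenerate alternating form and, via the $\g{sl}_2$-theory of the highest root together with $[\g g_{-1},\g g_2]=\g g_1$ (Lemma \ref{lemgralie}.$a$), ${\rm ad}(\g g_{-2})$ is injective on $\g g_1$, forcing $[X_1,\g g_{-2}]\ne 0$.

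Part $(a)$ is where the work lies, and I expect it to be the main obstacle. The inclusion $L\subseteq H$ is immediate from \eqref{eqnfadl}, since ${\rm Ad}\ell$ multiplies $F$ by $\chi(\ell)^2$. The reverse — that $L$ has finite index in $H$ — is delicate and, as announced, is postponed to Chapter \ref{secapphor}. The natural strategy is to regard both $L$ and $H$ as subgroups of the group ${\rm Aut}_{gr}(\g u)$ of graded automorphisms of $\g u$ and to invoke the structural fact that, for $\g g$ absolutely simple, $L$ is either all of ${\rm Aut}_{gr}(\g u)$ or maximal in it up to finite index; one then checks, using the classification of reflexive commutative and Heisenberg horospherical subalgebras, that the stabilizer $H$ of the line $\m R F$ cannot strictly contain $L$ (up to finite index) unless $L$ already equals ${\rm Aut}_{gr}(\g u)$. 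Concretely this amounts to identifying $F$ as a relative invariant — a norm form, Pfaffian, or reduced norm according to the case — and computing its full group of symmetries, which is the case-by-case heart of the matter.
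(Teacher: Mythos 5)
Your treatment of part $b)$ is correct and follows the same skeleton as the paper's: isolate the total-degree-two (equivalently, bidegree $(1,1)$) component of $G$, which is exactly the pairing $G_2(X,Y)={\rm tr}_{\g z}(\pi\,{\rm ad}X\,{\rm ad}Y\,\pi)$ of Lemma \ref{lemfphf}, and conclude from its non-degeneracy together with the faithfulness of ${\rm Ad}|_{\g u^-}$ on $L$. Where you genuinely differ is in the proof of non-degeneracy. The paper shows $G_2\neq 0$ by evaluating it on explicit $\g s\g l_2$-triples (Lemmas \ref{lemfgcom}.b and \ref{lemfghei0}.b) and then invokes $L$-invariance together with the structure of the $L$-action on $\g u$; your argument via a graded Cartan involution, showing that $X\mapsto G_2(X,\vartheta X)$ is negative definite once one knows $[X,\g g_{-s}]\neq 0$ for all $X\neq 0$, is more self-contained and avoids that representation-theoretic input. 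It is correct, including the reduction of the remaining Heisenberg case $i=1$ to the injectivity of ${\rm ad}(y_0)$ on the weight-one space of the $\g s\g l_2$-triple through $\g g_{\pm2}$ --- though note that $\g z=\g g_{\widetilde\al}$ is a restricted root space and need not be one-dimensional (see Table \ref{tabhorrea}), a slip that does not affect the argument.

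Part $a)$, however, is not proved: you record the inclusion $L\subset H$ and then only describe the intended strategy. That strategy does coincide with the paper's (Sections \ref{secpolcom} and \ref{secpolhei}), but the substance lies precisely in the steps you leave out. Concretely: (1) in the Heisenberg case $\g u$ is not abelian and $F$ is not homogeneous, so one must first show that every $\ph\in H$ preserves the grading $\g u=\g g_1\oplus\g g_2$ (Lemma \ref{lemfghei0}.c, using that $\g g_1=\{X\in\g u\mid F_{2d}(X)=0\}$) and then work with the top homogeneous component $F_{4d}$ on $\g g_1$, whose non-vanishing (Lemma \ref{lemfghei1}.b) itself requires producing an $\g s\g l_2$-triple with $[x'_0,y'_0]=2h_0$; (2) the maximality of $\g l$ in ${\g g\g l}_\m R(\g u)$, resp.\ in $\g d\g e\g r_{gr}(\g u)$, is not a citable black box but the case-by-case heart of the matter (Lemmas \ref{lemfgcom}.c, \ref{lemfghei1}.c and \ref{lemfghei2}, via Dynkin's lists); (3) maximality actually fails when $\g g$ has a complex structure, where ${\g g\g l}_{\m C}(\g u)$ is a genuine intermediate algebra that has to be excluded by a separate argument (Lemma \ref{lemfgcom}.d); and (4) one must still rule out $\g h$ being the full algebra, which the paper does by observing that ${\rm SL}(\g u)$ (resp.\ the derived group of ${\rm Aut}_{gr}(\g u)$) has an open orbit in $\g u$ (resp.\ $\g v$), so that the non-constant polynomial $F$ (resp.\ $F_{4d}$) cannot be semi-invariant for it. As written, your part $a)$ is a correct road map but not a proof.
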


We postpone the purely algebraic 
proof of Proposition \ref{profphf}.a 
in Section \ref{secpolcom} 
and \ref{secpolhei},
splitting it according to the two cases:
when $U$ is reflexive commutative 
and when $U$ is Heisenberg.
The proof of Proposition \ref{profphf}.b will follow 
from the following lemma.

\begin{Lem}
\label{lemfphf}
With the notation as in Proposition \ref{profphf}.
The bilinear form on $\g u\times \g u^-$
given, for 
all $X$ in $\g u$, $Y$ in $\g u^{-}$,  by 
\begin{equation}
\label{eqng2xy}
\textstyle
G_2(X,Y):={\rm tr}_{\g z}
(\pi\, {\rm ad}X\,{\rm ad}Y\,\pi)
\end{equation}
is a
non-degenerate duality between $\g u$ and $\g u^-$.
\end{Lem}

The proof of Lemma \ref{lemfphf} will also be 
postponed  
in Section \ref{secpolcom} 
and \ref{secpolhei}.

\begin{proof}[Proof of Proposition \ref{profphf}.b using Lemma \ref{lemfphf}]
One has the equality
\begin{eqnarray*}
G(X,Y)&=&{\rm det}_\g z(1+
\pi\, {\rm ad}X\,{\rm ad}Y\,\pi +O(\|X\|\,\|Y\|))\\
&=& 
1+{\rm tr_\g z}
(\pi\, {\rm ad}X\,{\rm ad}Y\,\pi) +O(\|X\|\,\|Y\|)\, .
\end{eqnarray*}
Extracting 
the homogeneous component of  degree $2$ in \eqref{eqnglxly}, one gets
$$G_2({\rm Ad}\ell\, X, {\rm Ad}\ell'\, Y)
= G_2(X,Y)
\;\;\mbox{\rm for all $X$ in $\g u$, $Y$ in $\g u^{-}$.}
$$
Since, by Proposition \ref{profphf},
this bilinear form $G_2$ is an $L$-invariant non-dege\-nerate duality, this implies ${\rm Ad}\ell={\rm Ad}\ell'$
on $\g u$ and therefore $\ell=\ell'$.
\end{proof}

%35
\subsection{The single and double  orbits are closed}
\label{secsindou}
\bq
In this section we prove that 
the single and the double $L$-orbits are closed.
Note that this fact does not use the
higher rank assumption on $G$ 
nor the irreducibility assumption on $\Delta$.
\eq

Recall that the {\it single} orbit is the $L$-orbit for the adjoint action of the group $L$ on the space 
$\mc X(\g u)$ of lattices in $\g u$, 
and the {\it double} orbit is the $L$-orbit
on the product space $\mc X(\g u)\times \mc X(\g u^-)$.

\begin{Prop}
\label{prosindou}
Let $G$ be an adjoint semisimple real algebraic Lie group, 
$U$, $U^{-}$ be opposite horospherical 
subgroups,
and $L:=P\cap P^-$ be the intersection of their normalizers.
Let $\Lambda$, $\Lambda^{-}$ be Lie lattices of $\g u$ and $\g u^{-}$ such that the subgroup $\Gamma$ of $G$ generated by $\exp(\Lambda)$ and $\exp(\Lambda^-)$
is discrete.
We assume that $\g u$ is either reflexive commutative or is Heisenberg. Then,
\\
$a)$ the {\it single} $L$-orbit $L\, \Lambda$ is closed in the space $\mc X(\g u)$ of lattices of $\g u$,\\
$b)$ the {\it double} $L$-orbit $L\, (\Lambda,\Lambda^{-})$ is closed in the product 
$\mc X(\g u)\times \mc X(\g u^{-})$.
\end{Prop}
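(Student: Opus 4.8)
The plan is to prove $(a)$ first and then deduce $(b)$ from it. Throughout, the engine will be a single elementary observation that I would isolate as a sublemma: if $(P_n)$ is a sequence of polynomials of bounded degree on a real vector space $V$, converging coefficientwise to $P_\infty$, and if $P_n(\Lambda_0)\subseteq D$ for a fixed lattice $\Lambda_0\subset V$ and a fixed closed discrete set $D\subset\m R$, then $P_n=P_\infty$ for $n$ large. (Choose a finite subset $S\subset\Lambda_0$ unisolvent for the degrees involved; on each $s\in S$ the values $P_n(s)\in D$ converge inside a discrete set, hence are eventually constant, and unisolvence upgrades this to equality of polynomials.) This is precisely the mechanism that converts the discreteness statements of Corollary~\ref{corpolfg} into rigidity.

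For $(a)$, suppose $\ell_n\Lambda\to\Lambda_\infty$ with $\ell_n\in L$. First I would use that the covolume of $\ell_n\Lambda$ converges, so $|\det_{\g u}{\rm Ad}\ell_n|$ stays bounded away from $0$ and $\infty$; since in both the commutative and the Heisenberg case $L$ acts on $\g z$ by similarities, there is a fixed relation $|\det_{\g u}{\rm Ad}\ell|=|\chi(\ell)|^{p}$ with $p>0$, so $\chi(\ell_n)^2\to c>0$ along a subsequence. Next, by Lemma~\ref{lemactlat} I may write ${\rm Ad}\ell_n|_{\g u}=\psi_n^{-1}\psi\sigma_n$ with $\psi_n\to{\rm id}$ in ${\rm Aut}(\g u)$, $\psi\Lambda=\Lambda_\infty$, and $\sigma_n$ in the discrete stabilizer ${\rm Aut}(\g u)_\Lambda$. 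Feeding this into the semi-invariance $F\circ{\rm Ad}\ell_n=\chi(\ell_n)^2F$ of Lemma~\ref{lemfgadl} and substituting $X\mapsto\sigma_n^{-1}X$ gives $\chi(\ell_n)^2\,F\circ\sigma_n^{-1}=F\circ(\psi_n^{-1}\psi)\to F\circ\psi$ coefficientwise; as $F\circ\sigma_n^{-1}$ takes values in the discrete set $F(\Lambda)$ on $\Lambda$ (Corollary~\ref{corpolfg}), the sublemma yields $F\circ(\psi\sigma_n)=cF$ for large $n$, i.e.\ $\psi\sigma_n\in H$ and $\Lambda_\infty=(\psi\sigma_n)\Lambda\in H\Lambda$. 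The same computation run with $h_n\in H$ in place of $\ell_n$ shows that $H\Lambda$ is closed. Finally I would invoke Proposition~\ref{profphf}.$a$: $L$ has finite index in $H$, hence is open in $H$, so the finitely many $L$-orbits inside the closed orbit $H\Lambda\cong H/H_\Lambda$ are open, hence clopen, hence closed; in particular $L\Lambda$ is closed and the orbit map $L/L_\Lambda\to L\Lambda$ is a proper homeomorphism.

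For $(b)$, suppose $\ell_n(\Lambda,\Lambda^-)\to(\Lambda_\infty,\Lambda_\infty^-)$. By $(a)$ on the $\g u$-side, $\Lambda_\infty=\ell\Lambda$ for some $\ell\in L$, and by properness $\ell^{-1}\ell_n=r_n\kappa_n^{-1}$ with $r_n\to e$ in $L$ and $\kappa_n^{-1}\in L_\Lambda$, the stabilizer of $\Lambda$ in $L$. Pushing this through the second coordinate gives $\kappa_n^{-1}\Lambda^-\to\Xi:=\ell^{-1}\Lambda_\infty^-$, where now $\kappa_n^{-1}$ lies in the discrete group $L_\Lambda\subset L$, so the full $L$-invariance $G({\rm Ad}\kappa_n^{-1}X,{\rm Ad}\kappa_n^{-1}Y)=G(X,Y)$ of Lemma~\ref{lemfgadl} is available. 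Writing ${\rm Ad}\kappa_n^{-1}|_{\g u^-}=\beta_n^{-1}\beta\nu_n$ as in Lemma~\ref{lemactlat} (with $\beta_n\to{\rm id}$, $\beta\Lambda^-=\Xi$, $\nu_n\in{\rm Aut}(\g u^-)_{\Lambda^-}$), and noting that ${\rm Ad}\kappa_n^{-1}$ preserves $\Lambda$ on $\g u$, I would substitute into the invariance, apply the sublemma to the polynomial $G$ on $\g u\times\g u^-$ and the discrete set $G(\Lambda\times\Lambda^-)$, then extract the degree-$(1,1)$ part, which is the nondegenerate duality $G_2$ of Lemma~\ref{lemfphf} (the content of Proposition~\ref{profphf}.$b$). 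Nondegeneracy forces $\beta_n={\rm id}$ for large $n$, i.e.\ $\kappa_n^{-1}\Lambda^-=\Xi$ exactly. Then $\tilde\ell:=\ell\kappa_n^{-1}\in L$ satisfies $\tilde\ell\Lambda=\Lambda_\infty$ and $\tilde\ell\Lambda^-=\Lambda_\infty^-$, so $(\Lambda_\infty,\Lambda_\infty^-)\in L(\Lambda,\Lambda^-)$.

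The main obstacle — and the reason the algebraic inputs \ref{profphf} and \ref{lemfphf} are indispensable — is the \emph{contamination by lattice automorphisms}. The soft arguments (Mahler, Lemma~\ref{lemactlat}, the sublemma) only ever produce the limit up to an element of a stabilizer ${\rm Aut}(\g u)_\Lambda$, or only locate it in the a priori larger group $H$, and these stabilizer elements (for instance transpose-type maps in the $M(d,\m R)$ model) need not lie in $L$ nor preserve $F$ or $G$. Closing this gap is exactly what the rigidity of $F$ under ${\rm Aut}(\g u)$ (Proposition~\ref{profphf}.$a$, via openness of $L$ in $H$) and the nondegeneracy of $G_2$ (Lemma~\ref{lemfphf}) accomplish. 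I expect the only other point needing genuine care to be the verification of the structural identity $|\det_{\g u}{\rm Ad}\ell|=|\chi(\ell)|^{p}$ in the Heisenberg case, where it follows from the similarity action on $\g z$ together with the symplectic-type behaviour of the bracket $\g g_1\times\g g_1\to\g z$.
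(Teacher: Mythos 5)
Your proof is correct and follows the paper's own strategy essentially step for step: covolume control of the determinant character, Lemma~\ref{lemactlat} to produce correcting automorphisms tending to the identity, the discreteness of $F(\Lambda)$ and $G(\Lambda\times\Lambda^-)$ from Corollary~\ref{corpolfg} combined with bounded-degree polynomial rigidity (your sublemma is exactly the paper's inline argument via Zariski density of the lattice and uniformly bounded degrees), and finally Proposition~\ref{profphf}.a, resp.\ the nondegeneracy of $G_2$, to land back in $L$. The only differences are organizational --- in $(a)$ you pass through the closedness of $H\Lambda$ and descend by finite index where the paper places the correcting automorphism $\ph_n\to e$ directly into $L$, and in $(b)$ you push the first-factor correction into the discrete stabilizer $L_\Lambda$ and invoke Lemma~\ref{lemfphf} directly where the paper compares two sequences $\eps_n,\eps'_n\to e$ via Proposition~\ref{profphf}.b --- and neither changes the substance.
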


\begin{proof} $a)$ 
Let $\ell_n\in L$ be such that the sequence 
of  lattices ${\rm Ad}\ell_n(\La)$
converges to a lattice $\La_\infty$ of $\g u$. 
We want to find $\ell$ in $L$ such that 
$\La_\infty ={\rm Ad}\ell(\La)$. 

The sequence of covolume of these lattices also converges.
Since the character $\chi$ introduced in
\eqref{eqnchidet}
and the character
$\ell \mapsto {\rm det}_{\rm \g u}({\rm Ad}\ell)$ are proportional, the sequence $\chi(\ell_n)$ also converges.
Therefore, without loss of generality, we can assume that
$$
\chi(\ell_n)=1\, .
$$ 
By Lemma \ref{lemactlat}, there exists 
a sequence of automorphisms $\ph_n\in{\rm Aut}(\g u)$ converging to $e$ such that
\begin{equation*}
{\rm Ad}\ell_n (\La)=\ph_n(\La_\infty)
\;\;\;\mbox{\rm for all $n\geq 1\, $.}
\end{equation*}
For every $X$ in $\La_\infty$, there exists $X_n$ in $\La$
such that
\begin{equation*}
{\rm Ad}\ell_n \, X_n = \ph_n(X)
\;\;\;\mbox{\rm for all $n\geq 1\,$.}
\end{equation*}
By Lemma \ref{lemfgadl}, the elements $\ell_n$ preserve the function $F$ and the sequence 
$$
F(X_n)=F({\rm Ad}\ell_n X_n)= F(\ph_n(X))
\;\;\;\mbox{\rm converges to}\;\;
F(X)\, .
$$
This sequence $F(X_n)$ belongs to the set $F(\Lambda)$
which is closed and discrete by Corollary \ref{corpolfg}.
Therefore this sequence is constant.
Hence, for all $X$ in $\La_\infty$,
there exists an integer $n_X\geq 1$ such that
\begin{equation*}
F(\ph_n(X))= F(X)
\;\;\;\mbox{\rm for all $n\geq n_X\,$.}
\end{equation*}
Since the degrees of the polynomials $F\circ\ph_n-F$ are uniformly bounded,
and since the set $\La_\infty$ is Zariski dense in $\g u$, one can find $n_X$ independent of $X$, that is, there exists 
$n_0\geq 1$ such that
\begin{equation*}
F(\ph_n(X))= F(X)
\;\;\;\mbox{\rm for all $n\geq n_0\,$ and all $X\in \g u$.}
\end{equation*}
Therefore, by Proposition \ref{profphf}.a, the automorphism $\ph_n$ belongs to $L$ for $n$ large.
Hence the single orbit $L\,\La$ is closed.

$b)$ 
The proof is similar to the proof of $a)$. 
Let $\ell_n$ be a sequence in $L$ 
such that the limits 
$$
\La_\infty= \lim_{n\ra\infty}{\rm Ad}\ell_n(\La)
\;\;{\rm and}\;\;
\La^-_\infty= \lim_{n\ra\infty}{\rm Ad}\ell_n(\La^-)
$$
exist.
By $a)$, there exist 
sequences $\eps_n$ and $\eps'_n$ in $L$ converging to $e$ such that
\begin{equation*}
{\rm Ad}\ell_n (\La)={\rm Ad}\eps_n(\La_\infty)
\;\;\mbox{\rm and}\;\;
{\rm Ad}\ell_n (\La^-)={\rm Ad}\eps'_n(\La^-_\infty)\, .
\end{equation*}
For  $(X,Y)$ in $\La_\infty\times\La^-_\infty$,
there exists a sequence $(X_n,Y_n)$ in $\La\times \La^-$
such that 
\begin{equation*}
{\rm Ad}\ell_n \, X_n={\rm Ad}\eps_n\, X
\;\;\mbox{\rm and}\;\;
{\rm Ad}\ell_n \, Y_n={\rm Ad}\eps'_n\, Y\, .
\end{equation*}
By Lemma \ref{lemfgadl}, the elements $\ell_n$ preserve the function $G$ and the sequence 
$$
G(X_n,Y_n)=G({\rm Ad}\ell_n X_n,{\rm Ad}\ell_n Y_n)= 
G({\rm Ad}\eps_n X_n,{\rm Ad}\eps'_n Y_n)
$$
converges to $G(X,Y)$.
Since this sequence $G(X_n,Y_n)$ belongs to the set $G(\Lambda\times\Lambda^-)$
which is closed and discrete by Corollary \ref{corpolfg},
this sequence is constant. 
For all $(X,Y)$ in $\La_\infty\times\Lambda^-_\infty$,
there exists  $n_{X,Y}\geq 1$ such that
\begin{equation*}
G({\rm Ad}\eps_n X_n,{\rm Ad}\eps'_n Y_n)= G(X,Y)
\;\;\;\mbox{\rm for all $n\geq n_{X,Y}\,$.}
\end{equation*}
As before, these $n_{X,Y}$ can be chosen independently of $(X,Y)$.
Therefore, by Proposition \ref{profphf}.b, 
one has $\eps_n=\eps'_n$ for $n$ large.
Hence the double orbit $L\,(\La,\La^-)$ is closed.
\end{proof}

%4
\section{Arithmeticity of $\Gamma$}
\label{secarigam}

The aim of this Chapter is to deduce our main theorem
\ref{thmmar} 
for horospherical subgroups $U$ which are either reflexive commutative or Heisenberg
using the closedness of 
the double $L$-orbit proven in Proposition \ref{prosindou}.
We will prove:

\begin{Prop}
\label{procomhor}
Let $G$ be a semisimple real algebraic Lie group
of real rank at least two, $P$ be a non-trivial parabolic subgroup, 
$U$ be its unipotent radical, 
and $\Gamma$ be a Zariski dense discrete subgroup 
of $G$ that contains an irreducible lattice $\Delta$ of $U$.
Assume  $U$ is reflexive commutative or $U$ is Heisenberg.

Then $\Gamma$ is an irreducible arithmetic lattice of $G$.
\end{Prop}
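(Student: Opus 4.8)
The plan is to reduce Proposition \ref{procomhor} to the dynamical input already established in Proposition \ref{prosindou} and then to build a $\m Q$-form of $G$ out of the resulting rigidity. First I would apply Lemma \ref{lemgamuum} to pass from the single reflexive lattice $\Delta\subset U$ to a pair: since $\Gamma$ is Zariski dense and $U$ is reflexive, $\Gamma$ also contains an irreducible lattice $\Delta^-$ of an opposite horospherical subgroup $U^-$, and by Corollary \ref{cornillat}.a I may assume $\exp(\Lambda)\subset\Delta$ and $\exp(\Lambda^-)\subset\Delta^-$ for Lie lattices $\Lambda,\Lambda^-$. After conjugating I arrange $U=U_\th$, $U^-=U^-_\th$, with $L=P\cap P^-$ the Levi. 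The closedness statement \eqref{quosindou} of Proposition \ref{prosindou} then says the double $L$-orbit $L\,(\Lambda,\Lambda^-)$ is closed in $\mc X(\g u)\times\mc X(\g u^-)$, so the orbit map $L/L_{\Lambda,\Lambda^-}\to \mc X(\g u)\times\mc X(\g u^-)$ is proper onto a closed orbit.

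The crucial next step is to show that the stabilizer $L_{\Lambda,\Lambda^-}$ is infinite, which is exactly property \eqref{eqnlllinf} of the strategy. The higher rank hypothesis enters here through the non-compactness of $L_0:=\{\ell\in L\mid \det_{\g u}(\mathrm{Ad}\,\ell)=1\}$; because the double orbit is closed and $L_0$ is non-compact (when $G$ is not of restricted type $A_2$), a closed $L_0$-orbit with compact stabilizer would force $L_0$ itself to embed properly, which an application of a recurrence or compactness argument (Dani--Margulis recurrence for unipotent flows when $P$ is non-minimal, or the Mahler criterion when $P$ is minimal) rules out, yielding an infinite discrete subgroup of $L_{\Lambda,\Lambda^-}$. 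I expect \textbf{this infiniteness of the stabilizer} to be the main obstacle: it is where the rank-two assumption is genuinely used, it requires the separate minimal/non-minimal case analysis, and it has a genuine exceptional case (restricted type $A_2$) that must be handled by hand.

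Once $L_{\Lambda,\Lambda^-}$ is infinite I would, without loss of generality, enlarge $\Gamma$ so that it contains this infinite group (it already normalizes the relevant lattices, so the enlarged group is still discrete). I then set $Q$ to be the Zariski closure of $\Gamma\cap P$. The point is that $\Gamma\cap P$ contains $\exp(\Lambda)$ together with the infinite stabilizer, so $Q$ carries a canonical $\m Q$-structure on its Lie algebra $\g q$: the unipotent part comes from Lemma \ref{lemnillat} applied to the lattice $\Delta$ in $U$ (giving a $\m Q$-form $\g u_\m Q=\log U_\m Q$), and the reductive directions are pinned down by the arithmetic stabilizer together with the $L$-invariant non-degenerate duality $G_2$ of Lemma \ref{lemfphf}.

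The final step is the propagation of this local $\m Q$-form to all of $\g g$. For each $g\in\Gamma$ the space $\mathrm{Ad}\,g(\g q)$ inherits a $\m Q$-form by transport; I would check these are compatible on overlaps, so they glue to a single $\Gamma$-invariant $\m Q$-form on $\g g$, which by Zariski density of $\Gamma$ descends to a $\m Q$-form $G_\m Q$ of $G$ containing $\Gamma$ (this is Proposition \ref{proextfor}). At this stage $\Gamma$ is a Zariski dense subgroup of $G_\m Z$ containing lattices in the two opposite horospherical subgroups $U$ and $U^-$, both defined over $\m Q$; invoking the Raghunathan--Venkataramana result (Proposition \ref{proragven}) gives that $\Gamma$ has finite index in $G_\m Z$, hence is commensurable with it and is an irreducible non-cocompact arithmetic lattice (Corollary \ref{corextfor}). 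This completes the proof of Proposition \ref{procomhor}.
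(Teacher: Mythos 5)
Your route is the paper's route: pass to a pair of opposite horospherical lattices via Lemma \ref{lemgamuum}, use the closedness of the double $L$-orbit from Proposition \ref{prosindou}, try to show that the stabilizer $L_{\La,\La^-}$ is infinite, and then conclude with Margulis' extension of $\m Q$-forms (Proposition \ref{proextfor}) and Raghunathan--Venkataramana (Proposition \ref{proragven}), i.e.\ Corollary \ref{corextfor}. The case split you indicate (Dani--Margulis recurrence when $P$ is not minimal, Mahler compactness when $P$ is minimal) is also exactly the paper's.

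The genuine gap is the exceptional case you flag but do not treat: restricted type $A_2$, which is precisely the case ``$P$ minimal and $U$ Heisenberg'' and hence squarely inside the hypotheses of Proposition \ref{procomhor}. There the assertion \eqref{eqnlllinf} can fail --- the stabilizer $L_{\La,\La^-}$ need not be infinite --- so the argument does not close by ``handling it by hand'' along the same lines. The paper's Lemma \ref{lemheidic} proves a dichotomy: either the $L_0$-orbit $L_0(\La,\La^-)$ is compact (and then, $L_0$ being non-compact, the stabilizer is infinite and one proceeds as before), or else $\Ga_\al=\Ga\cap G_\al$ is a lattice in the root group $G_\al$ for \emph{every} root $\al$ of the $A_2$ system. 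In the second branch one must abandon the original pair $(U,U^-)$ altogether and apply Corollary \ref{corextfor} to a \emph{different} pair of opposite horospherical subgroups, namely $U'=G_{\al_1}G_{-\al_2}$ and ${U'}^-=G_{-\al_1}G_{\al_2}$, whose common Levi $L'$ meets $\Ga$ in the infinite group $\Ga_{\al_3}$. Establishing branch (ii) itself requires work (Auslander's projection theorem, commutator relations among the $\Ga_\al$), none of which is sketched in your proposal; without this switch of horospherical pair the proof is incomplete. A smaller inaccuracy: in Proposition \ref{proextfor} the $\m Q$-form on $\g q$ is not ``pinned down by the duality $G_2$ of Lemma \ref{lemfphf}''; it comes from Lemma \ref{lemnillat} (the lattice $\De$ determines $U_{\m Q}$) together with the observation that $\Ga\cap P$ lies in ${\rm Aut}(U_{\m Q})\ltimes U_{\m Q}$, so its Zariski closure $Q$ is defined over $\m Q$. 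The bilinear form $G_2$ is used only in the proof of Proposition \ref{profphf}.b, i.e.\ inside the closedness argument, not in the construction of the $\m Q$-form.
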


\begin{Rem}
\label{remcomhor}
Without loss of generality, we can assume $G$ adjoint. 
By Lemma \ref{lemgamuum}, the group $G$ has no compact factors 
and  
$\Gamma$ is a discrete subgroup that contains $\exp(\Lambda)$ and $\exp(\Lambda^-)$
where $\Lambda$, $\Lambda^{-}$ are irreducible lattices in opposite 
horospherical Lie subalgebras $\g u$ and $\g u^{-}$. 
Let $L:=P\cap P^-$ be the intersection of their  normalizers. 
By Proposition \ref{prosindou}, we know that  
the  double  $L$-orbit $L\, (\Lambda,\Lambda^{-})$ is closed.
The proof of Proposition \ref{procomhor} will then be divided into three cases 
which are studied separately in
Sections \ref{secnotmin}, \ref{secmincom} and  \ref{secminhei}. 
\end{Rem}

%41 
\subsection{Irreducible horospherical lattices}
\label{secirrhor}
\bq
We first discuss the definition \ref{defirrlat} of irreducible and 
indecomposable lattices of $U$ 
\eq

The following lemma tells us that for a lattice 
$\Delta\subset U$ that is included in a discrete
Zariski dense subgroup of $G$, 
being {\it irreducible} is equivalent to being
{\it indecomposable}.
 
\begin{Lem}
\label{lemirrhor}
Let $G$ be a semisimple real algebraic Lie group, 
$U\subset G$ a non-trivial horospherical subgroup, 
and $\Delta\subset U$ a lattice of $U$ which is contained in a discrete Zariski dense
subgroup $\Gamma$ of $G$. Then the following are equivalent:\\ 
$i)$ $\Gamma$ is irreducible.\\
$ii)$ $\De$ is irreducible.\\
$iii)$ $\De$ is indecomposable.
\end{Lem}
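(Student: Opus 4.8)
The plan is to prove the three implications $i)\Rightarrow ii)\Rightarrow iii)\Rightarrow i)$ (or some suitable cycle), exploiting the interplay between the normal subgroups of $G$, the horospherical structure of $U$, and Zariski density of $\Gamma$. The central structural fact I would use is that $G$ decomposes (up to finite index / isogeny) as a product of its simple normal factors $G_a$, $a\in\mc A$, and that a proper normal subgroup $G'$ is (again up to finiteness) a product of a subcollection of these factors, with complementary factor $G''$. Correspondingly $U$ splits as $U=\prod_a U_a$ where $U_a=U\cap G_a$ is the horospherical subgroup of $G_a$, and $\g u=\bigoplus_a\g u_a$. The key point is that, because $\Delta$ is a \emph{lattice} in the nilpotent group $U$, its intersections with the factors behave well: by Corollary \ref{cornillat} and the product structure one can compare $\Delta$ with $\prod_a(\Delta\cap U_a)$.

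First I would establish $ii)\Leftrightarrow iii)$ purely at the level of $U$ and $\Delta$, since these notions only refer to how $\Delta$ meets normal subgroups. For $iii)\Rightarrow ii)$: if $\Delta$ is not irreducible, there is a proper normal $G'$ with $\Delta\cap G'\neq\{e\}$; since $\Delta\cap G'$ is a nontrivial subgroup of the lattice $\Delta\cap U'$ in $U'=U\cap G'$, and $U=U'U''$ with $U''$ the horospherical subgroup of the complementary factor, I would argue using Corollary \ref{cornillat}.c (applied to centralizers, as $U',U''$ commute) that $\Delta\cap U'$ and $\Delta\cap U''$ are both lattices in $U'$ and $U''$, whence their product has finite index in $\Delta$, contradicting indecomposability. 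Conversely $ii)\Rightarrow iii)$ is the easier direction: a decomposition $G=G'G''$ realizing decomposability forces $(\Delta\cap G')(\Delta\cap G'')$ to have finite index, and since this product is nontrivial (as $\Delta$ is infinite and $U$ meets each simple factor by the discussion after Definition \ref{defirrlat}), at least one intersection $\Delta\cap G'$ is nontrivial, contradicting irreducibility.

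The substantive content is the passage from a property of $\Delta$ to a property of the \emph{whole} group $\Gamma$, i.e.\ $ii)\Rightarrow i)$ and its converse; this is where Zariski density enters. For $i)\Rightarrow ii)$: if $\Delta\cap G'$ is nontrivial for some proper normal $G'$, then a fortiori $\Gamma\cap G'$ is infinite (any nontrivial unipotent subgroup of $\Delta$ is infinite), so $\Gamma$ is not irreducible. The genuinely interesting implication is $ii)\Rightarrow i)$: assuming $\Delta$ irreducible, I must show $\Gamma\cap G'$ is finite for every proper normal $G'$. Here I would use Zariski density of $\Gamma$ in $G$ together with reflexivity-type arguments: since $\Gamma$ is Zariski dense and $U$ is horospherical, by the reasoning of Lemma \ref{lemgamuum} the group $U$ (and an opposite $U^-$, obtained via a Zariski-open condition met by some element of $\Gamma$) meets every simple factor $G_a$ nontrivially, so $U$ and $U^-$ together generate $G$. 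The irreducibility of $\Delta$ means $\g u$ is not contained in any proper product of factors; combined with the generation statement, a proper normal $G'$ would force $\Gamma\cap G'$ to be a normal (hence, by Zariski density, bounded-index-controlled) subgroup whose discreteness and the no-compact-factors hypothesis force finiteness.

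The main obstacle I anticipate is precisely this last implication $ii)\Rightarrow i)$: controlling $\Gamma\cap G'$ for an arbitrary proper normal $G'$, since $\Gamma$ is only assumed discrete and Zariski dense, not yet known to be a lattice. The clean way around it, which I would adopt, is to combine Zariski density with the projection arguments: project $\Gamma$ to the quotient $G/G'$ (a semisimple group), observe the image is Zariski dense, and use that $\Delta$ irreducible means its image is still a lattice in the image of $U$, which is a nontrivial horospherical subgroup there. One then bootstraps discreteness of $\Gamma\cap G'$ from the fact that $\Gamma$ contains the opposite lattice $\Delta^-$ as well, so that $\Gamma\cap G'$, being normalized by both $\Delta$ and $\Delta^-$ which generate a Zariski dense group, must be normal in the Zariski closure $G$; a discrete normal subgroup of a connected semisimple group with no compact factors is central and hence finite. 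Making the normalization and centrality step fully rigorous is the delicate point, but it is standard once the product decomposition and Zariski density are in hand.
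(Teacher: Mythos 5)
Your proposal has two genuine gaps, and they occur exactly at the two places where the hypothesis that $\Delta$ sits inside a discrete Zariski dense $\Gamma$ must actually do work. First, the equivalence $ii)\Leftrightarrow iii)$ cannot be established ``purely at the level of $U$ and $\Delta$'': the implication $iii)\Rightarrow ii)$ is false there. Take $G={\rm SL}(2,\m R)\times{\rm SL}(2,\m R)$, $U\simeq\m R^2$ the product of the upper unipotent subgroups, and $\Delta=\m Z(1,0)+\m Z(\sqrt2,1)$; then $\Delta\cap G_1\neq\{e\}$ while $\Delta\cap G_2=\{e\}$, so $\Delta$ is indecomposable but not irreducible. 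Your argument for this implication presupposes that $\Delta\cap U'$ is a lattice in $U'=U\cap G'$ --- you need this to feed Corollary \ref{cornillat}.c, whose hypothesis is precisely that $\Delta\cap U'$ is \emph{already} a lattice --- but producing that lattice is the whole content of the step, and it requires $\Gamma$. The paper gets it as follows: choosing $G'$ of minimal dimension with $\Gamma\cap G'$ non-trivial, it shows $\Gamma\cap G'$ is Zariski dense in $G'$, deduces that the projection $p'(\Gamma)$ is discrete because it normalizes the discrete Zariski dense group $\Gamma\cap G'$ (the normalizer of such a group is discrete), and only then concludes that $p'(\Delta)$ is discrete in $U'$, whence $\Delta\cap U''$ --- and symmetrically $\Delta\cap U'$ --- is a lattice.

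Second, your argument for $ii)\Rightarrow i)$ rests on the claim that $\Gamma\cap G'$, being discrete and normalized by a Zariski dense subgroup, ``must be normal in the Zariski closure $G$'' and hence central and finite. That principle is false: for a product of two lattices $\Gamma=\Gamma_1\times\Gamma_2$ in $G=G_1\times G_2$, the subgroup $\Gamma\cap G_1=\Gamma_1$ is discrete and normalized by the Zariski dense group $\Gamma$, yet it is infinite and not normal in $G$. (What is true is that the \emph{Zariski closure} of $\Gamma\cap G'$ is normal in $G$, but that closure need not be discrete, so no centrality follows.) Note that your final chain of deductions never actually invokes the irreducibility of $\Delta$; if it were correct it would show that every discrete Zariski dense subgroup of a semisimple group is irreducible, which is absurd. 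The paper avoids attacking $ii)\Rightarrow i)$ head-on: it proves $i)\Rightarrow ii)\Rightarrow iii)$ (easy, since $\Delta$ is infinite and torsion-free) and then the contrapositive $\neg\, i)\Rightarrow\neg\, iii)$, manufacturing from a non-trivial $\Gamma\cap G'$ the finite-index subgroup $\Delta'\Delta''$ of $\Delta$ via the lattice statements above. You would need to reorganize your proof along these lines for it to close up.
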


\begin{Rem}
\label{remirrhor}
Let $\Lambda$ be a Lie lattice of $\g u$ such that ${\rm exp}(\Lambda)$ 
is contained in $\Delta$. 
The irreducibility of $\Delta$ is equivalent to the irreducibility of $\Lambda$:\\
$ii)'$ For any proper ideal $\g g'$ of $\g g$ , one has $\Lambda\cap\g g' = \{0\}$.\\
$iii)'$ Equivalently, there does not exists a decomposition $\g g=\g g'\oplus \g g''$ as a sum of proper ideals
such that $(\Lambda\cap\g g')\oplus(\Lambda\cap\g g'')$ has finite index in $\Lambda$.

In this case the lattice  $\Lambda$ is also called irreducible.
\end{Rem}

\begin{proof}[Proof of Lemma \ref{lemphg}]
Without loss of generality, we can assume $G$ adjoint. 

Since  $\Delta$ is infinite, one has 
$i)\Longrightarrow ii)\Longrightarrow iii)$.

Assume that Condition i) is not satisfied. Choose a 
non-trivial normal subgroup $G'$ of $G$ of minimal dimension such that the group $\Gamma':=\Gamma \cap G' $ is non-trivial.
We can decompose $G$ as a product $G=G' G''$ where $G''$ is 
the centralizer of $G'$. We set
$\Delta':=\Delta \cap G'$ and $\Delta'':=\Delta \cap G''$ and we want to prove that 
the group $\Delta'\Delta''$ has finite index in $\Delta$. 

Let $p':G\ra G'$   be the projection on the first factor.
Since the group $p'(\Gamma)$ is Zariski dense in $G'$ and normalizes the  group $\Ga'$,
the Zariski closure $H'$ of $\Ga'$ is a normal subgroup of $G'$. Therefore, by minimality of $G'$,
one has $H'=G'$ and the group $\Gamma'$ is Zariski dense in $G'$. Since the group $p'(\Gamma)$ normalizes
$\Gamma'$, this group $p'(\Gamma)$ is also a discrete subgroup of $G'$. 

We decompose the horospherical group $U$ as a  product of normal subgroups
$U=U'U''$ where $U':=U\cap G'$ and $U'':=U\cap G''$.
We have just seen that the group $p'(\Delta)$ is a discrete subgroup of $U'$.
Since the group $\Delta$ is a lattice of $U$, this implies that 
the intersection $\Delta''=\Delta\cap U''$ is a lattice of $U''$.

By the same argument as above  the Zariski closure of  
the group  $\Ga'':=\Ga\cap G''$
is a normal subgroup $H''$ of $G''$
that contains $U''$. Let $p'':G\ra H''$ be the projection on the factor $H''$.
By the same argument as above the group $p''(\Ga)$ 
is a discrete subgroup of $H''$. In particular $p''(\Delta)$ 
is a discrete subgroup of $U''$ and, as above, the intersection $\Delta'=\Delta\cap U'$ is a lattice of $U'$.

Therefore the group $\Delta'\Delta''$ is a lattice in $U$ and has finite index in $\Delta$.
\end{proof}

The following lemma tells us that the existence of an irreducible lattice in $U$ imposes compatibility conditions on the factors $U_a$.

\begin{Lem}
\label{lemirruua}
Let $G$ be an adjoint semisimple real algebraic Lie group and
$U\subset G$ be a non-trivial horospherical subgroup.
Let $G_a$ be the simple factors of $G$ and $U_a:=U\cap G_a$.
Assume that $U$ contains an irreducible lattice $\Delta$.
 Then\\ 
$a)$ If one group $U_a$ is commutative, all the groups $U_a$  are commutative.\\
$b)$ If one group $U_a$ is $s$-step nilpotent, all the groups
$U_a$ are $s$-step nilpotent.
\end{Lem}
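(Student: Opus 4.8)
The plan is to exploit the product structure coming from the adjointness of $G$ together with the behaviour of the descending central series. Since $G$ is adjoint, it is the direct product $G=\prod_{a\in\mc A}G_a$ of its simple factors, the parabolic subgroup $P$ decomposes accordingly as $P=\prod_a P_a$, and hence $U=\prod_a U_a$ with each $U_a=U\cap G_a$ horospherical in $G_a$. The descending central series of a direct product is the product of the descending central series, so $C^kU=\prod_a C^kU_a$ for every $k\geq 1$. I will write $s_a$ for the nilpotency class of $U_a$, so that $C^{s_a}U_a\neq\{e\}$ and $C^{s_a+1}U_a=\{e\}$, and recall that a non-trivial $U_a$ is commutative exactly when $s_a=1$.

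The heart of the argument is to show that all the $s_a$ are equal. I would set $s^{*}:=\max_{a}s_a$ and $S:=\{a\in\mc A\mid s_a=s^{*}\}$. Since $C^{s^{*}}U_a=\{e\}$ precisely when $s_a<s^{*}$, one gets $C^{s^{*}}U=\prod_{a\in S}C^{s^{*}}U_a$, a non-trivial unipotent group contained in the normal subgroup $G_S:=\prod_{a\in S}G_a$ of $G$. By Corollary \ref{cornillat}.b the group $C^{s^{*}}\Delta$ is a lattice in $C^{s^{*}}U$, hence is non-trivial, and it satisfies $C^{s^{*}}\Delta\subset C^{s^{*}}U\subset G_S$. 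Were $S$ a proper subset of $\mc A$, the group $G_S$ would be a proper normal subgroup of $G$ with $\Delta\cap G_S\supset C^{s^{*}}\Delta\neq\{e\}$, contradicting the irreducibility of $\Delta$ (Definition \ref{defirrlat}). Therefore $S=\mc A$ and $s_a=s^{*}$ for all $a$.

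Both claims then follow at once. If some $U_{a_0}$ is $s$-step nilpotent, then $s=s_{a_0}=s^{*}$, so every $U_a$ is $s$-step nilpotent, which proves $b)$; part $a)$ is the special case $s=1$, since a non-trivial $U_a$ is commutative if and only if its nilpotency class equals one. The only point that genuinely needs care is the verification that $C^{s^{*}}\Delta$ is non-trivial, which is exactly the content of Corollary \ref{cornillat}.b applied with $k=s^{*}$ (a cocompact discrete subgroup of a positive-dimensional unipotent group cannot be trivial); everything else is formal manipulation of the product decomposition, so I do not expect a real obstacle here.
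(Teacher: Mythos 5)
Your proof is correct and follows essentially the same route as the paper: both arguments apply Corollary \ref{cornillat}.b to the top term $C^{s}\Delta$ of the descending central series (your $s^{*}$ is the paper's $s$, the nilpotency class of $U$) to get a non-trivial subgroup of $\Delta$, and then invoke irreducibility to rule out its being confined to a proper sub-product of the simple factors. The only cosmetic difference is that the paper phrases the irreducibility step as "the projection of $C^{s}\Delta$ on each $U_a$ is non-trivial" while you phrase it as $\Delta\cap G_S\neq\{e\}$ for the proper normal subgroup $G_S$; these are the same observation.
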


\begin{proof}
The group $U$ is $s$-step nilpotent for some integer $s\geq 1$. 
This means that the $s$-th term $C^sU$ of the central descending sequence of $U$
is the last non-trivial one.
By Corollary \ref{cornillat}.b, the subgroup $C^s\Delta$ is a lattice of the group $C^sU$. Hence it is non-trivial. 
By irreducibility of $\De$, the projection of $C^s\Delta$ on each $U_a$ is non-trivial.
Therefore the group $C^sU_a$ is non-trivial.
\end{proof}

%42 
\subsection{$\Gamma$-proper and $\Gamma$-compact subgroups}
\label{secprocom}
\bq
We recall in this section
various lemmas due  to Margulis
in \cite{Margulis74} that focus on the intersection of a discrete subgroup of $G$ 
with closed subgroups of $G$.
We give the proofs which are short.
\eq

Let $H$ be a locally compact second countable group, 
let $\Gamma\subset H$ be a discrete subgroup and 
$H_1\subset H$ be a closed subgroup.

\begin{Def}
\label{defgcogpr}
$H_1$ is said to be $\Ga$-compact if $\Gamma\cap H_1$ is cocompact in $H_1$.\\
$H_1$ is said to be $\Ga$-proper if the injective map $H_1/(\Gamma\cap H_1)\hookrightarrow H/\Ga$ is proper.
\end{Def}

Here is the interpretation: Let $x_0:=\Ga/\Ga$ be the basis point of $H/\Ga$.\\
- The group $H_1$ is   $\Ga$-compact if and only if the $H_1$-orbit $H_1x_0$ is compact.\\
- The group $H_1$ is  $\Ga$-proper if and only if  the $H_1$-orbit $H_1x_0$ is  closed in $H/\Ga$.

Indeed, it is a classical fact that the topology induced on a closed orbit $H_1x_0$ 
coincides with the quotient topology on $H_1/(\Gamma\cap H_1)$

\begin{Lem}
\label{lemgcogpr}
Let $H$ be a locally compact second countable group, 
let $\Gamma\subset H$ be a discrete subgroup and 
$H_1$, $H_2$  be  two closed subgroups of $H$. \\
$a)$ If $H_1$ and $H_2$ are $\Ga$-proper, then $H_1\cap H_2$ is $\Ga$-proper.\\
$b)$ If $H_1$ is $\Ga$-proper and $H_2$ is $\Ga$-compact, then $H_1\cap H_2$ is $\Ga$-compact.
\end{Lem}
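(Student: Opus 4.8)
The plan is to prove both statements at once by working with compact lifts instead of with the orbits directly. The naive approach is bound to fail, because the inclusion $(H_1\cap H_2)\Gamma\subseteq H_1\Gamma\cap H_2\Gamma$ is in general strict: a coset can meet both $H_1$ and $H_2$ without meeting $H_1\cap H_2$, so one cannot read off closedness (or compactness) of the intersection orbit from the two given orbits. First I would reformulate the hypotheses. Using that every compact subset of $H/\Gamma$ is the image of a compact subset of $H$ (valid since $\Gamma$ is discrete and $H$ is locally compact), $\Gamma$-properness of $H_i$ is equivalent to saying that for every compact $C\subseteq H$ there is a compact $C_i\subseteq H_i$ with $H_i\cap C\Gamma\subseteq C_i(\Gamma\cap H_i)$, and $\Gamma$-compactness of $H_2$ means $H_2=C_2(\Gamma\cap H_2)$ for a single compact $C_2\subseteq H_2$. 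In both parts I would then fix such a $C$ (an arbitrary compact for $a)$, and $C=C_2$ for $b)$) and show that $(H_1\cap H_2)\cap C\Gamma$ (resp.\ $H_1\cap H_2$) is contained in $D(\Gamma\cap H_1\cap H_2)$ for some compact $D\subseteq H_1\cap H_2$; together with the obvious closedness of the relevant preimage, this gives properness in $a)$ and cocompactness in $b)$.

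The core computation is the same in both cases. Taking $h$ in the set under study, it lies in $H_1\cap C_1(\Gamma\cap H_1)$ and in $H_2\cap C_2(\Gamma\cap H_2)$, so I may write $h=c_1\delta_1=c_2\delta_2$ with $c_i\in C_i$, $\delta_1\in\Gamma\cap H_1$, $\delta_2\in\Gamma\cap H_2$. Then $g:=\delta_1\delta_2^{-1}=c_1^{-1}c_2$ lies in $\Gamma\cap(C_1^{-1}C_2)$, which is finite because $\Gamma$ is discrete and $C_1^{-1}C_2$ is compact. So there are only finitely many possible values $g\in F:=\Gamma\cap(C_1^{-1}C_2)$.

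The key point, which is what makes the intersection behave well, is that for each fixed $g$ the set
\[
\Lambda_g:=\{\delta\in\Gamma\cap H_1 \mid g^{-1}\delta\in H_2\}
\]
is either empty or a single coset $\delta_g^{(0)}(\Gamma\cap H_1\cap H_2)$. Indeed, for $\delta,\delta'\in\Lambda_g$ one has $\delta^{-1}\delta'=(g^{-1}\delta)^{-1}(g^{-1}\delta')$, and the cancellation of $g$ shows this element lies in $H_2$; it visibly lies in $\Gamma\cap H_1$ too, hence in $\Gamma\cap H_1\cap H_2$, and the reverse inclusion is immediate. Since the $\delta_1$ attached to $h$ belongs to $\Lambda_g$, I obtain $h\in C_1\,\delta_g^{(0)}(\Gamma\cap H_1\cap H_2)$. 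Letting $g$ range over the finite set $F$ gives $h\in D_0(\Gamma\cap H_1\cap H_2)$ with $D_0:=\bigcup_{g\in F}C_1\delta_g^{(0)}$ compact; since $h$ itself lies in $H_1\cap H_2$, one may replace $D_0$ by the compact set $D:=D_0\cap(H_1\cap H_2)\subseteq H_1\cap H_2$. This is exactly the containment sought: ranging over all compact $C$ yields properness of $H_1\cap H_2$ in $a)$, and for the single $C=C_2$ it yields cocompactness of $\Gamma\cap H_1\cap H_2$ in $H_1\cap H_2$ in $b)$.

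I expect the main obstacle to be precisely the failure of the naive orbit-intersection argument, and the whole proof to hinge on isolating the finite set $F$ and recognising that the twisted condition $g^{-1}\delta\in H_2$ carves out a single $(\Gamma\cap H_1\cap H_2)$-coset. The remaining ingredients — lifting compacta through $\pi\colon H\to H/\Gamma$, and the fact that a closed, relatively compact subset of the quotient is compact — are routine and I would only invoke them briefly.
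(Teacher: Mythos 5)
Your proof is correct, but it takes a genuinely different route from the paper's. The paper argues entirely downstairs in $H/\Gamma$: for $a)$ it observes that, because $\Gamma$ is discrete, every $H_1\cap H_2$-orbit inside the intersection $H_1x_0\cap H_2x_0$ of the two closed orbits is \emph{open} in that intersection (if $h_1x_0=h_2x_0$ and $h_1'x_0=h_2'x_0$ are close, then $h_1^{-1}h_2=h_1'^{-1}h_2'$ in $\Gamma$, so $h_1'h_1^{-1}=h_2'h_2^{-1}\in H_1\cap H_2$); being open in a partition they are also closed, and since the intersection is closed in $H/\Gamma$ so is $(H_1\cap H_2)x_0$; part $b)$ is then just "a closed subset of the compact orbit $H_2x_0$ is compact". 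Your argument works upstairs in $H$ with compact lifts: the finiteness of $F=\Gamma\cap C_1^{-1}C_2$ plays exactly the role that openness of orbits plays in the paper (both are the point where discreteness of $\Gamma$ enters), and your observation that $\Lambda_g$ is a single $(\Gamma\cap H_1\cap H_2)$-coset is the algebraic form of the cancellation $h_1'h_1^{-1}=h_2'h_2^{-1}$ above. The paper's version is shorter and more conceptual but leans on the "classical fact" that the quotient topology on $H_1/(\Gamma\cap H_1)$ agrees with the subspace topology on a closed orbit; yours is more quantitative, makes the compact sets explicit, and handles $a)$ and $b)$ by one and the same computation, at the cost of the routine lifting-of-compacta lemmas you flag at the end. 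Both are complete proofs.
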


\begin{proof}
a) By assumption, both orbits $H_1x_0$ and $H_2 x_0$ of the basis point $x_0$ are closed.
Since $\Gamma$ is discrete, all the $H_1\!\cap\! H_2$-orbits in the intersection  
$H_1x_0 \cap H_2 x_0$  are open in this intersection. Therefore these orbits are closed.
In particular, the orbit $(H_1\cap H_2)x_0$ is closed in $H/\Ga$.

b) Since  $H_2x_0$ is compact, the orbit $(H_1\cap H_2)x_0$ is also compact.
\end{proof}

\begin{Lem}
\label{lempglgug}
Let $G$ be a semisimple real algebraic Lie group, 
$P$ and $P^{-}$ be opposite parabolic subgroups,
$U$ and $U^{-}$ their unipotent radicals, 
and $L:=P\cap P^{-}$ so that 
$P=LU$ and  $P^{-}=LU^{-}$.
Let $P_0:=\{ p\in P\mid \det_\g u{\rm Ad}p=1\}$
and $L_0:=L\cap P_0$.
Let $\Gamma$ be a discrete subgroup of $G$.\\
$a)$ If $U$ is $\Gamma$-compact, the group $P_0$ is $\Gamma$-proper.\\
$b)$ If moreover $U^-$ is $\Ga$-compact, the group $L_0$
is also $\Ga$-proper.\\
$c)$ In this case, the group $(\Gamma\cap L)(\Ga\cap U)$
has finite index in $\Ga\cap P$.
\end{Lem}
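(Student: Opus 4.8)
The plan is to first realize $P_0$ as the stabilizer of a vector. Put $k=\dim\g u$ and let $\omega$ be a generator of the line $\wedge^{k}\g u\subset\wedge^{k}\g g$. Writing $\chi(p):=\det_{\g u}{\rm Ad}\,p$ as in \eqref{eqnchidet}, one has $\wedge^{k}({\rm Ad}\,p)\,\omega=\chi(p)\,\omega$ for $p\in P$, so that $P_0=\{\chi=1\}$ is exactly the stabilizer of $\omega$ in $G$ acting through $\wedge^{k}{\rm Ad}$, and $U\subset P_0$. For such a stabilizer the orbit $P_0x_0$ (with $x_0:=\Gamma/\Gamma$) is closed in $G/\Gamma$ as soon as the set $\{\wedge^{k}{\rm Ad}(\gamma)\,\omega:\gamma\in\Gamma\}$ is discrete in $\wedge^{k}\g g$: lifting a convergent sequence $p_nx_0\to gx_0$ to $p_n\gamma_n\to g$ with $\gamma_n\in\Gamma$, and using $\wedge^{k}{\rm Ad}(p_n^{-1})\omega=\omega$, the vectors $\wedge^{k}{\rm Ad}(\gamma_n^{-1})\omega=\wedge^{k}{\rm Ad}((p_n\gamma_n)^{-1})\omega$ converge, hence are eventually constant, which forces $g\gamma_n^{-1}\in P_0$ and $gx_0\in P_0x_0$. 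Thus $(a)$ reduces to proving this discreteness.

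Before that, I record one elementary observation, used twice: if $g\in\Gamma$ normalizes a horospherical subgroup $V$ such that $\Gamma\cap V$ is a lattice of $V$, then conjugation by $g$ preserves $\Gamma\cap V$, hence by Corollary \ref{cornillat}.b preserves the lattices induced in the graded pieces $C^iV/C^{i+1}V$ and acts there with determinant $\pm1$; therefore $\det_{\g v}{\rm Ad}\,g=\pm1$.

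The discreteness of $\{\wedge^{k}{\rm Ad}(\gamma)\,\omega\}$ is the heart of the matter, and the step I expect to be the main obstacle. Suppose for contradiction that infinitely many distinct such vectors converge to some $v$. Each $\g u_n:={\rm Ad}(\gamma_n)\g u$ is a horospherical subalgebra, $U_n:=\gamma_nU\gamma_n^{-1}$, and $\Delta_n:=\gamma_n(\Gamma\cap U)\gamma_n^{-1}=\Gamma\cap U_n$ is a lattice of $U_n$ whose covolume is comparable to $\|\wedge^{k}{\rm Ad}(\gamma_n)\,\omega\|$. Convergence to $v$ bounds these covolumes from above, and forces $v\neq0$ since a covolume tending to $0$ would produce arbitrarily short lattice vectors, while discreteness of $\Gamma$ bounds the systoles of the $\Delta_n$ from below. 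The plan is then to run the Mahler compactness argument of Proposition \ref{prophg}, uniformly over the varying subspaces $\g u_n$, to extract bounded generators of the $\Delta_n$; being elements of the discrete group $\Gamma$ these generators are eventually constant, and $k$ independent limit vectors lying in the $k$-dimensional spaces $\g u_n$ force $\g u_n=\g u_\infty$ for $n$ large. Then $\gamma_m^{-1}\gamma_n$ normalizes $U_\infty$, so by the observation above the ratio of the now collinear vectors $\wedge^{k}{\rm Ad}(\gamma_n)\,\omega$ is $\pm1$, and convergence to $v$ makes the sequence eventually constant, a contradiction. Making the Mahler step genuinely uniform in $n$ across the moving subspaces is the delicate point.

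Part $(b)$ is then a short consequence: applying $(a)$ to the opposite parabolic shows that $P_0^-:=\{p\in P^-\mid\det_{\g u^-}{\rm Ad}\,p=1\}$ is $\Gamma$-proper; since $\g u$ and $\g u^-$ are dual $L$-modules under the Killing form one has $\det_{\g u^-}{\rm Ad}\,\ell=\chi(\ell)^{-1}$ for $\ell\in L$, whence $P_0\cap P_0^-=L_0$, and Lemma \ref{lemgcogpr}.a gives that $L_0$ is $\Gamma$-proper. For $(c)$, the observation applied to $U$ shows $\chi(\gamma)=\pm1$ for every $\gamma\in\Gamma\cap P$, so $\Gamma\cap P_0$ has index at most $2$ in $\Gamma\cap P$; writing $\pi:P\to L$ for the projection with kernel $U$ and $\Lambda_0:=\pi(\Gamma\cap P_0)\subset L_0$, one has $[\Gamma\cap P_0:(\Gamma\cap L_0)(\Gamma\cap U)]=[\Lambda_0:\Gamma\cap L_0]$, so it suffices to prove this index is finite. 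Here both properness statements enter: properness of $P_0$ together with $\Gamma$-compactness of $U$ shows the $U$-orbits in the closed set $P_0x_0$ are compact with Hausdorff quotient $L_0/\Lambda_0$, so $\Lambda_0$ is discrete and the quotient map $P_0x_0\to L_0/\Lambda_0$ is proper; properness of $L_0$ from $(b)$ realizes $L_0/(\Gamma\cap L_0)$ as a closed subset of $P_0x_0$, so the composite $L_0/(\Gamma\cap L_0)\to L_0/\Lambda_0$ is proper. Its fibres are the discrete spaces $\Lambda_0/(\Gamma\cap L_0)$, which being compact are finite; hence $[\Lambda_0:\Gamma\cap L_0]<\infty$, and since $\Gamma\cap L_0\subset\Gamma\cap L$ the group $(\Gamma\cap L)(\Gamma\cap U)$ has finite index in $\Gamma\cap P$.
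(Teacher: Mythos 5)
Your proposal is correct, but it takes a recognizably different route from the paper's, so it is worth comparing the two. For part $a)$ you linearize: you realize $P_0$ as the stabilizer of $\omega\in\wedge^k\g g$ and reduce $\Ga$-properness to the (stronger, reusable) statement that the full orbit $\wedge^k{\rm Ad}(\Ga)\,\omega$ is closed and discrete, which you attack via Mahler's criterion applied to the lattices ${\rm Ad}(\ga_n)\La$ sitting in the moving subspaces ${\rm Ad}(\ga_n)\g u$. The paper never leaves $G/\Ga$: given $p_n\ga_n\to g_\infty$ it conjugates $\De=\Ga\cap U$ by $p_n$ rather than by $\ga_n$, so the lattices $p_n\De p_n^{-1}=g_n(\ga_n^{-1}\De\ga_n)g_n^{-1}$ all live in the \emph{fixed} group $U$, have constant covolume (this is where $p_n\in P_0$ enters) and uniformly bounded-below systole (they sit in $g_n\Ga g_n^{-1}$ with $g_n$ bounded); ordinary Mahler compactness then applies, the subgroups $\ga_n^{-1}\De\ga_n$ of $\Ga$ converge and hence stabilize, and the correcting elements $\ga_n\ga_{n_0}^{-1}$ normalize $\De$ --- the same endgame as yours. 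Thus the one step you rightly flag as delicate, Mahler uniformly over the moving subspaces, is exactly what the paper's choice of conjugating element eliminates; your version is nonetheless salvageable as written, since after extraction the planes ${\rm Ad}(\ga_n)\g u$ converge in the compact Grassmannian and one transports the lattices to the limit plane by isomorphisms tending to the identity before invoking the usual Mahler criterion, after which your eventual-constancy and $\det=\pm1$ arguments go through. Part $b)$ coincides with the paper's ($L_0=P_0\cap P_0^-$ together with Lemma \ref{lemgcogpr}.a). For part $c)$ the paper is more economical: it takes infinitely many distinct classes $\ga_n=u_n\ell_n$ in $(\Ga\cap U)\backslash(\Ga\cap P_0)/(\Ga\cap L_0)$, makes $u_n$ bounded using cocompactness of $\De$ in $U$, uses $\Ga$-properness of $L_0$ to make $\ell_n$ bounded modulo $\Ga\cap L_0$, and concludes by discreteness of $\Ga$; your fibration bookkeeping over $L_0/\Lambda_0$ proves the same finiteness correctly, at the cost of a few more claims to verify (discreteness of $\Lambda_0$, properness of $P_0x_0\to L_0/\Lambda_0$), each of which does hold for the reasons you indicate. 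In short: what your route buys is the closed-discreteness of the $\Ga$-orbit of $\omega$, a statement in the spirit of Corollary \ref{corphg}; what the paper's buys is brevity and the avoidance of any compactness argument on the Grassmannian.
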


\begin{Rem}
The group $P_0$ is called the {\it unimodular normalizer}
of $U$. Since  $\Ga\cap P$ normalizes the lattice $\De:=\Ga\cap U\subset U$,
one has   $\Ga\cap P\subset P_0$.
\end{Rem}

\begin{proof}
$a)$ Let $p_n\in P_0$ and $\ga_n\in \Ga$ be sequences such that
the sequence 
$ 
g_n:= p_n\ga_n$
converges to $g_\infty\in G$.
We want to find a sequence $\de_n\in \Ga\cap P_0$ 
such that the sequence 
$
p_n\delta_n$ has a limit point $p_\infty\in P_0.$
Since the sequence $g_n$ converges, 
there exists a neighborhood $\Omega_0$ of $e$ in $G$
such that the groups 
$\Delta'_n:=p_n\Delta p_n^{-1}
\subset g_n\Ga g_n^{-1}$ intersect $\Om_0$ trivially.
Since these lattices of $U$ have the same covolume,
by the Mahler compactness criterion, after extraction, 
these lattices $\De'_n$ converge  to a lattice $\De'_\infty$ of $U$. 
Therefore the subgroups $\De_n=g_n^{-1}\De'_n g_n=
\ga_n^{-1}\De\ga_n$
of $\Ga$ 
converge to the subgroup
$\De_\infty:=g_\infty^{-1}\De'_\infty g_\infty$.

Since $\Ga$ is discrete and $\De_\infty$
is finitely generated, there exists $n_0\geq 1$ such that
$
\De_n=\De_\infty
\;\;\mbox{\rm for all $n\geq n_0$.}
$
Therefore the elements $\de_n:=\ga_n\ga_{n_0}^{-1}$
belong to $\Gamma\cap P_0$ and
the sequence $p_n\de_n$ converges to $p_\infty:=g_\infty\ga_{n_0}^{-1}$.

$b)$ By point $a)$, both groups $P_0$ and $P_0^-$ are
$\Ga$-proper. Therefore, by Lemma \ref{lemgcogpr}.a,
the group $L_0=P_0\cap P_0^{-}$ is also $\Ga$-proper.

$c)$ 
Assume by contradiction that the group
$(\Gamma\cap L_0)(\Ga\cap U)$
has infinite index in $\Ga\cap P_0$.
Let 
$
\ga_n=u_n\ell_n
\;\;{\rm with}\;\;
\ell_n\in L_0\;, \;\; u_n\in U
$ 
be a sequence of elements of $\Ga\cap P_0$
whose images in $(\Ga\cap U)\backslash G/(\Gamma\cap L_0)$ are distinct.
After multiplying on the left 
by elements of the lattice $\Delta\subset U$, 
one can assume that the sequence $u_n$ is bounded.
Then the sequence 
$
\ell_n x_0=u_n^{-1}x_0$
is also bounded in $G/\Ga$.
Since  $L_0$ is $\Ga$-proper, 
one can find a sequence $\de_n$ in $\Gamma\cap L_0$ 
such that the sequence $m_n:=\ell_n\de_n$ is bounded in $L_0$.
Then the sequence $\ga_n\de_n=u_nm_n\in \Ga\cap P_0$ 
is also bounded. Since $\Ga$ is discrete it can only take finitely many values.
Contradiction.
\end{proof}

%43 
\subsection{The Raghunathan-Venkataramana theorem}
\label{secventhe}
\bq
The following result of Raghunathan and Venkataramana in \cite{Raghunathan76}
and \cite{Venkataramana94}
says that in order to prove Theorem \ref{thmmar} 
for $U$ refle\-xive, we only have to find a $\m Q$-structure on $G$ such that $\Gamma\subset G_\m Q$.
\eq

\begin{Prop}
\label{proragven}
Let $G$ be a semisimple  real algebraic Lie group
of real rank at least two. Assume that $G$ is defined over $\m Q$ and is $\m Q$-simple.
Let $U$ and $U^{-}$ be opposite horospherical subgroups
which are defined over $\m Q$
and $\Delta\subset U_\m Z$ and $\Delta^-\subset U^-_\m Z$ be finite index subgroups.
Then the subgroup $\Gamma$ generated by $\Delta$ and 
$\Delta^-$ has finite index in $G_\m Z$.
\end{Prop}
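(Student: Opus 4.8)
\emph{Plan.} This is the theorem of Raghunathan \cite{Raghunathan76} and Venkataramana \cite{Venkataramana94}, and the plan is to follow their strategy. Fix a maximal $\m Q$-split torus $S$ of $G$ inside $L:=P\cap P^-$, let $\Phi$ be the system of $\m Q$-roots of $S$ in $\g g$, and write $\g g=\bigoplus_{\gamma}\g g_\gamma$ for the corresponding relative root space decomposition, with $U_\gamma$ the associated root subgroups. Since $G$ is $\m Q$-simple the root system $\Phi$ is irreducible. The opposite parabolics $P,P^-$ are cut out by a cocharacter of $S$, so $\g u$ (resp.\ $\g u^-$) is the sum of the $\g g_\gamma$ over the roots of positive (resp.\ negative) level for this cocharacter, while the level-zero roots span the Levi. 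The first, easy, step is Zariski density: because $\g u$ and $\g u^-$ generate $\g g$ as a Lie algebra, the subgroup $\Ga$ generated by $\Delta$ and $\Delta^-$ is Zariski dense in $G$.

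The core of the argument is to show that $\Ga$ contains a finite–index subgroup of the integral root group $U_{\gamma,\m Z}:=U_\gamma\cap G_\m Z$ for \emph{every} relative root $\gamma\in\Phi$, after which one invokes the bounded–generation theorem for higher–rank arithmetic groups (Tits, Vaserstein, Raghunathan): when the congruence subgroup property holds, $G_\m Z$ is generated in a bounded number of steps by its integral root subgroups, and any subgroup containing a finite–index piece of each of them is itself of finite index. By hypothesis $\Ga$ already contains finite–index subgroups of the $U_{\gamma,\m Z}$ for all $\gamma$ occurring in $\g u$ and in $\g u^-$ (a standard filtration argument along the descending central series, as in Corollary \ref{cornillat}, reduces $\Delta,\Delta^-$ to the individual root groups). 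The task is therefore to produce the remaining root groups and, crucially, to control the indices.

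To propagate from the given roots to all of $\Phi$ I would use the Chevalley commutator relations $[\,x_\alpha(s),x_\beta(t)\,]=\prod_{i,j>0}x_{i\alpha+j\beta}(c_{ij}s^it^j)$: since $\Phi$ is irreducible, iterated commutators of roots from $\g u$ with roots from $\g u^-$ reach every root of $\Phi$ (for instance a commutator of a positive–level root with a negative–level root can land in a level–zero, i.e.\ Levi, root space). Each such commutator of finite–index subgroups yields at least a finite–index subgroup of the target integral root group, which is exactly the input the bounded–generation step requires.

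The hard part will be the passage from ``finite index in each root group'' to a genuinely finite–index subgroup of $G_\m Z$, i.e.\ the control of indices; this is precisely where the rank hypothesis is indispensable, as the rank–one counterexample in Section~\ref{secmaires} shows. Two regimes occur. When the $\m Q$-rank of $G$ is at least two, $\Phi$ has rank $\geq 2$, the commutator bootstrapping above is available, and the congruence subgroup property (Bass–Milnor–Serre, Raghunathan) together with bounded generation closes the argument. When the $\m Q$-rank equals one, $\Phi$ has rank one and no such bootstrapping among relative roots exists; here one must exploit the full real rank $\geq 2$ through the unit group of the relevant number field, reducing the index by conjugating the integral root subgroups by semisimple elements $t\in\Ga\cap S$ and running a gcd/Euclidean argument on the scalars $\gamma(t)$. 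This Serre–type congruence input for rank–one groups with enough units is, I expect, the technical heart of \cite{Venkataramana94} and the main obstacle to a self–contained proof.
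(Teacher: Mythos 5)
Your proposal does not follow the paper's route: the paper never reproves the Raghunathan--Venkataramana theorem. It cites \cite{Raghunathan76} and \cite{Venkataramana94} as a black box for \emph{maximal} opposite horospherical $\m Q$-subgroups, and the entire content of its proof is the reduction of the general case to the maximal one (following \cite[Cor.\ 2.2.2]{Oh98a}): one embeds $U$ and $U^-$ as normal subgroups of maximal opposite horospherical $\m Q$-subgroups $V$ and $V^-$, finds a finite-index subgroup $\Delta_L\subset L_\m Z$ normalizing both $\Delta$ and $\Delta^-$, applies the cited maximal case to the lattices $(\Delta_L\cap V)\Delta$ and $(\Delta_L\cap V^-)\Delta^-$ to get a finite-index subgroup of $G_\m Z$ that normalizes $\Gamma$, and concludes with the Margulis normal subgroup theorem. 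Your plan instead attacks the cited theorem itself, directly for non-maximal $U$, and never uses (or needs to supply) this normalizer mechanism.

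As a proof, the proposal has genuine gaps at exactly the points where the real work sits. First, the claim that each Chevalley commutator of finite-index subgroups of integral root groups ``yields at least a finite-index subgroup of the target integral root group'' is unjustified and is essentially the whole theorem: the commutator is a product over several roots $i\alpha+j\beta$, the relative $\m Q$-root groups are in general higher-dimensional with no clean commutator formula, and extracting a finite-index subgroup of a single root group from such products --- in particular reaching the level-zero Levi root groups with controlled index --- is precisely the content of \cite{Raghunathan76} and \cite{Venkataramana94}, not a formal consequence of irreducibility of the root system. Second, the appeal to bounded generation by integral root subgroups together with the congruence subgroup property is close to circular: that $G_\m Z$ is boundedly generated by its root subgroups, or even that the integral points of two opposite unipotent radicals generate a finite-index subgroup, \emph{is} the statement being proved, and neither is an off-the-shelf input in $\m Q$-rank one. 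Third, you explicitly leave the $\m Q$-rank one case open as ``the main obstacle to a self-contained proof''; since that is the case where the real-rank hypothesis genuinely enters, the argument is incomplete there by your own admission. The efficient (and intended) route is the one the paper takes: quote the maximal case and reduce to it.
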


\begin{proof}
This proposition is proven in \cite{Raghunathan76}
and \cite{Venkataramana94} for maximal opposite horospherical $\m Q$-subgroups. 
But Hee Oh explains in \cite[Corollary 2.2.2]{Oh98a} how to deduce the
general case. Indeed $U$ and $U^-$
are normal subgroups in maximal opposite horospherical $\m Q$-subgroups
$V$ and $V^-$. 
Let $L$ be the intersection of the normalizers of 
$U$ and $U^-$ so that $V=(L\cap V)U$ and $V^-=(L\cap V^-)U^-$.
There  exists a finite index subgroup 
$\Delta_L\subset L_\m Z$ that normalizes both $\Delta$ and $\Delta^-$.
By \cite{Raghunathan76}
and \cite{Venkataramana94}, the group generated by 
the lattices $(\Delta_L\cap V)\Delta$ and 
$(\Delta_L\cap V^-)\Delta^-$ of $V_\m Z$ and $V^-_\m Z$ 
has finite index in $G_\m Z$
and normalizes $\Gamma$. Therefore, by the Margulis normal subgroup
theorem, $\Gamma$ has finite index in $G_\m Z$.
\end{proof}

%44
\subsection{Margulis' extension of $\m Q$-forms}
\label{secextfor}

\bq
In this section we introduce an important tool which allows us to extend a pair of 
compatible $\m Q$-forms on 
$\g u$ and $\g u^-$ to a $\m Q$-form on $\g g$.
This tool (Corollary \ref{corextfor}) tells us that in order to prove Theorem \ref{thmmar}
for a reflexive horospherical subgroup $U$, we only have to find elements of $L$ that normalize
both lattices $\La$ and $\La^-$.
\eq

\begin{Prop}
\label{proextfor}
Let $G$ be a semisimple adjoint real algebraic Lie group,
$P$, $P^-$  be two opposite parabolic subgroups,
$U$, $U^-$ be their unipotent radicals and $L=P\cap P^-$.
Let $\Gamma$ be a discrete Zariski dense subgroup of $G$ 
such that $\Ga\cap U$ is an irreducible lattice of $U$
and $\Ga\cap U^-$ is an irreducible lattice of $U^-$.  
Assume that
the group $\Ga\cap L$ is infinite.

Then there exists a $\m Q$-form $G_\m Q$ of $G$ such that $\Gamma\subset G_\m Q$
\end{Prop}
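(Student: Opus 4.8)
The plan is to produce a single $\Gamma$-invariant $\m Q$-form on $\g g$ by transporting the canonical $\m Q$-forms living on the unipotent pieces. Recall from Lemma \ref{lemnillat} that the lattices $\La:=\log(\Ga\cap U)$ and $\La^-:=\log(\Ga\cap U^-)$ determine $\m Q$-forms $\g u_\m Q$ and $\g u^-_\m Q$ on $\g u$ and $\g u^-$. The key new ingredient is the infinite group $\Ga\cap L$. First I would use it, together with $\Ga\cap U$ and $\Ga\cap U^-$, to build a $\m Q$-form on the parabolic $\g p=\g l\oplus\g u$ and on $\g p^-$, and hence on $\g q:=$ the Zariski closure of $\Ga\cap P$ as mentioned in the strategy (\S\ref{secstrpro}). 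Concretely, since $\Ga\cap L$ is infinite, its Zariski closure $L_0$ is a positive-dimensional reductive subgroup of $L$ normalizing both $\La$ and $\La^-$; the rational structure on $\g u$ and $\g u^-$ defined by these lattices is preserved by $L_0$, and one checks that the bracket $[\g u_\m Q,\g u^-_\m Q]\subset\g l$ spans (over $\m Q$) a rational form $\g l_\m Q$ of $\g l$ compatible with $L_0$. This gives a $\m Q$-form $\g q_\m Q$ on $\g q=\g l\oplus\g u$ (or on the full $\g p$).

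Next I would spread this single rational structure around $G$ by the $\Gamma$-action. For each $g\in\Ga$, the space ${\rm Ad}\,g(\g q)$ carries the transported $\m Q$-form ${\rm Ad}\,g(\g q_\m Q)$. The essential compatibility claim is that these local $\m Q$-forms glue: whenever $g_1,g_2\in\Ga$ have ${\rm Ad}\,g_1(\g q)\cap{\rm Ad}\,g_2(\g q)$ of positive dimension, the two induced rational structures agree on the overlap. The natural way to prove this is to observe that the overlaps are cut out by the discrete data (the lattices $\La$, $\La^-$ and their brackets), which are literally $\Gamma$-invariant since $\Ga\cap U$, $\Ga\cap U^-$, $\Ga\cap L$ are all preserved under conjugation by $\Gamma$. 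Using that $\Ga$ is Zariski dense, the translates ${\rm Ad}\,g(\g q)$, $g\in\Ga$, span $\g g$, so a consistent choice of rational points on all of them determines a well-defined $\m Q$-subspace $\g g_\m Q\subset\g g$ that is $\Gamma$-invariant by construction.

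Finally I would verify that $\g g_\m Q$ is genuinely a $\m Q$-form of the Lie algebra: it is a $\m Q$-subspace with $\g g_\m Q\otimes_\m Q\m R=\g g$, and it is closed under the bracket because the bracket is rational on each ${\rm Ad}\,g(\g q)$ and the pieces are compatible. The $\Gamma$-invariance of $\g g_\m Q$ means $\Ga\subset{\rm Aut}(\g g,\g g_\m Q)=G_\m Q$, which is the desired $\m Q$-form $G_\m Q$ of the adjoint group $G$ (as in Corollary \ref{corextfor}, from which one then derives arithmeticity via Proposition \ref{proragven}).

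The hard part will be the gluing/consistency step: showing that the many $\m Q$-structures ${\rm Ad}\,g(\g q_\m Q)$ agree on all pairwise overlaps, so that they patch to one global $\Gamma$-invariant $\m Q$-form rather than merely a collection of locally defined rational structures. This is precisely Margulis' extension technique, and it requires carefully tracking how the rational points propagate along products of elements of $\Ga$ and invoking Zariski density of $\Ga$ together with the irreducibility of the lattices $\La$, $\La^-$ to rule out inconsistencies. A secondary subtlety is confirming that $\Ga\cap L$ being infinite really does force its Zariski closure to supply enough rational relations to pin down $\g l_\m Q$ uniquely; here the irreducibility of $\La$ (equivalently, of $\Ga$, by Lemma \ref{lemirrhor}) should guarantee that no rational ambiguity survives on the Levi part.
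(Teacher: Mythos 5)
Your overall plan --- equip the parabolic piece seen by $\Gamma$ with a $\m Q$-form, transport it by ${\rm Ad}\,g$ for $g\in\Gamma$, and glue the translates into a $\Gamma$-invariant $\m Q$-form of $\g g$ --- is indeed the paper's strategy (Margulis' extension technique), but two of your steps have genuine gaps, and the second sits exactly at the point you set aside as ``the hard part.'' First, the rational structure on the Levi part cannot be obtained by bracketing the lattices. When $\g u$ is non-commutative, $[\g u,\g u^-]$ is not contained in $\g l$; and in any case the $\m Q$-span of $[\g u_\m Q,\g u^-_\m Q]$ could have $\m Q$-dimension exceeding the real dimension of its real span, so it need not be a $\m Q$-form of anything --- ruling this out is itself a compatibility statement of the kind Margulis' lemma is designed to handle, not an input to it. Moreover the Zariski closures of $\Gamma\cap L$ and $\Gamma\cap P$ are in general proper subgroups $H\subset L$ and $Q\subset P$, so one should not expect a $\m Q$-form on all of $\g p$. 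The paper only puts a $\m Q$-form on $\g q={\rm Lie}(Q)$, and does so by noting that $\Gamma\cap P$ lies inside ${\rm Aut}(U_\m Q)\ltimes U_\m Q$, which is a $\m Q$-form of ${\rm Aut}(U)\ltimes U$; hence $Q$ is defined over $\m Q$ even though $P$ itself might not be.

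Second, the compatibility of the transported structures is not ``literally'' a consequence of invariance of the discrete data: $\Gamma\cap U$ and $\Gamma\cap L$ are \emph{not} preserved by conjugation by a general $g\in\Gamma$ (conjugation moves $U$ to $gUg^{-1}$). What must actually be proved is that, for $g$ in $\Gamma\cap U^-P$, the intersection $\g q^-\cap{\rm Ad}\,g(\g q)$ has dimension independent of $g$ and is a $\m Q$-subspace of $\g q^-$. The paper identifies this intersection with $\g h_g$, the Lie algebra of the Zariski closure of $\Gamma\cap(P^-\cap gPg^{-1})$, by showing that $(\Gamma\cap L_g)(\Gamma\cap U^-)$ has finite index in $\Gamma\cap P^-$, whence $\g q^-=\g h_g\oplus\g u^-$ and $\g q={\rm Ad}\,g^{-1}(\g h_g)\oplus\g u$; this rests on Lemma \ref{lempglgug}, i.e. the $\Gamma$-properness of the unimodular normalizer and the $\Gamma$-compactness of $U^-$ and $gUg^{-1}$, established via the Mahler compactness criterion. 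Only with this in hand can Margulis' Lemma \ref{lemextfor} be applied, with $W^-$ the span of the $\g h_g$ and $W$ the span of the ${\rm Ad}\,g^{-1}(\g h_g)$, irreducibility of the lattices supplying the spanning conditions. Your proposal stops precisely where this, the real content of the proposition, begins.
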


This proposition  is an extension of a result of Margulis
\cite[Theorem 9.5.1]{Margulis74} where it is assumed that 
$\Ga$ is a lattice in $G$.
For the sake of completness we carefully check below the proof 
of this extension.
Notice that in this proposition there is no higher rank assumption.
The proof relies on the following  lemma
of Margulis \cite[Lemma 8.6.2]{Margulis74}.

\begin{Lem}
\label{lemextfor}
Let $V$ be a real vector space, and $W, W^-\subset V$ be two real vector subspaces. 
Let $G\subset {\rm GL}(V )$ be a Zariski connected algebraic subgroup,
and $\Omega_0\subset G$ be a Zariski open subset such that:\\ 
$i)$ The dimension $d:=\dim(W^-\cap gW )$ does not depend on $g\in   \Omega_0$.\\
$ii)$ The vector subspaces $gW$ for $g$ in $\Omega_0$ span $V$.\\
$ii)'$ The vector subspaces $g^{-1}W^-$ for $g$ in $\Omega_0$ span $V$.\\
$iii)$ The vector subspaces $W^-\cap gW$ for $g$ in $\Omega_0$ span $W^-$.\\
$iii)'$ The vector subspaces $g^{-1}W^-\cap W$ for $g$ in $\Omega_0$ span $W$.

Let $\Ga\subset G$ be a Zariski dense subgroup. 
We assume that $W$ and $W^-$ are endowed with  $\m Q$-forms $W_\m Q$ 
and $W^-_\m Q$
satisfying the following compatibility conditions:\\
$iv)$ For all $g$ in $\Gamma\cap \Omega_0$ the subspaces $W^-\cap gW$ of $W^-$ are defined over $\m Q$.\\
$iv)'$ For all $g$ in $\Gamma\cap \Omega_0$ the subspaces $g^{-1}W^-\cap W$ of $W$ are defined over $\m Q$.

Then there exists a $\m Q$-form $G_\m Q$ on $G$ such that $\Gamma\subset G_\m Q$.
\end{Lem}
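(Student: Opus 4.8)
The statement is equivalent to producing a $\Ga$-invariant $\m Q$-form on $V$. Indeed, suppose $V_{\m Q}\subset V$ is a $\m Q$-subspace with $V_{\m Q}\otimes_{\m Q}\m R=V$ and $\Ga V_{\m Q}=V_{\m Q}$. Then a $\m Q$-basis of $V_{\m Q}$ is an $\m R$-basis of $V$ in which every element of $\Ga$ has rational matrix entries; since the Zariski closure of a subgroup of ${\rm GL}(N,\m Q)$ is defined over $\m Q$ and $\Ga$ is Zariski dense in the connected group $G$, the group $G$ is then defined over $\m Q$ and $\Ga\subset G_{\m Q}$. So the plan is to manufacture such a $V_{\m Q}$ out of $W_{\m Q}$ and $W^-_{\m Q}$.

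First I would set
$$ V_{\m Q}:=\textstyle\sum_{\ga\in\Ga}\bigl(\ga\, W_{\m Q}+\ga\, W^-_{\m Q}\bigr). $$
This is a $\m Q$-subspace of $V$, and it is manifestly $\Ga$-invariant since left translation by $\ga\in\Ga$ permutes the summands. By Zariski density $\Ga\cap\Om_0\neq\emptyset$, and hypothesis $ii$ shows that the subspaces $gW$ with $g\in\Ga\cap\Om_0$ already span $V$ over $\m R$; hence the natural map $V_{\m Q}\otimes_{\m Q}\m R\to V$ is surjective. Everything then comes down to proving that this map is \emph{injective}, i.e. that $\dim_{\m Q}V_{\m Q}=\dim_{\m R}V$, so that $V_{\m Q}$ is a genuine form rather than an over-large $\m Q$-subspace.

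This injectivity is the crux, and it is where hypotheses $i$, $iii$, $iii'$, $iv$, $iv'$ are used. For each $g\in\Ga\cap\Om_0$ the real subspace $gW$ carries the rational structure $gW_{\m Q}$, while the overlap $S_g:=W^-\cap gW$ simultaneously inherits the rational structure $W^-_{\m Q}\cap gW$ coming from the reference form $W^-_{\m Q}$. If these two $\m Q$-structures on $S_g$ did not coincide, their $\m Q$-span would have $\m Q$-dimension strictly larger than $\dim_{\m R}S_g$ and would inflate $V_{\m Q}$; so the heart of the matter is to prove that they agree. Hypothesis $i$ makes $g\mapsto S_g$ a morphism of constant rank $d$ on $\Om_0$; hypotheses $iv$ and $iv'$ say precisely that $S_g$ is $\m Q$-rational viewed inside $W^-$ and that $g^{-1}S_g=g^{-1}W^-\cap W$ is $\m Q$-rational viewed inside $W$; and hypotheses $iii$, $iii'$ guarantee that the overlaps $S_g$, respectively $g^{-1}S_g$, span $W^-$, respectively $W$. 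I would combine these with the Zariski density of $\Ga$ in $G$ to run a gluing/Galois-descent argument: the compatible rational structures carried by this spanning family of overlaps descend to a single rational structure, which forces the two $\m Q$-structures on every $S_g$ to coincide and pins down $V_{\m Q}\cap W=W_{\m Q}$ and $V_{\m Q}\cap W^-=W^-_{\m Q}$. A dimension count along a spanning decomposition $V=\sum_i g_iW$ with $g_i\in\Ga\cap\Om_0$ then yields $\dim_{\m Q}V_{\m Q}=\dim_{\m R}V$.

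Once $V_{\m Q}$ is established as a $\Ga$-invariant $\m Q$-form of $V$, the reduction of the first paragraph finishes the proof. I expect the delicate point to be exactly the compatibility of the two rational structures on each overlap $S_g$, together with the verification that this compatibility propagates across the whole spanning family even though hypotheses $iv$, $iv'$ only directly control elements $g$ lying in $\Om_0$.
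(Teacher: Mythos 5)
The first thing to note is that the paper does not prove this lemma at all: it is quoted from \cite[Lemma 8.6.2]{Margulis74} and used as a black box, so your argument has to stand on its own. It does not. The reduction to producing a $\Ga$-invariant $\m Q$-form of $V$, the definition $V_\m Q:=\sum_{\ga\in\Ga}(\ga W_\m Q+\ga W^-_\m Q)$, and the surjectivity of $V_\m Q\otimes_\m Q\m R\to V$ are all routine; the entire mathematical content of the lemma is the injectivity, i.e.\ the assertion that the infinitely many translated rational structures $\ga W_\m Q$, $\ga W^-_\m Q$ fit coherently inside a single $\m Q$-form of $V$. For that step you offer only the announcement of ``a gluing/Galois-descent argument'' (note that $\m R/\m Q$ is not a Galois extension, so descent in the technical sense is not available here), and you yourself flag the compatibility of the two rational structures on each overlap $S_g$ as the delicate point. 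That compatibility is not a consequence of the hypotheses for any single $g$: conditions $iv)$ and $iv)'$ say that $W^-\cap gW$ is a rational subspace of $W^-$ and that $g^{-1}W^-\cap W$ is a rational subspace of $W$, but they say nothing about whether $g$ carries the rational points of the latter to the rational points of the former, even projectively. So the step you defer is not a technical verification; it is the whole lemma.

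Moreover, no soft argument of the kind you describe can close this gap, because the listed hypotheses do not by themselves force your $V_\m Q$ to be a form. Take $V=\m R^2$, $W=W^-=V$, $W_\m Q=W^-_\m Q=\m Q^2$ and $\Omega_0=G$: conditions $i)$ through $iv)'$ are then satisfied vacuously, yet if $\Ga$ is generated by ${\rm diag}(s,s^{-1})$ with $s$ transcendental (so that $G$, its Zariski closure, is a connected torus), the sum $\sum_{n}\ga^nW_\m Q$ contains the $\m Q$-linearly independent vectors $s^ne_1$, $n\geq 0$, hence is infinite dimensional over $\m Q$. Any correct proof must therefore use the constancy and spanning conditions $i)$--$iii)'$ in an essential, quantitative way to control $\dim_\m Q V_\m Q$ --- this is where all of Margulis' actual argument lives --- and your outline never engages with that. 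Note also that your plan aims at a statement strictly stronger than the lemma, namely a $\Ga$-invariant form of $V$ inducing the given forms on $W$ and $W^-$, whereas the conclusion only asserts the existence of some $\m Q$-form of $G$ containing $\Ga$; insisting on the stronger statement makes the missing step harder, not easier.
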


\begin{proof}[Proof of Proposition \ref{proextfor}]  
We apply Lemma  \ref{lemextfor} with $V:=\g g$ 
and with $\Omega_0:=U^-P$.
The vector subspaces $W$ and $W^-$ will be defined later.  
Let $Q$, $Q^-$ and $H$ be the Zariski closures of $\Gamma\cap P$,
$\Gamma\cap P^-$ and $\Gamma\cap L$ respectively,
and let $\g q$, $\g q^-$ and $\g h$  be  the Lie algebras of  $Q$, $Q^-$ and $H$. 
\vs 
 
{\bf First step}:
we  describe a $\m Q$-form on $\g q$ and $\g q^-$.\\
The group $\Delta:=\Gamma\cap U$ is a lattice of $U$.
Hence, by Lemma \ref{lemnillat}.a, 
there exists a unique $\m Q$-form $U_\m Q$ on $U$ 
such that $\Delta\subset U_\m Q$.
The group ${\rm Aut}(U_\m Q)\ltimes U_\m Q$ is then 
a $\m Q$-form of the algebraic group 
${\rm Aut}(U)\ltimes U$. 
The group $P=L\ltimes U$ is naturally an algebraic  subgroup of 
${\rm Aut}(U)\ltimes U$. This subgroup might not be defined over $\m Q$. 
However the group $\Gamma\cap P$ is included
in ${\rm Aut}(U_\m Q)\ltimes U_\m Q$ 
and therefore its Zariski closure $Q$ 
is defined over $\m Q$. Let $\g q_\m Q$ 
be the corresponding $\m Q$-form of the Lie algebra $\g q$
of $Q$. This $\m Q$-form is $\Gamma\cap P$-invariant.
We use a similar construction for the $\m Q$-form on $\g q^-$.
\vs

{\bf Second step}:
we study the intersections $\g q^-\cap {\rm Ad}g(\g q)$. \\
For $g$ in $\Gamma\cap \Omega_0$,
the parabolic subgroups 
$P^-$ and $gPg^{-1}$ are opposite
and their intersection $L_{g}:=P^-\cap gPg^{-1}$
is a reductive group. Let $\g l_g$ be its Lie algebra.
The Zariski-closure 
$H_{g}$ of the group $\Gamma\cap L_g$ is 
a subgroup of $Q^-$ which is defined over $\m Q$. 
Hence its Lie algebra $\g h_{g}$ is 
a subalgebra of $\g q^-$ which is defined over $\m Q$.
Since the groups $U^-$ and $gUg^{-1}$ are $\Ga$-compact, 
by Lemma \ref{lempglgug}, the group
$$
\mbox{
$(\Gamma\cap L_g)(\Gamma\cap U^-)$ has finite index in $\Gamma\cap P^-$.}
$$
Therefore, the group $H_gU^-$ has finite index in $Q^-$ and one has
$\g q^-=\g h_{g}\oplus \g u^-\, .$ 
By a similar argument with the element 
$g^{-1}$, one gets
$
\g q={\rm Ad} g^{-1}(\g h_{g})\, \oplus\, \g u.$
Combining these last two equalities, one gets
\begin{equation}
\label{eqnhggqq}
 \g h_{g}= \g q^-\cap {\rm Ad}g(\g q)\, .
\end{equation}
This proves that the dimension of this intersection 
is equal to $\dim\g q -\dim \g u$, hence does not depend on $g$ and that this intersection  is a vector subspace of $\g q^-$ which is defined over $\m Q$. 
\vs
  
{\bf Third step}: we apply Lemma \ref{lemextfor}.

Let 
$W^-\subset \g q^-$ be the vector space spanned by  $\g h_g$
for $g$ in $\Ga\cap \Omega_0$,
and $W\subset \g q$ be the vector subspace spanned by
${\rm Ad} g^{-1}(\g h_{g})$, for  $g$ in $\Gamma\cap \Omega_0$.
The subspace $W^-$ of $\g q^-$ is defined over $\m Q$.
Similarly, the subspace $W$ of $\g q$ is defined over $\m Q$. We can now check the assumptions of 
Lemma  \ref{lemextfor}. 

$i)$ and $iv)$ By \eqref{eqnhggqq} one still has the equality
$$
\g h_{g}= W^-\cap {\rm Ad}g(W)
\;\;\mbox{\rm for all $g$ in $\Gamma\cap \Omega_0$}\, .
$$ 
  
$ii)$ By Lemma \ref{lemirrhor}, the group $\Ga\cap L$ 
intersects trivially all proper normal subgroup $G'$ of $G$.
Therefore, for any projection $p_a:\g g\ra\g g_a$
onto a simple ideal, the Lie algebra $p_a(\g h)$ is non-zero.
The subspace of $\g g$ spanned by ${\rm Ad}g(W)$, for $g\in \Gamma\cap \Om_0$,
is an ideal of $\g g$ that contains $\g h$. Therefore it is equal to $\g g$.

$iii)$  follows directly from our choice of $W^-$. 

$ii)'$, $iii)'$ and $iv)'$ are proven in a similar way.

By Lemma \ref{lemextfor}, there is a
$\m Q$-form on $G$ such that $\Gamma\subset G_\m Q$.
\end{proof}

\begin{Cor}
\label{corextfor}
Let $G$ be an adjoint semisimple real algebraic Lie group
of real rank at least two. 
Let $\Gamma$ be a discrete subgroup 
that contains $\exp(\Lambda)$ and $\exp(\Lambda^-)$
where $\Lambda$, $\Lambda^{-}$ are irreducible lattices in opposite 
horospherical Lie subalgebras $\g u$ and $\g u^{-}$. 
Let $L:=P\cap P^-$ be the intersection of their  normalizers. 
Assume that the stabilizer $L_{\La,\La^-}$ in $L$ of the pair $(\La,\La^-)$ is infinite.

Then $\Ga$ is an irreducible arithmetic lattice of $G$. 
\end{Cor}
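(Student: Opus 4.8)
The plan is to reduce the statement to Propositions \ref{proextfor} and \ref{proragven}, the only real difficulty being to arrange that the discrete group we feed into Proposition \ref{proextfor} contains the infinite stabilizer $L_{\La,\La^-}$. Set $\Gamma_0:=\langle\exp(\La),\exp(\La^-)\rangle\subseteq\Gamma$, a discrete group, and put $\Delta_0:=\Gamma_0\cap U$, $\Delta_0^-:=\Gamma_0\cap U^-$. Since $\La$ is irreducible, $\Delta_0$ and $\Delta_0^-$ are irreducible lattices of $U$ and $U^-$ (Remark \ref{remirrhor}); as $\Delta_0$ is irreducible, $U$ and $U^-$ meet every simple factor of $G$ and therefore generate $G$, so $\Gamma_0$ is Zariski dense. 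Writing $S:=L_{\La,\La^-}$, for $s\in S$ one has $\mathrm{Ad}(s)\La=\La$ and $\mathrm{Ad}(s)\La^-=\La^-$, so conjugation by $s$ preserves both $\exp(\La)$ and $\exp(\La^-)$; hence $S$ normalizes $\Gamma_0$.

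The \textbf{main step}, and the one I expect to be the crux, is to check that $\Gamma_1:=\langle\Gamma_0,S\rangle$ is still discrete, which makes rigorous the reduction ``we may assume $\Gamma$ contains $L_{\La,\La^-}$''. The key point is that the normalizer $N:=N_G(\Gamma_0)$ is discrete: its identity component $N^\circ$ acts continuously by conjugation on the discrete group $\Gamma_0$, hence centralizes it, and since $\Gamma_0$ is Zariski dense and $G$ is adjoint one gets $N^\circ\subseteq Z_G(G)=\{e\}$. As $\Gamma_1\subseteq N$, the group $\Gamma_1$ is discrete; it is Zariski dense, its intersections with $U$ and $U^-$ remain irreducible lattices (they contain $\Delta_0$, $\Delta_0^-$ with finite index, and unipotent groups are torsion free), and $\Gamma_1\cap L\supseteq S$ is infinite.

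I would then apply Proposition \ref{proextfor} to $\Gamma_1$ to obtain a $\m Q$-form $G_\m Q$ of $G$ with $\Gamma_1\subseteq G_\m Q$, built so that $U$ and $U^-$ are defined over $\m Q$ and, by Lemma \ref{lemnillat}, so that the lattice $\Delta_0$ is commensurable to $U_\m Z$. Before invoking Raghunathan--Venkataramana I must verify that $G_\m Q$ is $\m Q$-simple: if $G_\m Q$ decomposed as a product of two proper $\m Q$-subgroups, then $U_\m Z$, and hence the commensurable lattice $\Delta_0$, would be a decomposable lattice of $U$, contradicting the indecomposability of $\Delta_0$ provided by Lemma \ref{lemirrhor}.

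Finally, applying Proposition \ref{proragven} to the $\m Q$-simple group $G_\m Q$ with the finite-index subgroups $\Delta_0\cap U_\m Z$ and $\Delta_0^-\cap U^-_\m Z$ shows that the group they generate has finite index in $G_\m Z$. Since this group lies in $\Gamma_0$, the discrete group $\Gamma_0$ is commensurable to $G_\m Z$, hence is a non-cocompact arithmetic lattice. As $\Gamma\supseteq\Gamma_0$ is discrete and $\Gamma_0$ is a lattice, $\Gamma$ is commensurable to $\Gamma_0$; by Lemma \ref{lemirrhor} it is irreducible, so $\Gamma$ is an irreducible arithmetic lattice of $G$. The delicate point throughout is the discreteness of $\Gamma_1$, i.e. the reduction to the case $\Gamma\supseteq L_{\La,\La^-}$; everything else is assembling the cited propositions.
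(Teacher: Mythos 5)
Your proposal is correct and follows essentially the same route as the paper: reduce to the subgroup generated by $\exp(\La)$ and $\exp(\La^-)$, observe that $L_{\La,\La^-}$ lies in its normalizer, use the discreteness of the normalizer of a Zariski dense discrete subgroup of the adjoint group $G$ to adjoin it, then apply Proposition \ref{proextfor} followed by Proposition \ref{proragven}. The only cosmetic difference is that the paper enlarges to the full normalizer $N_G(\Gamma_0)$ rather than to $\langle\Gamma_0,L_{\La,\La^-}\rangle$; the key discreteness argument is identical.
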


\begin{proof}
Without loss of generality, 
we can assume that $\Gamma$ is generated by $ \exp(\Lambda)$ and $\exp(\Lambda^-)$.
Let $\Ga'$ be the normalizer of $\Ga$ in $G$. 
Since the group $\Ga$ is discrete and Zariski dense in $G$,
the group  $\Ga'$ is also discrete and Zariski dense in $G$.
Since this group $\Ga'$ contains  $\exp(\Lambda)$, $\exp(\Lambda^-)$ and 
the infinite group $L_{\La,\La^-}$,
by Proposition \ref{proextfor}, there exists a $\m Q$-form $G_\m Q$
of $G$ such that $\Ga'\subset G_\m Q$.
In particular, one has $\Ga\subset G_\m Q$.
Since the lattice $\Lambda$ is irreducible this $\m Q$-form is $\m Q$-simple.
By Raghunathan and Venkataramana's Proposition \ref{proragven},
$\Ga$ is commensurable to the lattice $G_\m Z$.  
\end{proof}

%45
\subsection{When $P$ is not a minimal parabolic subgroup}
\label{secnotmin}

\bq
We give the proof of Proposition \ref{procomhor} in the next three sections.
In almost all cases we will   prove that
the stabilizer
$ L_{\La,\La^-}$ is infinite and apply Corollary \ref{corextfor}.

We first prove Proposition \ref{procomhor} when the parabolic $P$ is not minimal, i.e. when the semisimple 
group $[L,L]$ is non-compact.
\eq

We use the notation of Remark \ref{remcomhor}. Let $S$ be the product of the non-compact simple factors of $[L,L]$
and let $H\subset L$ be the closure of the product $L_{\La,\La^-}S$.
Since $S$ is a normal subgroup of $L$, $H$ is a closed subgroup of $L$. 
By Proposition \ref{prosindou},  the  $L$-orbit $L\,(\La,\La^-)$ is closed.
Hence the  closure  of the $S$-orbit $S\,(\La,\La^-)$ is the orbit $H\,(\La,\La^-)$.

Note that the Radon $H$-invariant measure $\la_0$ on this orbit 
$H\,(\La,\La^-)$ is $S$-ergodic. 
Applying the following Proposition \ref{prodanmar} to the finite volume homogeneous space $\mc X_0:=\mc X(\g u)\times \mc X(\g u^-)$ tells us that $\la_0$ has finite volume.
This means that the stabilizer $L_{\La,\La^-}$
is a lattice in $H$.
Since $H$ is non-compact this stabilizer
$ L_{\La,\La^-}$ is infinite and by Corollary \ref{corextfor},
the group $\Ga$ is an irreducible arithmetic lattice of $G$.
This proves Proposition \ref{procomhor} when  $P$ is not minimal.
\vs

We have used the following result  due to Dani and Margulis. 

\begin{Prop}
\label{prodanmar}
Let $G_0$ be a real algebraic Lie group,  $\Ga_0$ be a lattice in $G_0$ and $\mc X_0=G_0/\Ga_0$.
Let $S_0\subset G_0$ be a semisimple subgroup 
then any $S_0$-invariant and ergodic Radon measure $\la_0$ on $\mc X_0$ has finite volume. 
\end{Prop}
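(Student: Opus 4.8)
The plan is to reduce everything to the recurrence of a single unipotent flow and to the Dani--Margulis non-divergence theorem, the ergodicity being needed only to handle the compact case. First I would dispose of the case where $S_0$ is compact: then the $S_0$-orbits are compact, an $S_0$-invariant ergodic Radon measure is carried by a single such orbit, and hence has finite mass. So I may assume $S_0$ is non-compact and fix a non-trivial unipotent one-parameter subgroup $\{u_t\}_{t\in\m R}\subset S_0$; its elements are unipotent in $G_0$, and $\la_0$ is in particular $\{u_t\}$-invariant. Note that beyond this point the $S_0$-ergodicity is not used.

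The key external input is the uniform non-divergence theorem of Dani and Margulis for unipotent flows on the finite-volume space $\mc X_0$: for every $\eps\in(0,1)$ there is a compact set $K\subset\mc X_0$ such that for every unipotent one-parameter subgroup $\{u_t\}$ of $G_0$ and every $x\in\mc X_0$ one has $\liminf_{T\ra\infty}\tfrac1T\,|\{t\in[0,T]\mid u_tx\in K\}|\ge 1-\eps$. I fix such an $\eps$ and $K$. Since $K$ is compact and $\la_0$ is Radon, $\la_0(K)<\infty$; moreover $\la_0(K)>0$, for otherwise invariance of $\la_0$ would force $\tfrac1T|\{t\in[0,T]\mid u_tx\in K\}|=0$ for $\la_0$-almost every $x$ (take rational $T$ and use monotonicity in $T$), contradicting the non-divergence bound since $\la_0\neq 0$.

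It then remains to bound $\la_0(K')\le\tfrac1{1-\eps}\la_0(K)$ for every compact $K'\subset\mc X_0$, after which inner regularity of the Radon measure gives $\la_0(\mc X_0)=\sup_{K'}\la_0(K')\le\tfrac1{1-\eps}\la_0(K)<\infty$ and finishes the proof. For this I would disintegrate $\la_0$ into its $\{u_t\}$-ergodic components. On almost every component $\nu$ (again Radon, and finite and positive on $K$ by the argument above) Hopf's ratio ergodic theorem applies to $f=\mathbf 1_{K'}$ and $g=\mathbf 1_K$, giving for $\nu$-almost every $x$ the convergence of $\big(\int_0^T\mathbf 1_{K'}(u_tx)\rmd t\big)\big/\big(\int_0^T\mathbf 1_K(u_tx)\rmd t\big)$ to $\nu(K')/\nu(K)$. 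The numerator is at most $T$, while by non-divergence the denominator is at least $(1-\eps)T$ along a sequence $T\ra\infty$; hence $\nu(K')/\nu(K)\le 1/(1-\eps)$. Integrating $\nu(K')\le\tfrac1{1-\eps}\nu(K)$ over the ergodic decomposition yields the desired inequality for $\la_0$.

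The main obstacle is twofold. The substantial ingredient is the Dani--Margulis non-divergence estimate, which must be quoted in exactly the right uniform form: uniform over the starting point $x$ and over the unipotent subgroup, and phrased with a $\liminf$, since for points deep in the cusp the finite-time frequencies can be arbitrarily small. The second, more technical, difficulty is that $\la_0$ may be infinite, so the ordinary Birkhoff theorem is unavailable and one genuinely needs Hopf's ratio ergodic theorem together with the ergodic decomposition of the flow $\{u_t\}$, checking that the reference function $\mathbf 1_K$ has positive finite integral on almost every ergodic component so that the ratio limit is well defined.
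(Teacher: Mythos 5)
There is a genuine gap, and it sits exactly at the point you yourself flag as the ``substantial ingredient'': the uniform non-divergence statement you quote is false. There is no compact set $K\subset\mc X_0$, depending only on $\eps$, such that $\liminf_{T\ra\infty}\tfrac1T|\{t\in[0,T]\mid u_tx\in K\}|\ge 1-\eps$ for \emph{every} $x\in\mc X_0$. Already for $G_0={\rm SL}(2,\m R)$, $\Ga_0={\rm SL}(2,\m Z)$ and the horocycle flow, a unimodular lattice containing a $u_t$-fixed vector of length $\delta$ remains for all time in the region where the systole is $\le\delta$, hence never meets a fixed compact $K$ once $\delta$ is small enough; more relevantly here, if $G_0$ is a product and $u_t$ lies in one factor, every point whose other coordinate lies deep in a cusp never meets a fixed compact set. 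The correct statements are: uniformity of the occupation bound over $x$ ranging in a \emph{compact} subset of $\mc X_0$, or, for an arbitrary $x$, the existence of a compact set $K(x,\eps)$ \emph{depending on} $x$.

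This is fatal for the architecture of your argument, not just for the citation. With the corrected input, your Hopf-ratio argument still shows that each $\{u_t\}$-ergodic component $\nu$ of $\la_0$ is finite (the good sets attached to an exhaustion by compacts are $\{u_t\}$-invariant and increase to $\mc X_0$, so one of them is $\nu$-conull), but the compact set, and hence the bound $\nu(\mc X_0)\le\tfrac1{1-\eps}\,\nu(K)$, now depends on the component, and an integral of infinitely many uniformly-unbounded finite components can be infinite. Indeed the statement you are actually proving --- that every $\{u_t\}$-invariant Radon measure on $\mc X_0$ is finite, with no further use of $S_0$ --- is false: take $G_0={\rm SL}(2,\m R)\times{\rm SL}(2,\m R)$, $\Ga_0={\rm SL}(2,\m Z)^2$, $u_t$ unipotent in the first factor, and $\la_0=\mu_1\otimes m_2$ with $\mu_1$ the Haar probability measure on the first factor and $m_2$ an infinite Radon measure on the second. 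So the $S_0$-invariance and ergodicity must intervene after the choice of $u_t$, which is precisely how the paper proceeds: pointwise recurrence together with von Neumann's mean ergodic theorem in $L^2(\mc X_0,\la_0)$ produce a non-zero $U_0$-invariant vector, the Howe--Moore theorem upgrades it to an $S''_0$-invariant one, averaging over the compact factors gives an $S_0$-invariant one, and only then does $S_0$-ergodicity force it to be a non-zero constant, i.e. $\la_0(\mc X_0)<\infty$.
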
 

\begin{proof}
See \cite[Theorem 11.5]{MargulisTomanov94}. 
Write $S_0=S'_0S''_0$ where $S'_0$ and $S''_0$ are respectively the product of the compact and non-compact factors of $S_0$. 
Let $U_0$ be a one parameter unipotent subgroup of $S''_0$
which is not included in a proper normal subgroup of $S''_0$.
The Dani-Margulis uniform recurrence theorem 
for unipotent flow on finite volume homogeneous spaces implies that the Hilbert space
$L^2(\mc X_0,\la_0)$ contains non-zero $U_0$-invariant functions. Therefore the 
Howe-Moore theorem gives a
non-zero $S''_0$-invariant function $\ph\in L^2(\mc X_0,\la_0)$.
One can assume $1\leq \ph\leq 2$. 
Averaging $S'_0$-translates of $\ph$
gives a non-zero $S_0$-invariant function. 
By ergodicity this function must be almost surely constant and hence $\la_0$ has finite volume. 
\end{proof}

%46 
\subsection{When $P$ is minimal and $U$ is commutative}
\label{secmincom}
\bq
In this section we prove Proposition \ref{procomhor} when the parabolic $P$ is  minimal and 
$U$ is  commutative.
\eq

In this case the group $G$ is a product  
\begin{equation}
\label{eqnsod}
\textstyle
G=\prod_a G_a
\;\;{\rm where}\;\; 
G_a={\rm PSO}(d_a+1,1)  
\;\;{\rm with}\;\; 
d_a\geq 1\, .
\end{equation}
This case is due to the second author in \cite{Miquel17}.
We present a short proof below using our new approach.
Let 
$L_0:=\{\ell\in L\mid {\rm det}_\g u({\rm Ad}\,\ell)=1\}$. 

\begin{Lem}
\label{lemsod}
With the notation 
of Remark $\ref{remcomhor}$. We assume that $P$ is minimal 
and $U$ is commutative. Then 
the $L_0$-orbit $L_0 (\La,\La^-)$ is compact
\end{Lem}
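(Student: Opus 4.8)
The plan is to show that the orbit stays inside a fixed Mahler-compact part of $\mc X(\g u)\times\mc X(\g u^-)$ and that its limit points fall back into the orbit. By \eqref{eqnsod} the group $G=\prod_a G_a$ has at least two factors $G_a={\rm PSO}(d_a+1,1)$, and $L=\prod_a L_a$ acts on $\g u=\bigoplus_a\g u_a$, with $\g u_a\simeq\m R^{d_a}$, by similarities: each $\ell_a$ acts as $\lambda_a\,O_a$ with $\lambda_a>0$ and $O_a$ orthogonal for the Euclidean norm on $\g u_a$ supplied by the duality $G_2$ of Lemma \ref{lemfphf}, and it acts on $\g u^-_a$ as $\lambda_a^{-1}O_a$. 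In these coordinates $L_0=\{\ell\mid\prod_a\lambda_a^{d_a}=1\}$, a \emph{non-compact} group precisely because there are at least two factors.

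First I would record that covolumes are constant along $L_0$: for $\ell\in L_0$ one has ${\rm covol}({\rm Ad}\ell\,\La)=|\det_\g u{\rm Ad}\ell|\,{\rm covol}(\La)=(\prod_a\lambda_a^{d_a})\,{\rm covol}(\La)={\rm covol}(\La)$, and likewise ${\rm covol}({\rm Ad}\ell\,\La^-)={\rm covol}(\La^-)$. The decisive point is a uniform lower bound on the systoles. Since $\g z=\g u$ here, the character $\chi$ is trivial on $L_0$, so by Lemma \ref{lemfgadl} the polynomial $F$ is $L_0$-invariant; as $\Phi$ splits over the factors, $F(X)=\prod_a F_a(X_a)$ with each $F_a$ invariant under the compact factor $M_a$ and homogeneous of degree $2d_a$, whence $F(X)=c\prod_a\|X_a\|^{2d_a}$ with $c\neq0$, homogeneous of degree $2D$, $D:=\dim\g u$. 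In particular $|F(u)|\le C\|u\|^{2D}$ for some $C>0$. By irreducibility of $\La$ (see Remark \ref{remirrhor}) every $v\in\La\smallsetminus\{0\}$ has all components $v_a\neq0$, so $F(v)\neq0$; and since $F(\La)$ is discrete by Corollary \ref{corpolfg}, there is $\eps>0$ with $|F(v)|\ge\eps$ on $\La\smallsetminus\{0\}$. Combining, for every $v\in\La\smallsetminus\{0\}$ and every $\ell\in L_0$,
$$
\eps\le|F(v)|=|F({\rm Ad}\ell\,v)|\le C\,\|{\rm Ad}\ell\,v\|^{2D},
$$
so $\|{\rm Ad}\ell\,v\|\ge(\eps/C)^{1/2D}=:\delta>0$ uniformly. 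The same argument applied with the roles of $U$ and $U^-$ exchanged (Corollary \ref{corpolfg} being symmetric) yields a uniform bound $\delta^->0$ for $\La^-$.

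With constant covolumes and uniform systole bounds, the Mahler compactness criterion shows that $\{({\rm Ad}\ell\,\La,{\rm Ad}\ell\,\La^-)\mid\ell\in L_0\}$ is relatively compact. To upgrade this to compactness I would check that limit points stay in the orbit: if ${\rm Ad}\ell_n(\La,\La^-)$ converges, then by Proposition \ref{prosindou} the limit equals ${\rm Ad}\ell_\infty(\La,\La^-)$ for some $\ell_\infty\in L$, and equality of covolumes forces $|\det_\g u{\rm Ad}\ell_\infty|=1$. Thus every limit point lies on the covolume level set inside the closed $L$-orbit; since $L_0$ is normal of index at most two in $\{\ell\in L\mid|\det_\g u{\rm Ad}\ell|=1\}$, the $L_0$-orbit is a fibre of a map to a finite set and is therefore closed. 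Hence $L_0(\La,\La^-)$ is compact.

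The main obstacle is the uniform systole bound. On its own a lattice $\La\subset\prod_a\m R^{d_a}$ would degenerate as $\ell$ runs to infinity in the non-compact group $L_0$; what prevents this is exactly the discreteness of $\Gamma$, which enters through the discreteness of $F(\La)$ in Corollary \ref{corpolfg}, together with the $L_0$-invariance of $F$ and the constraint $\prod_a\lambda_a^{d_a}=1$ coming from the higher-rank (several-factor) hypothesis. All the remaining steps are soft consequences of Mahler's criterion and of the already-established closedness of the single and double orbits.
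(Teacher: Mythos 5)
Your proof is correct and follows essentially the same route as the paper's: closedness of the orbit via Proposition \ref{prosindou}, the identification $F(X)=c\prod_a\|X_a\|^{2d_a}$ from semi-invariance, the positive infimum of $|F|$ on $\La\smallsetminus\{0\}$ coming from irreducibility together with Corollary \ref{corpolfg}, and Mahler's criterion for relative compactness. Your final paragraph upgrading relative compactness to compactness merely makes explicit what the paper leaves implicit when it asserts that the $L_0$-orbit (rather than the $L$-orbit) is closed.
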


\begin{proof} By Proposition \ref{prosindou},  the  $L_0$-orbit $L_0(\La,\La^-)$ is closed.
We only need to check that this orbit is relatively compact. By \eqref{eqnsod}, 
$\g u$ is a direct sum
\begin{equation*}
\label{eqnuuarda}
\textstyle
\g u=\oplus_a\,\g u_a
\;\;{\rm where}\;\; 
\g u_a=\m R^{d_a}
\end{equation*}
are Euclidean vector spaces and $L$ is the product of groups of similarities 
\begin{equation*}
\label{eqnllasim}
\textstyle
L=\prod_a L_a
\;\;{\rm where}\;\; 
L_a={\rm Sim}(\m R^{d_a} )  
\, .
\end{equation*}
We use again the polynomial $F(X)$ introduced in \eqref{eqnpolfg}.
By Lemma \ref{lemfgcom}.a, $F$ is non zero. 
Since $L$ has an open orbit in $\g u$,
$F$ is the unique $L$-semi-invariant polynomial on $\g u$ with character $\chi^2$ i.e.
satisfying the equivariance property 
\eqref{eqnfadl}. Therefore, there exists $c>0$ such that
$$\textstyle
F(X)=c\,\prod_a \|X_a\|^{2d_a}
\;\;\;\mbox{\rm for all $X=(X_a)\in \g u$.}
$$

Since the lattice $\La$ is irreducible,  one has 
$$
F(X)\neq 0
\;\;\mbox{\rm for all $X$ in $\La\!\smallsetminus\!\{0\}$\, .}
$$    
Lemma \ref{corpolfg} tells us that the set $F(\La)$ is a closed discrete subset of $\m R$. Therefore 
the following constant is positive
$$
m:=\inf_{X\in \La\smallsetminus\{0\}}F(X) >0\, .
$$
Since the function $F$ is $L_0$-invariant, 
one computes, for all $\ell$ in $L_0$,
\begin{equation}
\label{eqnfadlx}
\inf_{X\in \La\smallsetminus\{0\}}\|{\rm Ad} \ell(X)\|^{2d}
\geq c^{-1}\inf_{X\in \La\smallsetminus\{0\}}F({\rm Ad}\ell(X)) 
=c^{-1}m>0\, ,
\end{equation}
where $d:=\sum_a\, d_a$.
Since all the lattices ${\rm Ad}\ell\,(\Lambda)$,
with $\ell\in L_0$, have the same covolume,
by the Mahler compactness theorem, the bound \eqref{eqnfadlx}
tells us that the orbit 
$L_0\La$ is relatively compact.
For the same reason the orbit $L_0\La^-$ is also relatively compact.
\end{proof}

We can conclude the proof of Proposition \ref{procomhor} in this case.
Since the real rank of $G$ is at least two the group $L_0$ 
is non-compact. Therefore, by Lemma \ref{lemsod}, the stabilizer
$  L_{\La,\La^-}$ is infinite and by Corollary \ref{corextfor},
the group $\Ga$ is an irreducible arithmetic lattice of $G$.

%47 
\subsection{When $P$ is minimal and $U$ is Heisenberg}
\label{secminhei}

\bq
In this section we prove Proposition \ref{procomhor} when the parabolic $P$ is  minimal and 
$U$ is  Heisenberg.
\eq

In this case the group 
$G$ is a simple real Lie group and its root system $\Sigma$ is of type $A_2$. 
This means that the root system is 
$\Si=\{\pm\al_1,\pm\al_2,\pm\al_3\}$ with $\al_3=\al_1+\al_2$
and the Lie algebra $\g u$ is the sum 
$$
\g u=\g g_1\oplus\g g_2
\;\;{\rm with}\;\;
\g g_1:=\g g_{\al_1}\oplus\g g_{\al_2}
\;\;{\rm and}\;\;
\g g_2:=\g g_{\al_3}\, .
$$ 

One can give a list of such groups $G$: they are the groups ${\rm PGL}(3,\m K)$, for the fields $\m K=\m R$, $\m C$, $\m H$, or the group
${\rm Aut}(\m P^2(\m O))$ of collinea\-tions of the projective plane
over the  algebra of octonions. We will not use this list.

The group $L$ is a product $L=MA$ where $M$ is a compact group and $A$ is a two-dimensional split torus.
The group $L_0:=\{\ell\in L\mid {\rm det}_\g z({\rm Ad}\,\ell)=1\}$
is equal to $L_0=MA_0$ where $A_0:=L_0\cap A$  is a one-dimensional split torus.
 
For any root $\al$ we denote by $G_\al$ the unipotent group
whose Lie algebra is the root space $\g g_\al$ and $\Ga_\al:=\Ga\cap G_\al$. All these root spaces have the same dimension $d=1$, $2$, $4$ or $8$.

\begin{Lem}
\label{lemheidic}
With the notation 
of Remark $\ref{remcomhor}$. 
We assume that $P$ is minimal 
and $U$ is Heisenberg. Then one has the dichotomy:\\
$(i)$ either  the $L_0$-orbit $L_0 (\La,\La^-)$ is compact,\\
$(ii)$ or $\Ga_\al$ is a lattice in $G_\al$ for all $\al\in \Sigma$.
\end{Lem}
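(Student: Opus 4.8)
The plan is to reduce the dichotomy to a Mahler-compactness analysis of a single one-parameter flow, and then to convert any cusp excursion into genuine root-space lattices by means of the discreteness results already at hand. Write $L_0=MA_0$ with $M$ compact and $A_0=\{a_s\}_{s\in\m R}$, where $a_s$ acts on $\g g_{\pm\al_1}$ by $e^{\pm s}$, on $\g g_{\pm\al_2}$ by $e^{\mp s}$, and trivially on $\g g_{\pm\al_3}$ (this is exactly the condition $\det_\g z\mathrm{Ad}\,a_s=1$ cutting out $A_0$). Since $\det_\g u\mathrm{Ad}\,a_s=\det_{\g u^-}\mathrm{Ad}\,a_s=1$ and $M$ is compact, the covolumes of $\mathrm{Ad}\,\ell(\La)$ and $\mathrm{Ad}\,\ell(\La^-)$ stay constant as $\ell$ ranges over $L_0$. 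As the orbit $L_0(\La,\La^-)$ is closed by Proposition \ref{prosindou}, the Mahler criterion shows that it is compact — case $(i)$ — precisely when the lengths of the shortest nonzero vectors of $\mathrm{Ad}\,a_s(\La)$ and $\mathrm{Ad}\,a_s(\La^-)$ stay bounded below as $s$ ranges over $\m R$. I would therefore assume $(i)$ fails and exhibit, for every $\al\in\Sigma$, a lattice $\Ga_\al\subset G_\al$.

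Suppose first that the systole of $\mathrm{Ad}\,a_{s_n}(\La)$ tends to $0$ for some $s_n\to+\infty$; the three remaining divergences ($\La$ with $s_n\to-\infty$, and $\La^-$ with $s_n\to\pm\infty$) are symmetric. Writing $X=X_1+X_2+Z$ along $\g g_{\al_1}\oplus\g g_{\al_2}\oplus\g z$, one has $\|\mathrm{Ad}\,a_sX\|^2=e^{2s}\|X_1\|^2+e^{-2s}\|X_2\|^2+\|Z\|^2$, so the offending vectors $X_n\in\La$ satisfy $X_{1,n}\to0$ and $Z_n\to0$: they approach the contracting root space $\g g_{\al_2}$. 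The key point is to upgrade this approach to an honest lattice vector. For this I would use that $F$ is $A_0$-invariant with $F(0)=\Ph(w_0)=0$ (Lemma \ref{lemphg}.b), whence $F(X_n)=F(\mathrm{Ad}\,a_{s_n}X_n)\to0$; since $F(\La)$ is discrete (Corollary \ref{corpolfg}), $F(X_n)=0$ for $n$ large. Thus the $X_n$ lie on $\{F=0\}\cap\La$ while tending to $\g g_{\al_2}$, and a discreteness argument for $\Ga$ of the type used in Proposition \ref{promgx} (conjugating $e^{X_n}$ and invoking injectivity of $\exp$ on nilpotents) should produce a nonzero $X^\ast\in\La\cap\g g_{\al_2}$.

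Next I would propagate this single seed to full root-space lattices using the central lattices. Because $[\La,\La]$ and $[\La^-,\La^-]$ are cocompact in $\g g_{\al_3}$ and $\g g_{-\al_3}$ (Corollary \ref{cornillat}.b), the groups $\Ga_{\pm\al_3}$ are lattices. Bracketing a nonzero element of $\g g_{\al_2}$ with $\g g_{-\al_3}$ (resp. of $\g g_{-\al_1}$ with $\g g_{\al_3}$) is a linear isomorphism onto the neighbouring root space $\g g_{-\al_1}$ (resp. $\g g_{\al_2}$); this no-zero-divisor property reflects the composition-algebra structure underlying a Heisenberg $\g u$ with $\Sigma$ of type $A_2$. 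Hence the group commutators of $\exp(X^\ast)$ with $\Ga_{-\al_3}$ yield a full lattice $\Ga_{-\al_1}\subset G_{-\al_1}$, and commutators of the latter with $\Ga_{\al_3}$ yield a full $\Ga_{\al_2}$; so the whole ``cycle'' $\{\al_2,-\al_1\}$ is realised by lattices, together with $\{\pm\al_3\}$. A backward divergence would symmetrically realise the complementary cycle $\{\al_1,-\al_2\}$, and the two cycles exhaust $\Sigma$, giving $(ii)$.

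The main obstacle I anticipate is precisely that a one-sided cusp excursion only seeds one of the two commutator cycles: in type $A_2$ the bracket relations never let $\{\al_2,-\al_1,\pm\al_3\}$ reach $\g g_{\al_1}$ or $\g g_{-\al_2}$, so obtaining $(ii)$ for all of $\Sigma$ seems to require divergence of the $A_0$-flow in both time directions. The delicate step is therefore to rule out purely one-directional (or ``irrational'') divergence. I would attack this by exploiting the discreteness of $\Ga$ through both lattices at once — for instance via the $L_0$-invariant non-degenerate pairing $G_2$ of Lemma \ref{lemfphf} together with the discreteness of $G(\La\times\La^-)$ (Corollary \ref{corpolfg}), which couples $\g g_{\al_i}$ to $\g g_{-\al_i}$ — in order to show that a short vector in one contracting direction forces a short vector in the opposite contracting direction, so that the failure of $(i)$ produces seeds in both cycles and hence lattices $\Ga_\al$ for every $\al\in\Sigma$.
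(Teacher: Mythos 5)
Your architecture inverts the paper's: you take ``the orbit is compact or it is not'' as the primary dichotomy and try to derive $(ii)$ from non-compactness, whereas the paper's case split is structural --- whether one of the inclusions $(\Ga\cap G_{\al_1}G_{\al_3})\subset G_{\al_3}$, $(\Ga\cap G_{\al_2}G_{\al_3})\subset G_{\al_3}$ holds --- with one inclusion forcing compactness and the failure of both yielding $(ii)$. Your skeleton is logically admissible, but it contains a genuine gap at the seed-extraction step. What the cusp excursion actually produces is weaker than what you claim: after the $F$-argument (which, note, needs $F(pX_n)=0$ for all integers $p$ in order to isolate the top homogeneous component \eqref{eqnf4d} and conclude $\|X_{n,1}\|\,\|X_{n,2}\|=0$; the vanishing of the non-homogeneous $F(X_n)$ alone gives nothing about $F_{4d}(X_n)$), one obtains $X_n\in\La\cap(\g g_{\al_2}\oplus\g g_{\al_3})$ with $X_{n,2}\neq 0$ but with a possibly non-zero central component $X_{n,3}$. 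No conjugation argument \`a la Proposition \ref{promgx} can remove $X_{n,3}$: it lies in the center of $\g u$, so conjugating by $\Delta$ does not touch it, and the $\g g_{\al_2}$-component of $X_n$ need not itself belong to $\La$. The missing ingredient is Auslander's theorem (Proposition \ref{proaus}): the projection of $\Ga\cap(S'\ltimes U')$ to $S'$ is discrete, hence contains $\Ga_{\al_3}$ with finite index, so a suitable power of $e^{X_n}$ lies in $\Ga_{\al_2}\Ga_{\al_3}$ with non-trivial $G_{\al_2}$-part, and only then does one get a non-trivial element of $\Ga_{\al_2}$. Your subsequent bracket propagation is sound (the surjectivity of ${\rm ad}$ between root spaces is exactly the paper's $[g_1',G_{-\al_3}]=G_{-\al_2}$).

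The second gap is the one you flag yourself, and your proposed repair via $G_2$ and the discreteness of $G(\La\times\La^-)$ is both unworked and unnecessary. The resolution is elementary: by Proposition \ref{prosindou}.a the \emph{single} orbit $L_0\La$ is already closed, and $L_0=MA_0$ with $M$ compact and $A_0$ one-dimensional; hence if $L_0\La$ is non-compact its stabilizer is compact, the map $s\mapsto {\rm Ad}\,a_s(\La)$ is proper, and the lattices diverge as $s\to+\infty$ \emph{and} as $s\to-\infty$. The two time directions contract $\g g_{\al_2}$ and $\g g_{\al_1}$ respectively, so a single non-compact orbit $L_0\La$ seeds \emph{both} commutator cycles; and if only $L_0\La^-$ is non-compact, the same two-sided argument applied to $\La^-$ seeds $\{-\al_1,\al_2\}$ and $\{-\al_2,\al_1\}$, which together with $\Ga_{\pm\al_3}$ (Corollary \ref{cornillat}) again exhaust $\Sigma$. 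With these two repairs --- Auslander to convert excursion elements into genuine root-group lattices, and two-sided divergence of a closed non-compact one-parameter orbit --- your route closes up and becomes essentially a reorganization of the paper's proof.
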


\begin{proof} 
When $\al$, $\be$ are two non-opposite roots whose sum is not a root,
the product  $G_{\al}G_{\be}$ is a unipotent group. We distinguish 
two cases.
\vs

{\bf First case}: Assume $(\Ga\cap G_{\al_1}G_{\al _3})\subset G_{\al_3}\;$
or $\;(\Ga\cap G_{\al_2}G_{\al _3})\subset G_{\al_3}$.\\
In this case we will prove $(i)$. Assume for instance the second inclusion
\begin{equation}
\label{eqngaggg}
(\Ga\cap G_{\al_2}G_{\al _3})\subset G_{\al_3}\, .
\end{equation}

By Proposition \ref{prosindou},  the double  $L_0$-orbit $L_0 (\La,\La^-)$  is closed.
We only need to check that both single orbits $L_0\La$ and $L_0\La^-$
are relatively compact. 
Let $t\mapsto b_t$ be the one-parameter 
subgroup of $L_0$ whose action on an element 
$X=X_1\!+\!X_2\!+\!X_3\in \g u$ with $X_i\in \g g_{\al_i}$, is given by
\begin{equation}
\label{eqnbtxxx}
{\rm Ad}\,b_t(X_1\!+\!X_2\!+\!X_3)=e^t\,X_1+e^{-t}X_2+X_3
\end{equation} 
By contradiction, assume for instance that the orbit $L_0\La$ is not relatively compact.
Since the orbit $L_0\La$ is closed this implies that 
in both directions $n\ra\pm\infty$ the lattices ${\rm Ad}b_n(\La)$
go to infinity. Since all these lattices have the same covolume, 
by the Mahler compactness criterion, 
for all $n>1$, one can choose a non-zero element 
\begin{equation}
\label{eqnxnxnxn}
X_n=X_{n,1}+X_{n,2}+X_{n,3}\in\La\smallsetminus\{ 0\}
\;\;{\rm with}\;\;
X_{n,i}\in \g g_{\al_i}\, .
\end{equation}
such that 
\begin{equation}
\label{eqnenxenx}
\lim_{n\ra \infty}\; e^nX_{n,1}\!+\!e^{-n}X_{n,2}\!+\!X_{n,3} =0\,. 
\end{equation}
We use again the   polynomial $F(X)={\rm det}_{\g z}(e^{{\rm ad}X}w_0)$
introduced in \eqref{eqnpolfg}. 
This polynomial has degree $4d$ and its 
homogeneous component $F_{4d}$ of degree $4d$ 
is given in \eqref{eqnf4d} by
$$F_{4d}(X_1\!+\!X_2\!+\!X_3)={\rm det}_{\g 
g_{_3}}(\tfrac{1}{24}{\rm ad}(X_1\!+\! X_2)^4w_0).
$$ 
According to Lemma \ref{lemfghei1}.b,   $F_{4d}$ is non zero. Since $L$ has an open orbit in 
$\g g_1$,
this polynomial $F_{4d}$ is the unique $L$-semi-invariant polynomial on $\g g_1$ with character $\chi^2$.
Therefore there exists $c\neq 0$ such that
$$
F_{4d}(X_1\!+\!X_2\!+\!X_3)=c\,\|X_1\|^{2d}\,\|X_2\|^{2d}\, ,
$$
where these norms $\|.\|$  
are $M$-invariant norms on $\g g_{\al_i}$.
By Corollary \ref{corpolfg} the set 
$F(\La)$ is closed and discrete.
Since 
$$
F(X_n)=F({\rm Ad}b_n(X_n))
\;\;\mbox{\rm converges to $0$}\;\;
$$
One gets $F(X_n)=0$ for $n$ large.
The same argument  proves that 
$$
F(pX_n)=0
\;\; \mbox{for $n$ large, for all integer $p\geq 1$.}
$$
Therefore, for $n$ large,  one has 
$F_{4d}(X_n)=0$, i.e. 
\begin{equation}
\label{eqnxnxno}
\|X_{n,1}\|\,\|X_{n,2}\|=0\, .
\end{equation}
Since $\La$ is a lattice, combining \eqref{eqnxnxnxn}, \eqref{eqnenxenx} and \eqref{eqnxnxno},
one gets $X_{n,1}=0$
Therefore, by \eqref{eqngaggg}, one also gets $X_{n,2}=0$.
Using again \eqref{eqnxnxnxn}, \eqref{eqnenxenx} one finally gets $X_{n,3}=0$.
Contradiction.
\vs

{\bf Second case}: Assume 
$(\Ga\cap G_{\al_1}G_{\al _3})\not\subset G_{\al_3}\;$
and $\;(\Ga\cap G_{\al_2}G_{\al _3})\not\subset G_{\al_3}$.\\
We choose  elements 
$g_1\in (\Ga\cap G_{\al_1}G_{\al _3})\smallsetminus \Ga_{\al_3}$ and 
$g_2\in (\Ga\cap G_{\al_2}G_{\al _3})\smallsetminus \Ga_{\al_3}$. 
We will prove $(ii)$. 
By Corollary \ref{cornillat}.c, we already know that  
$\Ga_{\al_3}$ is a lattice in $G_{\al_3}$ and $\Ga_{-\al_3}$ is a lattice in $G_{-\al_3}$.

We claim that {\it $\Ga_{\al_1}$ is a lattice in $G_{\al_1}$
and $\Ga_{-\al_2}$ is a lattice in $G_{-\al_2}$.}

Let $\Ga'$ be the discrete subgroup of $G$ 
generated by $g_1$, $\Ga_{\al _3}$ and $\Ga_{-\al _3}$. 
Let $G'$ be the
semidirect product 
$G'=S'\ltimes U'$ where $S'$ is the  simple Lie group of real rank one
generated by $G_{\al _3}$ and $G_{-\al _3}$
and $U'$ is the  unipotent group $U'=G_{\al_1}G_{-\al_2}$.
Since the action of $S'$ on $\g u'$ is irreducible, 
the Zariski closure of $\Ga'$ is the group $G'$.

By Auslander's Proposition  \ref{proaus}
below, the projection of $\Ga\cap G'$ on $S'$ is discrete.
Since $\Ga_{\al_3}$ is a cocompact lattice in $G_{\al_3}$, 
the group $\Ga_{\al_1}\Ga_{\al_3}$ has finite index in 
$\Ga\cap G_{\al_1}G_{\al _3}$.
The existence of $g_1$ implies then that $\Ga_{\al_1}$
contains an element $g'_1\neq e$.
The inclusion $[g'_1,\Ga_{-\al_3}]\subset \Ga_{-\al_2}$
and the equality $[g'_1,G_{-\al_3}]=G_{-\al_2}$
imply that $\Ga_{-\al_2}$ is a lattice in $G_{-\al_2}$.
The inclusion $[\Ga_{-\al_2},\Ga_{\al_3}]\subset \Ga_{\al_1}$
and the equality $[G_{-\al_2},G_{\al_3}]=G_{\al_1}$ 
imply that $\Ga_{\al_1}$ is a lattice in $G_{\al_1}$.

Replacing $g_1$ by $g_2$, one deduces 
that $\Ga_{\al_2}$ is a lattice in $G_{\al_2}$
and $\Ga_{-\al_1}$ is a lattice in $G_{-\al_1}$ 
which proves $(ii)$.
\end{proof}

In this proof we have used the following 
classical result of Auslander which can be found in \cite[Thm 8.24]{Raghunathan72}.

\begin{Prop}
\label{proaus}
Let $G$ be a real algebraic Lie group which is a semidirect product
$G:=S\ltimes U$ of a semisimple Lie group $S$ and of 
a normal unipotent subgroup $U$. Let $p:G\rightarrow S$ be the projection
and $\Ga$ be a Zariski dense discrete subgroup of $G$.
Then the group $p(\Ga)$ is a discrete subgroup of $S$.
\end{Prop}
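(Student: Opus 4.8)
The plan is to induct on $\dim U$, the case $U=\{e\}$ being trivial since then $p$ is the identity map. For the inductive step I would first set $\Delta:=\Gamma\cap U$ and let $V\subset U$ be its Zariski closure. Since $\Delta$ is normal in $\Gamma$ and $\Gamma$ is Zariski dense in $G$, the subgroup $V$ is normal in $G$ and contained in $U$. Being a Zariski dense discrete subgroup of the simply connected nilpotent group $V$, the group $\Delta$ is a cocompact lattice in $V$ (see \cite[Chapter~2]{Raghunathan72}); in particular $\Gamma\cap V=\Delta$ is cocompact in $V$.

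Assume now $V\neq\{e\}$. The elementary fact I would use is that \emph{if a closed normal subgroup $N$ meets a discrete subgroup $\Gamma$ in a cocompact lattice of $N$, then the image of $\Gamma$ in $G/N$ is discrete}: indeed, if $\gamma_n\nu_n\to e$ with $\nu_n\in N$, then writing $\nu_n$ modulo the cocompact subgroup $\Gamma\cap N$ shows that $\gamma_n$ is bounded modulo $\Gamma\cap N$, hence constant modulo $\Gamma\cap N$ by discreteness, so its image in $G/N$ is eventually trivial. Applying this with $N=V$, the image $\overline\Gamma$ of $\Gamma$ in $G/V=S\ltimes(U/V)$ is discrete and still Zariski dense, while $U/V$ has strictly smaller dimension than $U$. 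By the induction hypothesis the projection of $\overline\Gamma$ to $S$ is discrete; but this projection is exactly $p(\Gamma)$, since the composite $G\to G/V\to S$ equals $p$. This closes the induction whenever $\Gamma\cap U\neq\{e\}$.

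The hard part will be the remaining base case $\Delta=\Gamma\cap U=\{e\}$, where $p$ is injective on $\Gamma$ and the reduction above is vacuous; here I must rule out that $p(\Gamma)$ is non-discrete. If it were, its closure $R:=\overline{p(\Gamma)}$ would have a non-trivial identity component $R^0$ which, being normalized by the dense subgroup $p(\Gamma)$, would be a non-trivial semisimple normal subgroup of $S$. Choosing $\gamma_n=s_nu_n\in\Gamma$ with $s_n=p(\gamma_n)\to e$ and $s_n\neq e$ forces $u_n\to\infty$ in $U$ by discreteness of $\Gamma$. I would then exploit the contracting dynamics of an $\mathbb R$-split one-parameter subgroup inside $R^0$ acting on $\g u$: approximating such a contraction by elements of $p(\Gamma)$ and combining them with the $\gamma_n$ should force the $U$-components of suitable products in $\Gamma$ to accumulate, producing distinct elements of $\Gamma$ converging in $G$ and contradicting discreteness. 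Making this dynamical step precise — controlling the interaction between the growth of the $U$-components and the contraction, and treating possible compact factors of $R^0$ separately — is the main obstacle; once it is settled, the induction of the previous paragraph yields that $p(\Gamma)$ is discrete.
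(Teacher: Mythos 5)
Your first two paragraphs are correct: $V$, the Zariski closure of $\Gamma\cap U$, is indeed a connected normal unipotent subgroup of $G$ in which $\Gamma\cap U$ is a cocompact lattice, and your lemma that a discrete group meeting a closed normal subgroup cocompactly has discrete image in the quotient is sound. But this reduction buys essentially nothing: after one application you land in the case $\Gamma\cap U=\{e\}$, and that case \emph{is} the whole theorem (one could have quotiented by $V$ at the outset). The entire content of the statement is concentrated in your third paragraph, which you yourself label ``the main obstacle'' and do not carry out. As written, the proposal therefore has a genuine gap: the claim that the contracting dynamics of a one-parameter subgroup of $R^0$, approximated by elements of $p(\Gamma)$, can be combined with the escaping $U$-components $u_n$ to produce accumulating elements of $\Gamma$ is not an argument but a hope, and the difficulty you flag (controlling the interaction between the growth of $u_n$ and the contraction, since you have no control whatsoever on the $U$-components of the approximating elements of $\Gamma$) is precisely where such attempts break down.

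The missing step is exactly Auslander's theorem, which is what the paper invokes: it cites \cite[Thm 8.24]{Raghunathan72}, which asserts that for a connected normal \emph{solvable} subgroup $N$ of a Lie group $G$ and a discrete subgroup $\Gamma$, the identity component of the closure of the image of $\Gamma$ in $G/N$ is \emph{solvable}. Granting that, your own setup finishes the proof in one line: you correctly observe that $R^0$ is normalized by the Zariski dense group $p(\Gamma)$, hence is a connected normal subgroup of the semisimple group $S$, hence semisimple; being also solvable by Auslander, it is trivial, so $p(\Gamma)$ is discrete. No dynamical argument is needed, but the solvability statement itself is a nontrivial theorem (its proof in Raghunathan occupies several pages and rests on a careful analysis of the closure of $\Gamma U$), not something recoverable by the sketch you give. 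You should either quote Auslander's theorem or reproduce its proof; the induction on $\dim U$ can then be discarded entirely.
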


We now end the proof of Proposition \ref{procomhor} when $P$ is minimal and $U$ is Heisenberg, distinguishing between both cases
of Lemma \ref{lemheidic}.

In Case $(i)$, since $L_0$ is non-compact, the stabilizer
$  L_{\La,\La^-}$ is infinite and by Corollary \ref{corextfor},
the group $\Ga$ is an irreducible arithmetic lattice of $G$.

In Case $(ii)$, the group $\Ga$ intersects cocompactly the two 
opposite horospherical subgroups
$U'=G_{\al_1}G_{-\al_2}$ and ${U'}^-=G_{-\al_1}G_{\al_2}$.
Let $L':=P'\cap {P'}^-$ be the intersection of their normalizers.
The intersection $\Ga\cap L'$ is also infinite since it contains $\Ga_{\al_3}$.
Therefore by the same  Corollary \ref{corextfor},
the group $\Ga$ is an irreducible arithmetic lattice of $G$.
\vs

This finishes the proof of Proposition \ref{procomhor}.

%5
\section{Reduction steps}
\label{secmaithe}

The aim of this chapter is to prove our main theorem 
\ref{thmmar} for all horospherical groups, relying on the special case when 
the horospherical group $U$ is either reflexive commutative
or Heisenberg (Proposition \ref{procomhor}). The reduction process relies on the following
three steps:  Propositions \ref{prononref}, \ref{proroospa}
and \ref{prononroo}. 
The first one deals with non-reflexive groups $U$,
the second one with reflexive non-Heisenberg $U$ whose center is a root space, and 
the last one with reflexive non-commutative $U$ whose center is not a root space.
For a simple group $G$, this reduction process is due to Hee Oh in \cite{Oh98a}, \cite{Oh98b} and \cite{Oh99}. 
We extend it here to  semisimple groups $G$. We include the proofs for the sake of completeness.

%51 
\subsection{Non-reflexive horospherical subgroups}
\label{secnonref}
\bq
We first explain how to reduce the case of  
non-reflexive horospherical groups to the case of 
reflexive horospherical groups.
\eq

\begin{Prop}
\label{prononref}
Let $G$ be a semisimple real algebraic Lie group,
$U$ be a non-reflexive horospherical subgroup,
and $\Gamma$ be a Zariski dense discrete subgroup 
of $G$ that contains an irreducible lattice $\Delta$ of $U$.

Then there exists a larger reflexive  horospherical subgroup 
$U'$ of $G$ containing $U$ such that the group $\Gamma$
also contains an irreducible lattice $\Delta'$ of $U'$.
\end{Prop}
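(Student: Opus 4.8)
The plan is to enlarge $U$ to its \emph{reflexive hull} and then to show that $\Ga$ automatically contains a lattice of that larger group. Write $U=U_\th$ for a subset $\th\subset\Pi$ of simple roots as in \S\ref{secroosys}, and let $\iota:=-w_0$ be the opposition involution of $\Pi$, so that by the reflexivity criterion recalled in \S\ref{secroosys} the non-reflexivity of $U$ means exactly $\iota(\th)\neq\th$. I set $\th':=\th\cup\iota(\th)$ and $U':=U_{\th'}$. Then $\th'$ is $\iota$-invariant, hence $U'$ is reflexive, and since $n_\th\leq n_{\th'}$ on positive roots one has $\g u:=\g u_\th\subset\g u_{\th'}=:\g u'$. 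The next point to record is the internal structure of $\g u'$: every root $\al$ occurring in $\g u'$ is positive, those with $n_\th(\al)>0$ span $\g u$, and the remaining ones satisfy $n_\th(\al)=0$, so they lie in the Levi factor $\g l_\th=\g g_0$. Writing $\g w$ for their span, one gets a vector space decomposition $\g u'=\g w\oplus\g u$ in which $\g u$ is an ideal (because $[\g g_0,\g g_j]\subset\g g_j$) and $\g w$ is a subalgebra contained in $\g l_\th$ (because $[\g g_0,\g g_0]\subset\g g_0$). At the group level this reads $U'=W\ltimes U$ with $U\trianglelefteq U'$ and $W:=\exp\g w\cong U'/U$ a simply connected nilpotent group.

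With this structure in place, the goal reduces to proving that $\De':=\Ga\cap U'$ is a lattice of $U'$. Here I would invoke the standard criterion for lattices in simply connected nilpotent groups: $\De'$ is a lattice of $U'$ provided $\De'\cap U$ is a lattice of $U$ and the image of $\De'$ in $U'/U\cong W$ is a lattice of $W$. The first condition is clear, since $\De'\cap U=\Ga\cap U$ is discrete and contains the cocompact subgroup $\De$, hence is itself a lattice of $U$ (Corollary \ref{cornillat}). The cocompactness of $\Ga\cap U$ in $U$ also guarantees that the image $\ol{\De'}$ of $\De'$ in $W$ is discrete. Finally, by the theory of lattices in nilpotent Lie groups (Lemma \ref{lemnillat} and \cite{Raghunathan72}), a discrete subgroup of the simply connected nilpotent group $W$ is a lattice if and only if it is Zariski dense in $W$. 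Thus the whole statement reduces to a single assertion: $\Ga\cap U'$ is Zariski dense in $U'$, equivalently its image in $W$ is Zariski dense.

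This Zariski density is the heart of the argument, and the step I expect to be the main obstacle. The difficulty is that one cannot produce the missing directions by conjugating $\De$ by a generic element of $\Ga$: remaining inside the unipotent group $U'$ is a Zariski-closed, non-dense constraint, and no one-parameter torus flow moves $U_\th$ towards $U_{\iota(\th)}$. The combinatorial target is nonetheless clear: since $\g w\subset\g u_{\iota(\th)}$ and the degree-one layer for $n_{\th'}$ is contained in $\g u_\th+\g u_{\iota(\th)}$, Lemma \ref{lemnonroo}.a shows that $\g u_\th$ and $\g u_{\iota(\th)}$ generate $\g u'$ as a Lie algebra, so it suffices to exhibit inside $\Ga$ a discrete subgroup whose projection to $W$ is Zariski dense. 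The plan is to obtain these new elements dynamically, in the spirit of Propositions \ref{promgx} and \ref{prophg}: using that $\Ga$ is Zariski dense one first finds $g_0\in\Ga$ for which $g_0Ug_0^{-1}$ is opposite to $U_{\iota(\th)}$ (a nonempty Zariski-open condition, realised for instance by $w_0$), placing a second lattice $g_0\De g_0^{-1}\subset\Ga$ in general position; one then runs a Mahler compactness argument on the $\exp(t\,h_{\th'})$-translates of these lattices, and the discreteness of $\Ga$ forces the extracted limits to be attained, yielding genuine elements of $\Ga\cap U'$ spanning $\g w$. Controlling simultaneously the discreteness of the image in $W$ and the nondegeneracy of the Mahler limit is where the careful work lies, and is precisely the point at which the non-reflexivity hypothesis $\iota(\th)\neq\th$ gets used.

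Finally, the irreducibility of $\De'$ follows at once from Lemma \ref{lemirrhor}: both $\De$ and $\De'$ are lattices of horospherical subgroups contained in the single Zariski dense discrete group $\Ga$, so the irreducibility of each is equivalent to that of $\Ga$. As $\De$ is irreducible by hypothesis, $\Ga$ is irreducible, and hence so is $\De'$.
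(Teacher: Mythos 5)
There is a genuine gap, and it sits exactly where you flag it. Your reductions (the decomposition $\g u'=\g w\oplus\g u$, the criterion that $\De'$ is a lattice of $U'$ iff its image in $U'/U$ is a lattice, the equivalence of lattice and Zariski dense discrete subgroup in a simply connected nilpotent group, and the final irreducibility via Lemma \ref{lemirrhor}) are all fine, but the heart of the matter --- producing elements of $\Ga$ in the new directions and showing they form a lattice in $U'/U$ --- is never carried out; the ``plan'' of running Mahler compactness on $\exp(t\,h_{\th'})$-translates is not an argument. Worse, the target is mislocated. After choosing $g_0\in\Ga\cap Uw_0P$ and conjugating so that $g_0\in w_0P$, the new unipotent elements that $\Ga$ actually provides lie in $g_0\De g_0^{-1}\subset w_0Uw_0^{-1}=U^-_{\iota(\th)}$; the ones among them that land in the normalizer $P$ of $U$ occupy root spaces $\g g_\al$ with $n_\th(\al)=0$ and $n_{\iota(\th)\cap\th^c}(\al)<0$, i.e.\ they sit in $\g u^-_{\iota(\th)}\cap\g l_\th$ --- the \emph{opposite} of your $\g w\subset\g u_{\iota(\th)}$ inside the Levi. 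Consequently there is no reason for $\Ga$ to meet your fixed group $U_{\th\cup\iota(\th)}$ in a lattice: already for $G={\rm SL}(3,\m R)$, $\th=\{\al_1\}$ and $\Ga=h\,{\rm SL}(3,\m Z)\,h^{-1}$ with $h$ a generic element of $L$, one has $\Ga\cap U$ a lattice of $U$ while $\Ga\cap U_\Pi=\Ga\cap U$ is not a lattice of $U_\Pi$. The enlarged group must be allowed to depend on $\Ga$, it is only a conjugate (by the longest element $w'_0$ of the Weyl group of $L$) of a standard $U_{\th\cup j(\iota(\th)\cap\th^c)}$ with $j=-w'_0$, and this group may itself fail to be reflexive, so the enlargement has to be iterated rather than achieved in one step by passing to the $\iota$-saturation $\th\cup\iota(\th)$.

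The mechanism the paper uses for the missing step is the $\Ga$-proper/$\Ga$-compact formalism of Section \ref{secprocom}: since $U$ is $\Ga$-compact, its unimodular normalizer $P_0$ is $\Ga$-proper (Lemma \ref{lempglgug}.a --- this is where Mahler compactness enters, applied to the lattices $p_n\De p_n^{-1}$ of $U$), and since $w_0Uw_0^{-1}$ is $\Ga$-compact, the intersection $V':=P_0\cap w_0Uw_0^{-1}$ is $\Ga$-compact by Lemma \ref{lemgcogpr}.b. Non-reflexivity of $U$ guarantees $V'$ is non-trivial modulo $U$, the group $V:=V'U$ is then a $\Ga$-compact unipotent group strictly containing $U$, and a root computation identifies it as a horospherical subgroup. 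If you replace your Zariski-density discussion by this intersection argument, and let the reflexive $U'$ be reached by iterating the construction, the proof goes through.
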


\begin{proof}
We will find a larger  horospherical subgroup 
$V\supsetneq U$ of $G$ such that the group $\Gamma\cap V$
is also an irreducible lattice of $V$. If $V$ is reflexive we are done. If not, we apply again this construction to $V$
until we get a reflexive horospherical subgroup.

We use the notation of \S \ref{secroosys} 
and \S \ref{secgralie}. We choose a maximal split torus $A$ of $G$, 
a set of simple root $\Pi$ and a subset $\theta\subset \Pi$
such that the Lie algebra
$\g u$ and its normalizer $\g p$
are given by
\begin{equation}
\label{eqnuupp}
\g u=
\textstyle\bigoplus\limits_{n_\th(\al)>0}\g g_\alpha
\;\;\;{\rm and}\;\;\;
\g p=\g l \oplus\g u
\;\;{\rm with}\;\; 
\g l:= \textstyle\bigoplus\limits_{n_\th(\al)= 0}\g g_\alpha
\end{equation}
where $n_\th$ is the function on $\Sigma\cup\{0\}$ given by \eqref{eqnnth}.
Let $P=LU$ be the normalizer of $U$ and $w_0$ be the longest element
of the Weyl group $W_G:=N_G(\g a)/Z_G(\g a)$ of $G$.
Since $\Gamma$ is Zariski dense it contains an element 
$g_0$ in the Zariski open set $Uw_0P$. 
After replacing $\Gamma$
by a suitable conjugate $u_0\Gamma u_0^{-1}$ with $u_0$ in $U$, 
we can assume, without loss of generality,  that the element $g_0$ is in $w_0P$ so that 
the group 
$g_0\Delta g_0^{-1}$ is a lattice in $w_0 U w_0^{-1}$.
Note that, since $U$ is non-reflexive,
the conjugate $w_0Uw_0^{-1}$ intersects $P$ non-trivially.
According to Lemma \ref{lempglgug}.a 
the group $P_0:=\{ g\in P\mid \det_\g u{\rm Ad}g=1\}$
is $\Ga$-proper. Therefore, since 
the group $w_0 U w_0^{-1}$ is $\Ga$-compact, by Lemma \ref{lemgcogpr}, the group 
$$
V':= P_0\cap  w_0 U w_0^{-1}
\;\; \;
\mbox{\rm is $\Ga$-compact.}
$$
This non-trivial group $V'$ normalizes $U$ and the group 
$V:=V'U$ is also a unipotent group which is $\Ga$-compact.

We now check that $V$ is a horospherical subgroup. 
We compute its Lie algebra $\g v$.
The transformation $\iota:=-w_0$ induces a bijection of 
the set $\Pi$ of simple roots of $G$. 
One has the equality $\g v=\oplus_\al \,\g g_\al $ 
where the sum is over the roots $\al\in \Sigma$ such that
$$
\mbox{\rm $n_\theta (\alpha )>0$
\;\; or\;\; ( $n_\th(\al)=0$\; and $n_{\iota(\th)\cap \th^c}(\al)<0$ )}\, . 
$$
We note that the set $\th^c=\Pi\smallsetminus \th$ is a set of simple roots for the group $L$. We denote by $w'_0$ 
the longest element of the Weyl group 
$W_L:=N_L(\g a)/Z_L(\g a)$ of $L$. 
The transformation $j:=-w'_0$ induces a bijection of 
the set $\th^c$ of simple roots of $L$. 
Therefore, one has the equality ${\rm Ad}w'_0(\g v)=\oplus_\al \,\g g_\al $ 
where the sum is over the roots $\al\in \Sigma$ such that
$$
\mbox{\rm $n_\theta (\alpha )>0$
\;\; or\;\; ( $n_\th(\al)=0$\; and $n_{j(\iota(\th)\cap \th^c)}(\al)>0$ )} \, .
$$
This tells us that this Lie algebra is a standard horospherical
Lie algebra 
$$
{\rm Ad}w'_0(\g v)=\g u_{\th'}
$$
with $\th'=\th\cup j(\iota(\th)\cap \th^c)$.

By Lemma \ref{lemirrhor}.$i)$, the lattice 
$\Delta_V:=\Gamma\cap V$ of $V$ is irreducible.
\end{proof}

%52
\subsection{When the center of $\g u$ is a root space}
\label{secroospa}
\bq
We now explain how to reduce the case of a
reflexive horospherical subalgebra $\g u$ which is not Heisenberg 
but whose center is a root space (see Definition \ref{defroospa})
to the case of 
a reflexive horospherical subalgebra $\g u'$ whose center is not a root space.
\eq

\begin{Prop}
\label{proroospa}
Let $G$ be a semisimple real algebraic Lie group
of real rank at least two,
$U$ be a non-trivial horospherical subgroup, 
and $\Gamma$ be a Zariski dense discrete subgroup 
of $G$ that contains an irreducible lattice $\Delta$ of $U$.

Assume $\g u$ is reflexive, not Heisenberg, and the center of $\g u$ is a root space.

Then there exists a smaller reflexive horospherical subgroup 
$U'\subsetneq U$ of $G$ such that $\Gamma$
also contains an irreducible lattice $\Delta'$ of $U'$
and the center of $\g u'$ is not a root space.
\end{Prop}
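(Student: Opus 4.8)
The plan is to realise $U'$ as an intersection $U\cap gUg^{-1}$ for a well-chosen $g\in\Gamma$, exploiting that all such intersections are automatically $\Gamma$-compact. First I would carry out the standing reductions. Since the center $\g z$ of $\g u$ is a root space, $\g u$ is contained in a single simple ideal of $\g g$ (Definition~\ref{defroospa}); because $\Delta$ is irreducible this forces $G$ to be simple, so $\g z=\g g_{\wt\al}$ is the highest root space. In the grading $\g g=\bigoplus_{j=-s}^{s}\g g_j$ attached to $\th$ one has $\g z=\g g_s$ (Lemma~\ref{lemnonroo}.c), and since $U$ is not Heisenberg (Definition~\ref{defheihor}) the algebra $\g u$ is not two-step nilpotent, i.e. $s\ge 3$. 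By Lemma~\ref{lemgamuum} I may also assume that $\Gamma$ contains irreducible lattices $\Delta\subset U$ and $\Delta^-\subset U^-$.

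Next I would record the mechanism that makes the problem tractable. For every $g\in\Gamma$ the conjugate $gUg^{-1}$ is $\Gamma$-compact, since $g\Delta g^{-1}=\Gamma\cap gUg^{-1}$ is a lattice in it. A $\Gamma$-compact subgroup is $\Gamma$-proper, so by Lemma~\ref{lemgcogpr}.a the intersection $U\cap gUg^{-1}$ is $\Gamma$-proper; being contained in the compact orbit of $U$, its orbit is a closed subset of a compact set, hence compact. Thus $U\cap gUg^{-1}$ is $\Gamma$-compact for every $g\in\Gamma$, so $\Gamma$ meets it in a lattice, which is irreducible by Lemma~\ref{lemirrhor} as soon as $U\cap gUg^{-1}$ is horospherical. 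It therefore suffices to exhibit one $g\in\Gamma$ for which $U':=U\cap gUg^{-1}$ is a proper reflexive horospherical subgroup whose center is not a root space.

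I would then identify the relative position that $g$ must realise. For a representative $w$ of a Weyl element, $U\cap wUw^{-1}$ is again a standard horospherical subgroup, and for $w=w_0^{L'}$ the longest element of the Weyl group of the Levi $L'=L_{\th'}$ attached to a proper subset $\th'\subsetneq\th$ one computes $U\cap w_0^{L'}U(w_0^{L'})^{-1}=U_{\th'}$: indeed $w_0^{L'}$ fixes the $\th'$-grading and flips the $L'$-unipotent radical $U\cap L'$ onto its opposite. Using $s\ge 3$ and the structure of $\Sigma$ (Lemma~\ref{lemgralie}), I would choose $\th'$ to be a proper, $\iota$-stable subset of $\th$ (where $\iota=-w_0$), so that $U_{\th'}$ is reflexive, strictly smaller than $U$, and has a $\th'$-top level containing more than one root, hence a center which is not a root space. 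This pins down the Bruhat cell in which the sought element $g$ must lie.

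The hard part will be to produce an actual $g\in\Gamma$ realising this relative position. Unlike in Proposition~\ref{prononref}, where the required element lay in the Zariski-open big cell so that Zariski density sufficed, here $U$ is reflexive and the relevant cell has empty interior, so density alone gives nothing. My plan is to build $g$ from the reflexivity element $g_0\in\Gamma$ (with ${\rm Ad}\,g_0\,\g u=\g u^-$) together with elements of the opposite lattice $\Delta^-$, and to control the resulting relative position by means of the $\Gamma$-proper and $\Gamma$-compact calculus of Lemmas~\ref{lempglgug} and~\ref{lemgcogpr} combined with the discreteness of $\Gamma$. Once such a $g$ is found, $U'=U\cap gUg^{-1}$ is the desired smaller reflexive horospherical subgroup, $\Delta':=\Gamma\cap U'$ is an irreducible lattice of $U'$, and its center is not a root space, which is exactly the input needed for Proposition~\ref{prononroo}.
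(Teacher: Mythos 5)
There is a genuine gap, and it sits exactly where you flag ``the hard part'': you never produce the element $g\in\Gamma$ realising the required relative position, and no such $g$ can be extracted from the hypotheses by the tools you invoke. Because $U$ is reflexive, a \emph{generic} $g$ (one in the open cell $\Omega$, which is all that Zariski density supplies) gives $U\cap gUg^{-1}=\{e\}$: the open double coset corresponds to $w_0$, and $\Sigma^+_\th\cap w_0\Sigma^+_\th=\emptyset$ when $w_0(\th)=-\th$. The position you need ($PgP=Pw_0^{L'}P$ for a proper $\th'\subset\th$) is a proper Zariski-closed condition, and neither Zariski density nor the $\Gamma$-proper/$\Gamma$-compact calculus of Lemmas \ref{lemgcogpr} and \ref{lempglgug} can force a discrete group to contain elements of a prescribed lower Bruhat cell. (Your candidates built from $g_0$ and $\Delta^-$ do not help: $g_0$ and $\delta^- g_0$ conjugate $U$ onto $U^-$, again giving trivial intersection.) So the strategy is not merely incomplete; its key existence step has no visible mechanism. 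A secondary issue is that $U\cap gUg^{-1}$ is in general only a unipotent group, not automatically horospherical, so even granting $g$ you would still have to verify that the intersection is the unipotent radical of a parabolic before applying Lemma \ref{lemirrhor}.

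The paper avoids conjugation entirely and never leaves $U$. Writing $U=U_\th$, it sets $\th_0:=\{\al_i\in\Pi\mid \widetilde\al-\al_i\in\Sigma\}\subset\th$ and takes $U'$ to be the centralizer in $U$ of $C^{s-1}U$ (whose Lie algebra is $\g g_{s-1}\oplus\g g_s$); a root computation identifies $U'=U_{\th\smallsetminus\th_0}$, which is a proper nontrivial reflexive horospherical subgroup with non-root-space center because $s\ge 3$ while $n_{\th_0}(\widetilde\al)=2$. The point that replaces your missing element $g$ is Corollary \ref{cornillat}: the terms $C^k\Delta$ of the descending central series are lattices in $C^kU$, and the centralizer in $U$ of a subgroup meeting $\Delta$ in a lattice again meets $\Delta$ in a lattice. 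These purely nilpotent-lattice facts give $\Delta':=\Delta\cap U'$ directly, with irreducibility automatic since $\g g$ is simple. Your opening reductions ($G$ simple, $s\ge 3$) do match the paper's, but the core of your argument would need to be replaced by this intrinsic construction.
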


\begin{proof}[Proof of Proposition \ref{proroospa}] 
We use freely the notation of \S \ref{secroosys}
and \S \ref{secgralie} and write $U=U_\th$ for a subset $\th$ 
of the set $\Pi$ of simple roots.
Since $\Delta$ is irreducible and since the center $\g z$ of $\g u$ is a root space, 
the Lie algebra $\g g$ is simple.
The group $U$ is $s$-step nilpotent, where $s=n_\th(\widetilde{\al})\geq 3$. 
The last group $C^sU$ of the descending central sequence
is the center of $U$.
Let 
$$
\th_0:=\{\al_i\in \Pi\mid \widetilde{\al}-\al_i
\;\;\mbox{\rm is a root}\;\}\, .
$$
Since the center of $\g u$ is equal to $\g g_{\widetilde{\al}}$.
One has the inclusion $\th_0\subset \th$.
Let $U'$ be the centralizer of $C^{s-1}U$ in $U$.
Since the Lie algebra of $C^{s-1}U$ is 
$\g g_{s-1}\oplus \g g_s$,
the Lie algebra of $U'$ is
$
\g u'
\;=\;
\textstyle
\oplus_{\al}
\,\g g_\al
$ 
where the  sum is over  
the roots $\al$ such that 
$$
\mbox{\rm $n_\theta (\alpha )\geq 2$
\;\; or\;\; ( $n_\th(\al)=1$\; and $n_{\th_0}(\al)=0$ )}\, . 
$$
This is also the set of roots $\al$ such that 
$n_{\th\smallsetminus\th_0}(\al)\geq 1$.
Therefore one has the equality $U'=U_{\theta\smallsetminus\theta_0}$.
By Corollary \ref{cornillat}, 
the group $\Delta\cap U'$ is a lattice in $U'$. 
This lattice is automatically irreducible since $\g g$ is simple.
\end{proof}

%53
\subsection{When the center of $\mathfrak u$ is not a root space}
\label{secnonroo}

\bq
The following proposition is the last 
of our three reduction steps for Theorem \ref{thmmar}.
It deals with a reflexive non-commutative horospherical
subgroup whose center is not a root space.  
\eq

\begin{Prop}
\label{prononroo}
Let $G$ be a semisimple real algebraic Lie group
of real rank at least two,
$U$ be a non-trivial horospherical subgroup, 
and $\Gamma$ be a Zariski dense discrete subgroup 
of $G$ that contains an irreducible lattice $\Delta$ of $U$.

Assume  $\g u$ is reflexive, non-commutative, and its center 
is not a root space.

Then the group  $\Gamma$ is an irreducible arithmetic subgroup of $G$.
\end{Prop}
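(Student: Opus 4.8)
The plan is to reduce this last case to the reflexive \emph{commutative} case already settled in Proposition \ref{procomhor}, by passing to the center of $\g u$ and to the simple subgroup it generates. Without loss of generality assume $G$ adjoint. Since $\g u$ is reflexive and $\Gamma$ is Zariski dense, Lemma \ref{lemgamuum} provides an opposite horospherical $U^-$ together with an irreducible lattice $\Delta^-\subset U^-$ inside $\Gamma$; fix Lie lattices $\La\subset\g u$ and $\La^-\subset\g u^-$ with $\exp(\La)\subset\Delta$ and $\exp(\La^-)\subset\Delta^-$, and write $\g g=\g g_{-s}\oplus\cdots\oplus\g g_s$ for the grading attached to $\g u=\g u_\th$. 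By non-commutativity $s\geq 2$, and the center is $\g z=\g g_s$ by Lemma \ref{lemnonroo}.c. Since $C^sU=\exp(\g z)$ and $C^sU^-=\exp(\g g_{-s})$, Corollary \ref{cornillat}.b shows that $C^s\Delta$ and $C^s\Delta^-$ are lattices of $\exp(\g z)$ and $\exp(\g g_{-s})$.

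Let $G'\subset G$ be the semisimple subgroup with Lie algebra $\g g'=\g g_{-s}\oplus[\g g_{-s},\g g_s]\oplus\g g_s$, which is semisimple by Lemma \ref{lemnonroo}.d applied to each simple factor. Its grading has only three terms, so $Z:=\exp(\g z)$ is a commutative (indeed $[\g z,\g z]\subset\g g_{2s}=0$) horospherical subgroup of $G'$, reflexive with opposite $Z^-=\exp(\g g_{-s})$, and $\g z,\g g_{-s}$ generate $\g g'$. Consequently $\Gamma':=\langle C^s\Delta,\,C^s\Delta^-\rangle\subset\Gamma$ is discrete and Zariski dense in $G'$, and $C^s\Delta$ is an irreducible lattice of $Z$ in $G'$: every proper normal subgroup of $G'$ lies in a proper normal subgroup of $G$, on which $\Delta$, hence $C^s\Delta$, is trivial by Lemma \ref{lemirrhor}. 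I would then apply Proposition \ref{procomhor} to the triple $(G',Z,\Gamma')$ to conclude that $\Gamma'$ is an irreducible arithmetic lattice of $G'$.

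The transfer back to $G$ is then immediate. Since $[\g g_{-s},\g g_s]\subset\g g_0=\g l$, the Levi $L'$ of $Z$ in $G'$ is contained in $L:=P\cap P^-$. As $\Gamma'$ is arithmetic, $Z$ is defined over $\m Q$, so $L'$ is a non-compact reductive $\m Q$-subgroup and $\Gamma'\cap L'$ is a lattice in $L'$, in particular infinite. Therefore $\Gamma\cap L\supset\Gamma'\cap L'$ is infinite, and Proposition \ref{proextfor} furnishes a $\m Q$-form $G_\m Q$ of $G$ with $\Gamma\subset G_\m Q$. Because $\Gamma$ is irreducible (Lemma \ref{lemirrhor}) this $\m Q$-form is $\m Q$-simple, and Proposition \ref{proragven} of Raghunathan and Venkataramana yields that $\Gamma$ is commensurable with $G_\m Z$, i.e. an irreducible arithmetic lattice of $G$, exactly as at the end of Corollary \ref{corextfor}.

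I expect the main obstacle to be the verification that $G'$ has real rank at least two, which is what makes Proposition \ref{procomhor} applicable. The key point should be that $\g z=\g g_s$ is not a root space (Definition \ref{defroospa}), so $A$ acts on it with two distinct weights $\beta_1\neq\beta_2$; viewing $\g z$ as an $\g l$-module these can be chosen so that $\gamma:=\beta_1-\beta_2$ is a root, whence $0\neq[\g g_{\beta_1},\g g_{-\beta_2}]=\g g_\gamma\subset[\g g_{-s},\g g_s]$ by Lemma \ref{lemgralie}.a, so that $\gamma$ is a non-zero restricted root of $G'$ and $A\cap G'$ acts non-scalarly on $\g z$; hence $G'$ cannot have rank one. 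I would still need to dispose of the degenerate configuration in which no two weights of $\g z$ are linked by an $\g l$-root, which I expect to eliminate using the classification of reflexive horospherical subalgebras recalled in Sections \ref{secrefcom} and \ref{secheihor}, and to make sure none of the low-rank exceptions (analogous to the $A_2$ case of Lemma \ref{lemheidic}) intervene.
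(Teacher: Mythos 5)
Your overall strategy (pass to the centers $\g z=\g g_s$ and $\g g_{-s}$, form the semisimple subgroup $G'$ with $\g g'=\g g_{-s}\oplus[\g g_{-s},\g g_s]\oplus\g g_s$, settle arithmeticity there, and transfer back through an infinite subgroup of $\Gamma\cap L$) is exactly the paper's. But there are two genuine gaps. First, you assert that $Z=\exp(\g z)$ is \emph{reflexive} in $G'$ ``with opposite $Z^-$''; this conflates having an opposite horospherical subgroup (always true) with being conjugate to one, which is the definition of reflexive in \S\ref{secdefpar}. The paper explicitly warns that this horospherical subgroup of $G'$ need not be reflexive: for $G={\rm SO}(d,\m C)$ and $P$ the stabilizer of an isotropic $p$-plane with $1<p<d/2$, one gets $G'={\rm SL}(p,\m C)$ with $Z$ the unipotent radical of the stabilizer of a line, which is not conjugate in $G'$ to the radical of a hyperplane stabilizer. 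So Proposition \ref{procomhor} cannot be applied to $(G',Z,\Gamma')$; one must invoke the full Theorem \ref{thmmar} for the smaller group $G'$ (whose proof first runs the non-reflexive reduction of Proposition \ref{prononref}), and this forces the argument to be organized as an induction on the dimension of $G$, which your write-up does not set up.

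Second, the transfer back is too quick: ``$\Gamma'\cap L'$ is a lattice in $L'$, in particular infinite'' does not follow from $L'$ being a non-compact reductive $\m Q$-subgroup. By Borel--Harish-Chandra, $G'_\m Z\cap L'$ is a lattice only in the subgroup $L'_0$ cut out by all $\m Q$-rational characters of $L'$, and that lattice is infinite only if $L'_0$ is non-compact --- precisely the point the paper argues separately (when $G$ is not simple, irreducibility of $\Delta'$ forces the characters $\ell\mapsto{\rm det}_{\g u'_a}({\rm Ad}\ell)$ to be irrational; when $G$ is simple, $L'_0$ is semisimple of real rank at least one). A one-dimensional $\m Q$-split torus is non-compact yet has finite integral points, so non-compactness of $L'$ alone proves nothing. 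On the remaining point you flag yourself --- that ${\rm rank}_{\m R}\,G'\geq 2$ --- no classification or root-difference analysis is needed: since $\g z$ is not a root space there are two non-proportional roots $\be_1,\be_2$ with $n_\th=s$, and the corresponding coroots $h_{\be_1},h_{\be_2}$ lie in $[\g g_{-s},\g g_s]\cap\g a$ and are linearly independent, so $\g g'$ already contains a two-dimensional split torus.
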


Note that in the following proof we  
use our main theorem \ref{thmmar} 
for a smaller dimensional group $G'$.
This is allowed by induction. 

\begin{proof}
Since $U$ is reflexive, by Lemma \ref{lemgamuum},  
there exists an opposite horospherical subgroup $U^-$
such that the group
$\Delta^-:=\Gamma\cap U^-$ is a lattice in $U^-$.
These groups are $s$-step nilpotent.
Without loss of generality we can assume that the group $\Gamma$
is generated by $\Delta$ and $\Delta^-$. 
By \S \ref{secgralie}, there exists a graduation
\begin{equation*}
\g g=\g g_{-s}\oplus\ldots \oplus \g g_0
\oplus\ldots \oplus \g g_s
\;\;\;\mbox{\rm such that }
\end{equation*}
\begin{equation*}
\g u^{-}=\g g_{-s}
\oplus\ldots \oplus \g g_{-1}
\;\; ,\;\; \g l=\g g_0
\;\;\;{\rm and}\;\;\;
\g u=\g g_1
\oplus\ldots \oplus \g g_s\, .
\end{equation*}
Let $U':=Z$ and ${U'}^-:=Z^-$
be the centers of $U$ and  $U^-$.
The Lie algebras of $U'$ and ${U'}^-$ are $\g u'=\g z=\g g_s$
and ${\g u'}^-=\g z^-=\g g_{-s}$. 
By Corollary \ref{cornillat}.c, the  groups $\Delta':=\Gamma\cap U'$ 
and ${\Delta'}^{-}:=\Gamma\cap {U'}^-$
are lattices in $U'$ and ${U'}^-$.

Let $\Ga'$ be the discrete subgroup of $G$ generated by
$\Delta'$ and ${\Delta'}^-$.
The Zariski closure of $\Ga'$ is the group $G'$
generated by $U'$ and ${U'}^-$. 
This group $G'$ is a semisimple real algebraic Lie group
whose Lie algebra is
\begin{equation*}
\g g':=\g g_{-s}\oplus \g l'
\oplus \g g_s
\;\;\;{\rm with}\;\;\;
\g l':=[\g g_{-s},\g g_s]\subset \g g_0\, .
\end{equation*}
Since $\g u'=\g z$ is not a root space, the real rank of $G'$
is at least two.
The groups $U'$ and $U'^{-}$ are opposite horospherical 
subgroups. They are commutative.
Since $\Delta $ is irreducible, 
by Lemma \ref{lemirruua}, for every simple ideal $\g g_a$ of $\g g$, the intersection 
$\g u'_a:=\g u'\cap \g g_a$
is non-zero and is the center of $\g u_a:=\g u\cap \g g_a$. 
According to Lemma \ref{lemnonroo}, 
the intersection $\g g'_a:=\g g'\cap\g g_a$  is also a simple ideal of $\g g'$.
Therefore, since the group $\Delta$ is an irreducible lattice 
of $U$, the group $\Delta'$ is an irreducible lattice of $U'$.

Using an induction argument, we can apply  
our main theorem \ref{thmmar} 
to the smaller dimensional group ${\rm Ad}G'$:
there exists a $\m Q$-form $\g g'_\m Q$ of 
$\g g'$ such that $\Gamma'$ is commensurable with 
the stabilizer $G'_\m Z$ of $\g g'_\m Z$.
Let $P'$ and ${P'}^-$ be the normalizers of $U'$ and ${U'}^-$
in $G'$ and $L':=P'\cap{P'}^-$. 
Since $L$ normalizes $U'$ and ${U'}^-$, this group $L'$ is  a normal subgroup of $L$
whose real rank  is equal to the real rank of $G'$ and hence is at least two.
Moreover the group ${\rm Ad}L'$ is a subgroup of ${\rm Ad}G'$ which is defined over $\m Q$. 
Let $L'_0$ be the intersection of the kernels of all the characters of ${\rm Ad}L'$
that are defined over $\m Q$.

We claim that {\it the group $L'_0$ is non-compact.}\\
- In the case  $G $ is not simple, since the group $\Delta'$ is an irreducible lattice, 
the characters  $\ell\mapsto {\rm det}_{\g u'_a}({\rm Ad}\ell)$ of ${\rm Ad}L'$ are not defined over $\m Q$. 
Hence $L'_0$ is non-compact.\\ 
- In the case  $G$ is simple, the group $G'$ is simple too and since $U$ is abelian,
the split center of $L$ is one-dimensional 
and $L'_0$ is a semisimple group of real rank at least one.
Hence $L'_0$ is non-compact.

Now, according to Borel and Harish-Chandra theorem, the 
discrete subgroup 
$L'_{0,\m Z}:=L_0\cap G'_\m Z$ is a lattice of $L'_0$ hence is an infinite group.
This group stabilizes $\Delta$ and $\Delta^-$.
Therefore by Corollary \ref{corextfor} the group $\Ga$
is an irreducible arithmetic lattice of $G$.
\end{proof}

\begin{Rem}
Note that in the previous proof, the horospherical subgroup 
$U'$ of $G'$ is not always reflexive. 
For instance when $G={\rm SO}(d, \m C)$ and $P$ is the stabilizer of an isotropic $p$-plane in $\m C^d$
with $1<p<n/2$, then the group 
$G'$ is $G'={\rm SL}(p, \m C)$ and $P':=L'U'$ is the stabilizer of a line in $\m C^p$.
\end{Rem}

%54
\subsection{$S$-arithmetic setting}
\label{secsarset}

Our main result can be extended to product 
of simple groups over local fields.
We just quote the statement 
complementing our Theorem \ref{thmmar}.

\begin{Prop}
\label{promar} 
let $S$ be a finite set of 
valuation of $\m Q$ including the archi\-medean valuation $\infty$
and at least one finite valuation.
For $p$ in $S$, let 
$G_p$ be the group of $\m Q_{p}$-points  of
a  connected semisimple algebraic $\m Q_{p}$-group, 
and  $U_p$ be a horospherical subgroup of $G_p$
intersecting non-trivially all the simple factors of $G_p$.
Let $G=\prod_{p\in S}G_p$, $U:=\prod_{p\in S}U_p$.
Let $\Ga$ be a discrete subgroup of $G$ whose image in every  $G_p$ is Zariski dense. Assume that $\Gamma$ 
contains a  lattice $\Delta$ of $U$ such that $\Delta\cap U_\infty$
is irreducible in $G_\infty$. 
Then $\Ga$ is a  lattice in $G$. 
\end{Prop}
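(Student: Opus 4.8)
The plan is to run the proof of Theorem \ref{thmmar} over the locally compact ring $\prod_{p\in S}\m Q_p$ in place of $\m R$ and over the ring of $S$-integers $\m Z_S:=\m Z[1/p\,:\,p\in S\smallsetminus\{\infty\}]$ in place of $\m Z$. Every algebraic construction of Chapter \ref{secorbhor} is insensitive to the ground field: at each place one forms the projection $\pi_p$, the block $M_p(g_p):=\pi_p\,{\rm Ad}g_p\,\pi_p$, the polynomial $\Ph_p(g_p):=\det_{\g z_p}(M_p(g_p))$ on $G_p$, and likewise $F_p$ and $G_p$; assembling the places yields a polynomial map $\Ph=(\Ph_p)_p:G\ra\prod_{p\in S}\m Q_p$. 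The reduction steps of Chapter \ref{secmaithe} (Propositions \ref{prononref}, \ref{proroospa} and \ref{prononroo}) are purely structural and, being defined through the canonical functions $n_\theta$, apply factorwise to each $G_p$ and assemble; together with the dimension induction of Proposition \ref{prononroo} they reduce the statement to the analogue of Proposition \ref{procomhor}, in which every $U_p$ is reflexive commutative or Heisenberg. By the analogue of Lemma \ref{lemgamuum} one may then assume that $\Gamma$ also contains an $S$-lattice $\Delta^-$ of an opposite group $U^-=\prod_p U_p^-$.

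\textbf{Closed orbits.} I would first record the $S$-arithmetic Mal'cev theory (the analogue of Lemma \ref{lemnillat} and Corollary \ref{cornillat}): the lattice $\Delta$ of $U$ determines a $\m Q$-form of the unipotent group with $\Delta$ commensurable to its $\m Z_S$-points, hence a Lie lattice $\La\subset\g u$ which is an $S$-lattice. The core is the closedness of the single and double $L$-orbits in the spaces of $S$-lattices of $\g u$ and of $\g u\times\g u^-$, with $L=\prod_p L_p$; the proof of Proposition \ref{prosindou} transfers once two facts are available over $\prod_{p\in S}\m Q_p$: the Mahler compactness criterion for $S$-lattices, and the discreteness of $\Ph(\Gamma)$ in $\prod_{p\in S}\m Q_p$. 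For the latter I would argue as in Propositions \ref{promgx} and \ref{prophg}: the Bruhat decomposition, the cocompactness of $\Delta^-$ and the discreteness of $\Gamma$ show that the set $\{M(g)X\}$ is discrete, and a Mahler extraction makes the $S$-lattices $M(g_n)(\La\cap\g z)$ stabilise to a fixed $S$-lattice $\La_\infty$. A new ingredient enters here: this equality determines $M(g_n)$ only modulo the automorphism group of the $S$-lattice $\La\cap\g z$, so that $\Ph(g_n)=\det M(g_n)$ is pinned down only up to a unit of $\m Z_S$; the discreteness of $\Ph(\Gamma)$ then follows from the discreteness of the group of $S$-units $\m Z_S^\times$ in $\prod_{p\in S}\m Q_p$, which is the product formula. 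The equivariance of $F$ and $G$ (Lemma \ref{lemfgadl}) and its converse (Proposition \ref{profphf}) are statements over each $\m Q_p$ and are unchanged.

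\textbf{Infinite stabiliser.} This is the one place where the hypotheses genuinely differ: the higher-rank assumption of Theorem \ref{thmmar} is replaced by the requirement that $S$ contain a finite place besides $\infty$. Setting $L_0:=\{\ell=(\ell_p)\in L\mid\prod_{p\in S}|\det_{\g u_p}({\rm Ad}\,\ell_p)|_p=1\}$, the presence of two places makes $L_0$ non-compact even if every $G_p$ has rank one, exactly as $\m Z[1/p]$ is a lattice in $\m R\times\m Q_p$ while $\m Z$ is not a lattice in $\m R$. Granting this, I would show $L_{\La,\La^-}$ infinite by the dichotomy of Chapter \ref{secarigam}: when $P$ is not minimal, apply the $S$-arithmetic Dani--Margulis recurrence theorem of \cite{MargulisTomanov94} (already invoked in Proposition \ref{prodanmar}) to see that the homogeneous $S$-invariant measure on the closed double orbit is finite; when $P$ is minimal, argue with the $S$-arithmetic Mahler criterion as in Lemmas \ref{lemsod} and \ref{lemheidic}.

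\textbf{Conclusion.} With $L_{\La,\La^-}$ infinite, Margulis' extension argument (the $S$-arithmetic analogue of Proposition \ref{proextfor}), using that $U_p$ and $U_p^-$ are $\Gamma$-compact at every place, produces a semisimple $\m Q$-group $\mathbf G$ with $G_p=\mathbf G(\m Q_p)$ for all $p\in S$ and $\Gamma$ contained in the diagonal image of $\mathbf G(\m Q)$ in $G$. Decomposing $\mathbf G$ into $\m Q$-simple factors and applying to each the $S$-arithmetic form of the Raghunathan--Venkataramana theorem (Proposition \ref{proragven})---the irreducibility of $\Delta\cap U_\infty$ in $G_\infty$ and the assumption that $U_p$ meets every simple factor of $G_p$ guaranteeing that horospherical lattices survive in each factor---one finds that $\Gamma$ is commensurable to $\mathbf G(\m Z_S)$, which is a lattice in $G$ by the $S$-arithmetic Borel--Harish-Chandra theorem. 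The main obstacle is the closed-orbit step: the mixing of archimedean and non-archimedean absolute values in the Mahler and recurrence arguments, and in particular the appeal to the discreteness of $\m Z_S^\times$ (the product formula) for the discreteness of $\Ph(\Gamma)$ and to the non-compactness of $L_0$ for the infinitude of the stabiliser, is precisely what lets a finite place substitute for higher real rank.
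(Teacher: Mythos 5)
Your proposal follows essentially the same route as the paper, which for this proposition only records that the real argument transfers because its main tools (Raghunathan--Venkataramana, Margulis' construction of $\m Q$-forms, Dani--Margulis recurrence, Howe--Moore) have counterparts over products of local fields. Your elaboration is consistent with that sketch, and you correctly locate the two points where the new hypothesis ``at least one finite place'' replaces higher real rank: the discreteness of $\Ph(\Gamma)$ via the discreteness of $\m Z_S^\times$ in $\prod_{p\in S}\m Q_p^\times$, and the non-compactness of $L_0$ coming from the presence of two places.
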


The proof is similar and ensures that $\Ga$ is $S$-arithmetic.
Indeed the main tools we used for real Lie groups 
have their counterparts on 
products of Lie groups over $\m Q_p$: the Raghunathan-Venkataramana
theorem, Margulis' construction of $\m Q$-forms,
the recurrence of unipotent flow of Dani-Margulis and 
the Howe-Moore decay of matrix coefficients.

Partial results in this direction were obtained 
by Hee Oh  in \cite[Theorem 4.3]{Oh00} when $G_\infty$ 
is a higher rank absolutely simple Lie group and 
by Benoist and Oh in \cite[Theorem 1.1]{BenoistOh10a} 
when all the groups $G_p$ are products of ${\rm SL}(2,\m Q_p)$.

%6
\section{Horospherical Lie subalgebras}
\label{secapphor}
We give the list of the horospherical Lie subalgebras that occur in Chapter \ref{secorbhor} and \ref{secarigam}, 
and we deduce from it a proof 
of Proposition \ref{profphf}.a and Lemma \ref{lemfphf}
which are the algebraic parts of the proof of our
closedness result in Chapter \ref{secorbhor}.

%61
\subsection{Reflexive commutative horospherical subalgebras}
\label{secrefcom}

\bq
The first class of horospherical subalgebras which play an important role
are the reflexive commutative ones.
\eq

Let $\g g$ be a semisimple real Lie algebra
and 
$\g u$ be a horospherical subalgebra. 
We will use freely  the notation of  \S \ref{secroosys} and \S \ref{secgralie}. One 
can choose a split Cartan subspace $\g a$ of $\g g$, a set of simple restricted roots $\Pi$ and a subset $\theta\subset \Pi$
such that 
$\g u=\g u_\th$.
This Lie algebra  $\g u_\th$ is commutative if and only if it  is the component $\g u_\th=\g g_1$ of a grading
\begin{equation*}
\label{eqng1gog1}
\g g=\g g_{- 1}  \oplus \g g_0
  \oplus \g g_1\, ,
\end{equation*}

One can easily  reduce the  classification 
of the reflexive horospherical subalgebras $\g u$ of $\g g$
to the case where $\g g$ is a complex simple Lie algebras.
Indeed, the horospherical subalgebra $\g u$
of $\g g$ is reflexive commutative if and only if 
its complexification $\g u_\m C$ is reflexive commutative in $\g g_\m C$. 
This horospherical Lie algebra is then a direct sum of  
reflexive commutative horospherical Lie algebras of the simple ideals of $\g g_\m C$.

We assume now that the Lie algebra  $\g g$ is simple.
The commutative horospherical subalgebras $\g u_\th$ 
are exactly those for which $\th$ contains  only one simple root $\th=\{\al\}$ and
such that $n_\th(\widetilde\al)=1$, where $\widetilde\al$ is the largest root of $\Sigma$.
This subalgebra is reflexive if and only if 
$\al=-w_0\al$. It is therefore very easy to give the list  of the reflexive 
commutative horospherical subgroups: 
one expresses the largest root as a sum of simple roots $\widetilde\al=\sum_in_i\al_i$ 
and pick out those simple roots for which $n_i=1$ and $-w_0\al_i=\al_i$.

%\end{document}

%T1
%\input{table1.tex}

\begin{table}[ht!]
\centerline{$\begin{array}{|cc|c|c|}
\hline
\raisebox{3ex}{ }\raisebox{-1.5ex}{ }
& { \g g}&[\g l,\g l] & F(X)\\
\multicolumn{2}{|c|}{\rm Simple\; roots}  &
\g u\simeq \g u^- & G(X,Y)\\
\hline
(1)& 
\raisebox{3ex}{ }\raisebox{-1.5ex}{ }
\g s\g l (\m C^{2n})\;\;   n\geq 3 &
\g s\g l (\m C^{n})\!\oplus\! \g s\g l (\m C^{n})&
({\rm det}X)^{2n}\\
%A6
\multicolumn{2}{|c|}{\mbox{
\begin{tikzpicture}[scale=0.8]
\draw 
(0,0) --++ (1,0)
(1,0) --++ (1,0)
(2,0) --++ (1,0)
(3,0) --++ (1,0);
\draw[fill=white]
(0,0) circle [radius=.1] 
(1,0) circle [radius=.1] 
(3,0) circle [radius=.1] 
(4,0) circle [radius=.1];
\draw[fill=white]
(2,0) circle [radius=.05]
(2,0) circle [radius=.1];
\end{tikzpicture}
}}
& {\rm End }(\m C^n)&({\rm det}(1+XY))^{2n}\\
\hline
(2)&
\raisebox{3ex}{ }\raisebox{-1.5ex}{ }
\g s\g o (\m C^{n+2})\;\;
 n\geq 3&
\g s\g o (\m C^{n})&
q(X)^{n}\\
%B5 et D6
\multicolumn{2}{|c|}{\mbox{
\begin{tikzpicture}[scale=0.8]
\draw 
(0,0) --++ (1,0)
(1,0) --++ (1,0)
(2,0) --++ (1,0)
(3,-.04) --++ (1,0)
(3,+.04) --++ (1,0); 
\draw
(3.5,0) --++ (130:.2)
(3.5,0) --++ (-130:.2);
\draw[fill=white]
(1,0) circle [radius=.1] 
(2,0) circle [radius=.1] 
(3,0) circle [radius=.1] 
(4,0) circle [radius=.1];
\draw[fill=white]
(0,0) circle [radius=.05]
(0,0) circle [radius=.1];
\draw 
(0,-0.8) --++ (1,0)
(1,-0.8) --++ (1,0)
(2,-0.8) --++ (1,0)
(3,-0.8) --++ (0.9,0.3)
(3,-0.8) --++ (0.9,-0.3); 
\draw[fill=white]
(1,-0.8) circle [radius=.1] 
(2,-0.8) circle [radius=.1] 
(3,-0.8) circle [radius=.1] 
(3.9,-0.5) circle [radius=.1]  
(3.9,-1.1) circle [radius=.1];
\draw[fill=white]
(0,-0.8) circle [radius=.05]
(0,-0.8) circle [radius=.1];
\end{tikzpicture}
}}
& \raisebox{2ex}{$\m C^n$}&
\raisebox{2ex}{$(1\!-\!b(X,Y)\!+\!\tfrac14 q(X)q(Y))^n$}\\
\hline
(3)&
\raisebox{3ex}{ }\raisebox{-1.5ex}{ }
\g s\g p (\m C^{2n})\;\;
 n\geq 3&
\g s\g l (\m C^{n})&
({\rm det}X)^{n+1}\\
%C5
\multicolumn{2}{|c|}{\mbox{
\begin{tikzpicture}[scale=0.8]
\draw 
(0,0) --++ (1,0)
(1,0) --++ (1,0)
(2,0) --++ (1,0)
(3,-.04) --++ (1,0)
(3,+.04) --++ (1,0); 
\draw
(3.5,0) --++ (50:.2)
(3.5,0) --++ (-50:.2);
\draw[fill=white]
(0,0) circle [radius=.1] 
(1,0) circle [radius=.1] 
(2,0) circle [radius=.1] 
(3,0) circle [radius=.1];
\draw[fill=white]
(4,0) circle [radius=.05]
(4,0) circle [radius=.1];
\end{tikzpicture}
}}
& S^2(\m C^n)&({\rm det}(1+XY))^{n+1}\\
\hline
(4)&
\raisebox{3ex}{ }\raisebox{-1.5ex}{ }
\g s\g o (\m C^{2n})\;
 n\,{\rm pair}\!\geq\! 4&
\g s\g l (\m C^{n})&
({\rm det}X)^{n-1}\\
%D6
\multicolumn{2}{|c|}{\mbox{
\begin{tikzpicture}[scale=0.8]
\draw 
(0,0) --++ (1,0)
(1,0) --++ (1,0)
(2,0) --++ (1,0)
(3,0) --++ (0.9,0.3)
(3,0) --++ (0.9,-0.3); 
\draw[fill=white]
(1,0) circle [radius=.1] 
(2,0) circle [radius=.1] 
(3,0) circle [radius=.1] 
(0,0) circle [radius=.1] 
(3.9,-0.3) circle [radius=.1]; 
\draw[fill=white]
(3.9,0.3) circle [radius=.05]
(3.9,0.3) circle [radius=.1];
\end{tikzpicture}
}}
& \Lambda^2(\m C^n)&({\rm det}(1+XY))^{n-1}\\
\hline
(5)&
\raisebox{3ex}{ }\raisebox{-1.5ex}{ }
\g e_7\hspace{2em}\mbox{ }&
\g e_6&
I_3(X)^{18}
\\
%E7
\multicolumn{2}{|c|}{\mbox{
\begin{tikzpicture}[scale=0.8]
\draw 
(0,0) --++ (1,0)
(1,0) --++ (1,0)
(2,0) --++ (1,0)
(3,0) --++ (1,0)
(4,0) --++ (1,0)
(2,0) --++ (0,-0.6);
\draw[fill=white]
(0,0) circle [radius=.1] 
(1,0) circle [radius=.1] 
(2,0) circle [radius=.1] 
(2,-0.6) circle [radius=.1] 
(3,0) circle [radius=.1] 
(4,0) circle [radius=.1];
\draw[fill=white]
(5,0) circle [radius=.05]
(5,0) circle [radius=.1];
\end{tikzpicture}
}}
& \m C^{27}&G(X,Y)\\
\hline
\end{array}$}
\caption{Reflexive commutative horospherical $\g u$ in complex simple $\g g$}
\label{tabhorcom}
\end{table}

\begin{Rem} We want to point out that,
when $\g g$ is a complex  Lie algebra,  
the pair $(\g g, \g g_0)$ is the complexification of a hermitian symmetric pair 
and the map $(\g g,\g u)\ra (\g g, \g g_0)$ induces a bijection between\\
$\{$complex abelian horospherical   algebras$\}$
and $\{$hermitian symmetric spaces$\}$.
The reflexive commutative complex horospherical 
subalgebras correspond to the 
hermitian symmetric spaces of {\it tube type}
(see \cite[p.528]{Helgason78} or \cite[p.97]{FarautKoranyi94}).
\end{Rem}
 
We give in Table \ref{tabhorcom} this list of reflexive commutative horospherical Lie algebras $\g u$ in a complex simple Lie algebra $\g g$.
Choosing real forms for these pairs $(\g g,\g u)$ gives 
all  the reflexive commutative horospherical Lie algebras $\g u$ in a real absolutely simple Lie algebra $\g g$.
We give also the polynomials $F$ and $G$ 
introduced in 
\eqref{eqnpolfg}. In this table,
$q$ is a non-degenerate quadratic form 
on $\m C^{2n}$, $b$ 
the associated bilinear form
and $I_3(X)$  a cubic form on $\m C^{27}$.

%62
\subsection{Heisenberg horospherical subalgebras}
\label{secheihor}
\bq
The second class of horospherical Lie subalgebras which play an important role
are the Heisenberg horospherical subalgebras.
\eq

Let $\g g$ be a semisimple real algebraic Lie group
and 
$\g u$ be a horospherical subalgebra. 
Using again the notation of \S \ref{secroosys}
and \S \ref{secgralie}, one 
can choose a Cartan subspace $\g a$ of $\g g$, 
a set of simple restricted root $\Pi$ and a subset $\theta\subset \Pi$
such that 
$\g u=\g u_\th$.
By Definition \ref{defheihor}, 
a horospherical Lie subalgebra  $\g u_\th$ of $\g g$   is Heisenberg if $\g g$ is simple and if  $\g u_\th$ is 
the sum $\g u_\th=\g g_1\oplus \g g_2$ of a grading
\begin{equation*}
\label{eqng2gog2}
\g g=\g g_{- 2}  \oplus \g g_{- 1}  \oplus \g g_0
  \oplus \g g_1\oplus \g g_2\, ,
\end{equation*}
where $\g g_1\neq 0$ and $\g g_2=\g g_{\widetilde{\al}}$.

The existence of a Heisenberg horospherical 
subalgebra   implies 
that $\g g$ is   not isomorphic to 
$\g s\g o(d,1)$.
The next lemma tells us that the converse is true.

%T2
%\input{table2.tex}
\begin{table}[ht!]
\centerline{$\begin{array}{|cc|c|}
\hline
\raisebox{3ex}{ }\raisebox{-1.5ex}{ }
& { \g g}&[\g l,\g l] \\
\multicolumn{2}{|c|}{\rm Simple\; roots}  &
\g u\simeq \g u^-   \\
\hline
(1)&
\raisebox{3ex}{ }\raisebox{-1.5ex}{ }
\g s\g l (\m C^{n+2})\;\;   n\geq 1 &
\g s\g l (\m C^{n})\\
%A5
\multicolumn{2}{|c|}{\mbox{
\begin{tikzpicture}[scale=0.8]
\draw 
(0,0) --++ (1,0)
(1,0) --++ (1,0)
(2,0) --++ (1,0)
(3,0) --++ (1,0);
\draw[fill=white]
(1,0) circle [radius=.1] 
(2,0) circle [radius=.1] 
(3,0) circle [radius=.1];
\draw[fill=white]
(0,0) circle [radius=.05] 
(0,0) circle [radius=.1] 
(4,0) circle [radius=.05]
(4,0) circle [radius=.1];
\end{tikzpicture}
}}
&{\m C^n}^*\oplus {\m C^n}\oplus \m C\\
\hline
(2)&
\raisebox{3ex}{ }\raisebox{-2ex}{ }
\g s\g o (\m C^{n+4})\;\;
 n\geq 2&
\g s\g l(\m C^2)\oplus\g s\g o (\m C^{n})\\
%B5 et D6
\multicolumn{2}{|c|}{\mbox{
\begin{tikzpicture}[scale=0.8]
\draw 
(0,0) --++ (1,0)
(1,0) --++ (1,0)
(2,0) --++ (1,0)
(3,-.04) --++ (1,0)
(3,+.04) --++ (1,0); 
\draw
(3.5,0) --++ (130:.2)
(3.5,0) --++ (-130:.2);
\draw[fill=white]
(0,0) circle [radius=.1] 
(2,0) circle [radius=.1] 
(3,0) circle [radius=.1] 
(4,0) circle [radius=.1];
\draw[fill=white]
(1,0) circle [radius=.05]
(1,0) circle [radius=.1];
\draw 
(0,-0.8) --++ (1,0)
(1,-0.8) --++ (1,0)
(2,-0.8) --++ (1,0)
(3,-0.8) --++ (0.9,0.3)
(3,-0.8) --++ (0.9,-0.3); 
\draw[fill=white]
(0,-0.8) circle [radius=.1] 
(2,-0.8) circle [radius=.1] 
(3,-0.8) circle [radius=.1] 
(3.9,-0.5) circle [radius=.1]  
(3.9,-1.1) circle [radius=.1];
\draw[fill=white]
(1,-0.8) circle [radius=.05]
(1,-0.8) circle [radius=.1];
\end{tikzpicture}
}}
& \raisebox{2ex}{$\m C^2\otimes\m C^n\oplus\m C$}\\
\hline
(3)&
\raisebox{3ex}{ }\raisebox{-1.5ex}{ }
\g s\g p (\m C^{2n+2})\;\;
 n\geq 1&
\g s\g p (\m C^{2n})\\
%C5
\multicolumn{2}{|c|}{\mbox{
\begin{tikzpicture}[scale=0.8]
\draw 
(0,0) --++ (1,0)
(1,0) --++ (1,0)
(2,0) --++ (1,0)
(3,-.04) --++ (1,0)
(3,+.04) --++ (1,0); 
\draw
(3.5,0) --++ (50:.2)
(3.5,0) --++ (-50:.2);
\draw[fill=white]
(1,0) circle [radius=.1] 
(2,0) circle [radius=.1] 
(3,0) circle [radius=.1] 
(4,0) circle [radius=.1];
\draw[fill=white]
(0,0) circle [radius=.05]
(0,0) circle [radius=.1];
\end{tikzpicture}
}}
& \m C^{2n}\oplus \m C\\
\hline
(4)&
\raisebox{3ex}{ }\raisebox{-1.5ex}{ }
\g e_6&
\g s\g l(\m C^6)\\
%E6
\multicolumn{2}{|c|}{\mbox{
\begin{tikzpicture}[scale=0.8]
\draw 
(0,0) --++ (1,0)
(1,0) --++ (1,0)
(2,0) --++ (1,0)
(3,0) --++ (1,0)
(2,0) --++ (0,-0.6);
\draw[fill=white]
(0,0) circle [radius=.1] 
(1,0) circle [radius=.1] 
(2,0) circle [radius=.1] 
(3,0) circle [radius=.1] 
(4,0) circle [radius=.1];
\draw[fill=white]
(2,-0.6) circle [radius=.05]
(2,-0.6) circle [radius=.1]; 
\end{tikzpicture}
}}
& \Lambda^3( \m C^{6})\oplus \m C\\
\hline
(5)&
\raisebox{3ex}{ }\raisebox{-1.5ex}{ }
\g e_7&
\g s\g o(\m C^{12})\\
%E7
\multicolumn{2}{|c|}{\mbox{
\begin{tikzpicture}[scale=0.8]
\draw 
(0,0) --++ (1,0)
(1,0) --++ (1,0)
(2,0) --++ (1,0)
(3,0) --++ (1,0)
(4,0) --++ (1,0)
(2,0) --++ (0,-0.6);
\draw[fill=white]
(1,0) circle [radius=.1] 
(2,0) circle [radius=.1] 
(2,-0.6) circle [radius=.1] 
(3,0) circle [radius=.1] 
(4,0) circle [radius=.1] 
(5,0) circle [radius=.1];
\draw[fill=white]
(0,0) circle [radius=.05]
(0,0) circle [radius=.1];
\end{tikzpicture}
}}
& \m C^{32}\oplus \m C\\
\hline
(6)&
\raisebox{3ex}{ }\raisebox{-1.5ex}{ }
\g e_8&
\g e_7\\
%E8
\multicolumn{2}{|c|}{\mbox{
\begin{tikzpicture}[scale=0.8]
\draw 
(0,0) --++ (1,0)
(1,0) --++ (1,0)
(2,0) --++ (1,0)
(3,0) --++ (1,0)
(4,0) --++ (1,0)
(5,0) --++ (1,0)
(2,0) --++ (0,-0.6);
\draw[fill=white]
(0,0) circle [radius=.1] 
(1,0) circle [radius=.1] 
(2,0) circle [radius=.1] 
(2,-0.6) circle [radius=.1] 
(3,0) circle [radius=.1] 
(4,0) circle [radius=.1] 
(5,0) circle [radius=.1];
\draw[fill=white]
(6,0) circle [radius=.05]
(6,0) circle [radius=.1];
\end{tikzpicture}
}}
& \m C^{56}\oplus\m C\\
\hline
(7)&
\raisebox{3ex}{ }\raisebox{-1.5ex}{ }
\g f_4 &
\g s\g p (\m C^{6})\\
%F4
&\mbox{
\begin{tikzpicture}[scale=0.8]
\draw 
(0,0) --++ (1,0)
(1,-.04) --++ (1,0)
(1,+.04) --++ (1,0)
(2,0) --++ (1,0); 
\draw
(1.5,0) --++ (130:.2)
(1.5,0) --++ (-130:.2);
\draw[fill=white]
(1,0) circle [radius=.1] 
(2,0) circle [radius=.1] 
(3,0) circle [radius=.1];
\draw[fill=white]
(0,0) circle [radius=.05]
(0,0) circle [radius=.1];
\end{tikzpicture}
}
& \Lambda^3_0(\m C^6)\oplus\m C\\
\hline
(8)&
\raisebox{3ex}{ }\raisebox{-1.5ex}{ }
\g g_2 &
\g s\g l (\m C^{2})\\
%G2
&\mbox{
\begin{tikzpicture}[scale=0.8]
\draw 
(0,-0.08) --++ (1,0)
(0,0) --++ (1,0)
(0,0.08) --++ (1,0); 
\draw
(0.5,0) --++ (50:.2)
(0.5,0) --++ (-50:.2);
\draw[fill=white]
(0,0) circle [radius=.1];
\draw[fill=white]
(1,0) circle [radius=.05]
(1,0) circle [radius=.1];
\end{tikzpicture}
}
& S^3(\m C^2)\oplus \m C\\
\hline
\end{array}$}
\caption{  Complex   
Heisenberg horospherical $\g u$}
\label{tabhorhei}
\end{table}

\begin{Lem}
\label{lemheihor} 
Let $\g g$ be a simple real Lie algebra not
isomorphic to $\g s\g o(d,1)$.
Then $\g g$ contains a Heisenberg horospherical subalgebra $\g u$.
This Lie subalgebra is unique up to inner conjugation.
In particular it is reflexive.   
\end{Lem}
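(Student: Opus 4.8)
The plan is to produce the Heisenberg grading canonically from the coroot $\wt\al^\vee$ of the highest restricted root $\wt\al$, and to reduce both existence and uniqueness to standard properties of $\wt\al$, thereby avoiding the case-by-case Tables \ref{tabhorcom} and \ref{tabhorhei}. I fix a maximal split torus $\g a$, a positive system $\Sigma^+$ with simple roots $\Pi$ and longest Weyl element $w_0$, and I let $\wt\al$ be the highest root of $\Sigma$. The grading element I use is $h:=\wt\al^\vee\in\g a$, the semisimple member of an $\g s\g l_2$-triple attached to a nonzero vector of $\g g_{\wt\al}$.

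For existence I would grade $\g g$ by $\operatorname{ad}h$. Since $\wt\al$ is the highest, hence dominant, root, standard properties of the highest root give, for every $\al\in\Sigma$, that $\al(h)=\langle\al,\wt\al^\vee\rangle\in\{-2,-1,0,1,2\}$, with $\al(h)=2$ precisely when $\al=\wt\al$; in particular $\al_i(h)\in\{0,1\}$ for each simple root, so $h=h_\th$ for $\th:=\{\al_i\in\Pi\mid(\al_i,\wt\al)\neq 0\}$, and the eigenspace decomposition of $\operatorname{ad}h$ is exactly the five-term grading of \S\ref{secgralie} with $s=2$ and $\g g_2=\g g_{\wt\al}$. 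By Lemma \ref{lemnonroo} the algebra $\g u_\th=\g g_1\oplus\g g_2$ is then two-step nilpotent with center $\g z=\g g_2=\g g_{\wt\al}$, which is a root space in the sense of Definition \ref{defroospa}; hence $\g u_\th$ is Heisenberg as soon as $\g g_1\neq 0$. Finally $\g g_1\neq 0$ unless $\Sigma$ is reduced of rank one: if $\operatorname{rank}\Sigma\geq 2$ some simple root meets $\wt\al$ and lands in $\g g_1$, and if $\Sigma$ is of type $BC_1$ then $\g g_1=\g g_{\al_1}\neq 0$. A reduced rank-one $\g g$ is precisely $\g s\g o(d,1)$, so this yields existence whenever $\g g\not\cong\g s\g o(d,1)$.

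For uniqueness I would first move an arbitrary Heisenberg subalgebra $\g u=\g u_{\th'}$ onto the fixed data $(\g a,\Sigma^+)$: maximal split tori are conjugate under inner automorphisms and, once $\g a$ is fixed, the restricted Weyl group is realized by inner automorphisms, so I may assume $\g u=\g u_{\th'}$ with the same $\g a,\Sigma^+$. Being Heisenberg forces $s=n_{\th'}(\wt\al)=2$ and makes $\g g_{\wt\al}$ the only degree-two root space. If $(\al_i,\wt\al)>0$ then $\wt\al-\al_i\in\Sigma^+$ and $n_{\th'}(\wt\al-\al_i)=2-n_{\th'}(\al_i)$; as this value must be $\leq 1$, we get $\al_i\in\th'$, whence $\th'\supseteq\th$. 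Conversely, writing $\wt\al=\sum_k c_k\al_k$, the highest root has full support so every $c_k\geq 1$; together with $n_{\th'}(\wt\al)=\sum_{\al_k\in\th'}c_k=2$ and the equality $n_\th(\wt\al)=\wt\al(\wt\al^\vee)=2$, this leaves no room for an extra simple root, so $\th'=\th$ and $\g u_{\th'}=\g u_\th$. Thus all Heisenberg subalgebras are inner-conjugate.

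Reflexivity then follows from the criterion of \S\ref{secroosys}: the opposition involution $-w_0$ permutes $\Pi$ and fixes $\wt\al$, so $(\,-w_0\al_i,\wt\al)=(\al_i,-w_0\wt\al)=(\al_i,\wt\al)$ shows $-w_0(\th)=\th$, i.e. $w_0(\th)=-\th$, whence $\g u_\th$ is reflexive. The hard part, and the only place needing genuine input beyond root combinatorics, is the closing claim of the existence step that $\g g_1=0$ forces $\g g\cong\g s\g o(d,1)$; this is the translation of the hypothesis into restricted-root language and rests on the classification of simple real Lie algebras of restricted rank one, whereas the grading construction, the uniqueness of $\th$, and reflexivity are all uniform.
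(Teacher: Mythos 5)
Your proof is correct and follows essentially the same route as the paper's: both identify $\th$ as the set of simple roots not orthogonal to the highest restricted root $\widetilde\al$ (equivalently, those with $\widetilde\al-\al_i\in\Sigma$) and verify $n_\th(\widetilde\al)=2$ by the same coroot computation $\sum_i n_i\langle\al_i,\widetilde\al^\vee\rangle=\langle\widetilde\al,\widetilde\al^\vee\rangle=2$. You merely spell out the uniqueness inclusions and the reflexivity $-w_0(\th)=\th$, which the paper asserts tersely, and you make explicit that the excluded case is exactly the one where $\widetilde\al$ is simple, i.e.\ $\g g\simeq\g s\g o(d,1)$ --- a fact the paper records in the remark preceding its proof.
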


The Lie algebras $\g g=\g s\g o(d,1)$ play a special role since 
they are the only simple Lie algebras 
for which the largest root $\widetilde\al$ is also
a simple root.

\begin{proof}
If the horospherical Lie subalgebra
$\g u_\th$ is Heisenberg, one must have
\begin{equation}
\label{eqnthhei}
\th=\{\al_i\in \Pi\mid \widetilde\al -\al_i\in \Sigma\}.
\end{equation}
This proves that $\g g$ contains at most one Heisenberg
horospherical subalgebra up to inner conjugacy. 

Let us check that the horospherical subalgebra $\g u_\th$ with $\th$ given by 
\eqref{eqnthhei} is Heisenberg. Indeed, one has
$  (\widetilde\al,\al_i)=\|\widetilde\al\|^2/2$ for $\al_i\in \th$ and 
$  (\widetilde\al,\al_i)=0$ for $\al_i\in \th^c$. 
One expresses the largest root as a sum of simple roots $\widetilde\al=\sum_in_i\al_i$ 
and one computes
$
\sum_{\al_i\in \th} n_i=
2\sum_i n_i \frac{(\widetilde\al,\al_i)}{\|\widetilde\al\|^2}=2.
$
\end{proof}

In Table \ref{tabhorhei}, we give the list of the 
pairs $(\g g,\g u)$ where $\g g$ is a complex simple Lie algebra and $\g u$ is a  
Heisenberg horospherical  Lie subalgebra,
the grey points 
corresponding to the subset  $\th$ of $\Pi$. 

When $\g u$ is a Heisenberg horospherical subalgebra in an absolutely simple real Lie algebra $\g g$ 
with $\dim \g z=1$,   the pair $(\g g_\m C,\g u_\m C)$
is still Heisenberg and is in Table  \ref{tabhorhei}.

%T3
%\input{table3.tex}

\begin{table}[ht!]
\centerline{$\begin{array}{|cc|c|c|}
\hline
\raisebox{3ex}{ }\raisebox{-1.5ex}{ }
&{\g g}\mbox{ }& {\rm Simple\;
restricted\; roots} &[\g l,\g l] \\
&{ \g g_\m C}& \mbox{\rm Satake diagram}  &
\g u\simeq \g u^- \\
\hline
(9)&
\raisebox{3ex}{ }\raisebox{-1.5ex}{ }
\g s\g l (\m H^{n+2})&
%A4
\mbox{
\begin{tikzpicture}[scale=0.8]
\draw 
(0,0) --++ (1,0)
(1,0) --++ (1,0)
(2,0) --++ (1,0);
\draw[fill=white]
(1,0) circle [radius=.1] 
(2,0) circle [radius=.1]; 
\draw[fill=white]
(0,0) circle [radius=.05] 
(0,0) circle [radius=.1] 
(3,0) circle [radius=.05] 
(3,0) circle [radius=.1];
\end{tikzpicture}
}&
\g s\g l (\m H^{n})\oplus\g s\g l (\m H)^2\\
n\geq 1 &
\g s\g l (\m C^{2n+4})&
%A9
\mbox{
\begin{tikzpicture}[scale=0.6]
\draw 
(0,0) --++ (1,0)
(1,0) --++ (1,0)
(2,0) --++ (1,0)
(3,0) --++ (1,0)
(4,0) --++ (1,0)
(5,0) --++ (1,0)
(6,0) --++ (1,0)
(7,0) --++ (1,0);
\draw[fill=white]
(3,0) circle [radius=.12] 
(5,0) circle [radius=.12];
\draw[fill=white]
(1,0) circle [radius=.06] 
(1,0) circle [radius=.12] 
(7,0) circle [radius=.06]
(7,0) circle [radius=.12];
\draw[fill=black]
(0,0) circle [radius=.12] 
(2,0) circle [radius=.12] 
(4,0) circle [radius=.12] 
(6,0) circle [radius=.12] 
(8,0) circle [radius=.12];
\end{tikzpicture}
}&
{\m H^n}^*\oplus {\m H^n}\oplus \m H\\
\hline
(10)&
\raisebox{3ex}{ }\raisebox{-1.5ex}{ }
\g s\g p (\m H^{p+1,q+1}) &
%C4
\mbox{
\begin{tikzpicture}[scale=0.8]
\draw 
(0,0) --++ (1,0)
(1,0) --++ (1,0)
(2,-.04) --++ (1,0)
(2,+.04) --++ (1,0); 
\draw
(2.5,0) --++ (130:.2)
(2.5,0) --++ (-130:.2);
\draw[fill=white]
(1,0) circle [radius=.1] 
(2,0) circle [radius=.1] 
(3,0) circle [radius=.1];
\draw[fill=white]
(0,0) circle [radius=.05]
(0,0) circle [radius=.1];
\end{tikzpicture}
}&
\g s\g p (\m H^{p,q})\oplus\g s\g l (\m H)\\
\! 1\! \leq\! p\!<\! q\! &
\g s\g p (\m C^{2p+2q+4}) &
%C8
\mbox{
\begin{tikzpicture}[scale=0.6]
\draw 
(0,0) --++ (1,0)
(1,0) --++ (1,0)
(2,0) --++ (1,0)
(3,0) --++ (1,0)
(4,0) --++ (1,0)
(5,0) --++ (1,0)
(6,0) --++ (1,0)
(7,-.04) --++ (1,0)
(7,+.04) --++ (1,0); 
\draw
(7.5,0) --++ (50:.2)
(7.5,0) --++ (-50:.2);
\draw[fill=white]
(3,0) circle [radius=.12] 
(5,0) circle [radius=.12] 
(7,0) circle [radius=.12];
\draw[fill=black]
(0,0) circle [radius=.12] 
(2,0) circle [radius=.12] 
(4,0) circle [radius=.12] 
(6,0) circle [radius=.12] 
(8,0) circle [radius=.12];
\draw[fill=white]
(1,0) circle [radius=.06]
(1,0) circle [radius=.12];
\end{tikzpicture}
}&
\m H^{p,q}\oplus{\rm Im}(\m H)\\
\hline
(11)&
\raisebox{3ex}{ }\raisebox{-1.5ex}{ }
\g e_{6(-5)} &
%A2
\mbox{
\begin{tikzpicture}[scale=0.8]
\draw 
(0,0) --++ (1,0);
\draw[fill=white]
(0,0) circle [radius=.05] 
(0,0) circle [radius=.1] 
(1,0) circle [radius=.05] 
(1,0) circle [radius=.1];
\end{tikzpicture}
}&
\g s\g 0 (8)\\
&\g e_6&\raisebox{3ex}{ }\raisebox{-1.5ex}{ }
%E6
\mbox{
\begin{tikzpicture}[scale=0.6]
\draw 
(0,0) --++ (1,0)
(1,0) --++ (1,0)
(2,0) --++ (1,0)
(3,0) --++ (1,0)
(2,0) --++ (0,-0.6);
\draw[fill=white]
(0,0) circle [radius=.06]
(0,0) circle [radius=.12] 
(4,0) circle [radius=.06] 
(4,0) circle [radius=.12];
\draw[fill=black]
(1,0) circle [radius=.12] 
(2,0) circle [radius=.12] 
(3,0) circle [radius=.12] 
(2,-0.6) circle [radius=.12]; 
\end{tikzpicture}
}&
\m R^8\oplus {\m R^8}\oplus \m R^8\\
\hline
\end{array}$}
\caption{  
Non-complex Heisenberg   $\g u$
with $\dim(\g z)>1$}
\label{tabhorrea}
\end{table}

In Table \ref{tabhorrea}, we give the list of the 
remaining pairs $(\g g,\g u)$, i.e. those for which $\g g$ is an absolutely simple real Lie algebra
and $\g u$ is a  
Heisenberg horospherical  Lie subalgebra 
with $\dim \g z>1$,
the grey points 
corresponding to the subset  $\th$ of $\Pi$, 
and the black points to the compact simple roots of $\g g_\m C$.

Note that, as in Tables \ref{tabhorcom} and \ref{tabhorhei}, the Lie algebra $\g l$ and 
the representation of $\g l$ in $\g u$ can easily be determined by 
a glance at the Dynkin or Satake diagram (see
\cite[Prop. 3.1]{Kac80}).
\vs

Note that the set $\th$ in \eqref{eqnthhei} is either a singleton $\th=\{\al_i\}$
with $-w_0\al_i=\al_i$ and $n_i=2$,
or a pair $\th=\{\al_i,\al_j\}$ with $-w_0\al_i=\al_j$ and $n_i=n_j=1$.
This second case occurs if and only if the root system $\Si$
is of type $A_r$, i.e. in Cases $(1)$, $(9)$
and $(11)$ of Tables \ref{tabhorhei} and 
\ref{tabhorrea}.

\begin{Exa}
We have explicitely computed in Table \ref{tabhorcom},
the polynomials $F(X)$ for reflexive commutative Lie subalgebras.
One can also explicitely compute the polynomials 
$F(X)$ for the Heisenberg Lie subalgebras
in Tables \ref{tabhorhei} and \ref{tabhorrea}. 
Here are three examples where $F$ is given up to a scalar multiple:\\   
- Case (1): One has $\g g=\g s\g l(\m R^{n+2})$,
$\g l=\g s\g l(\m R^{n})\oplus\m R^2$,\\
$\g u=\{X=(f,v,z)\in {\m R^n}^*\oplus\m R^n\oplus\m R\}$, and
$
F(X)\simeq f(v)^2-z^2\, .
$\\
- Case (2): One has $\g g=\g s\g 0(2,n+2)$,
$\g l=\g s\g l(\m R^{2})\oplus\g s\g o(\m R^n)\oplus \m R^2$,\\
$\g u=\{X=(V,z)\in \m R^2\otimes\m R^n\oplus\m R\}$, 
and
$
F(X)\simeq \det(V\!\mbox{ }^tV)-z^2\, .
$\\
- Case (3): One has $\g g=\g s\g p(\m R^{2n+4})$,
$\g l=\g s\g p(\m R^{2n})\oplus\m R^2$,\\
$\g u=\{X=(v,z)\in \m R^{2n}\oplus\m R\}$, and
$
F(X)\simeq z^2\, .
$
\end{Exa}

%63
\subsection{When $U$ is reflexive commutative}
\label{secpolcom}
\bq
The aim of this section is to prove 
Proposition \ref{profphf} for a reflexive commutative horospherical subgroup.
\eq

We first discuss a few   facts about reflexive commutative horospherical 
subalgebras.
We use the notation of \S \ref{secrefcom}: 
one has $\g l=\g g_0$ and $\g u=\g g_1$ for some graduation 
$$
\g g=\g g_{-1}\oplus\g g_0\oplus\g g_1.
$$ 
We denote by 
$h_0$ the element of the center of $\g g_0$ inducing this graduation,
i.e. such that ${\rm ad}h_0=\pm 1$ on $\g g_{\pm 1}$.
 
We set $\g u_a:=\g u\cap \g g_a$ where $\g g_a$ are the simple ideals of $\g g$.
The function 
$$
F(X)={\rm det}_{\g u}(\tfrac12({\rm ad}X)^2w_0)
$$ 
introduced   in 
\eqref{eqnpolfg} for $X\in \g g_1$ is also given by 
\begin{equation}
\label{eqnfxfaxa}
\textstyle
F(X)=\prod_a F_a(X_a)
\end{equation}
where $X=\sum_a X_a$ with $X_a\in \g u_a$ and
\begin{equation}
\label{eqnfxdetu}
F_a(X_a)={\rm det}_{\g u_a}(\tfrac12({\rm ad}X_a)^2w_0)
\end{equation}

\begin{Lem}
\label{lemfgcom}
With the notation of Proposition \ref{profphf}. 
We assume that $\g u$ is reflexive commutative. Then\\
$a)$ The polynomial $F$ is non-zero and homogeneous of degree $2d:= 2\dim\g u$.\\
$b)$ There exists $x'_0\in \g g_1$ and $y'_0\in \g g_{-1}$ 
such that $h'_0:=[x'_0,y'_0]=2\, h_0$.\\  
$c)$ When $\g g$ is absolutely simple, the Lie algebra $\g l$
is maximal in ${\g g\g l}_\m R(\g u)$.\\   
$d)$ When $\g g$ is a simple complex Lie algebra, 
the only intermediate Lie algebra
$\g h$ between $\g l$
and  ${\g g\g l}_\m R(\g u)$ is ${\g g\g l}_\m C(\g u)$.
\end{Lem}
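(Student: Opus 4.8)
The plan is to prove the four parts in the order $(b)$, $(a)$, $(c)$, $(d)$, since $(a)$ relies on the $\g{sl}_2$-triple furnished by $(b)$, while $(c)$ and $(d)$ are representation-theoretic and of a different flavour. For $(b)$ I would establish the stronger equality $[\g g_1,\g g_{-1}]=\g g_0$ when $\g g$ is simple. Set $\g m:=[\g g_1,\g g_{-1}]\subseteq\g g_0$; the Jacobi identity gives $[\g g_0,\g m]\subseteq\g m$, and since $\g u=\g g_1$ and $\g u^-=\g g_{-1}$ are commutative one has $[\g g_1,\g g_1]=[\g g_{-1},\g g_{-1}]=0$, so $\g s:=\g g_{-1}\oplus\g m\oplus\g g_1$ is a nonzero ideal of $\g g$; by simplicity $\g s=\g g$, hence $\g m=\g g_0\ni 2h_0$. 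For semisimple $\g g$ one argues simple factor by simple factor, noting that $h_0$ has zero component on any ideal carrying the trivial grading. Choosing $x_0'\in\g g_1$ and $y_0'\in\g g_{-1}$ with $[x_0',y_0']=2h_0$, the relations $[2h_0,x_0']=2x_0'$ and $[2h_0,y_0']=-2y_0'$ are automatic from the grading, so $(x_0',2h_0,y_0')$ is an $\g{sl}_2$-triple.

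For $(a)$, homogeneity of degree $2d$ is immediate from $F(X)={\rm det}_{\g u}(\tfrac12({\rm ad}X)^2w_0)$, as $({\rm ad}X)^2$ is quadratic and the determinant is over the $d$-dimensional space $\g u$. To see $F\not\equiv 0$ I would evaluate at $x_0'$: under the triple $(x_0',2h_0,y_0')$ the operator ${\rm ad}(2h_0)$ has eigenvalues in $\{-2,0,2\}$, so every irreducible $\g{sl}_2$-submodule of $\g g$ has dimension $1$ or $3$, and therefore $({\rm ad}x_0')^2$ maps the weight-$(-2)$ space $\g g_{-1}$ isomorphically onto the weight-$2$ space $\g g_1$. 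Since $w_0$ also carries $\g g_1$ isomorphically onto $\g g_{-1}$, the endomorphism $\tfrac12({\rm ad}x_0')^2w_0$ of $\g u$ is invertible, whence $F(x_0')\neq 0$. (For semisimple $\g g$ one instead invokes the product formula $F=\prod_a F_a$ together with the simple case.)

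For $(c)$ and $(d)$ I would move to the infinitesimal picture. Using the Killing duality $\g u^{*}\cong\g u^-=\g g_{-1}$ one has an isomorphism of $\g l$-modules $\g{gl}_{\m R}(\g u)\cong\g g_1\otimes\g g_{-1}$, under which the bracket $\g g_1\otimes\g g_{-1}\to\g g_0$ exhibits $\g l=\g g_0$ as a submodule; a subalgebra $\g h$ with $\g l\subseteq\g h\subseteq\g{gl}_{\m R}(\g u)$ is the sum of $\g l$ with an $\g l$-stable complement, so maximality amounts to showing that no proper nonzero collection of the remaining $\g l$-irreducibles is closed under the bracket of $\g{gl}(\g u)$. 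When $\g g$ is absolutely simple the isotropy representation of $\g l$ on $\g u$ is irreducible, and I would finish by running through the finite list of Table~\ref{tabhorcom}: each case is a classical irreducible representation (of $\g{sl}_n\oplus\g{sl}_n$ on $M_n$, of $\g{so}_n$ on $\m C^n$, of $\g{sl}_n$ on $S^2$ or $\Lambda^2$, and of $\g e_6$ on $\m C^{27}$) for which maximality of the structure subalgebra in the general linear algebra is classical, e.g. via Dynkin's classification of maximal subalgebras. For $(d)$, with $\g g$ complex I would intersect $\g h$ with $\g{gl}_{\m C}(\g u)$: by the complex form of $(c)$ this intersection is $\g l$ or $\g{gl}_{\m C}(\g u)$, and combined with the standard maximality of $\g{gl}_{\m C}(\g u)$ in $\g{gl}_{\m R}(\g u)$ and a short argument excluding a "diagonal" intermediate, the only new possibility is $\g h=\g{gl}_{\m C}(\g u)$.

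The main obstacle is exactly the maximality in $(c)$ and $(d)$: parts $(a)$ and $(b)$ are short and uniform, but the module-theoretic reformulation only organises the problem and does not by itself close it. Verifying that no intermediate bracket-closed subspace arises still rests on the explicit classification and on input from the theory of maximal subalgebras — equivalently, on the Jordan-algebraic fact that the structure group of a simple Euclidean Jordan algebra is maximal in its general linear group. I expect this case analysis, together with the careful bookkeeping of the real/complex/quaternionic commutant needed to descend from $\g g_{\m C}$ to an absolutely simple real $\g g$, to be the bulk of the work.
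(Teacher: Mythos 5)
The gap is in your part $(b)$, and it propagates to your part $(a)$. Your ideal argument does show that the \emph{linear span} $[\g g_1,\g g_{-1}]$ equals $\g g_0$, but that only expresses $2h_0$ as a \emph{sum} of brackets $[x_i,y_i]$; the image of the bilinear map $(x,y)\mapsto[x,y]$ is not a linear subspace, so you cannot simply ``choose $x'_0,y'_0$ with $[x'_0,y'_0]=2h_0$''. The existence of a single such pair is exactly the nontrivial content of $(b)$: already for $\g g=\g s\g l(2n,\m R)$ with $\g g_{\pm1}\simeq M_n(\m R)$ one has $[X,Y]={\rm diag}(XY,-YX)$, so $[X,Y]=2h_0$ forces $Y=X^{-1}$, i.e.\ you must first produce an \emph{invertible} (generic) $X$ --- which is precisely the statement $F(X)\neq 0$. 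Since your proof of $(a)$ evaluates $F$ at the $x'_0$ furnished by $(b)$, reversing the order $(b)\Rightarrow(a)$ leaves you with nothing to start from. The paper breaks this circle by proving $(a)$ first with no $\g s\g l_2$-triple at all: if $F\equiv 0$, the semi-invariance \eqref{eqnphlvglu} forces $\Phi$ to vanish on the dense open set $P^-Uw_0P$, contradicting $\Phi(e)=1$. It then obtains $(b)$ by taking $x'_0$ with $F(x'_0)\neq 0$ and defining $y'_0$ as an explicit multiple of the logarithmic derivative ${\rm d}F(x'_0)/F(x'_0)\in\g u^*\simeq\g u^-$; the $\g l$-semi-invariance of $F$ then yields $[x'_0,y'_0]=2h_0$. (Your $\g s\g l_2$-computation deducing $F(x'_0)\neq 0$ from the triple is correct in itself, but it runs the implication in the direction opposite to the one that can be established first.)

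For $(c)$ and $(d)$ you are essentially on the paper's track: both arguments reduce to the irreducibility of $\g u_\m C$ under $\g l_\m C$ plus a case-by-case check against Dynkin's list of maximal subalgebras. For $(d)$, though, the paper's actual mechanism is the four-term decomposition of $\g u_\m C\otimes\g u_\m C^*$ under $\g l_\m C=\g l_+\oplus\g l_-$, using that the two off-diagonal summands are irreducible and inequivalent because $\g u$ is not self-dual as an $\g l$-module; your route via maximality of ${\g g\g l}_\m C(\g u)$ in ${\g g\g l}_\m R(\g u)$ still needs the ``short argument excluding a diagonal intermediate'' to be spelled out, and the paper's decomposition is the clean way to supply it.
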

This triple 
$(x'_0,h'_0,y'_0)$ is an $\g s\g l_2$-triple, this means that
one has the bracket relations
$[h'_0,x'_0 ]=2\, x'_0$, $[h'_0,y'_0 ]=-2\, y'_0$ and  $[x'_0,y'_0]=  h'_0$.

\begin{proof}[Proof of Lemma \ref{lemfgcom}] 
$a)$ By \eqref{eqnfxdetu} the polynomial $F$ is homogeneous of degree $2d$.
If  $F\equiv 0$, using \eqref{eqnphlvglu}, one  deduces that the polynomial
$\Phi$ is zero on the open dense set $P^-Uw_0P$, contradicting the equality $\Phi(e)=1$.

$b)$ Without loss of generality, we can assume that $\g g$ is simple.  We give a short proof 
adapted from \cite{Rubenthaler82}. 
Let $x'_0\in \g u$ such that $F(x'_0)\neq 0$.
Since $F$ is $L$-semi-invariant with character 
$\chi^2$, for  $h\in \g l$, one has,
$$
{\rm d}F(x'_0)([h,x'_0])= 2\,{\rm d}\chi(h)\, F(x'_0).
$$
The Killing form $B$ 
identifies $\g u^-$ with the dual of $\g u$
so that 
$$
y'_0:=\frac{B(h_0,h_0)}{{\rm d}\chi(h_0)}\,\frac{{\rm d}F(x'_0)}{F(x'_0)}
$$
is an element of $\g u^-$. One computes, for  $h\in \g l$,
$$
B([x'_0,y'_0],h)=B(y'_0,[h,x'_0])=2\frac{B(h_0,h_0)}{{\rm d}\chi(h_0)}\,{\rm d}\chi(h)
=2\,B(h_0,h).
$$
Therefore, one has $[x'_0,y'_0]=2h_0$.

$c)$  Since $\g g_\m C$ is simple, the representation of $L_\m C$
in $\g u_\m C$ is irreducible. 
The maximality follows from a direct analysis of each case occuring in Table \ref{tabhorcom}.
This straightforward analysis can also be seen as a special case of
Dynkin's classification of maximal 
semisimple Lie subalgebras
of classical complex simple Lie algebras in \cite{Dynkin00} . 
Indeed, since none of the examples of Table \ref{tabhorcom} belong to 
Dynkin's list of exceptions   \cite[Table 7 p.236]{OnishchikVinverg94},
we deduce that $\g l_\m C$ is a maximal subalgebra of ${\g g\g l}(\g u_\m C)$.

$d)$ Since $\g l$ and $\g u$ are complex Lie algebras, 
the complexified Lie algebra $\g l_\m C$ is the sum of two ideals
$\g l_\m C=\g l_+\oplus\g l_-$ both isomorphic to $\g l$ and 
the representation of $\g l_\m C$ in $\g u_\m C$ 
decomposes as the sum of two 
irreducible representations of 
$\g l_+$ and $\g l_-$, $\; \g u_\m C=\g u_+\oplus\g u_-$. 
Therefore 
$$
\g u_\m C\otimes \g u_{\m C}^*\; =\;
\g u_+\otimes \g u_+^*\;\oplus\;
\g u_+\otimes \g u_-^*\;\oplus\;
\g u_-\otimes \g u_+^*\;\oplus\;
\g u_-\otimes \g u_-^*
$$
Since $\g u$ is not self-dual as a representation of $\g l$, the two representations in the middle are irreducible and inequivalent.
Hence either $\g h_\m C$ contains both of them,
or $\g h_\m C$ is included in ${\g g\g l}(\g u_+)\oplus{\g g\g l}(\g u_-)$. This proves our claim since, by $c)$, the Lie subalgebra $\g l$ is   maximal  in ${\g g\g l}_\m C(\g u)$.
\end{proof}

\begin{Rem} We want to point out that the proof of $d)$ is 
valid even in Case $(2)$ of Table \ref{tabhorcom}. 
Indeed, in this case, the representation of 
$[\g l,\g l]={\g s\g o}(n, \m C)$ in $\g u=\m C^n$ is self-dual, but the representation of $\g l$ in $\g u$ is not self dual.
\end{Rem}

\begin{proof}[Proof of Proposition \ref{profphf}.a
for $\g u$ reflexive commutative ] \mbox{ }

Formula \eqref{eqnfxfaxa} implies that the group  $H$ permutes the 
factors $\g u_a$.
Hence we can assume that $\g g$ is simple. 
By Lemma \ref{lemfgcom}.a, the   function $F$ on $\g u$ is $L$-semi-invariant and non-constant. 

Assume first that $\g g$ is absolutely simple.
Since ${\rm SL}(\g u)$
has an open orbit in $\g u$, this function $F$ 
is not ${\rm GL}(\g u)$-semi-invariant.
Therefore, by   Lemma \ref{lemfgcom}.c
the Lie algebra $\g h$ of the group $H$ is equal to $\g l$.

Assume now that $\g g$ has a complex structure.
Since ${\rm SL}_\m C(\g u)$
also has an open orbit in $\g u$, this function $F$ 
is  not ${\rm GL}_\m C(\g u)$-semi-invariant.
Therefore, by Lemma \ref{lemfgcom}.d
the lie algebra $\g h$ of the group $H$ is equal to $\g l$.
\end{proof}

\begin{proof}[Proof of Lemma \ref{lemfphf}
for $\g u$ reflexive commutative ] \mbox{ }

The bilinear form $G_2$ introduced in \eqref{eqng2xy} is non-zero. Indeed, using the $\g s\g l_2$-triple 
$(x'_0,h'_0,y'_0)$ of  Lemma \ref{lemfgcom}.b,
one has
$G_2(x'_0, y'_0)={\rm tr}_\g z({\rm ad} h'_0)= 2d$.
Since this bilinear form $G_2$ is $L$-invariant and the action of $L$ on $\g u$ is irreducible, 
this bilinear form is non-degenerate.
\end{proof}

%64
\subsection{When $U$ is Heisenberg}
\label{secpolhei}
\bq
The aim of this section is to prove 
Proposition \ref{profphf} for a 
Heisenberg horospherical subgroup.
\eq

We use the notation of \S \ref{secheihor}:
$\g g$ is simple, 
$\g l=\g g_0$ and $\g u=\g u_\th=\g g_1\oplus \g g_2$ for some subset $\th\subset\Pi$ and some graduation 
\begin{equation}
\label{eqng2g0g2}
\g g=\g g_{-2}\oplus\g g_{-1}\oplus\g g_0\oplus\g g_1\oplus\g g_2.
\end{equation}
We denote by 
$h_0$ the element of the center of $\g g_0$ inducing this graduation,
i.e. such that ${\rm ad}\, h_0=j$ on $\g g_{j}$ for all $j$.

In this case, the polynomial function $F(X)$
introduced in \eqref{defpolfg} 
is not a homogeneous function anymore. 
Indeed for $X=V+Z$ with $V\in \g v:=\g g_1$ and $Z\in \g z:=\g g_2$,
it is given by
\begin{equation}
\label{eqnfvzdet}
F(V+Z)={\rm det}_\g z(
\tfrac12({\rm ad}Z)^2w_0+
\tfrac12{\rm ad}Z\,({\rm ad}V)^2w_0+
\tfrac{1}{24}({\rm ad}V)^4w_0)
\end{equation}
Its homogeneous components of degree $2d$  and $4d$, with 
$d:=\dim\g z$ are
\begin{eqnarray}
\label{eqnf2d}
F_{2d}(V+Z)&=&{\rm det}_\g z(
\tfrac12({\rm ad}Z)^2w_0)
\, ,\\
\label{eqnf4d}
F_{4d}(V+Z)&=&{\rm det}_\g z(
\tfrac{1}{24}({\rm ad}V)^4w_0)\, .
\end{eqnarray}

\begin{Lem}
\label{lemfghei0}
With the notation of Proposition \ref{profphf}. 
We assume that $\g u=\g u_\th$ is Heisenberg. Then\\
$a)$ The polynomial $F_{2d}$ is non-zero.\\
$b)$ There exists $x_0\in \g g_2$ and $y_0\in \g g_{-2}$ 
such that $[x_0,y_0]=h_0$.\\  
$c)$ Every automorphism $\ph\in H$ is graded i.e. 
$\ph(\g g_1)= \g g_1$ and 
$\ph(\g g_2)= \g g_2$.\\
$d)$ The bilinear form $G_2$  introduced in \eqref{eqng2xy} is a non-degenerate 
duality between $\g g_2$ and $\g g_{-2}$   
\end{Lem}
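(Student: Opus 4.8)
The unifying idea is to work inside the simple subalgebra $\g g':=\g g_{-2}\oplus[\g g_{-2},\g g_2]\oplus\g g_2$ provided by Lemma \ref{lemnonroo}.d. Inside $\g g'$ the subspace $\g g_2$ is a reflexive commutative horospherical subalgebra, so parts $a)$ and $b)$ reduce to the already-treated commutative case of Lemma \ref{lemfgcom}. Concretely, for $Z\in\g g_2$ the operator $({\rm ad}\,Z)^2$ sends $\g g_{-2}$ into $\g g_2$, and on $\g g_{-2}$ it coincides with the bracket computed inside $\g g'$ (since $[\g g_2,\g g_{-2}]\subset\g g'_0$ and $[\g g_2,\g g'_0]\subset\g g_2$). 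As ${\rm Ad}\,w_0$ carries $\g g_2=\g g_{\widetilde\al}$ isomorphically onto $\g g_{-2}=\g g_{-\widetilde\al}$, the polynomial $F_{2d}$ of \eqref{eqnf2d} agrees, up to composition with a fixed linear isomorphism of $\g g_2$, with the degree-$2d$ polynomial attached by \eqref{eqnpolfg} to the reflexive commutative pair $(\g g',\g g_2)$. By Lemma \ref{lemfgcom}.a the latter is non-zero, hence so is $F_{2d}$, which proves $a)$.

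For $b)$, I would apply Lemma \ref{lemfgcom}.b inside $\g g'$ to get $x_0\in\g g_2$ and $y_0\in\g g_{-2}$ with $[x_0,y_0]=2h'_0$, where $h'_0$ is the grading element of the $|1|$-grading $\g g'=\g g_{-2}\oplus\g g'_0\oplus\g g_2$. Since ${\rm ad}\,h_0=0$ on $\g g_0\supseteq\g g'_0=[\g g_2,\g g_{-2}]$, the element $h_0$ lies in the center of $\g g'_0$; that center is one-dimensional because $\g g'$ is simple, and $h_0$ acts by $\pm 2$ on $\g g_{\pm2}$, so by uniqueness of the grading element $h_0=2h'_0=[x_0,y_0]$.

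For $c)$, every $\ph\in{\rm Aut}(\g u)$ preserves the center $\g z=\g g_2$, giving $\ph(\g g_2)=\g g_2$ for free; the real content is $\ph(\g g_1)=\g g_1$. I would factor $\ph=\ph_{\rm gr}\circ\ph_s$ with $\ph_{\rm gr}$ graded and $\ph_s$ a pure shear $V+Z\mapsto V+Z+\gamma V$ for a linear map $\gamma:\g g_1\to\g z$. Conjugating by $a_t:={\rm Ad}(\exp(t h_0))|_\g u\in{\rm Ad}\,L\subset H$ gives $a_t^{-1}\ph\,a_t\to\ph_{\rm gr}$ as $t\to\infty$, and since $H$ is closed we get $\ph_{\rm gr}\in H$, hence $\ph_s=\ph_{\rm gr}^{-1}\ph\in H$; this reduces the claim to showing that a shear lying in $H$ is trivial. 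Using the weighted homogeneity $F(e^t V+e^{2t}Z)=e^{4td}F(V+Z)$, the same conjugation forces the proportionality constant of $\ph_s$ to be $1$, i.e. $F(V+Z+\gamma V)=F(V+Z)$ for all $V,Z$; differentiating in the $\g z$-direction and extracting the top $Z$-degree component yields $\langle\nabla F_{2d}(Z),\gamma V\rangle=0$ for all $V,Z$. Thus ${\rm im}\,\gamma$ lies in the subspace of $\g g_2$ along which $F_{2d}$ is translation-invariant; this subspace is $L$-stable, and because the Levi of $\g g'$ already acts irreducibly on $\g g_2$ while $F_{2d}$ is non-zero and non-constant, it must be $\{0\}$, so $\gamma=0$. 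I expect this to be the main obstacle: the crux is converting ``$F\circ\ph$ proportional to $F$'' into the vanishing of the shear, which relies both on the non-degeneracy of the leading form $F_{2d}$ from $a)$ and on the irreducibility of $\g g_2$.

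For $d)$, the grading shows that for $X\in\g g_2$, $Y\in\g g_{-2}$ the form \eqref{eqng2xy} is $G_2(X,Y)={\rm tr}_{\g g_2}\bigl(Z\mapsto[X,[Y,Z]]\bigr)$. With the triple of $b)$, and using $[x_0,Z]\in\g g_4=0$, one computes $[x_0,[y_0,Z]]=[[x_0,y_0],Z]=[h_0,Z]=2Z$, so $G_2(x_0,y_0)={\rm tr}_{\g g_2}(2\,{\rm id})=2d\neq0$. Since $G_2$ is manifestly ${\rm Ad}\,L$-invariant (by $L$-equivariance of $\pi$ and cyclicity of the trace) and $L$ acts irreducibly on both $\g g_2$ and $\g g_{-2}$, a non-zero invariant pairing between these irreducible modules is automatically a non-degenerate duality. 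This establishes $d)$ and, combined with Proposition \ref{profphf}.a in the Heisenberg case, completes the ingredients needed for Lemma \ref{lemfphf}.
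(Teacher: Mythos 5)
Your treatment of $a)$, $b)$ and $d)$ follows the paper's route: everything is pushed into the simple rank-one subalgebra $\g g'=\g g_{-2}\oplus[\g g_{-2},\g g_2]\oplus\g g_2$ of Lemma \ref{lemnonroo}.d, where $\g g_2$ is reflexive commutative and Lemma \ref{lemfgcom} does the work; your computation $[x_0,[y_0,Z]]=[[x_0,y_0],Z]=2Z$ for $d)$ is exactly how Lemma \ref{lemfphf} is proved in the commutative case. One step in $b)$ needs justification: you assert that $h_0$ ``lies in the center of $\g g'_0$'', but a priori $h_0$ is only an element of $\g g_0$ \emph{centralizing} $\g g'_0$, not an element \emph{of} $\g g'_0$, and without $h_0\in\g g'$ the vanishing of ${\rm ad}(h_0-2h'_0)$ on $\g g'$ does not force $h_0=2h'_0$. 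The missing input is the Heisenberg relation $(\widetilde\al,\al_i)=\|\widetilde\al\|^2/2$ for $\al_i\in\th$ and $(\widetilde\al,\al_i)=0$ otherwise (used in the proof of Lemma \ref{lemheihor}), which shows that $h_0=h_\th$ is the coroot of $\widetilde\al$ and hence lies in $\g a\cap[\g g_{\widetilde\al},\g g_{-\widetilde\al}]\subset\g g'_0$; once $h_0\in\g g'$, equality with $2h'_0$ follows because $\g g'$ is semisimple with trivial center (the one-dimensionality of the center of $\g g'_0$ that you invoke is neither needed nor true in general, since that center can contain compact factors).

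For $c)$ you take a genuinely different route. The paper's argument is one line: since $L$ acts transitively on $\g z\smallsetminus\{0\}$ and $F_{2d}$ is $L$-semi-invariant and non-zero, the zero set of $F_{2d}$ in $\g u$ is exactly $\g g_1$; and since $\ph$ is linear, $F\circ\ph=cF$ forces $F_{2d}\circ\ph=cF_{2d}$ on the homogeneous component of degree $2d$, so $\ph$ preserves $\{F_{2d}=0\}=\g g_1$. Your argument instead factors $\ph$ as a graded automorphism times a shear $V+Z\mapsto V+Z+\ga V$, contracts by ${\rm Ad}\exp(th_0)\in L\subset H$ to place both factors in the closed group $H$ and to normalize the proportionality constant of the shear to $1$, and then kills $\ga$ by extracting the subleading $Z$-degree of $F(V+Z+\ga V)=F(V+Z)$ and invoking irreducibility of $\g g_2$ under the Levi. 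This is correct (any such shear is an automorphism of a two-step nilpotent algebra, and the degeneration argument is sound), and it has the mild advantage of using only irreducibility of the $L$-action on $\g z$ rather than transitivity on $\g z\smallsetminus\{0\}$; but it is considerably longer than the observation that a linear map scaling $F$ scales each homogeneous component of $F$ by the same constant.
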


\begin{proof}[Proof of Lemma \ref{lemfghei0}] 
$a)$ The Lie algebra $\g g':=\g g_{-2}\oplus \g g'_0\oplus \g g_2$, with $\g g'_0=[\g g_{-2},\g g_2]$ is a simple
Lie algebra of real rank one. Therefore 
its horospherical subalgebra
$\g u':=\g g_2$ is reflexive commutative,
and Lemma \ref{lemfgcom}.a tells us that 
the polynomial $F_{2d}$ is non-zero.

$b)$ The polynomial $F_{2d}$ on $\g g_2$ is $L$-semi-invariant. 
The  existence of the $\g s\g l_2$-triple 
$(x_0,h_0,y_0)$ follows as 
in the proof of Lemma \ref{lemfgcom}.b. 

$c)$ Since $\g g_2$ is the center of $\g u$,
one has $\ph(\g g_2)= \g g_2$.
Moreover, since $\g u$ is Heisenberg, the group $L$ acts transitively on $\g g_2\smallsetminus \{0\}$, and hence
one has the equality $\g g_1=\{X\in \g u\mid F_{2d}(X)=0\}$.
Therefore, one also has $\ph(\g g_1)= \g g_1$.

$d)$ This follows from Proposition \ref{profphf}
applied to the 
reflexive commutative horospherical Lie subalgebra $\g u'$ of $\g g'$ introduced in $a)$.
\end{proof}

\begin{proof}[Proof of Lemma \ref{lemfphf}
for $\g u$ Heisenberg]
We have already seen that $G_2$ is a non-degenerate
duality between $\g g_{-2}$ and $\g g_2$.

This bilinear form $G_2$ is non-zero on $\g g_{-1}\times\g g_1$. Indeed, 
for every simple root $\al$ in $\th$, there exists
a $\g s\g l_2$-triple $(x_\al,h_\al,y_\al)$
with 
$x_\al$ in $\g g_\al$ and 
$y_\al$ in $\g g_{-\al}$. 
Since $\be:=\widetilde\al -\al$ is a root, one has
$[\g g_{\widetilde\al},\g g_{-\al}]=\g g_\be$,
and the action of $h_\al$ on $\g g_{\widetilde\al}$
is scalar and non-zero.
Therefore, one has 
$$
G_2(x_\al,y_\al)={\rm tr}_{\g z}({\rm ad}h_\al)\neq 0.
$$
This bilinear form $G_2$ is $L$-invariant and the action of $L$ on $\g g_1$ is either irreducible
or every irreducible component contains 
one root space $\g g_\al$, with $\al\in \th$. 
Therefore the bilinear form $G_2$ is also a non-degenerate duality 
between $\g g_{-1}$ and $\g g_1$.
\end{proof}

Remember that the set $\th$ contains 
either one simple root $\al$ with $\-w_0\al=\al$ 
or two simple roots   $\al$ and $-w_0\al$
of the same length.

It remains to prove Proposition \ref{profphf}.a.
We  split the proof into two cases:\\
\ref{secpolhei1}: when  the roots $\al\in \th$ have the same length as the largest root $\widetilde \al$.\\ 
\ref{secpolhei2}: when  $\th$ contains a root $\al $ with is shorter than  $\widetilde \al$.  

\subsubsection{When $U$ is Heisenberg
and $\|\al\|=\|\widetilde\al\|$}
\label{secpolhei1}
This corresponds to all cases in Tables \ref{tabhorhei} and  \ref{tabhorrea} except $(3)$ and $(10)$.

\begin{Lem}
\label{lemfghei1}
With the notation of Proposition \ref{profphf}. 
We assume that $\g u=\g u_\th$ is Heisenberg and $\|\al\|=\|\widetilde\al\|$ for all $\al\in \th$. Then\\
$a)$ There exists $x'_0\in \g g_1$ and $y'_0\in \g g_{-1}$ 
such that $h'_0:=[x'_0,y'_0] =2\, h_0$.\\  
$b)$ The polynomial $F_{4d}$ is non-zero.\\
$c)$ The Lie algebra $\g l$
is a maximal Lie subalgebra of the Lie algebra 
$\g d\g e\g r_{gr}(\g u)$ 
of graded derivations of $\g u$,
except in Case $(11)$ of Table \ref{tabhorrea} 
where $\g l=\g d\g e\g r_{gr}(\g u)$.   
\end{Lem}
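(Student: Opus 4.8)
The plan is to establish a) and b) together through a single $\g s\g l_2$-triple $(x'_0,2h_0,y'_0)$ with $x'_0\in\g g_1$ and $y'_0\in\g g_{-1}$, and then to treat c) by the graded-derivation analogue of the argument in Lemma \ref{lemfgcom}.c--d. The triple yields a) directly, and b) is read off from it by $\g s\g l_2$-representation theory.

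For a) I would copy the gradient-and-Killing-form computation of Lemma \ref{lemfgcom}.b, applied to the top homogeneous component $F_{4d}$ rather than to $F$. Restricted to $\g g_1$ one has $F_{4d}(V)={\rm det}_\g z(\tfrac1{24}({\rm ad}V)^4w_0)$, and since $L=\g g_0$ preserves the grading of $\g u$, every homogeneous component of $F$ is $L$-semi-invariant with the same character $\chi^2$; in particular $F_{4d}({\rm Ad}\ell\,V)=\chi(\ell)^2F_{4d}(V)$ on $\g g_1$. Choosing $x'_0\in\g g_1$ with $F_{4d}(x'_0)\neq0$, differentiating this identity at $\ell=e$, and using that the Killing form pairs $\g g_1$ with $\g g_{-1}$ nondegenerately, I would turn the logarithmic differential of $F_{4d}$ at $x'_0$ into an element $y'_0\in\g g_{-1}$. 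The same normalization as in Lemma \ref{lemfgcom}.b --- with ${\rm d}\chi(h_0)={\rm tr}_\g z({\rm ad}\,h_0)=2d\neq0$ and the exponent $2$ coming from $\chi^2$ --- then forces $[x'_0,y'_0]=2h_0$, so $h'_0:=[x'_0,y'_0]=2h_0$ and $(x'_0,h'_0,y'_0)$ is the desired triple.

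Once such a triple exists, b) is immediate: the neutral element $2h_0$ acts on $\g g_j$ by the scalar $2j$, so $\g g_2=\g z$ and $\g g_{-2}$ are exactly the weight $+4$ and weight $-4$ spaces of this $\g s\g l_2$, which here are the two extreme weights. By $\g s\g l_2$-representation theory the raising operator $({\rm ad}\,x'_0)^4$ maps the weight $-4$ space isomorphically onto the weight $+4$ space, and since $w_0$ sends $\g z$ isomorphically onto $\g g_{-2}$, the endomorphism $({\rm ad}\,x'_0)^4w_0$ is invertible on $\g z$, i.e. $F_{4d}(x'_0)\neq0$. The genuine obstacle is therefore the apparent circularity: a) as written needs $F_{4d}\neq0$. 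I would break it by securing, independently of the polynomial, one element $x'_0\in\g g_1$ that is nilpositive in a triple with neutral element exactly $2h_0$. Here the hypothesis $\|\al\|=\|\widetilde{\al}\|$ of this subsection is essential: it guarantees that $\{\al,\widetilde{\al}-\al,\widetilde{\al}\}$ is an $A_2$-substring with its three roots lying in $\g g_1$, $\g g_1$ and $\g g_2$ respectively, and a short computation shows $2h_0\in[\g g_1,\g g_{-1}]$ (its Killing-orthogonal complement in $\g l$ is the kernel of the $\g g_1$-action, on which $h_0$ vanishes). Combining this substring with the triple $(x_0,h_0,y_0)$ of Lemma \ref{lemfghei0}.b to produce a nilpositive element of $\g g_1$ whose characteristic is forced to be $2h_0$ is the delicate step I expect to cost the most work, and it is precisely where the two excluded cases $(3)$ and $(10)$, in which $\al$ is short, would fail.

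For c) I would argue as in Lemma \ref{lemfgcom}.c--d. A graded derivation of $\g u$ is determined by its restriction to $\g g_1$, because $\g g_2=[\g g_1,\g g_1]$ fixes its value on $\g g_2$ through the Leibniz rule; hence $\g d\g e\g r_{gr}(\g u)$ embeds in $\g g\g l(\g g_1)$ as the subalgebra preserving the alternating $\g g_2$-valued bracket $\g g_1\wedge\g g_1\to\g g_2$ up to the induced action on $\g g_2$, and $\g l$ sits inside it. After complexifying and using the explicit $\g l$-module structure of $\g g_1$ recorded in Tables \ref{tabhorhei} and \ref{tabhorrea}, the maximality of $\g l$ reduces to Dynkin's classification of maximal subalgebras of the classical simple Lie algebras; one checks case by case that none of the pairs $(\g l,\g g_1)$ arising here lies in Dynkin's list of exceptions, with the single exception of Case $(11)$ of Table \ref{tabhorrea}, the $\g e_6$ triality case, where this conformal algebra collapses onto $\g l$ and one obtains the stated equality $\g l=\g d\g e\g r_{gr}(\g u)$.
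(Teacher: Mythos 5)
There is a genuine gap, and it sits exactly where you predict: part $a)$. Your primary route for $a)$ --- transporting the gradient-and-Killing-form computation of Lemma \ref{lemfgcom}.b to the top component $F_{4d}$ --- is circular, as you acknowledge: it needs a point $x'_0$ with $F_{4d}(x'_0)\neq 0$, i.e.\ part $b)$, which you then propose to deduce from $a)$. Your fallback --- producing a nilpositive element of $\g g_1$ whose characteristic is $2h_0$ directly from the $A_2$-substring $\{\al,\widetilde\al-\al,\widetilde\al\}$ --- is the right idea and the right place where $\|\al\|=\|\widetilde\al\|$ enters (it is what makes this substring a genuine $A_2$ system), but you explicitly leave the construction as ``the delicate step I expect to cost the most work'', and that step is the entire content of $a)$. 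The ingredient you are missing is the Borel--Tits theorem: the graded Lie subalgebra $\g g'$ spanned by the six root spaces $\g g_{\pm\al}$, $\g g_{\pm(\widetilde\al-\al)}$, $\g g_{\pm\widetilde\al}$ is simple with restricted root system of type $A_2$, and by \cite[Theorem 7.2]{BorelTits65} it contains a split graded subalgebra $\g g''$ isomorphic to $\g s\g l(3,\m R)$ with the same root system, in which $\g u\cap\g g''$ and $\g u^-\cap\g g''$ are the upper and lower triangular horospherical subalgebras. One then writes down the principal triple explicitly (Example \ref{exasl2sl3}): $x'_0=E_{1,2}+E_{2,3}\in\g g_1$, $y'_0=E_{2,1}+E_{3,2}\in\g g_{-1}$, $h'_0=\mathrm{diag}(2,0,-2)=2h_0$. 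Without this (or an equivalent explicit construction) $a)$ is unproven, and $b)$ --- which you derive from $a)$ by $\g s\g l_2$-weight theory exactly as the paper does, noting that $(\mathrm{ad}\,x'_0)^4$ carries the lowest weight space $\g g_{-2}$ isomorphically onto the highest weight space $\g g_2$ --- falls with it.

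Your sketch of $c)$ follows the paper's route (a graded derivation is controlled by its restriction to $\g g_1$ and the bracket $\g g_1\wedge\g g_1\to\g g_2$, complexify, invoke Dynkin's classification and check that none of the pairs lies in his list of exceptions, with Case $(11)$ of Table \ref{tabhorrea} singled out as the equality case) and is acceptable modulo the case-by-case verification, which the paper also carries out by inspection of Tables \ref{tabhorhei} and \ref{tabhorrea}; the one refinement the paper adds is Lemma \ref{lemauthei}, used to pin down $\g d\g e\g r_{gr}(\g u)$ as (up to center) $\g s\g p(\g v,\om)$ over $\m C$ rather than over $\m R$ when $\g u$ carries a complex structure.
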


\begin{Exa}
\label{exasl2sl3}
Let $\g g$ be the simple Lie algebra $\g g={\g s\g l}(3,\m R)$, 
let $\g u=\g g_1\oplus \g g_2$ be the upper triangular horospherical subalgebra  and $\g u^-=\g g_{-1}\oplus \g g_{-2}$ be the lower triangular horospherical subalgebra.
A $\g s\g l_2$-triple $(x_0, h_0,y_0 )$ 
satisfying Lemma  \ref{lemfghei0}.b is 
\begin{equation*}
x_0=\mbox{\scriptsize 
$\left(\!
\begin{array}{ccc} 0 &0&1   \\
  0 &0&0\\
  0&0&0  
\end{array}\!
\right)$}\in \g g_2
\, ,\;
h_0=\mbox{\scriptsize
$\left(\!
\begin{array}{ccc} 1 &0&0   \\
  0 &0&0\\
  0&0&\!-1  
\end{array}\!
\right)$}\in \g g_0
\, ,\;
y_0=\mbox{\scriptsize
$\left(\!
\begin{array}{ccc} 0 &0&0   \\
  0 &0&0\\
  1&0&0  
\end{array}\!
\right)$}\in \g g_{-2}\, .
\end{equation*}
The corresponding grading \eqref{eqng2g0g2} of $\g g$ is given by this element $h_0$. 
A $\g s\g l_2$-triple $(x'_0, h'_0,y'_0 )$ 
satisfying Lemma  \ref{lemfghei1}.a is
\begin{equation*}
x'_0=\mbox{\scriptsize 
$\left(\!
\begin{array}{ccc} 0 &1&0   \\
  0 &0&1\\
  0&0&0  
\end{array}\!
\right)$}\in \g g_1
\, ,\;
h'_0=\mbox{\scriptsize 
$\left(\!
\begin{array}{ccc} 2 &0&0   \\
  0 &0&0\\
  0&0&\!-2  
\end{array}\!
\right)$}\in \g g_0
\, ,\;
y'_0=\mbox{\scriptsize 
$\left(\!
\begin{array}{ccc} 0 &0&0   \\
  1 &0&0\\
  0&1&0  
\end{array}\!
\right)$}\in \g g_{-1}\, .
\end{equation*}
\end{Exa}

\begin{proof}[Proof of Lemma \ref{lemfghei1}] 
Since the subset $\th\subset \Pi$
is given by \eqref{eqnthhei}, the difference 
$\be:=\widetilde\al-\al$ is a root.
Since the roots $\widetilde \al$ and $\al$ have
the same length, the subset 
$\Si':=\{\pm\al,\pm\be,\pm\widetilde\al\} \subset \Sigma$
is a root system of type $A_2$.
Let $\g g'\subset \g g$ be the graded Lie subalgebra spanned by the root spaces
$\g g_{\pm\al}$, $\g g_{\pm\be}$ and $\g g_{\pm\widetilde\al}$.
It is a simple real Lie algebra with root system $\Sigma'$.
Note that by construction $\g g'_{\pm 2}=\g g_{\pm 2}$

By \cite[Theorem 7.2]{BorelTits65}, there exists a split 
simple algebraic subalgebra $\g g''\subset \g g'$
with the same real rank and same root system as $\g g'$. 
Therefore, this subalgebra $\g g''$ is a graded 
subalgebra of $\g g$ which is isomorphic to $\g s\g l(3,\m R)$
and the intersections $\g u'':=\g u\cap\g g''$ and
${\g u''}^-:=\g u^-\cap\g g''$ are the upper and lower triangular horospherical
subalgebras of  $\g s\g l(3,\m R)$ given in Example \ref{exasl2sl3}.

The first $\g s\g l_2$-triple $(x_0, h_0,y_0 )$ of $\g g''$
in Example \ref{exasl2sl3} is also the one given by  Lemma \ref{lemfghei0}.b,
and therefore $h_0$ is the element of $\g g_0$ inducing the graduation \eqref{eqng2g0g2} on $\g g$.
The second $\g s\g l_2$-triple $(x'_0, h'_0,y'_0 )$ of $\g g''$
in Example \ref{exasl2sl3} is the one we are looking for
since one has $h'_0=2h_0$.

b) We   use this  $\g s\g l_2$-triple $(x'_0, h'_0,y'_0 )$.
The centers
$\g z$ and $\g z^-$ are respectively the eigenspaces of 
${\rm ad}h'_0$ for the eigenvalues $4$ and $-4$.
Therefore the theory of $\g s\g l_2$-modules tells us that
the map $({\rm ad}x'_0)^4$ induces a bijection from $\g z^-$ to $\g z$, and 
$F_{4d}(x'_0)={\rm det}_\g z(
\tfrac{1}{24}({\rm ad}x'_0)^4w_0)$ is non-zero.

$c)$ Recall that the Heisenberg horospherical subalgebra $\g u$ is graded as 
$\g u=\g v\oplus\g z$. Therefore the group ${\rm Aut}_{gr}(\g u)$
can be seen as a subgroup of ${\rm GL}(\g v)$.
The Lie bracket on $\g u$ gives an antisymmetric bilinear map
$\om:\g v\times\g v\ra\g z$.
\vs

{\bf First Case} When $|\th|=1$.\\
If  $\g g$ is absolutely simple we replace $\g g$ by $\g g_\m C$. Hence, without loss of generality,
we can assume that $\g g$ is a complex simple Lie algebra.
In this case, $\g v$ is a complex vector space and $\om$
is a complex symplectic form on $\g v$.
The representation of $L$
in $\g v$ preserves the complex structure on $\g v$ and preserves up to scalar the symplectic form $\om$. This representation  is irreducible.
By Lemma \ref{lemauthei}, every graded automorphism $\ph$ of the real Lie algebra $\g u$ is complex linear or skew-linear.
The group of graded automorphisms of $\g u$ is up to finite index
${\rm Aut}_{gr}(\g u)\simeq {\rm Sp}(\g v,\om)\times \m C^*$.

The maximality follows from a direct analysis of each case 
from $(2)$ to $(8)$ occuring in Table \ref{tabhorhei}.
This straightforward analysis can again be seen as a special case of
Dynkin's classification 
in \cite{Dynkin00}. 
Indeed, since none of the examples of Table \ref{tabhorhei} belong to 
Dynkin's list of exceptions   \cite[Table 7 p.236]{OnishchikVinverg94},
we deduce that $\g l_{\m C}$ is a maximal subalgebra of $\g s\g p(\g v_\m C)\oplus \m C$.
\vs

{\bf Second Case} When $|\th|=2$.\\
This corresponds to Cases $(1)$, $(9)$ and $(11)$ in Tables \ref{tabhorhei} and \ref{tabhorrea}.

- Case $(1)$.
One has $\g g = \g s\g l(\m C^{n+2})$ or $\g g_\m C=\g s\g l(\m C^{n+2})$. Using again Lemma \ref{lemauthei}, one sees that the  Levi Lie algebra 
$\g l$ (resp. $\g l_\m C $) is isomorphic to $\g g\g l(\m C^n)\oplus \m C$ and hence is a maximal Lie subalgebra in 
$\g d\g e\g r_{gr}(\g u)$ (resp. $\g d\g e\g r_{gr}(\g u_\m C)$), itself isomorphic to $\g s\g p(\m C^{2n})\oplus\m C$.

- Case $(9)$.
One has 
$\g g_{\m C}=\g s\g l(\m C^{2n+4})$,
$\g v_{\m C}=\m C^2\otimes {\m C^n}^* \oplus 
{\m C^n}\otimes {\m C^2}^*$ 
and $\g z_{\m C}={\rm End}(\m C^2)$. Therefore the Levi Lie algebra $\g l_\m C\simeq \g g\g l(\m C^2)\oplus\g s\g l(\m C^{n})\oplus \g g\g l(\m C^2)$
is maximal in 
$\g d\g e\g r_{gr}(\g u_\m C)\simeq 
\g s\g l(\m C^2)\oplus\g s\g l(\m C^2)\oplus\g s\g p(\m C^{2n})\oplus\m C^2$.

- Case $(11)$.
One has $\g g_{\m C}=\g e_6$, $\g v_{\m C}=\m C^8\oplus\m C^8$ 
and $\g z_{\m C}=\m C^8$. The Levi Lie algebra is   
$\g l_\m C \simeq \g s\g o(\m C^{8})\oplus\m C^2$.
Here the three spaces $\m C^{8}$ are the three irreducible
$8$-dimensional representations  of $\g s\g o(\m C^{8})$, 
the standard one and the two half-spin representations. 
Therefore $\g l_\m C$
 is equal to
$\g d\g e\g r_{gr}(\g u_\m C)\simeq 
\g s\g o(\m C^{8})\oplus\m C^2$.
\end{proof}

In the previous proof, we have used the following.

\begin{Lem}
\label{lemauthei}
Let $\g u$ be a complex Heisenberg Lie algebra.
Every automorphism $\ph$ of  the real Lie algebra $\g u$
is either complex linear or skew-linear.
\end{Lem}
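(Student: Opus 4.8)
The plan is to reduce the statement to a computation with the $\m C$-symplectic form that $\g u$ carries, and then to read off the complex structure from that form. First I would use that the center $\g z=[\g u,\g u]=Z(\g u)$ is characteristic, so $\ph(\g z)=\g z$; set $\psi:=\ph|_{\g z}$. Since $\g u=\g g_1\oplus\g g_2$ is graded with $\g z=\g g_2$, I would restrict to automorphisms preserving the degree-one part $\g v:=\g g_1$ — this is exactly the case needed, the relevant $\ph$ being graded by Lemma \ref{lemfghei0}.c — so that $\ph=\ol\ph\oplus\psi$ with $\ol\ph\in{\rm GL}_\m R(\g v)$. Here $\g z\cong\m C$ is one complex-dimensional and the bracket is a nondegenerate $\m C$-bilinear alternating form $\om:\g v\times\g v\ra\g z$, and the relation $\ph[X,Y]=[\ph X,\ph Y]$ reads $\om(\ol\ph X,\ol\ph Y)=\psi\,\om(X,Y)$ for $X,Y\in\g v$.

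The key step is to detect $J$ (multiplication by $i$ on $\g v$) inside $\om$. I would fix an $\m R$-basis of $\g z\cong\m C$ and write $\om=\om_1+i\,\om_2$ with $\om_1,\om_2$ real nondegenerate alternating forms; $\m C$-bilinearity of $\om$ gives $\om_1(JX,Y)=-\om_2(X,Y)$, so the operator $K:=\om_1^{-1}\om_2$ equals $-J$. Writing $\psi$ as a real matrix $\left(\begin{smallmatrix}p&q\\r&s\end{smallmatrix}\right)$ and taking real and imaginary parts of the identity above gives $\ol\ph^{*}\om_1=p\,\om_1+q\,\om_2$ and $\ol\ph^{*}\om_2=r\,\om_1+s\,\om_2$, where $\ol\ph^{*}\om_i:=\om_i(\ol\ph\cdot,\ol\ph\cdot)$, whence
\[
\ol\ph^{-1}K\,\ol\ph=(\ol\ph^{*}\om_1)^{-1}(\ol\ph^{*}\om_2)=(pI+qK)^{-1}(rI+sK).
\]
The right-hand side lies in the commutative field $\m R[K]=\m R[J]\cong\m C$; since the left-hand side squares to $-I$, it is an element of $\m C$ with square $-1$, hence equals $\pm J$. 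This yields $J\ol\ph=\pm\ol\ph J$, i.e. $\ol\ph$ is $\m C$-linear or conjugate-linear.

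Finally I would transfer this to $\psi$ and reassemble $\ph$. If $\ol\ph$ is $\m C$-linear, then $\psi\,\om(iX,Y)=\om(\ol\ph(iX),\ol\ph Y)=i\,\psi\,\om(X,Y)$, and since $\om$ maps onto $\g z$ this forces $\psi$ to be $\m C$-linear, so $\ph=\ol\ph\oplus\psi$ is $\m C$-linear; the parallel computation for $\ol\ph$ conjugate-linear makes $\psi$, and hence $\ph$, skew-linear. I expect the main obstacle to be purely organizational — the real/imaginary-part bookkeeping in the middle step — together with the point that gradedness of $\ph$ is genuinely needed: a general automorphism may differ from a graded one by an arbitrary $\m R$-linear shear in ${\rm Hom}_\m R(\g v,\g z)$, which would destroy the conclusion, so one must first reduce to the graded case.
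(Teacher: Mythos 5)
Your argument is correct, and it is worth noting that the paper offers no proof at all here --- it says only ``One checks it by direct computation'' --- so your write-up actually supplies the missing computation. The route you take is a clean one: encode the complex structure $J$ of $\g v$ as the operator $K=\om_1^{-1}\om_2=-J$ relating the real and imaginary parts of the $\m C$-bilinear bracket form, observe that pulling back by $\ol\ph$ conjugates $K$ into the field $\m R[K]\cong\m C$, and conclude $\ol\ph^{-1}K\ol\ph=\pm J$ because $\pm J$ are the only square roots of $-I$ in that field. The remaining small verifications (nondegeneracy of $\om_1$ and $\om_2$, invertibility of $pI+qK$ from the invertibility of $\psi$, and the fact that the image of $\om$ spans $\g z$ over $\m R$ so that the identity $\psi(iw)=\pm i\psi(w)$ propagates to all of $\g z$) all go through, and the transfer from $\ol\ph$ to $\psi$ is handled correctly.

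You are also right to insist on the graded reduction, and this is your most valuable observation: as literally stated the lemma is false, since for any real-linear $\si\in{\rm Hom}_\m R(\g v,\g z)$ that is neither $\m C$-linear nor conjugate-linear the shear ${\rm id}+\si$ is an automorphism of the real Lie algebra $\g u$ (because $\si$ lands in the center) but is neither complex linear nor skew-linear. The statement should read ``every \emph{graded} automorphism,'' and this is exactly how the paper uses it: Lemma \ref{lemfghei0}.c guarantees that the automorphisms in $H$ are graded, and the proof of Lemma \ref{lemfghei1}.c silently reinserts the word ``graded'' when invoking Lemma \ref{lemauthei}. So your proof, restricted to the graded case, proves precisely what the paper needs.
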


\begin{proof}[Proof of Lemma \ref{lemauthei}]
One checks it by direct computation. 
\end{proof}

\begin{proof}[Proof of Proposition \ref{profphf}.a
for $\g u$ Heisenberg, and 
$\|\al\|=\|\widetilde\al\|$ ] \mbox{ }\\
By Lemma \ref{lemfghei1}.a, the   function $F_{4d}$ on $\g v$ is $L$-semi-invariant and non-constant. 
When we are not in Case $(11)$ of Table \ref{tabhorrea}, the derived group of the group ${\rm Aut}_{gr}(\g u)$
has an open orbit in $\g v$, therefore this function $F_{4d}$ 
is not ${\rm Aut}_{gr}(\g u)$-semi\-invariant.
Therefore, by   Lemma \ref{lemfghei1}.c,
the Lie algebra $\g h$ of the group $H$ is equal to $\g l$ in all   cases.
\end{proof}

\subsubsection{When $U$ is Heisenberg
and $\|\al\|<\|\widetilde\al\|$}
\label{secpolhei2}

This corresponds to Case $(3)$ and $(10)$ 
in Tables \ref{tabhorhei}
and   \ref{tabhorrea}.

\begin{Lem}
\label{lemfghei2}
With the notation of Proposition \ref{profphf}. 
We assume that $\g u=\g u_\th$ is Heisenberg and $\|\al\|<\|\widetilde\al\|$. Then
the Lie algebra $\g l$
coincides with the algebra 
$\g d\g e\g r_{gr}(\g u)$ 
of graded derivations of $\g u$  
\end{Lem}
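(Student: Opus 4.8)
Since every $\ph\in H$ is graded by Lemma \ref{lemfghei0}.c, one has $\g h\subseteq\g d\g e\g r_{gr}(\g u)$, so proving the stated equality $\g d\g e\g r_{gr}(\g u)=\g l$ will also finish Proposition \ref{profphf}.a in these two cases. The plan is therefore to compute the graded derivation algebra directly. Write $\g u=\g v\oplus\g z$ with $\g v=\g g_1$ and $\g z=\g g_2$, and let $\om\colon\g v\times\g v\to\g z$ be the Lie bracket, which is onto by Lemma \ref{lemnonroo}.a. A graded derivation is a pair $(A,B)\in\g g\g l(\g v)\oplus\g g\g l(\g z)$ satisfying
\[ B\,\om(X,Y)=\om(AX,Y)+\om(X,AY)\qquad\text{for all }X,Y\in\g v. \]
Since $\om$ is surjective, $B$ is determined by $A$, so the assignment $(A,B)\mapsto A$ embeds $\g d\g e\g r_{gr}(\g u)$ into $\g g\g l(\g v)$; the adjoint action gives an inclusion $\g l\subseteq\g d\g e\g r_{gr}(\g u)$, faithful because $G$ is adjoint. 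It remains to prove the reverse inclusion $\g d\g e\g r_{gr}(\g u)\subseteq\g l$.

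Both sides complexify compatibly, $\g d\g e\g r_{gr}(\g u)\otimes_\m R\m C=\g d\g e\g r_{gr}(\g u_\m C)$, so I may assume $\g g$ complex and read off the pair $(\g u,\om)$ from the $\g l$-action recorded in Tables \ref{tabhorhei} and \ref{tabhorrea} (after complexifying in Case $(10)$). The hypothesis $\|\al\|<\|\wt\al\|$ leaves exactly Cases $(3)$ and $(10)$. In Case $(3)$ the center $\g z$ is one-dimensional and $\om$ is a symplectic form on $\g v=\m C^{2n}$; then the derivation identity says precisely that $A$ lies in the conformal symplectic algebra $\g s\g p(\g v,\om)\oplus\m C\,\mathrm{id}$, with $B$ the scalar conformal factor. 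This is exactly $\g l$, so equality holds.

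In Case $(10)$ one has $\g v=\m C^2\otimes\m C^{2m}$ and $\g z=S^2\m C^2$, with $\om(a\otimes v,b\otimes w)=(a\cdot b)\,\langle v,w\rangle$, where $a\cdot b\in S^2\m C^2$ is the symmetric product and $\langle\,,\,\rangle$ is the symplectic form on $\m C^{2m}$; here $\g l=\g g\g l(\m C^2)\oplus\g s\g p(\m C^{2m})$ acts in the evident way. The plan is to feed decomposable vectors into the derivation identity: for each covector $\xi$ on $\g z$ the contracted form $\om_\xi:=\langle\xi,\om(\cdot,\cdot)\rangle$ is a non-degenerate skew form on $\g v$, and the group preserving this whole family up to scale is $\mathrm{GL}(\m C^2)\times\mathrm{Sp}(\m C^{2m})$. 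Running the identity over all $\xi$ forces $A$ to normalize the family $\{\om_\xi\}$, hence to respect the tensor factorization $\g v=\m C^2\otimes\m C^{2m}$ up to the $\g g\g l(\m C^2)$-action, which places $A$ in $\g g\g l(\m C^2)\oplus\g s\g p(\m C^{2m})=\g l$. Descending to real points then gives $\g d\g e\g r_{gr}(\g u)=\g l$.

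The main obstacle is the linear-algebra step in Case $(10)$: showing that a map $A$ whose induced deformation $\om(A\cdot,\cdot)+\om(\cdot,A\cdot)$ of the $S^2\m C^2$-valued form $\om$ factors back through $\om$ must preserve the tensor decomposition of $\g v$. I expect this to follow, as the analogous maximality statements in Lemma \ref{lemfghei1}.c do, from the irreducibility of the $\g l$-action on $\g v$ together with the explicit shape of $\om$. The feature distinguishing this case from the equal-length case of Lemma \ref{lemfghei1} is that here the conformal group of the single form $\om$ is already $\g l$ itself, leaving no room for a strictly larger graded derivation algebra; this is why one obtains equality rather than mere maximality.
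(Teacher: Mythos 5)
Your proposal follows essentially the same route as the paper: both reduce to the complexified pairs of Cases $(3)$ and $(10)$ of Tables \ref{tabhorhei} and \ref{tabhorrea} and compute $\g d\g e\g r_{gr}(\g u_\m C)$ directly from the explicit shape of $\g v$, $\g z$ and the bracket $\om$, and the paper's own proof is in fact terser than yours (it handles the complex-structure subcase via Lemma \ref{lemauthei} rather than your complexification identity, and simply asserts the two identifications). One small inaccuracy to fix: in Case $(10)$ the contracted form $\om_\xi$ is non-degenerate only when the quadratic form $a\mapsto\xi(a\cdot a)$ on $\m C^2$ is non-degenerate, i.e.\ for generic $\xi$ rather than all $\xi$; this does not harm the argument, since generic $\xi$ suffice (equivalently, one can phrase the derivation condition as $A$ preserving the kernel of $\om$ viewed as a map $\Lambda^2\g v\to\g z$), and the final linear-algebra verification you flag as remaining is precisely the content that the paper also leaves to the reader.
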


\begin{proof}[Proof of Lemma \ref{lemfghei2}]

- Case $(3)$. One has $\g g=\g s\g p(\m C^{2n+2})$ or 
$\g g_\m C=\g s\g p(\m C^{2n+2})$.  
Using again Lemma \ref{lemauthei}, one sees that the  Levi Lie algebra 
$\g l$ (resp. $\g l_\m C$) is isomorphic to $\g s\g p(\m C^{2n})\oplus \m C$ and equal to 
$\g d\g e\g r_{gr}(\g u)$ (resp. $\g d\g e\g r_{gr}(\g u_\m C)$).

- Case $(10)$.
One has 
$\g g_{\m C}=\g s\g p(\m C^{4n+8})$,
$\g v_{\m C}=\m C^2\otimes \m C^{2n} $, 
$\g z_{\m C}=S^2(\m C^2)$, 
and the   Levi Lie algebra  
$\g l_\m C\simeq \g g\g l(\m C^{2})\oplus\g s\g p(\m C^{2n})$ is equal to $\g d\g e\g r_{gr}(\g u_\m C)$.
\end{proof}

\begin{proof}[Proof of Proposition \ref{profphf}.a
for $\g u$ Heisenberg and 
$\|\al\|<\|\widetilde\al\|$ ]\mbox{ }\\
By Lemma \ref{lemfghei0}.c, 
the automorphisms $\ph\in H$ are graded automorphisms of $\g u$.
By Lemma \ref{lemfghei2}, the group $L$ has finite index in the group
${\rm Aut}_{gr}(\g u)$. Therefore the group $L$ also has finite index in $H$.
\end{proof}

%9
{\small

}

{\small
\noindent
Y. \textsc{Benoist}:
Universit\'e Paris-Sud, Orsay, France\newline
e-mail: \texttt{yves.benoist@u-psud.fr}}

\medskip
{\small
\noindent
S. \textsc{Miquel}:
Universit\'e Paris-Sud, Orsay, France\newline
e-mail: \texttt{sebastien.miquel@u-psud.fr}}

\end{document}